\definecolor{dkblue}{RGB}{1,31,91} % This is a dark Blue  
\newtheorem{theorem}{Theorem}[section]
\newtheorem{lemma}[theorem]{Lemma}
\newtheorem{proposition}[theorem]{Proposition}
\newtheorem{corollary}[theorem]{Corollary}
\newtheorem{definition}[theorem]{Definition}
\newtheorem{remark}[theorem]{Remark}
\newcommand{\pd}[1]{\p_{#1}}
\newcommand{\pdd}[2]{\p_{#1}^{#2}}
\newcommand{\abs}[1]{\left\lvert{#1}\right\rvert}
\newcommand{\norm}[1]{\left\lVert{#1}\right\rVert}
\newcommand{\snorm}[1]{\left[{#1}\right]}
\newcommand{\hnorm}[3]{\norm{#1}_{C^{#2, #3}}}
\newcommand{\chnorm}[2]{\norm{#1}_{C^{#2}}}
\newcommand{\hsnorm}[2]{\snorm{#1}_{C^{0, #2}}}
\newcommand{\starnorm}[1]{\abs{#1}_{*}}
\newcommand{\mc}[1]{\mathcal{#1}}
\newcommand{\mb}[1]{\mathbb{#1}}
\newcommand{\p}{\partial}
\newcommand{\dist}{\mbox{dist}}
\newcommand{\sgn}{\mbox{sgn}}
\newcommand{\veps}{\varepsilon}
\newcommand{\paren}[1]{\left({#1}\right)}
\newcommand{\diff}{\triangle}
\newcommand{\wt}[1]{\widetilde{#1}}
\newcommand{\wh}[1]{\widehat{#1}}
\newcommand{\mbs}{{\mathbb{S}^1}}
\newcommand{\mbr}{\mathbb{R}}
\newcommand{\jump}[1]{\llbracket{#1}\rrbracket}
\newcommand{\dual}[2]{\left\langle{#1},{#2}\right\rangle}
\newcommand{\set}[2]{\left\{ #1 \mid #2 \right\}}
\begin{document}

\title{The Tension Determination Problem for an Inextensible Interface in 2D Stokes Flow}
% \author{Po-Chun Kuo, Ming-Chih Lai, Yoichiro Mori, Analise Rodenberg}

\author[P.-C. Kuo]{Po-Chun Kuo$^{\ast,\dagger}$}
\thanks{$^\dagger$ supported by NSF DMS-204144(USA)}
% \email[pck]{kuopo@sas.upenn.edu}
\author[M.-C. Lai]{Ming-Chih Lai$^\mathsection$}
\thanks{$^\mathsection$ supported by NSTC, Taiwan, under the research grant 110-2115-M-A49-011-MY3(TW)}
% \email[mcl]{mclai@math.nctu.edu.tw}
\author[Y. Mori]{Yoichiro Mori$^{\ast,\ddagger}$}
\thanks{$^\ddagger$ supported by the Simons Foundation Math+X Chair Fund and NSF DMS-1907583 and DMS-2042144(USA)}
% \email[ym]{y1mori@sas.upenn.edu}
\author{Analise Rodenberg$^\mathparagraph$}
% \email[A. Rodenberg]{arodenb2@kennesaw.edu}

\address{   $^\ast$Department of Mathematics, School of Arts and Sciences, University of Pennsylvania, Philadelphia, PA 19104, United States of America.
            $^\dagger$\href{mailto:kuopo@sas.upenn.edu}{kuopo@sas.upenn.edu}
            $^\ddagger$\href{y1mori@sas.upenn.edu}{y1mori@sas.upenn.edu}}
\address{$^\mathsection$Department of Applied Mathematics, National Yang Ming Chiao Tung University, Hsinchu, Taiwan 30010, ROC
\href{mclai@math.nctu.edu.tw}{mclai@math.nctu.edu.tw}}
\address{$\mathparagraph$ Department of Mathematics, Kennesaw State University, Marietta, GA 30060, United States of America
\href{arodenb2@kennesaw.edu}{arodenb2@kennesaw.edu}}
% \address{$^\dagger$\href{mailto:kuopo@sas.upenn.edu}{kuopo@sas.upenn.edu}}
% \address[M.-C. Lai]{Department of Applied Mathematics, National Yang Ming Chiao Tung University, Hsinchu, Taiwan 30010, ROC}
% \address[A. Rodenberg]{Department of Mathematics, Kennesaw State University, Marietta, GA 30060, United States of America}

\keywords{Stokes flow, inextensible interface, interfacial tension, boundary integral equation, H\"older regularity}
\subjclass{35Q35, 35R37, 45A05,47A75, 76D07}
% \thanks{Y. Mori was supported by the Simons Foundation Math+X Chair Fund and NSF DMS-1907583 and DMS-2042144. 
% P.-C. Kuo was supported by NSF DMS-204144.
% M.-C. Lai was supported by NSTC, Taiwan, under the research grant 110-2115-M-A49-011-MY3.}

\date{\today}
\maketitle

\begin{abstract}
Consider an inextensible closed filament immersed in a 2D Stokes fluid. Given a force density $\bm{F}$ defined on this filament, 
we consider the problem of determining the tension $\sigma$ on this filament that ensures the filament is inextensible.
This is a subproblem of dynamic inextensible vesicle and membrane problems, which appear in 
engineering and biological applications. We study the well-posedness and regularity properties of this problem 
in H\"older spaces. We find that the tension determination problem admits a unique solution if and only if the closed 
filament is {\em not} a circle. Furthermore, we show that the tension $\sigma$ gains one derivative with respect to the 
imposed line force density $\bm{F}$, and show that the tangential and normal components of $\bm{F}$ affect the regularity 
of $\sigma$ in different ways. We also study the near singularity of the tension determination problem as the interface 
approaches a circle, and verify our analytical results against numerical experiment.
%    We consider a static problem of a one dimensional inextensible filament immersed in a two dimensional steady Stokes fluid.
%    This problem is a subproblem of a dynmaic inextensible vesicle problem, applied on engineering, biology and other fields.
%    The stationary problem is solving velocity pressure and a Lagrange multiplier for its inextensibilty constraint with a fixed interface and a given external force.
%    We establish the well-posedness of the static problem in Holder spaces, which is helpful to understand well-posedness of the dynamic problem.
%    With boundary integral method, we first reduce the stationary problem into an equation only for solving the Lagrange multiplier.
%    Then, we split the operator on the multiplier into a principle part, which is a Hilbert transform on its first derivative function, and a remaining part.
%    Finally, the well-posedness is established by Fredholm Alternative theorem after taking an left inverse of the principle part adding an identity on the equation.
%    Furthermore, in this analysis, we find a singularity when the interface is a unit circle, so we discuss the singular phenomena when the interface is near a unit circle and use simulation to confirm our results.  
\end{abstract}

% \tableofcontents
\section{Introduction}

\subsection{Motivation and Model Formulation}\label{motivation}
Fluid structure interaction problems in which thin elastic structures interact with the surrounding fluid find many applications throughout the natural sciences
and engineering \cite{FML1997, KS2005, KS2006, OL2020, SSOG1986, VGZB2009}.
One of the simplest of such problems is the 2D Peskin problem, in which a 1D closed elastic structure is immersed in a 2D Stokes fluid. There have been extensive 
computational studies of this and related problems \cite{B2015, LL2001,P1997,P2002,SL2010}. 
More recently, the 2D Peskin problem has been studied analytically in \cite{CS2021,GMS2020,MRS2019,R2018,T2021}. 
In an important variant of this problem, the elastic structure is assumed to be inextensible, 
motivated in particular by the properties of lipid bilayer membranes.
This and related problems have been studied computationally by many authors as models for red blood cells and artificial membrane vesicles
\cite{OL2020,QGY2021,VRBP2009,VGZB2009, NG2005, S2004}.
A distinguishing feature of such inextensible interface problems is that the unknown tension $\sigma$ must be found as part 
of the problem. The tension $\sigma$ plays a role analogous to the pressure in incompressible flow problems.
In this paper, we consider the static problem of determining the tension $\sigma$ of a 1D inextensible interface immersed in a 2D Stokes fluid given 
a prescribed interfacial force density $\bm{F}$. 
Before we state our problem, let us first consider the following dynamic problem. 
Let $\Gamma_t$ denote a sufficiently smooth simple curve that depends on time $t$ which partitions $\mbr^2$ into the interior region $\Omega_1$ and its complement $\Omega_2=\mathbb{R}^2\backslash \lbrace\Omega_1\cup \Gamma_t \rbrace$.
The velocity field $\bm{u}$ and $p$ satisfy the Stokes equations in $\mathbb{R}^2\backslash\Gamma_t$
\begin{align}
\label{et:stokes}-\Delta \bm{u} + \nabla p &= 0 \quad \text{in } \mbr^2 \setminus \Gamma_t\\
\label{et:div_free}\nabla \cdot \bm{u} &= 0\quad \text{in } \mbr^2 \setminus \Gamma_t
\end{align}
We have assumed that the viscosity of the interior and exterior fluids are the same and normalized to $1$.
We let $\Gamma_t$ be inextensible. Parametrize $\Gamma_t$ as $\bm{X}(s,t)$ where $s$ is both an arclength and Lagrangian parametrization of the curve.
For definiteness, we assume that the parametrization is in the counter-clockwise direction along the curve $\Gamma_t$.
Since $s$ is the arclength parameter, we have
\begin{equation}
\label{et:inextX}
\abs{\bm{\tau}}=1, \; \bm{\tau}=\p_s\bm{X},
\end{equation}
where $\p_s$ is the partial derivative with respect to $s$ and $\bm{\tau}$ is the unit tangent vector on $\Gamma_t$.
We assume, without loss of generality, that the length of the string is $2\pi$ so that $s\in \mathbb{S}^1=\mathbb{R}/(2\pi\mathbb{Z})$. 
We impose the following interface conditions on $\Gamma_t$ to the Stokes equations \eqref{et:stokes} and \eqref{et:div_free}
\begin{align}
\label{et:velocity_cts}\jump{\bm{u}} &= 0 \quad \text{on } \Gamma_t\\
\label{et:stress_jump}\jump{\paren{\nabla\bm{u} + (\nabla\bm{u})^T - p\mb{I}}\bm{n}} &= \mc{F}(\bm{X}) + \p_{s}(\sigma\bm{\tau}), \quad \text{on } \Gamma_t,
\end{align}
where $\mathbb{I}$ is the $2\times 2$ identity matrix and $\bm{n}$ is the unit normal vector on $\Gamma_t$ pointing outward (from $\Omega_1$ to $\Omega_2$)
\begin{equation*}
\bm{n}=\p_s\bm{X}^\perp=\mc{R}_{\pi/2}\bm{\tau}=\mc{R}_{\pi/2}\p_s\bm{X}, \quad \mc{R}_{\pi/2}=\begin{pmatrix} 0 & 1 \\ -1 & 0 \end{pmatrix} .
\end{equation*}
In the above, $\jump{\cdot}$ denotes the jump in the enclosed value across $\Gamma_t$
\begin{align*}
\jump{f} := f|_{\Omega_1} - f|_{\Omega_2}.
\end{align*}
Thus, equation \eqref{et:velocity_cts} enforces continuity of the velocity field and \eqref{et:stress_jump} specifies the jump in stress across the interface $\Gamma_t$.
Note that the interfacial force given in the right hand side of \eqref{et:stress_jump} consists of two terms.
The first term $\mc{F}$ is a mechanical force determined by the configuration of $\bm{X}$. A typical choice is to let the string generate a bending force
\begin{equation}\label{et:F}
\mc{F}=-\p_s^4 \bm{X}.
\end{equation}
The second term in the right hand side of \eqref{et:stress_jump} is a tension force that ensures where the tension $\sigma(s,t)$ is to be determined as part of the problem to enforce the inextensiblility constraint. The string position moves with the local fluid velocity
\begin{equation}
\label{et:Xt}
\p_t \bm{X}(s,t)=\bm{u}(\bm{X}(s,t),t),
\end{equation}
where $\p_t$ is the partial derivative with respect to $t$, so the inextensiblility constraint can be written as
\begin{equation}\label{et:inextmod}
\pd{t}\abs{\pd{s}\bm{X}}^2=2\pd{s}\bm{X}\cdot \pd{t}\pd{s}\bm{X}=2\bm{\tau}\cdot \pd{s}\bm{u}=0.
\end{equation}
The above condition is equivalent to \eqref{et:inextX} assuming that the initial parametrization is with respect to arclength.
To specify the problem completely, we finally impose the condition that
$\bm{u} \to 0$ and that $p$ be bounded as $|\bm{x}|\to \infty$.

The above dynamic problem has been considered, from modeling and computational points of view, 
by different authors primarily as a 2D mechanical model for red blood cells in flow \cite{CL2014,HKL2014,P_2005,SSOG1986}.
We also point out that the problem of finding the steady states of the above dynamic problem, taking $\mc{F}$ as in \eqref{et:F}, 
reduces to the problem of finding the minimizers of the Willmore energy under a perimeter and interior area constraint. This constrained minimization problem and its 3D counterpart have been studied by many authors \cite{choksi2013global,seifert1997configurations,VRBP2009}.

In this paper, we consider the following static problem of determining the tension $\sigma(s)$ given a force density $\bm{F}(s)$ defined on the interface (Figure \ref{fig:TDP}).
This may be considered as a subproblem of the above dynamic problem.
Let $\Gamma$ be a fixed simple curve, parameterized by arclength as $\bm{X}(s), \; s\in \mathbb{S}^1=\mathbb{R}/2\pi\mathbb{Z}$
as above. The Stokes equations are satisfied in $\mathbb{R}^2\backslash \Gamma$ as in \eqref{et:stokes}-\eqref{et:div_free}
\begin{align}
\label{e:stokes}-\Delta \bm{u} + \nabla p &= 0 \quad \text{in } \mbr^2 \setminus \Gamma\\
\label{e:div_free}\nabla \cdot \bm{u} &= 0\quad \text{in } \mbr^2 \setminus \Gamma
\end{align}
Given an interfacial force density $\bm{F}(s)$, we impose the following interfacial conditions as in \eqref{et:velocity_cts}-\eqref{et:stress_jump}
\begin{align}
\label{e:velocity_cts}\jump{\bm{u}} &= 0 \quad \text{on } \Gamma\\
\label{e:stress_jump}\jump{\paren{\nabla\bm{u} + (\nabla\bm{u})^T - p\mb{I}}\bm{n}} &=\bm{F} + \p_{s}(\sigma\bm{\tau}), \quad \text{on } \Gamma,
\end{align}
We then have the inextensibility condition as in \eqref{et:inextmod}, which allows for the determination of $\sigma$
\begin{align}
\label{e:inextensible}\p_{s}\paren{\bm{u}(\bm{X}(s))}\cdot \bm{\tau} &= 0 \quad \text{on } \Gamma.
\end{align}
%Equation \eqref{e:inextensible} demands that the interface $\Gamma$ remain inextensible. 
%Equations \eqref{e:stokes}-\eqref{e:inextensible} comprise the system that we study in this paper.
%
%The above static problem should be seen as a subproblem of the following dynamic inextensible string problem, in which the interface, now denoted $\Gamma_t$ 
%to emphasize its dependence on time $t$, moves
%with the the local fluid velocity. The interfacial curve $\Gamma_t$ is parametrized as $\bm{X}(s,t)$. The Stokes equations \eqref{e:stokes} and \eqref{e:div_free}
%remain the same as in the dynamic problem except that the interior and exterior fluid regions are now time dependent.
%The 
%
%
%That is to say, if $\Gamma$ is a free interface with $\pd{t}\bm{X}(s,t)=\bm{u}(\bm{X}(s,t),t)$, then 
%\begin{align*}
%    \pd{t}\abs{\pd{s}\bm{X}}^2=2\pd{s}\bm{X}\cdot \pd{t}\pd{s}\bm{X}=2\bm{\tau}\cdot \pd{s}\bm{u}=0.
%\end{align*} 
We again impose the condition that $\bm{u} \to 0$ and that $p$ be bounded as $|\bm{x}|\to \infty$. 
In order for $\bm{u}\to 0$ as $\abs{\bm{x}}\to \infty$, we must impose the condition
\begin{equation*}
\int_{\Gamma}\bm{F}ds = 0.
\end{equation*}
Our problem is thus to solve for the unknown tension $\sigma$, together with $\bm{u}$ and $p$, given a force density $\bm{F}$ that satisfies
the mean zero condition given above.  
\begin{figure}[htbp]
    \centering
    \includegraphics[width=1\textwidth]{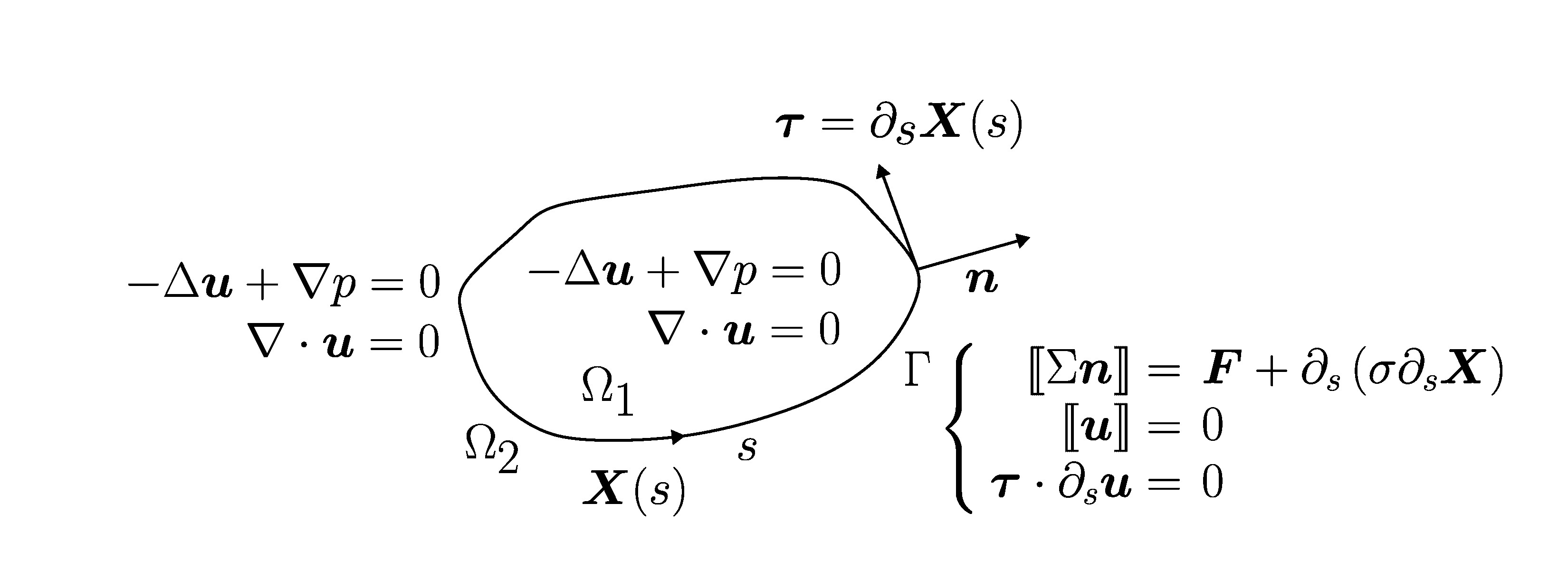}
    \caption{Schematic diagram for the tension determination problem.}
    \label{fig:TDP}
\end{figure}

In addition to its intrinsic interest, an understanding of the above static problem should pave the way toward an analysis of the dynamic problem.
Furthermore, an analysis of the above problem should give insight into the numerical algorithms for this problem. Indeed, 
all numerical algorithms to date for dynamic inextensible interface problems solves for $\sigma$ at each time step \cite{OL2020,VGZB2009,LSX2019}. 

The paper that is directly relevant to our paper is \cite{LSX2019}, where the authors provide an analysis of the above tension determination problem motivated 
by the need to develop numerical algorithms for the Navier-Stokes version of the dynamic problem.
There, the authors consider the problem in which a term $\alpha \bm{u}, \alpha>0$ is added to \eqref{e:stokes}, and $\Omega$ is bounded domain.
The authors define a notion of weak solution by formulating the problem as a saddle point problem, and prove an inf-sup inequality to establish existence and uniqueness
in an $L^2$ based Sobolev space. We shall comment on the relationship between this and our results where appropriate.

\subsection{Well-posedness}

Let $C^k(A),\; k=0,1,2, \cdots$ be the space of functions with continuous $k$-derivatives on the set 
$A$, where $A=\mbs, \mathbb{R}^2,$ or $\mathbb{R}^2\backslash \Gamma$.
We shall mostly work with $C^k(\mbs)$.
Define the norms on $C^k\paren{\mbs}$ as
\begin{align*}
    \chnorm{f}{k}=\sum_{i=0}^k \snorm{f}_{C^k},\quad \snorm{f}_{C^k}=\sup_{s\in \mbs}\abs{\pdd{s}{k}f}
\end{align*}
Next, a function $f$ in $C^0\paren{\mbs}$ is in the H\"older space $C^{0,\gamma}\paren{\mbs}$, $0<\gamma<1$ if $f$ satisfies
\begin{align*}
    \sup_{s, s'\in \mbs}\frac{\abs{f(s) - f(s')}}{\abs{s - s'}^\gamma}<\infty.
\end{align*}
For the definition of the norm of $C^{0,\gamma}\paren{\mbs}$, we may restrict the range of $s$ and $s'$.
For example, set the range as $\abs{s-s'}<1$.
Then, define the norm as 
\begin{align*}
    \hnorm{f}{0}{\gamma}:=  \chnorm{f}{0}+\hsnorm{f}{\gamma},\quad \hsnorm{f}{\gamma}=\sup_{\abs{s-s'}<1}\frac{\abs{f(s) - f(s')}}{\abs{s - s'}^\gamma}
\end{align*}
since
\begin{align*}
    \sup_{s, s'\in \mbs}\frac{\abs{f(s) - f(s')}}{\abs{s - s'}^\gamma}\leq 2\chnorm{f}{0}+\hsnorm{f}{\gamma}.
\end{align*}
Next, we define the H\"older space $C^{k,\gamma}\paren{\mbs}$. The function
$f$ is in $C^{k,\gamma}\paren{\mbs}$ if $f\in C^{k}\paren{\mbs}$ and $\pdd{s}{k}f$ is in $C^{0,\gamma}\paren{\mbs}$, where the the norm is defined as
\begin{align*}
    \hnorm{f}{k}{\gamma}:=  \chnorm{f}{k}+\hsnorm{\pdd{s}{k}f}{\gamma}.
\end{align*}
We will frequently write $f = f(s)$ and $f' = f(s')$, and use the notation
\begin{align*}
\Delta f := f(s) - f(s'), \quad 
\diff_h f = f(s + h) - f(s).
\end{align*}
To estimate expressions which feature denominators of the form $\abs{\Delta\bm{X}}$, we will need the following quantity
\begin{align*}
\starnorm{\bm{X}} := \inf_{s\neq s'}\abs{\frac{\bm{X}(s) - \bm{X}(s')}{s - s'}}.
\end{align*}
This condition allows us to estimate $\Delta \bm{X}$ from below
\begin{equation*}
\abs{\Delta \bm{X}}\geq \starnorm{\bm{X}}\abs{s-s'}.
\end{equation*}
It is easily seen that $\bm{X}(s)$ is a simple curve if and only if $\starnorm{\bm{X}}>0$. Indeed, 
$\starnorm{\bm{X}}=0$ if and only if $\abs{\pd{s}\bm{X}}=0$ at some $s$ or if there are points $s\neq s'$ such that $\bm{X}(s)\neq \bm{X}(s')$.
Given our arclength parametrization, $\abs{\p_s\bm{X}}=1\neq 0$; thus, $\starnorm{\bm{X}}>0$ is equivalent to the condition that $\bm{X}$ has no self intersections.
We shall also make use of the Lebesgue spaces $L^p(\mbs), 1\leq p\leq \infty$.

Before we state our result, we give a precise definition of what we mean by a solution to the tension determination problem.
Let the stress tensor be
\begin{equation}\label{sigF}
\Sigma(\bm{x})=\nabla \bm{u}(\bm{x})+(\nabla\bm{u}(\bm{x}))^{\rm T}-p(\bm{x})\mathbb{I},
\end{equation}
and define limits
\begin{equation}\label{Siglim}
\bm{F}_{\Omega_1}(s)=\lim_{t\to 0+} \Sigma(\bm{X}(s)-t\bm{n}(s))\bm{n}(s), \; \bm{F}_{\Omega_2}(s)=\lim_{t\to 0+} \Sigma(\bm{X}(s)+t\bm{n}(s))\bm{n}(s),
\end{equation}
where $\bm{n}(s)=\bm{X}(s)^\perp$ is the outward normal on $\Gamma$.
\begin{definition}[Solution of Tension Determination Problem]\label{STDP}
Assume $\bm{F}\in C^0(\mbs)$, $\bm{X}\in C^2(\mbs)$ and $\starnorm{\bm{X}}>0$. Let $\bm{u}, p, \sigma$ belong to the following function spaces
\begin{equation}\label{upfnspace}
\bm{u}\in C^2(\mathbb{R}^2\backslash \Gamma)\cap C^0(\mathbb{R}^2),\quad p\in C^1(\mathbb{R}^2\backslash\Gamma)\cap L^1_{\rm loc}(\mathbb{R}^2), \quad \sigma\in C^1(\mbs),
\end{equation}
where $L^1_{\rm loc}(\mathbb{R}^2)$ denotes the space of locally integrable functions in $\mathbb{R}^2$.
Suppose $\bm{u}$ and $p$ satisfy the following conditions in the far field
\begin{equation}\label{updecay}
\lim_{R\to \infty}\sup_{\abs{\bm{x}}=R}\abs{\bm{u}(\bm{x})}=0, \quad \lim_{R\to \infty}\sup_{\abs{\bm{x}}\geq R} \abs{p(\bm{x})}<\infty.
\end{equation}
We say that $\bm{u}, p, \sigma$ are a solution to the tension determination problem if the following conditions hold. 
\begin{enumerate}
\item $\bm{u}$ and $p$ satisfy the Stokes equations \eqref{e:stokes} and \eqref{e:div_free} in $\mathbb{R}^2\backslash \Gamma$.
\item $\bm{u}, p, \sigma $satisfy the condition \eqref{e:stress_jump} in the following sense. The limits in \eqref{sigF} exist, this convergence is uniform, 
and $\bm{F}_{\Omega_1}-\bm{F}_{\Omega_2}=\bm{F}+\p_s(\sigma \bm{\tau})$.\label{item:stressjump}
\item The inextensibility condition \eqref{e:inextensible} is satisfied in the following weak sense. For any $w\in C^1(\mbs)$, we have
\begin{equation*}
\int_\mbs \bm{u}(\bm{X}(s))\cdot\p_s (w\bm{\tau})ds=0.
\end{equation*}
\end{enumerate}
\end{definition}
The above represents the weakest possible condition on $\bm{F}, \bm{X}$ and $\sigma$ if we are to make pointwise sense of the interface condition \eqref{e:stress_jump}.
It turns out that, when $\bm{X}$ is merely $C^2(\mbs)$, the inextensibility condition \eqref{e:inextensible} cannot be satisfied pointwise. 
We hence impose this condition in a weak sense.
We now state our main well-posedness result. In the statement below, we let $\chi_{\Omega_1}(\bm{x})$ be the indicator function for $\Omega_1$
\begin{equation}\label{indOmega1}
\chi_{\Omega_1}(\bm{x})=\begin{cases} 1 &\text{ if } \bm{x}\in \Omega_1,\\ 0 &\text{ otherwise}.\end{cases}
\end{equation}
\begin{theorem}\label{t: well-posed_noncircle}
Suppose $\bm{X}\in C^{2}(\mbs)$ with $\starnorm{\bm{X}} > 0$, and
\begin{equation*}
\bm{F}\cdot \p_s\bm{X}\in C^{0,\gamma}(\mbs),\; \gamma\in(0,1),\quad \bm{F}\cdot \p_s\bm{X}^\perp\in C^{0}(\mbs), \quad \int_{\mbs} \bm{F} ds=0.
\end{equation*}
Then there exists a solution $\bm{u},p,\sigma$ to the tension determination problem in the sense of Definition \ref{STDP} with the following properties.
\begin{enumerate}
\item If $\bm{X}$ is not a circle, the general solution can be written as $\bm{u}, \sigma, p+c$ where $c\in \mathbb{R}$ is an arbitrary constant.
\item If $\bm{X}$ is a circle, the general solution can be written as $\bm{u}, \sigma+c_1, p+c_1 \chi_{\Omega_1}+c_2$ where $\chi_{\Omega_1}$ is as in \eqref{indOmega1} and $c_1,c_2\in \mathbb{R}$ are arbitrary constants.
\end{enumerate}
Furthermore, $\sigma\in C^{1,\gamma}(\mbs)$ and 
$\p_s \bm{u}(\bm{X}(s))\in L^p(\mbs), 1<p<\infty$, so that \eqref{e:inextensible} is satisfied for almost every $s\in \mbs$.
If, in addition $
\bm{F}\in C^{0,\gamma}(\mbs)$ and $\bm{X}\in C^{2,\gamma}(\mbs)$, then $\p_s\bm{u}(\bm{X}(s))\in C^\gamma(\mbs)$ and \eqref{e:inextensible} is satisfied pointwise.
\end{theorem}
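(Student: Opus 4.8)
The plan is to reduce the full problem to a scalar boundary integral equation for $\sigma$ alone via the $2$D Stokes single-layer potential, settle uniqueness and the circle/non-circle dichotomy through a dissipation identity, and obtain existence and the H\"older regularity from the explicit near-diagonal structure of the resulting operator. Concretely, I would take $\bm u = \mc{S}[\bm f]$, the single-layer potential with density $\bm f := \bm F + \p_s(\sigma\bm\tau)$, together with its associated pressure $p$, normalized so that the limits \eqref{Siglim} satisfy $\bm F_{\Omega_1} - \bm F_{\Omega_2} = \bm f$. By the classical properties of $\mc{S}$, this $\bm u, p$ solves \eqref{e:stokes}--\eqref{e:div_free} in $\mbr^2\setminus\Gamma$, is continuous across $\Gamma$, realizes \eqref{e:stress_jump} in the sense of Definition \ref{STDP}(2), and — since $\int_\mbs\bm f\,ds = \int_\mbs\bm F\,ds = 0$, the tension term integrating to zero on a closed curve — satisfies \eqref{updecay}. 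The one remaining requirement, Definition \ref{STDP}(3), becomes after integration by parts the condition that $e_{\tau\tau}(\bm u) := \bm\tau^{\rm T}(\nabla\bm u)\bm\tau = \p_s(\bm u(\bm X(s)))\cdot\bm\tau$ — a tangential derivative of the continuous trace, hence well defined on $\Gamma$ — vanish weakly; i.e.\ $\mc{A}\sigma = g$, where $\mc{A}\sigma := e_{\tau\tau}(\mc{S}[\p_s(\sigma\bm\tau)])|_\Gamma$ and $g := -e_{\tau\tau}(\mc{S}[\bm F])|_\Gamma$. I would also record the dissipation identity, obtained by testing the Stokes equations against $\bm u$ over $\Omega_1$ and $\Omega_2$, integrating by parts, and using the decay: $\int_{\mbr^2\setminus\Gamma}2\abs{e(\bm u)}^2\,d\bm x = \int_\mbs\jump{\Sigma\bm n}\cdot\bm u\,ds$.

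Uniqueness and the dichotomy then follow. If two solutions of the tension determination problem share the same $\bm F$, their velocity difference $\bm v := \bm u^{(1)} - \bm u^{(2)}$ solves homogeneous Stokes with $\jump{\Sigma_{\bm v}\bm n} = \p_s(\varphi\bm\tau)$, $\varphi := \sigma_1 - \sigma_2$, and the dissipation identity together with Definition \ref{STDP}(3) (applied to each solution with $w = \varphi$) gives $\int 2\abs{e(\bm v)}^2 = 0$. Hence $\bm v$ is a rigid motion in each of $\Omega_1,\Omega_2$; the decay forces $\bm v\equiv 0$ in $\Omega_2$, continuity gives $\bm v = 0$ on $\Gamma$, and a rigid motion vanishing on $\p\Omega_1$ vanishes, so $\bm v\equiv 0$ in $\mbr^2$. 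Then $\jump{\Sigma_{\bm v}\bm n}$ is a constant multiple of $\bm n$, and since $\p_s(\varphi\bm\tau) = \varphi'\bm\tau + \varphi\kappa\bm n$ (with $\p_s\bm\tau = \kappa\bm n$) this forces $\varphi$ constant and $\varphi\kappa$ constant. If $\Gamma$ is not a circle, $\kappa$ is non-constant, so $\varphi\equiv 0$; if $\Gamma$ is a circle, any constant $\varphi = c_1$ is admissible, and conversely the uniform normal force $c_1\kappa\bm n$ produced by a constant tension on the circle gives $\bm u\equiv 0$ with a constant pressure jump (the $2$D Young--Laplace balance) — exactly the ambiguity $p\mapsto p + c_1\chi_{\Omega_1}$; the further $+c_2$, and in the non-circle case the lone $+c$, are the overall pressure constant (from $\nabla p\equiv 0$ off $\Gamma$ and the fixed stress jump). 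The same computation shows that for the circle the compatibility condition $\int_\mbs g\,ds = 0$ needed for solvability of $\mc{A}\sigma = g$ holds automatically: $\int_\mbs g\,ds = \langle\mc{S}[\bm F],\kappa\bm n\rangle = \langle\bm F,\mc{S}[\kappa\bm n]\rangle = 0$, using the symmetry of $\mc{S}$ and that $\kappa\bm n$ is a multiple of the uniform normal, with $\mc{S}[\bm n]|_\Gamma\equiv 0$.

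For existence and regularity I would analyze $\mc{A}$ directly. Using $\starnorm{\bm X} > 0$, $\bm X\in C^2$, the $2$D Stokeslet $G_{ij}(\bm r) = \tfrac1{4\pi}(-\delta_{ij}\log\abs{\bm r} + r_ir_j\abs{\bm r}^{-2})$ and $\Delta\bm X = (s-s')\bm\tau(s) + O(\abs{s-s'}^2)$, a short computation gives $\p_k\p_l G_{ij}(\Delta\bm X)\,\tau_i(s)\tau_l(s)\tau_j(s')\tau_k(s') = \tfrac{1}{4\pi}(s-s')^{-2} + O(\abs{s-s'}^{-1})$, so that $\mc{A} = -\tfrac14\Lambda + \mc{K}$, where $\Lambda := (-\p_s^2)^{1/2}$ is the order-$1$ periodic operator and $\mc{K}$ is a remainder of order $\le 0$ whose $C^{0,\gamma}(\mbs)$ and $L^p(\mbs)$ mapping bounds depend only on $\norm{\bm X}_{C^2}$. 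The analogous expansion for $g = -e_{\tau\tau}(\mc{S}[\bm F])|_\Gamma$ exhibits its leading kernel as a nonzero constant times $(s-s')^{-1}$ acting on $\bm F\cdot\bm\tau$ only: the coefficient against $\bm F\cdot\bm n$ vanishes because $\p_kG_{ij}(\bm\tau)\tau_i\tau_k$ is parallel to $\tau_j$, and in fact (using $\bm\tau(s)\cdot\bm n(s) = 0$) the whole kernel acting on $\bm F\cdot\bm n$ is bounded; this is precisely why $\bm F\cdot\bm\tau\in C^{0,\gamma}(\mbs)$ and $\bm F\cdot\bm n\in C^0(\mbs)$ already force $g\in C^{0,\gamma}(\mbs)$. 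Since $\Lambda$ is an isomorphism $C^{1,\gamma}(\mbs)/\mbr\to\{h\in C^{0,\gamma}(\mbs):\int_\mbs h = 0\}$ and $\mc{K}: C^{1,\gamma}(\mbs)\to C^{0,\gamma}(\mbs)$ is compact, $\mc{A}$ is Fredholm of index zero from $C^{1,\gamma}(\mbs)$ to $C^{0,\gamma}(\mbs)$; it is moreover formally self-adjoint, so with the injectivity-modulo-its-kernel established above (the kernel being $\{0\}$, or $\mbr$ in the circle case, by the dissipation identity applied to $\bm v = \mc{S}[\p_s(\sigma\bm\tau)]$ when $\mc{A}\sigma = 0$) and the compatibility of $g$, we conclude that $\mc{A}\sigma = g$ has a solution $\sigma\in C^{1,\gamma}(\mbs)$, unique modulo that kernel — the inverse's bound degenerating as $\Gamma\to$ circle, which is the near-singularity studied later. (Alternatively, Lax--Milgram applied to the nonnegative form $B(\sigma,w):=\langle\mc{S}[\p_s(\sigma\bm\tau)],\p_s(w\bm\tau)\rangle$, which satisfies $B(\sigma,\sigma) = \int_{\mbr^2\setminus\Gamma}2\abs{e(\mc{S}[\p_s(\sigma\bm\tau)])}^2\ge 0$ and a G\aa{}rding inequality, gives a first, rougher solution $\sigma\in H^{1/2}(\mbs)$ to bootstrap from.) Assembling, $\bm u = \mc{S}[\bm f]$ with $\bm f = \bm F + \p_s(\sigma\bm\tau)\in C^0(\mbs)$, so $\mc{S}[\bm f]$ has a Zygmund-class trace with $\p_s\bm u(\bm X(s))\in L^p(\mbs)$ for every $1<p<\infty$, whence \eqref{e:inextensible} holds a.e.; and if in addition $\bm F\in C^{0,\gamma}(\mbs)$ and $\bm X\in C^{2,\gamma}(\mbs)$, then $\kappa\in C^{0,\gamma}$, $\bm f\in C^{0,\gamma}(\mbs)$, $\mc{S}[\bm f]$ has a $C^{1,\gamma}$ trace, $\p_s\bm u(\bm X(s))\in C^\gamma(\mbs)$, and \eqref{e:inextensible} holds pointwise.

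I expect the principal difficulty to lie in the regularity step under the minimal assumption $\bm X\in C^2$: showing that the order-$\le 0$ remainder $\mc{K}$ — a variable-coefficient singular integral whose leading coefficient is only continuous, so that symbolic calculus is unavailable and one must argue by localizing to small scales and freezing coefficients — is bounded on $C^{0,\gamma}(\mbs)$ and on $L^p(\mbs)$, together with the companion $L^p$ and Zygmund mapping properties of $\mc{S}$ at this low regularity; and, the genuinely new structural point, isolating the vanishing of the $(s-s')^{-1}$ coefficient against the normal component of $\bm F$ that accounts for the asymmetric hypotheses on $\bm F\cdot\bm\tau$ and $\bm F\cdot\bm n$. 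By comparison, the single-layer reduction, the dissipation identity, the circle/non-circle dichotomy, and the Fredholm bookkeeping should be comparatively routine.
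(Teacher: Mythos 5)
Your proposal is correct and follows essentially the same route as the paper: reduction via the single-layer potential to an integral equation for $\sigma$ whose principal part is $-\tfrac14\mc{H}\p_s$ (your $-\tfrac14\Lambda$), kernel/commutator-type estimates under $\bm{X}\in C^2$ giving the smoothing remainder and the tangential/normal asymmetry in the hypotheses on $\bm{F}$, the Fredholm alternative, and the dissipation identity plus the pressure-jump/curvature argument ($\p_s(\sigma\bm{\tau})=\diff p\,\bm{n}$ forces $\sigma$ constant and $\sigma\kappa$ constant) for the circle dichotomy. The only cosmetic difference is the circle case, where you argue via index-zero formally self-adjoint Fredholm theory plus the compatibility condition $\int g\,ds=0$ (using $\mc{S}[\bm{n}]|_\Gamma=0$), whereas the paper restricts $\mc{L}$ to the zero-mean H\"older spaces $\bar{C}^{k,\gamma}$ — the same ingredients in slightly different bookkeeping.
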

We thus see that the tension determination problem has a unique solution $\sigma$ if and only if $\bm{X}$ is not a circle.
This suggests that, as $\Gamma$ approaches a circle, the problem of uniquely determining the tension $\sigma$ becomes increasingly singular.
We shall further investigate this solution behavior in Section \ref{property_L}.

\begin{remark}
The results of \cite{LSX2019} imply that, if $\bm{F}\in H^{-1/2}(\Gamma)$, then there is a suitable weak solution $\sigma\in L^2(\Gamma)$. 
Theorem \ref{t: well-posed_noncircle} shows that $\sigma$ is one derivative smoother than $\bm{F}$ in the H\"older scale. 
It is thus likely that even in the $L^2$ Sobolev scale, $\sigma$ gains one more derivative compared to $\bm{F}$. 

We also note that \cite{LSX2019} claims that the tension determination problem always has a unique (weak) solution, regardless of whether 
$\Gamma$ is a circle. This seems to be due to the fact that \cite{LSX2019} considers the weak solution corresponding to the following problem in which 
\eqref{e:stress_jump} is replaced by the following condition
\begin{align*}
    \jump{\paren{\nabla\bm{u} + (\nabla\bm{u})^T - p \mb{I}}\bm{n}}= \bm{F} + \p_{s}(\sigma\bm{\tau})+c_*\bm{n}
\end{align*}
for some constant $c_*$, and the following mean-zero constraint is imposed on $\sigma$
\begin{equation*}
\int_\Gamma \sigma ds=0.
\end{equation*}
It is straightforward to see from our results that $\sigma$, in the sense above of \cite{LSX2019}, is always uniquely determined regardless of whether $\Gamma$
is a circle. 
%It is however, not clear if the results of \cite{LSX2019} easily imply the dichotomy we have uncovered in Theorem \ref{t: well-posed_noncircle}.
%Indeed, \cite{LSX2019} does not address the question of the determination of $c_*$ in the above. 
\end{remark}

To prove the above well-posedness result, we first rewrite the problem in terms of a boundary integral equation for the unknown tension $\sigma$.
Let $\wh{\bm{F}}=\bm{F}+\p_s(\sigma\bm{\tau})$ denote the right hand side of \eqref{e:stress_jump}. It is well-known that 
the velocity field $\bm{u}$ and pressure $p$ can be expressed as 
\begin{align}
\bm{u}(\bm{x})  &= \wh{\mc{S}}[\wh{\bm{F}}](\bm{x}):= \int_{\mbs} G(\bm{x} - \bm{X}(s')) \wh{\bm{F}}(s')ds',\label{e:u_soln}\\
p(\bm{x})       &= \mc{P}[\wh{\bm{F}}](\bm{x}):=\int_{\mbs} \Pi(\bm{x} - \bm{X}(s')) \wh{\bm{F}}(s')ds',\label{e:p_soln}
\end{align}
where
\begin{equation*}
\begin{split}
G(\bm{r}) &= \frac{1}{4\pi}\paren{G_L(\bm{r})\mb{I} + G_T (\bm{r})}, \; G_L(\bm{r})=-\log \abs{\bm{r}}, \; G_T(\bm{r})=\frac{\bm{r}\otimes \bm{r}}{\abs{\bm{r}}^2},\\
\Pi(\bm{r}) &= \frac{1}{2\pi}\frac{\bm{r}^{T}}{\abs{\bm{r}}^2}.
\end{split}
\end{equation*}
Moreover, the stress tensor $\Sigma$ will be
\begin{align}\label{e:tensor_soln}
    \Sigma_{ij} \paren{\bm{x}}=\mc{T}[\wh{\bm{F}}](\bm{x}):=\int_{\mbs} \Theta_{ijk}(\bm{x} - \bm{X}(s')) \wh{F}_k(s')ds',
\end{align}
where
\begin{align*}
    \Theta_{ijk}(\bm{r})=-\frac{1}{\pi}\frac{r_i r_j r_k}{\abs{\bm{r}}^4}
\end{align*}
The relevant properties of the above potentials will be discussed in Section \ref{sect:classical}. 
Take the limit as $\bm{x}\to \bm{X}(s)$ in \eqref{e:u_soln} to obtain
\begin{align}\label{uXs}
\bm{u}(\bm{X}(s)) &= \wh{S}[\wh{\bm{F}}](\bm{X}(s))=\mc{S}[\wh{\bm{F}}](s) = \int_{\mbs} G(\bm{X}(s)-\bm{X}(s')) \wh{\bm{F}}(s')ds'.
\end{align}
Now, we can rewrite equation \eqref{e:inextensible} as
\begin{align*}
\p_{s}\bm{X}\cdot \p_{s}\mc{S}[\p_{s}(\sigma \p_{s}\bm{X})] = -\p_{s}\bm{X}\cdot \p_{s}\mc{S}[\bm{F}].
\end{align*}
Let us define operator $\mc{L}$ and $\mc{Q}$ as follows:
\begin{align}\label{e:L_defn}
\mc{L}\sigma=\mc{Q}[\p_s(\sigma\p_s\bm{X})], \; \mc{Q}[\bm{F}]=\p_{s}\bm{X}\cdot \p_{s} \mc{S}[\bm{F}].
\end{align}
Note that $\mc{L}$ depends on $\bm{X}$.
Thus, equation \eqref{e:inextensible} becomes
\begin{align}\label{e:rewrite_inextensible}
\mc{L}\sigma =-\mc{Q}[\bm{F}].
\end{align} 
To study the solvability of equation \eqref{e:rewrite_inextensible}, we need to understand the mapping properties 
of $\mc{L}$ and $\mc{Q}$ under certain assumptions on the regularity of $\Gamma$.
Let us write the operator $\mc{Q}$ as follows
\begin{equation}\label{QFT}
\begin{split}
\mc{Q}[\bm{g}]&= \p_{s}\bm{X}\cdot \p_{s}\mc{S}[\bm{g}] = \p_{s}\bm{X} \cdot \p_{s} \int_{\mbs} G(\bm{X}-\bm{X}')\bm{g}'ds'\\
&= \frac{1}{4\pi}\paren{\p_{s}\bm{X} \cdot \wt{F}_C[\bm{g}]+\p_s\bm{X}\cdot F_T[\bm{g}]}, \\
\wt{F}_C[\bm{g}]&=\p_s \int_{\mbs} G_L(\bm{X}-\bm{X}')\bm{g}'ds',\quad F_T[\bm{g}]=\p_s \int_{\mbs} G_T(\bm{X}-\bm{X}')\bm{g}'ds'.
\end{split}
\end{equation}
It is useful to further decompose the operator $\wt{F}_C$.
When $\abs{s-s'}\ll 1$, we have
\begin{align*}
 -\p_{s}G_L(\bm{X}-\bm{X}')=\frac{\Delta\bm{X}\cdot \p_{s}\bm{X}}{\abs{\Delta\bm{X}}^2}\approx \frac{1}{s-s'}\approx \frac{1}{2}\cot \left( \frac{s-s'}{2}\right).
\end{align*}
Note that $\frac{1}{2\pi}\cot \left( \frac{s-s'}{2}\right)$ is the kernel of the Hilbert transform $\mathcal{H}$ on $\mbs$, i.e.
\begin{align*}
    (\mc{H}f)(s):=\frac{1}{2\pi}\mbox{p.v.}\int_{\mbs} \cot \left( \frac{s-s'}{2}\right)f\paren{s'}ds'.
\end{align*}
We thus rewrite $\wt{F}_{C}$ as
\begin{equation}\label{FC}
\begin{split}
\frac{1}{4\pi}\wt{F}_C[{\bm{g}}]&=-\frac{1}{4}\mc{H}\bm{g}+\frac{1}{4\pi}F_C[\bm{g}],\\
    F_C[\bm{g}]&=\int_{\mbs}K_C(s,s')\mathbf{g}'ds', \; K_C(s,s')=\frac{1}{2}\cot \left( \frac{s-s'}{2}\right)-\frac{\Delta\bm{X}\cdot \p_{s}\bm{X}}{\abs{\Delta\bm{X}}^2},
    \end{split}
\end{equation}
so we obtain
\begin{align*}
    \mc{Q}[\bm{F}] =-\frac{1}{4}\pd{s}\bm{X}\cdot\mc{H}\bm{F}+\frac{1}{4\pi}\pd{s}\bm{X}\cdot\paren{F_{C}\left[\bm{F}\right]+F_{T}\left[\bm{F}\right]}.
\end{align*}
To state our main result for the operator $\mc{Q}$, we split $\bm{F}$ into tangential and normal components to $\bm{X}$
\begin{align}\label{Ff1f2}
    \bm{F}= f_1\bm{\tau}+f_2\bm{n}=f_1 \pd{s}\bm{X}+f_2 \pd{s}\bm{X}^\perp, %\; \p_s \bm{X}^\perp=\begin{pmatrix} 0 & 1 \\ -1 & 0 \end{pmatrix} \p_s\bm{X}.
\end{align}
\begin{proposition}\label{p:rhs_f}
Let $\gamma\in(0, 1)$ and $\bm{X}\in C^{2}(\mbs)$ with $\starnorm{\bm{X}} > 0$. 
Let $\bm{F}$ be in the form of \eqref{Ff1f2}, and suppose that $f_1\in C^{0,\gamma}(\mbs)$ and $f_2\in C^0(\mbs)$. Then, 
\begin{equation*}\label{e:rhs_splitting}
 \mc{Q}[\bm{F}]= -\frac{1}{4}\mc{H}f_1 + \mc{M}_1(f_1)+\mc{M}_2(f_2),
\end{equation*}
where $\mc{M}_1$ and $\mc{M}_2$ are bounded linear operators from $C^{0}(\mbs)$ to $C^{0, \alpha}(\mbs)$ for any $\alpha\in (0,1)$.
In particular, if $\bm{F}\in C^{0,\gamma}(\mbs)$, then $\mc{Q}(\bm{F})\in C^{0,\gamma}(\mbs)$.
\end{proposition}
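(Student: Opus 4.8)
The plan is to analyze the three pieces of $\mathcal{Q}[\bm{F}]$ in \eqref{QFT} separately, using the decomposition \eqref{FC} together with the splitting $\bm{F} = f_1\p_s\bm{X} + f_2\p_s\bm{X}^\perp$. The main term $-\tfrac14\p_s\bm{X}\cdot \mc{H}\bm{F}$ must be shown to contribute exactly $-\tfrac14\mc{H}f_1$ plus a smoothing remainder. The idea is to commute $\mc{H}$ past the multiplication by $\p_s\bm{X}$ (and $\p_s\bm{X}^\perp$): since $\mc{H}$ is a Calder\'on--Zygmund operator on $\mbs$ and $\p_s\bm{X}\in C^1(\mbs)$, the commutator $[\mc{H}, \p_s X_i]$ maps $C^0(\mbs)$ into $C^{0,\alpha}(\mbs)$ for every $\alpha\in(0,1)$ — this is the standard gain-of-regularity for commutators of singular integrals with $C^1$ (indeed Lipschitz) multipliers. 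Using $\p_s\bm{X}\cdot\p_s\bm{X} = 1$ and $\p_s\bm{X}\cdot\p_s\bm{X}^\perp = 0$ pointwise, we get $\p_s\bm{X}\cdot\mc{H}(f_1\p_s\bm{X} + f_2\p_s\bm{X}^\perp) = \mc{H}(f_1\,|\p_s\bm{X}|^2) + \mc{H}(f_2\,\p_s\bm{X}\cdot\p_s\bm{X}^\perp) + (\text{commutator terms}) = \mc{H}f_1 + (\text{commutator terms})$, and the commutator terms are absorbed into $\mc{M}_1, \mc{M}_2$. Here one must be a little careful that only $f_1 \in C^{0,\gamma}$ is assumed while $f_2\in C^0$ only, but the commutator estimate needs no regularity on the function it is applied to — only on the multiplier — so this is fine.

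Next I would show that $F_C$ and $F_T$ from \eqref{QFT}--\eqref{FC} are smoothing: I claim each maps $C^0(\mbs)$ boundedly into $C^{0,\alpha}(\mbs)$ for any $\alpha\in(0,1)$. For $F_C$ the kernel $K_C(s,s') = \tfrac12\cot\!\big(\tfrac{s-s'}{2}\big) - \tfrac{\Delta\bm{X}\cdot\p_s\bm{X}}{|\Delta\bm{X}|^2}$ is the difference between the Hilbert kernel and $-\p_s G_L(\bm{X}-\bm{X}')$; a second-order Taylor expansion of $\bm{X}$ around $s$ (valid since $\bm{X}\in C^2$) together with the lower bound $|\Delta\bm{X}|\ge \starnorm{\bm{X}}|s-s'|$ shows $K_C$ has only a logarithmic, hence integrable, singularity and, more importantly, that $\p_s K_C$ (in the outer variable) has an $O(|s-s'|^{-1})$-type bound that integrates to give H\"older continuity of $F_C[\bm{g}]$; these are exactly the classical potential-theory estimates referenced as being collected in Section \ref{sect:classical}. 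For $F_T$, after writing $G_T(\bm{r}) = (\bm{r}\otimes\bm{r})/|\bm{r}|^2$ and differentiating in $s$, one sees the resulting kernel is a bounded (the leading $\tfrac{\Delta\bm{X}\otimes\Delta\bm{X}}{|\Delta\bm{X}|^2}$ factor stays bounded) plus weakly singular kernel — again no Calder\'on--Zygmund cancellation is needed because the would-be singular part is smooth away from $s=s'$ and only logarithmically singular — so $F_T$ is likewise smoothing from $C^0$ to $C^{0,\alpha}$. Assembling, set $\mc{M}_1(f_1) = -\tfrac14[\mc{H},\p_s\bm{X}\cdot](f_1\p_s\bm{X})\text{-type terms} + \tfrac{1}{4\pi}\p_s\bm{X}\cdot(F_C + F_T)(f_1\p_s\bm{X})$ and similarly $\mc{M}_2(f_2)$ with $f_2\p_s\bm{X}^\perp$; both are then bounded $C^0(\mbs)\to C^{0,\alpha}(\mbs)$.

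Finally, the last sentence: if $\bm{F}\in C^{0,\gamma}(\mbs)$ then both $f_1 = \bm{F}\cdot\p_s\bm{X}$ and $f_2 = \bm{F}\cdot\p_s\bm{X}^\perp$ lie in $C^{0,\gamma}(\mbs)$ (products of $C^{0,\gamma}$ with $C^1$), $\mc{H}f_1\in C^{0,\gamma}(\mbs)$ since $\mc{H}$ is bounded on $C^{0,\gamma}(\mbs)$ for $0<\gamma<1$, and $\mc{M}_1(f_1) + \mc{M}_2(f_2)\in C^{0,\alpha}$ for all $\alpha$, in particular for $\alpha = \gamma$; hence $\mc{Q}[\bm{F}]\in C^{0,\gamma}(\mbs)$.

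The step I expect to be the main obstacle is the precise kernel analysis of $F_C$ and $F_T$ — in particular, justifying differentiation under the integral sign past a singular (though integrable) kernel and extracting the sharp H\"older modulus of the resulting principal-value integral using only $\bm{X}\in C^2$ (rather than $C^{2,\gamma}$). This is where the $\starnorm{\bm{X}}>0$ hypothesis and careful bookkeeping of the Taylor remainder $\Delta\bm{X} - (s-s')\p_s\bm{X}$ enter, and it is the technical heart of the argument; the commutator estimate for the Hilbert-transform piece, by contrast, is standard and clean.
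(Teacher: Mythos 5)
Your proposal is correct and follows essentially the same route as the paper: the decomposition of $\mc{Q}$ into the Hilbert-transform part plus $F_C$ and $F_T$, the commutator estimate for $\mc{H}$ with $C^1$ multipliers applied with $|\p_s\bm{X}|=1$ and $\p_s\bm{X}\cdot\p_s\bm{X}^\perp=0$ (needing no regularity on $f_2$), and kernel estimates showing $F_C,F_T:C^0(\mbs)\to C^{0,\alpha}(\mbs)$ — these are exactly the paper's Propositions \ref{p:commute} and \ref{p:kernelest01}. One small caveat: your aside that ``no Calder\'on--Zygmund cancellation is needed'' for $F_T$ undersells the point, since term-by-term the differentiated kernel $\p_s\bigl(\Delta X_i\Delta X_j/|\Delta\bm{X}|^2\bigr)$ is only $O(|s-s'|^{-1})$, and its actual boundedness (and likewise the boundedness, not mere log-singularity, of $K_C$) comes from the cancellation of leading Taylor terms isolated in the paper's Lemma \ref{prelim_ests} — which is precisely the bookkeeping of $\Delta\bm{X}-(s-s')\p_s\bm{X}$ that you correctly flag as the technical heart.
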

To establish the above result, we first show in Proposition \ref{p:kernelest01} that $F_C$ and $F_T$ are operators that map functions in $C^0(\mbs)$ to $C^{0,\alpha}(\mbs), \alpha\in (0,1)$. 
This follows from the study of the properties of the associated kernels, and estimates similar to those used in \cite{MRS2019}.

We then turn to the Hilbert transform term. Our main technical result is in the following. Suppose $\bm{v}\in C^1(\mbs)$ and $f\in C^0(\mbs)$. 
In Proposition \ref{p:commute}, we shall establish the following commutator estimate
\begin{align*}
    \mc{H}(f\bm{v})=(\mc{H}f)\bm{v}+\bm{R}(f,\bm{v}),
\end{align*}
where $\bm{R}\in C^{0,\alpha}(\mbs)$. This immediately yields
\begin{equation*}
\p_s\bm{X}\cdot (\mc{H}(f_1 \pd{s}\bm{X}+f_2 \pd{s}\bm{X}^\perp))=\mc{H} f_1+ R
\end{equation*}
where $R\in C^{0,\alpha}(\mbs)$. This, together with the estimates on $F_C$ and $F_T$ discussed above, yields Proposition \ref{p:rhs_f}.

%In section \ref{p:well-posedness} we obtain the well-posedness of the inextensible problem.
The mapping properties of $\mc{L}$ are obtained as a direct consequence of the mapping properties of $\mc{Q}$ established in Proposition \ref{p:rhs_f}.
% First, according Proposition \ref{p:rhs_f}, we obtain the condition of the force $\bm{F}$.
%Then, we split $\mc{L}(\cdot)$ into a principle term $\frac{1}{4}\mc{H}(\p_{s}(\cdot))$ and a remainder term $B(\cdot)$ with lower order.
Indeed, note that
\begin{align*}
    \pd{s}\paren{\sigma\pd{s}\bm{X}}=\pd{s}\sigma\pd{s}\bm{X}+\sigma\pdd{s}{2}\bm{X}=\pd{s}\sigma\pd{s}\bm{X}+\widetilde{\sigma}\pd{s}\bm{X}^\perp, \; 
    \widetilde{\sigma}=\sigma \pdd{s}{2}\bm{X}\cdot\pd{s}\bm{X}^\perp.
\end{align*}
Applying Proposition \ref{p:rhs_f} with $f_1=\p_s\sigma$ and $f_2=\wt{\sigma}$, we obtain the following result.
\begin{proposition}\label{p:L_mapping_prop}
Given $\bm{X}\in C^{2}(\mbs)$ with $\starnorm{\bm{X}} > 0$, then, for any $\gamma \in (0, 1)$, we have $\mc{L}: C^{1, \gamma}(\mbs) \mapsto C^{0, \gamma}(\mbs)$. In particular, 

\begin{align*}
\mc{L}(\cdot) = -\frac{1}{4}\mc{H}(\p_{s}(\cdot)) + \mc{M}(\cdot),
\end{align*}
where $\mc{M}$ is a bounded operator from $C^{1}(\mbs)$ to $C^{0, \alpha}(\mbs)$ for any $\alpha \in (0, 1)$.
\end{proposition}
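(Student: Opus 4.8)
The plan is to obtain Proposition~\ref{p:L_mapping_prop} as a direct corollary of Proposition~\ref{p:rhs_f}, using the product-rule identity recorded just above the statement. Recall that $\mc{L}\sigma = \mc{Q}[\p_s(\sigma\p_s\bm{X})]$ and that
\[
\p_s(\sigma\p_s\bm{X}) = (\p_s\sigma)\,\p_s\bm{X} + \wt\sigma\,\p_s\bm{X}^\perp, \qquad \wt\sigma = \sigma\,(\pdd{s}{2}\bm{X}\cdot\p_s\bm{X}^\perp),
\]
so that $\p_s(\sigma\p_s\bm{X})$ is precisely of the form \eqref{Ff1f2} with tangential coefficient $f_1 = \p_s\sigma$ and normal coefficient $f_2 = \wt\sigma$. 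First I would check that the hypotheses of Proposition~\ref{p:rhs_f} hold for $\sigma\in C^{1,\gamma}(\mbs)$: then $f_1 = \p_s\sigma\in C^{0,\gamma}(\mbs)$, and $f_2 = \wt\sigma\in C^0(\mbs)$ because $\sigma\in C^0(\mbs)$, $\bm{X}\in C^2(\mbs)$ makes $\pdd{s}{2}\bm{X}$ continuous, $\p_s\bm{X}^\perp$ is continuous, and a product of continuous functions on the compact set $\mbs$ is continuous and bounded. This is exactly the regularity level at which Proposition~\ref{p:rhs_f} was stated.

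Next I would apply Proposition~\ref{p:rhs_f} to conclude
\[
\mc{L}\sigma = \mc{Q}[\p_s(\sigma\p_s\bm{X})] = -\tfrac14\mc{H}(\p_s\sigma) + \mc{M}_1(\p_s\sigma) + \mc{M}_2(\wt\sigma),
\]
with $\mc{M}_1,\mc{M}_2\colon C^0(\mbs)\to C^{0,\alpha}(\mbs)$ bounded for every $\alpha\in(0,1)$. Set $\mc{M}(\sigma) := \mc{M}_1(\p_s\sigma) + \mc{M}_2(\wt\sigma)$. To see that $\mc{M}\colon C^1(\mbs)\to C^{0,\alpha}(\mbs)$ is bounded, note that $\sigma\mapsto\p_s\sigma$ is bounded $C^1(\mbs)\to C^0(\mbs)$, and that $\sigma\mapsto\wt\sigma$ is multiplication by the fixed bounded continuous function $\kappa := \pdd{s}{2}\bm{X}\cdot\p_s\bm{X}^\perp$, hence bounded $C^0(\mbs)\to C^0(\mbs)$ with operator norm at most $\chnorm{\kappa}{0}$ and a fortiori bounded $C^1(\mbs)\to C^0(\mbs)$; composing each with the corresponding bounded $\mc{M}_i$ and summing gives the claim. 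This already establishes the displayed formula in the statement.

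Finally, to conclude $\mc{L}\colon C^{1,\gamma}(\mbs)\to C^{0,\gamma}(\mbs)$, I would choose $\alpha=\gamma$ above so that $\mc{M}(\sigma)\in C^{0,\gamma}(\mbs)$, and then handle the leading term $-\tfrac14\mc{H}(\p_s\sigma)$ by invoking boundedness of the Hilbert transform $\mc{H}$ on $C^{0,\gamma}(\mbs)$ — a classical fact about periodic singular integral operators — together with $\p_s\sigma\in C^{0,\gamma}(\mbs)$ whenever $\sigma\in C^{1,\gamma}(\mbs)$; adding the two contributions yields $\mc{L}\sigma\in C^{0,\gamma}(\mbs)$ with the asserted bound. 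There is no serious obstacle here, since the new analytic content — the smoothing operator $\mc{M}$ — is inherited verbatim from $\mc{M}_1,\mc{M}_2$ of Proposition~\ref{p:rhs_f}, and the only non-elementary ingredient is the Hölder mapping property of $\mc{H}$, whose contribution is cleanly isolated in the formula. The single point warranting a line of care is the observation that the curvature-type coefficient $\kappa$ is merely continuous under the standing assumption $\bm{X}\in C^2(\mbs)$, which forces one to use Proposition~\ref{p:rhs_f} only with $f_2\in C^0(\mbs)$ — precisely as it is stated.
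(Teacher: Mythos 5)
Your proposal is correct and is essentially the paper's own argument: decompose $\p_s(\sigma\p_s\bm{X})$ into tangential part $\p_s\sigma$ and normal part $\wt\sigma=\sigma\,\p_s^2\bm{X}\cdot\p_s\bm{X}^\perp$, apply Proposition \ref{p:rhs_f}, and use the H\"older boundedness of $\mc{H}$ for the leading term. No substantive differences from the paper's proof.
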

Finally, to solve \eqref{e:rewrite_inextensible}, we consider the following equation
\begin{equation*}
\paren{I + \frac{1}{4}\mc{H}\p_{s}}^{-1}\mc{L}\sigma=-\paren{I + \frac{1}{4}\mc{H}\p_{s}}^{-1}\mc{Q}[\bm{F}].
\end{equation*}
Thanks to Proposition \ref{p:L_mapping_prop}, the operator acting on $\sigma$ on the right hand side can be written as $I+\mc{K}$,
where $\mc{K}$ is a compact operator from $C^{1, \gamma}(\mbs)$ to itself. We may use Fredholm theory to establish the well-posedness, 
which implies that $\mc{L}$ is invertible if and only if its nullspace is trivial. We demonstrate that $\mc{L}$ has a non-trivial 
nullspace if and only if $\Gamma$ is a circle. If $\Gamma$ is a circle, the nullspace is given by the constant functions, i.e.
\begin{equation}\label{constcircle}
\mc{L}1=0 \text{ if } \Gamma \text{ is a circle}.
\end{equation}

\subsection{Behavior of $\mc{L}$ near the unit circle}
As we saw above, the operator $\mc{L}$ has a non-trivial null-space if and only if $\Gamma$ is a circle.
Thus, as $\Gamma$ approaches a circle, we expect $\mc{L}$ to become increasingly singular.
In Section \ref{property_L}, we study the behavior of $\mc{L}$ when $\Gamma$ is close to a circle.

For purposes of studying $\mc{L}$ near a circle, it is convenient to change the parametrization of $\bm{X}$ from the arclength $s$
to polar coordinate $\theta$. Condition \eqref{e:stress_jump} now becomes
\begin{equation*}
\jump{\paren{\nabla\bm{u} + (\nabla\bm{u})^T - p\mb{I}}\bm{n}}\abs{\pd{\theta}\bm{X}} = \bm{F}(\theta) + \p_{\theta}(\sigma(\theta) \bm{\tau}(\theta)) \quad \text{on } \Gamma.\label{e:jump_stress_theta}
\end{equation*}
where $\bm{F}(\theta)$ is the force density with respect to the $\theta$ variable. 
The tension determination problem in the $\theta$ variable can be reduced to an integral equation in exactly the same way 
as in the case of arclength parametrization. 
In fact, the problem we obtain turns out to be identical to \eqref{e:rewrite_inextensible} except the arclength variable $s$ is replaced by the polar coordinate $\theta$
\begin{equation*}
\begin{split}
\mc{L}_\theta \sigma&=-\mc{Q}_\theta [\bm{F}], \; \mc{L} \sigma=\mc{Q}_\theta[\p_\theta(\sigma\bm{\tau})], \; \mc{Q}_\theta[\bm{F}]=\bm{\tau}\cdot\p_\theta\mc{S}[\bm{F}], \\
\mc{S}_\theta [\bm{g}]&=\int_{\mathbb{S}^1} G(\bm{X}(\theta)-\bm{X}(\theta'))\bm{g}(\theta')d\theta'.
\end{split}
\end{equation*}
In Section \ref{s:Ltheta}, we obtain a precise relationship between $\mc{L}$ (in the arclength variable) and $\mc{L}_\theta$ defined above.
Like $\mc{L}$, it is shown that $\mc{L}_\theta$ maps $C^{1,\gamma}(\mbs)$ to $C^{\gamma}(\mbs)$. 
Furthermore, $\mc{L}_\theta$ is invertible if and only if $\mc{L}$ is invertible.
We may thus study the invertibility of $\mc{L}_\theta$ when $\Gamma$ is near a circle. Define
\begin{equation}\label{Xveps}
    \bm{X}_\veps=\bm{X}_c+\varepsilon\bm{Y}=\paren{1+\veps g}\bm{X}_c, \; \bm{X}_c=\begin{pmatrix} \cos \theta \\ \sin\theta \end{pmatrix}.
\end{equation}
where $g$ is a $C^2$ function. When $\veps=0$, $\bm{X}_\veps=\bm{X}_0$ is the unit circle. 
Let $\mc{L}_\veps$ be the operator $\mc{L}_\theta$ when $\bm{X}=\bm{X}_\veps$, then
\begin{equation*}
\begin{split}
    \mc{L}_\varepsilon\sigma
    &=\bm{\tau}_\varepsilon\cdot \pd{\theta}\mc{S}_\varepsilon\left[\pd{\theta}(\sigma\bm{\tau}_\varepsilon)\right]\\
    &=\bm{\tau}_\veps\cdot \pd{\theta}\int_\mbs G(\Delta \bm{X}_\veps)\pd{\theta'}(\sigma(\theta')\bm{\tau}_\veps(\theta'))d\theta',\;
    \bm{\tau}_\veps=\frac{\pd{\theta}\bm{X}_\varepsilon}{\abs{\pd{\theta}\bm{X}_\varepsilon}}.
    \end{split}
\end{equation*}
Here and henceforth, $\Delta f$ denotes the difference $f(\theta)-f(\theta')$ when applied to a function of $\theta$. When $\veps=0$, the arclength and 
polar coordinates coincide, and thus $\mc{L}_0$ has a nullspace of constant functions (see \eqref{constcircle}). Our goal is to understand the behavior of this null space.
We consider the following eigenvalue problem
\begin{equation*}%\label{eigprob}
\mc{L}_\veps \sigma_\veps=\lambda_\veps \sigma_\veps, \quad \int_{\mathbb{S}^1}\sigma_\veps^2 d\theta=2\pi, \;  \text{ where } \lambda_0=0, \; \sigma_0=1.
\end{equation*}
% The above is an eigenvalue perturbation problem. 
For small values of $\veps$, $\lambda_\veps$ is non-zero but small, and is expected to quantify the near-singularity of $\mc{L}_\veps$.
We will prove the following result.
\begin{theorem}\label{t:lambdaveps}
Suppose $g$ in \eqref{Xveps} is in $C^2(\mbs)$, and suppose it has the following Fourier expansion
\begin{equation}\label{gexp}
g(\theta)=g_0+\sum_{n\geq 1} \paren{g_{n1}\cos\theta+g_{n2} \sin\theta}.
\end{equation}
There is an $\veps_*>0$ so that if $\abs{\veps}\leq \veps_*$, there is a unique $\lambda_\veps$ that satisfies \eqref{eigprob} with the following properties:
\begin{enumerate}
\item $\lambda_\veps$ is smooth in $\veps$ and $\lambda_\veps\leq 0$.
\item If $\bm{X}_\veps$ is not a circle for $\veps\neq 0$, there are constants $C_1$ and $C_2$ that do not depend on $\veps$ so that:
\begin{equation*}
\norm{\mc{L}_\veps^{-1}}_{\mc{B}(C^{\gamma}(\mathbb{S}^1);C^{1,\gamma}(\mathbb{S}^1))}\leq C_1+\frac{C_2}{\abs{\lambda_\epsilon}} \text{ for } 0<\abs{\veps}\leq \veps_*
\end{equation*}
where the left hand side is the operator norm of $\mc{L}_\veps^{-1}$ as a map from $C^{\gamma}(\mathbb{S}^1)$ to $C^{1,\gamma}(\mathbb{S}^1)$. 
\item $\lambda_\veps$ has the following expansion around $\veps=0$:
\begin{equation}\label{lambdaest}
\lambda_\veps=\lambda_2 \veps^2+\mc{O}(\abs{\veps}^3), \; \lambda_2=-\frac{1}{8}\sum_{n\geq 2} n(n^2-1)(g_{n1}^2+g_{n2}^2).
\end{equation}
\end{enumerate}
\end{theorem}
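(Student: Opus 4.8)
The plan is to recognise $\mc{L}_\veps$ as an unbounded self-adjoint operator on $L^2(\mbs)$ and run analytic perturbation theory about the simple eigenvalue $\lambda_0=0$ of $\mc{L}_0$. The first step is self-adjointness together with the correct sign: for $\sigma_1,\sigma_2\in C^{1,\gamma}(\mbs)$, integrating by parts once in $\theta$ and using that the Stokeslet kernel $G$ is a symmetric matrix with $G(-\bm{r})=G(\bm{r})$ gives
\[
\dual{\sigma_1}{\mc{L}_\veps\sigma_2}_{L^2(d\theta)}=-\int_\mbs \p_\theta(\sigma_1\bm{\tau}_\veps)\cdot\mc{S}_\veps\bigl[\p_\theta(\sigma_2\bm{\tau}_\veps)\bigr]\,d\theta=\dual{\mc{L}_\veps\sigma_1}{\sigma_2}_{L^2(d\theta)} ,
\]
so $\mc{L}_\veps$ is symmetric; taking $\sigma_1=\sigma_2=\sigma$, the middle expression is minus the Dirichlet energy $\int_{\mbr^2}\abs{\nabla\bm{u}}^2$ of the single-layer Stokes field $\bm{u}$ with density $\p_\theta(\sigma\bm{\tau}_\veps)$, so $\dual{\sigma}{\mc{L}_\veps\sigma}\le0$, with equality forcing $\bm{u}\equiv0$ and hence $\p_\theta(\sigma\bm{\tau}_\veps)\parallel\bm{n}_\veps$ — which occurs only at $\sigma=0$ unless $\bm{X}_\veps$ is a circle, in which case it holds precisely on the constants (recovering the dichotomy of Theorem \ref{t: well-posed_noncircle}). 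Combined with the $\theta$-analogue of Proposition \ref{p:L_mapping_prop}, which writes $\mc{L}_\veps=-\tfrac14\mc{H}\p_\theta+\mc{M}_\veps$ with $\mc{M}_\veps:C^{1}(\mbs)\to C^{0,\alpha}(\mbs)$ bounded (and smoothing), this exhibits $\{\mc{L}_\veps\}$ as an analytic family (in the sense of Kato) of self-adjoint operators on $L^2(\mbs)$ with a common domain, and bootstrapping in $-\tfrac14\mc{H}\p_\theta\sigma=\lambda\sigma-\mc{M}_\veps\sigma$ shows every eigenfunction lies in $C^{1,\gamma}(\mbs)$.

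Next I would determine the spectrum of $\mc{L}_0$. On the unit circle $\mc{L}_0$ commutes with rotations and so is a Fourier multiplier; a direct computation using the explicit Fourier-mode action of the Stokes single-layer potential on the circle gives $\mc{L}_0 1=0$ (cf.\ \eqref{constcircle}) and eigenvalues $\mu_n<0$ for every $n\neq0$ with $\mu_n\to-\infty$, so $0$ is a simple, isolated eigenvalue with eigenfunction $1$ and the rest of $\operatorname{spec}(\mc{L}_0)$ lies in $(-\infty,-\delta_0]$ for some $\delta_0>0$. Kato--Rellich analytic perturbation theory then yields, for $\abs{\veps}\le\veps_*$, a unique eigenvalue $\lambda_\veps$ of $\mc{L}_\veps$ near $0$, analytic (hence smooth) in $\veps$, with an analytic family of normalised eigenfunctions $\sigma_\veps$, $\sigma_0=1$; then $\lambda_\veps=\dual{\sigma_\veps}{\mc{L}_\veps\sigma_\veps}/\norm{\sigma_\veps}^2\le0$ and, for small $\veps$, $\lambda_\veps=\max\operatorname{spec}(\mc{L}_\veps)$. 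This establishes item (1) and the uniqueness claim. For item (2), let $P_\veps=\tfrac1{2\pi}\dual{\sigma_\veps}{\cdot}\,\sigma_\veps$ be the rank-one spectral projection onto $\sigma_\veps$; since $\norm{\sigma_\veps}_{C^{1,\gamma}}$ is bounded uniformly in $\veps$, $P_\veps:C^{\gamma}(\mbs)\to C^{1,\gamma}(\mbs)$ is bounded uniformly in $\veps$. Then
\[
\mc{L}_\veps^{-1}=\lambda_\veps^{-1}P_\veps+\bigl(\mc{L}_\veps|_{\operatorname{ran}(I-P_\veps)}\bigr)^{-1}(I-P_\veps) ,
\]
the first term contributing at most $C_2/\abs{\lambda_\veps}$ in the $C^\gamma\to C^{1,\gamma}$ norm; on $\operatorname{ran}(I-P_\veps)$ the operator $\mc{L}_\veps$ has spectrum bounded away from $0$ uniformly for small $\veps$, so a perturbation argument off the bounded inverse of $\mc{L}_0$ on mean-zero functions — using Proposition \ref{p:L_mapping_prop} for the $C^\gamma\to C^{1,\gamma}$ gain — makes the second term bounded $C^\gamma\to C^{1,\gamma}$ uniformly in $\veps$, yielding $C_1$.

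For item (3), write $\mc{L}_\veps=\mc{L}_0+\veps\mc{L}^{(1)}+\veps^2\mc{L}^{(2)}+\mc{O}(\veps^3)$, which is explicit from $\bm{X}_\veps=(1+\veps g)\bm{X}_c$ via $\bm{\tau}_\veps=\bm{\tau}_0+\veps\,g'\bm{X}_c+\mc{O}(\veps^2)$ and the Taylor expansion of $G(\Delta\bm{X}_\veps)$. I would first show $\lambda_1=0$. Set $f(\veps)=\dual{1}{\mc{L}_\veps 1}$; then $f\le0$ and $f(0)=0$, because on the unit circle $\p_\theta\bm{\tau}_0=\p_\theta^2\bm{X}_c=-\bm{X}_c$ is a uniform normal density whose single-layer field vanishes identically. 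Hence $\veps=0$ maximises the smooth function $f$, so $f'(0)=\dual{1}{\mc{L}^{(1)}1}=0$; and since also $\lambda_\veps\ge f(\veps)/2\pi$ (variational characterisation of $\max\operatorname{spec}(\mc{L}_\veps)$ with test function $1$) with equality at $\veps=0$, the nonnegative smooth function $\lambda_\veps-f(\veps)/2\pi$ is minimised at $\veps=0$, so $\lambda_1=f'(0)/2\pi=0$. The second-order Rayleigh--Schr\"odinger formula, with $\phi_0=1/\sqrt{2\pi}$ and $\phi_n=e^{in\theta}/\sqrt{2\pi}$, then gives
\[
\lambda_\veps=\lambda_2\,\veps^2+\mc{O}(\abs{\veps}^3),\qquad \lambda_2=\dual{\phi_0}{\mc{L}^{(2)}\phi_0}+\sum_{n\neq0}\frac{\abs{\dual{\phi_n}{\mc{L}^{(1)}\phi_0}}^2}{-\mu_n} .
\]
It remains to evaluate $\dual{1}{\mc{L}^{(2)}1}$ and $\dual{e^{in\theta}}{\mc{L}^{(1)}1}$ using the Fourier representation of the single-layer potential on the circle together with the explicit values of $\mu_n$; here the $n=0$ and $n=1$ modes of $g$ drop out (they generate an infinitesimal dilation and translation of the circle, which leave $\lambda$ at $0$), and assembling the pieces yields $\lambda_2=-\tfrac18\sum_{n\ge2}n(n^2-1)(g_{n1}^2+g_{n2}^2)$, manifestly $\le0$ in agreement with item (1).

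The main obstacle is the explicit computation behind item (3): one must differentiate the curve-dependent operator $\mc{L}_\veps$ twice — producing contributions from the variation of $\bm{\tau}_\veps$ and from the first and second $\veps$-derivatives of $\mc{S}_\veps$ — and then carry out the mode-by-mode evaluation on the circle, keeping the bookkeeping straight so that the coefficient $n(n^2-1)$ emerges and the low modes cancel; this in turn rests on first obtaining the exact Fourier symbol $\mu_n$ of $\mc{L}_0$, itself a somewhat delicate Stokeslet-on-circle calculation. A secondary technical point is transferring the $L^2$ analytic perturbation theory to the Hölder operator-norm bound asserted in item (2), which uses the mapping and Fredholm properties recorded in Proposition \ref{p:L_mapping_prop}.
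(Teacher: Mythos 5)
Your framework for items (1) and (2) is sound and, in substance, parallels the paper, though by a different route: the paper constructs $(\lambda_\veps,\sigma_\veps)$ via the implicit function theorem applied to $F(\sigma,\lambda,\veps)=(\mc{L}_\veps\sigma-\lambda\sigma,\int\sigma^2-2\pi)$ entirely in the H\"older scale (Propositions \ref{p:Lvepsreg}--\ref{p:eprob_reg}), gets $\lambda_\veps\le 0$ from the symmetry/negative-semidefiniteness of $\mc{L}_\theta$ (Proposition \ref{p:Ltheta}), and proves item (2) by exactly the rank-one splitting you describe (Corollary \ref{c:Lvepsinv}, with $\mc{N}_\veps=\mc{L}_\veps(1-\mc{P}_\veps)-\mc{P}_\veps$ inverted by smooth dependence on $\veps$ in the H\"older operator norm). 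Your alternative — Kato analytic perturbation of a self-adjoint realization on $L^2$ — is plausible, but note it needs ingredients the paper never establishes and deliberately avoids (it remarks that the Friedrichs extension exists but is not used): you would have to prove that $\mc{M}_\veps$ and its $\veps$-derivatives are bounded on $L^2$-based spaces with a common domain (type-(A) analyticity), and then bootstrap an $H^1$ eigenfunction of the extension up to $C^{1,\gamma}$ so that it solves \eqref{eigprob} classically; your one-line bootstrap claim does not yet work, since $\mc{M}_\veps$ is only shown to map $C^1$ to $C^{0,\alpha}$, which you cannot apply to a function known only to lie in $H^1$. Your detour for $\lambda_1=0$ via $f(\veps)=\dual{1}{\mc{L}_\veps 1}$ is correct but unnecessary: smoothness plus $\lambda_\veps\le 0=\lambda_0$ already forces $\lambda_1=0$, which is the paper's one-line argument.

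The genuine gap is item (3). The content of \eqref{lambdaest} is the explicit value $\lambda_2=-\tfrac18\sum_{n\ge2}n(n^2-1)(g_{n1}^2+g_{n2}^2)$, and your proposal asserts rather than derives it: you write down the Rayleigh--Schr\"odinger formula (which is indeed equivalent to the paper's solvability condition $2\pi\lambda_2=\dual{1}{\mc{L}_2 1}-\dual{\mc{L}_1 1}{\mc{L}_0^{-1}\mc{L}_1 1}$) and then say ``assembling the pieces yields'' the answer. None of the pieces are assembled: there is no second-order expansion of $G_L(\Delta\bm{X}_\veps)$, $G_T(\Delta\bm{X}_\veps)$ and $\bm{\tau}_\veps$ in $\veps$, no evaluation of the three pairings $\dual{\bm{\tau}_0}{\p_\theta\mc{S}_2[\p_\theta\bm{\tau}_0]}$, $\dual{\bm{\tau}_1}{\p_\theta\mc{S}_1[\p_\theta\bm{\tau}_0]}$, $\dual{\bm{\tau}_1}{\p_\theta\mc{S}_0[\p_\theta\bm{\tau}_1]}$ (the paper's Lemmas \ref{L:020}--\ref{L:101}, resting on the Appendix \ref{unitcircle_appendix} identities and the Toland identity, Lemma \ref{t:Toland}), and no treatment of the first-order cross term: in the paper this term vanishes because $\mc{L}_1 1\equiv 0$ (Proposition \ref{t: eig_1}), a nontrivial cancellation you neither state nor prove — your remark that ``the $n=0$ and $n=1$ modes of $g$ drop out'' is a statement about the final answer, not a substitute for showing the whole sum $\sum_{n\neq0}\abs{\dual{\phi_n}{\mc{L}^{(1)}\phi_0}}^2/(-\mu_n)$ contributes nothing. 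Since the coefficient $n(n^2-1)$ is exactly what the theorem asserts and what the numerics in Section \ref{s:numerics} test, item (3) is not proved by the proposal; you yourself flag this computation as ``the main obstacle,'' and it is where essentially all of the paper's work in Section \ref{s:lambda2} and Appendix \ref{unitcircle_appendix} lies.
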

The first item in the above theorem follows by the implicit function theorem and is proved in Section \ref{s:ep}. The non-positivity of $\lambda_\veps$ is shown in Section \ref{s:Ltheta}, 
as a consequence of the negative semi-definiteness of the operator $\mc{L}_\theta$. 
The second item, which demonstrates that magnitude of $\lambda_\veps$ controls the near singularity of $\mc{L}_\veps$, is also shown in the same Section.
The third item is the subject of Section \ref{s:lambda2}.

In Section \ref{s:numerics}, expression \eqref{lambdaest} is verified against numerical experiments. 
We use a boundary integral method to solve the tension determination problem and to compute the eigenvalues of the operator $\mc{L}_\veps$.
We see that the expression for $\lambda_2$ is in excellent agreement with the numerically calculated eigenvalues.
In particular, when $g_{n1}=g_{n2}=0$ for $n\geq 2$, we see that $\lambda_2=0$.
In this case, we expect that $\lambda_\veps=\mc{O}(\abs{\veps}^4)$ since $\lambda_\veps\leq 0$.
We indeed observe this behavior in our numerical experiments.
We summarize the results and discussion for future outlook in Section \ref{conclusion}.

In Appendix \ref{layer_potential_appendix}, we have collected some basic statements about layer potentials for Stokes flow and their proofs.
These results are standard and classical, but we have found it difficult to locate in the literature the precise statements we need in this paper.
Appendix \ref{unitcircle_appendix} contains some calculations needed to carry out perturbative calculations around the unit circle performed in Section \ref{s:lambda2}

\section{Well-posedness of the Tension Determination Problem}\label{est_IOs}
\subsection{Stokes Interface Problem and Layer Potentials}\label{sect:classical}
Consider the following Stokes interface problem
\begin{equation}\label{Stokes_interface}
\begin{split}
-\Delta \bm{u}+\nabla p=0,&\quad  \nabla \cdot \bm{u}=0, \text{ in } \mathbb{R}^2\backslash \Gamma,\\
\jump{\bm{u}}&=0, \quad \jump{\Sigma \bm{n}}=\bm{F} \text{ on } \Gamma.
\end{split}
\end{equation}
We seek a solution in the function spaces given in \eqref{upfnspace} and suppose that the stress jump condition is satisfied 
in the sense of item \ref{item:stressjump} of Definition \ref{STDP}. 
That is, given the stress $\Sigma(\bm{x})$ defined as in \eqref{sigF}, 
the uniform limits of \eqref{Siglim} exist and that the limiting functions satisfy $\bm{F}_{\Omega_1}-\bm{F}_{\Omega_2}=\bm{F}$.
We quote the following result.
\begin{theorem}\label{classical}
Suppose $\bm{X}\in C^2(\mbs)$ $\starnorm{\bm{X}}>0$ and $\bm{F}\in C^0(\mbs)$.
Then, $\bm{u}(\bm{x})=\wh{\mc{S}}[\bm{F}]$ and $p(\bm{x})=\mc{P}[\bm{F}](\bm{x})$ defined in \eqref{e:u_soln} and \eqref{e:p_soln} 
is a solution to the Stokes interface problem \eqref{Stokes_interface}.
Moreover, $\bm{u}(\bm{x})$ is continuous across the interface $\Gamma$ and
\begin{align*}
    \abs{\bm{u}(\bm{x})}\rightarrow 0 \mbox{ as } \bm{x}\rightarrow\infty \Leftrightarrow \int_{\mbs} \bm{F}ds=0.
\end{align*}
\end{theorem}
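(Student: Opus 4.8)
The plan is to recognize $\bm u=\wh{\mc S}[\bm F]$ and $p=\mc P[\bm F]$ as the single-layer Stokes potential and its associated pressure, and to verify the assertions — the interior Stokes equations, the $C^0(\mbr^2)$ regularity of $\bm u$ (equivalently $\jump{\bm u}=0$), the traction jump $\jump{\Sigma\bm n}=\bm F$, and the far-field dichotomy — one at a time. For $\bm x\notin\Gamma$ the integrands in \eqref{e:u_soln}--\eqref{e:p_soln} are real-analytic in $\bm x$ and $\Gamma$ is compact, so one may differentiate under the integral to any order with locally uniform convergence; hence $\bm u,p\in C^\infty(\mbr^2\setminus\Gamma)$, and since $(G(\bm r),\Pi(\bm r))$ is the Stokeslet — $-\Delta_{\bm r}G+\nabla_{\bm r}\Pi=0$ and $\nabla_{\bm r}\cdot G=0$ for $\bm r\neq0$, a direct check from the explicit $G_L,G_T,\Pi$ — applying $-\Delta_{\bm x}+\nabla_{\bm x}$ and $\nabla_{\bm x}\cdot$ under the integral gives \eqref{e:stokes}--\eqref{e:div_free} off $\Gamma$; the same differentiation identifies the stress $\Sigma=\nabla\bm u+(\nabla\bm u)^{\rm T}-p\mb I$ with $\mc T[\bm F]$ as in \eqref{e:tensor_soln}. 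For $p\in L^1_{\rm loc}(\mbr^2)$ I would combine $\abs{\Pi(\bm r)}\le\tfrac{1}{2\pi}\abs{\bm r}^{-1}$ with the bound $\int_{\mbs}\abs{\bm x-\bm X(s')}^{-1}ds'\lesssim\log\tfrac{1}{\dist(\bm x,\Gamma)}$ near $\Gamma$, itself a consequence of $\abs{\Delta\bm X}\ge\starnorm{\bm X}\abs{s-s'}$; a locally logarithmic bound is integrable.

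For continuity of $\bm u$ across $\Gamma$ only the $G_L=-\log\abs{\bm r}$ part of $G$ can misbehave: the $G_T$ kernel is bounded (its sole discontinuity at $\bm r=0$), so the standard split into a uniformly small near-piece $\abs{s-s'}<\delta$ and an $\bm x$-continuous far-piece shows it contributes a continuous potential. For the $G_L$ part one runs the classical single-layer argument — split at $\abs{s-s'}<\delta$, compare $\log\abs{\bm x-\bm X(s')}$ with $\log(\abs{s-s'}+\dist(\bm x,\Gamma))$ using $\starnorm{\bm X}>0$, and use continuity of $\bm F$ together with integrability of $\log$ — to conclude that $\wh{\mc S}[\bm F]$ extends continuously up to $\Gamma$ from either side with common boundary value $\mc S[\bm F](s)$ as in \eqref{uXs}. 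Hence $\bm u\in C^2(\mbr^2\setminus\Gamma)\cap C^0(\mbr^2)$ and $\jump{\bm u}=0$.

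The crux is the traction jump. I would form $\Sigma(\bm x)\bm n(s)$ from \eqref{e:tensor_soln}, set $\bm x=\bm X(s)\pm t\bm n(s)$, and analyze the limit $t\to0^+$. The point is that once $\Theta_{ijk}(\bm x-\bm X(s'))$ is contracted with the \emph{fixed} normal $\bm n(s)$ and restricted to the normal line through $\bm X(s)$, it behaves near $s'=s$ like a Poisson-type, scale-invariant kernel: writing $\bm X(s')=\bm X(s)+(s'-s)\bm\tau(s)+O(\abs{s'-s}^2)$, one has $\bm r:=\bm x-\bm X(s')\approx\pm t\,\bm n(s)-(s'-s)\bm\tau(s)$, whence $\Theta_{ijk}(\bm r)\,n_i(s)\approx-\tfrac1\pi(\pm t)\,r_jr_k\,(t^2+(s'-s)^2)^{-2}$, and here the hypothesis $\bm X\in C^2(\mbs)$ is exactly what is needed to control the error. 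The plan is then the standard Plemelj-type argument: replace the kernel by this frozen leading part, reduce the jump to an explicit elementary integral over $\mbr$ that evaluates to $\pm\tfrac12\bm F(s)$, and show the remainder — governed by the $O(\abs{s'-s}^2)$ curvature correction and by the modulus of continuity of $\bm F$ — converges \emph{uniformly} to a function continuous across $\Gamma$. Subtracting the two one-sided limits yields $\bm F_{\Omega_1}-\bm F_{\Omega_2}=\bm F$, the uniform convergence required in item \ref{item:stressjump} of Definition \ref{STDP} falling out of the same estimates. This is the step I expect to be the main obstacle: the frozen-kernel integral must be carried out with the correct constants, and the low regularity of the data, especially $\bm F\in C^0$, keeps the remainder estimates delicate — it is where I would most want to reproduce or cite the classical layer-potential machinery of Appendix \ref{layer_potential_appendix}.

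Finally, for the far field, expanding uniformly in $s'$ as $\abs{\bm x}\to\infty$, $\log\abs{\bm x-\bm X(s')}=\log\abs{\bm x}+O(\abs{\bm x}^{-1})$ and $G_T(\bm x-\bm X(s'))=\wh{\bm x}\otimes\wh{\bm x}+O(\abs{\bm x}^{-1})$ with $\wh{\bm x}=\bm x/\abs{\bm x}$, so
\[
\bm u(\bm x)=\frac{1}{4\pi}\Big(-\log\abs{\bm x}\,\mb I+\wh{\bm x}\otimes\wh{\bm x}\Big)\!\int_{\mbs}\!\bm F\,ds'\;+\;O\!\big(\abs{\bm x}^{-1}\big)\norm{\bm F}_{L^1(\mbs)}.
\]
If $\int_{\mbs}\bm F\,ds'=0$ the bracketed term vanishes identically and $\bm u(\bm x)\to0$ uniformly on $\abs{\bm x}=R$ as $R\to\infty$; if $\int_{\mbs}\bm F\,ds'\neq0$ the unbounded term $-\tfrac{1}{4\pi}\log\abs{\bm x}\int_{\mbs}\bm F$ cannot be cancelled by the $O(1)$ remainder, so $\abs{\bm u(\bm x)}\not\to0$, giving the stated equivalence. (The pressure needs no separate argument: $\abs{\Pi(\bm x-\bm X(s'))}=O(\abs{\bm x}^{-1})$ forces $p(\bm x)\to0$ anyway, consistent with \eqref{updecay}.) Assembling the four parts proves the theorem.
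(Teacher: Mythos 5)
Your proposal is correct in substance, and three of its four parts (smoothness and the Stokes equations off $\Gamma$ by differentiation under the integral, continuity of $\bm{u}$ across $\Gamma$ via a near/far splitting of the $G_L$ and $G_T$ contributions using $\abs{\Delta\bm{X}}\geq\starnorm{\bm{X}}\abs{s-s'}$, and the far-field dichotomy by expanding $G(\bm{x}-\bm{X}(s'))-G(\bm{x})=O(\abs{\bm{x}}^{-1})$) follow essentially the same lines as the paper's Appendix \ref{layer_potential_appendix}. Where you genuinely diverge is the traction jump. You propose the direct Plemelj route: freeze $\bm{F}$ and the geometry at $\bm{X}(s)$, contract $\Theta$ with the fixed normal $\bm{n}(s)$, evaluate the resulting scale-invariant integral (your constant $\pm\tfrac12\bm{F}(s)$ is the right one), and push the curvature and modulus-of-continuity errors into a uniformly convergent remainder. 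The paper instead never expands the kernel pointwise: it introduces the double layer potential $\wh{\mc{D}}$ with kernel $K_{ij}(\bm{y},\bm{x})=\Theta_{ijk}(\bm{y}-\bm{x})n_k(\bm{y})$, proves the exact Gauss identities $\int_\Gamma K\,ds\in\{-\mathbb{I},0,-\tfrac12\mathbb{I}\}$, shows $\wh{\mc{D}}[\wh{\bm{F}}]$ is continuous at points where the density vanishes, and then observes that $\Sigma(\bm{x})\bm{n}(s(\bm{x}))+\wh{\mc{D}}[\wh{\bm{F}}](\bm{x})$ is continuous across $\Gamma$ because the two kernels' singular parts cancel (the difference involves $\bm{n}(s')-\bm{n}(s(\bm{x}))$, which is $O(\abs{\bm{X}(s')-\bm{x}})$ for a $C^2$ curve); the jump of $\Sigma\bm{n}$ is then read off from the known jump of $\wh{\mc{D}}$. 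The paper's route buys a cleaner treatment of merely continuous $\bm{F}$ and delivers the uniformity required in item \ref{item:stressjump} of Definition \ref{STDP} almost automatically from the Gauss identity plus the subtraction trick; your route is more self-contained and makes the $\pm\tfrac12$ constant transparent, at the cost of redoing the classical remainder estimates by hand.

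One point to make explicit when you carry out your version of the jump: the limiting boundary term $\int_{\mbs}\Theta_{ijk}(\bm{X}(s)-\bm{X}(s'))n_j(s)F_k(s')\,ds'$ is an absolutely convergent integral, not a principal value, because $(\bm{X}(s)-\bm{X}(s'))\cdot\bm{n}(s)=O(\abs{s-s'}^2)$ for $\bm{X}\in C^2$ (this is precisely the paper's estimate \eqref{e:tech_regularity} and the boundedness \eqref{e:kernel_regularity00}). This bounded-kernel fact is what makes the hypothesis $\bm{F}\in C^0(\mbs)$ sufficient and what lets the remainder converge uniformly in $s$; without noting it, the "delicate" remainder step you flag is harder to close than it needs to be.
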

\begin{remark}
It is clear that $\bm{u}(\bm{x})=\wh{\mc{S}}[\bm{F}]$ and $p(\bm{x})=\mc{P}[\bm{F}](\bm{x})$ belong to the function spaces in \eqref{upfnspace}, and
satisfy the Stokes equation. It is standard that $\wh{\mc{S}}[\bm{F}]$ is continuous across the interface $\Gamma$.
The important part is to check whether $\bm{u}$ and $p$ indeed satisfy the stress interface condition in the sense specified above. 
%We can find some results about $\bm{u}, p$ in \cite{P_1992}, but we haven't found the proof for the lemma from textbooks and articles.
This result is classical and can be found for example in \cite{P_1992}. We leave the discussion about Theorem \ref{classical} in Appendix \ref{layer_potential_appendix}.
\end{remark}
As a consequence, we also have the following result. 
\begin{corollary}\label{coroIP}
Let $\bm{X}$ and $\bm{F}$ be as in Theorem \ref{classical}, and suppose $\bm{F}$ satisfies:
\begin{equation}\label{Fzero}
\int_{\mbs} \bm{F}ds=0.
\end{equation}
Let $\bm{u}=\wh{\mc{S}}[\bm{F}]$. Then, we have
\begin{equation*}
\frac{1}{2}\int_{\mbr^2\setminus \Gamma} \abs{\nabla \bm{u} + (\nabla\bm{u})^T}^2 d\bm{x}=\int_{\Gamma} \bm{u}\cdot \bm{F} ds.
\end{equation*}
\end{corollary}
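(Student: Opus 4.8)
The plan is to integrate the pointwise identity $\nabla\cdot(\Sigma\bm{u})=\frac{1}{2}\abs{\nabla\bm{u}+(\nabla\bm{u})^{\rm T}}^2$ over $\Omega_1$ and over $\Omega_2\cap B_R$, where $B_R=\set{\bm{x}\in\mbr^2}{\abs{\bm{x}}<R}$, apply the divergence theorem on each region, and let $R\to\infty$. To obtain the identity I would first note that the stress tensor $\Sigma$ in \eqref{sigF} is divergence free, since $\nabla\cdot\Sigma=\Delta\bm{u}+\nabla(\nabla\cdot\bm{u})-\nabla p=\Delta\bm{u}-\nabla p=0$ by \eqref{e:stokes}--\eqref{e:div_free}. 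Then, writing in components and using the symmetry of $\Sigma$, $\partial_i(\Sigma_{ij}u_j)=\Sigma_{ij}\partial_iu_j=\frac{1}{2}\Sigma_{ij}(\partial_iu_j+\partial_ju_i)$; substituting $\Sigma_{ij}=\partial_iu_j+\partial_ju_i-p\,\delta_{ij}$ and using $\nabla\cdot\bm{u}=0$ removes the pressure term and leaves exactly $\frac{1}{2}\abs{\nabla\bm{u}+(\nabla\bm{u})^{\rm T}}^2$.

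Next I would apply the divergence theorem on $\Omega_1$, whose outward unit normal along $\Gamma$ is $\bm{n}$, to get $\int_{\Omega_1}\frac{1}{2}\abs{\nabla\bm{u}+(\nabla\bm{u})^{\rm T}}^2\,d\bm{x}=\int_\Gamma\bm{u}\cdot\bm{F}_{\Omega_1}\,ds$, using the continuity of $\bm{u}$ on $\overline{\Omega_1}$ and the one-sided limit $\bm{F}_{\Omega_1}$ of \eqref{Siglim}. On $\Omega_2\cap B_R$ the boundary consists of $\Gamma$ with outward normal $-\bm{n}$ together with $\partial B_R$ with outward normal $\bm{x}/\abs{\bm{x}}$, so that $\int_{\Omega_2\cap B_R}\frac{1}{2}\abs{\nabla\bm{u}+(\nabla\bm{u})^{\rm T}}^2\,d\bm{x}=-\int_\Gamma\bm{u}\cdot\bm{F}_{\Omega_2}\,ds+\int_{\partial B_R}\bm{u}\cdot\Sigma\frac{\bm{x}}{\abs{\bm{x}}}\,dS$. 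Adding the two identities and invoking $\bm{F}_{\Omega_1}-\bm{F}_{\Omega_2}=\bm{F}$ from Theorem \ref{classical} will reduce the claim to showing that the integral over $\partial B_R$ tends to $0$ as $R\to\infty$.

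This last point is where the hypothesis $\int_\mbs\bm{F}\,ds=0$ is used. Since $\bm{F}$ has mean zero I can write $\bm{u}(\bm{x})=\int_\mbs\paren{G(\bm{x}-\bm{X}(s'))-G(\bm{x})}\bm{F}(s')\,ds'$, and since $\nabla G_L$ and $\nabla G_T$ are $\mc{O}(\abs{\bm{x}}^{-1})$ uniformly in $s'$ (as $\bm{X}$ is bounded), this gives $\bm{u}(\bm{x})=\mc{O}(\abs{\bm{x}}^{-1})$; the same mean-zero cancellation applied to $\nabla_{\bm{x}}\bm{u}$ gives $\nabla\bm{u}(\bm{x})=\mc{O}(\abs{\bm{x}}^{-2})$, and applied to \eqref{e:p_soln} it gives $p(\bm{x})=\mc{O}(\abs{\bm{x}}^{-2})$, hence $\Sigma(\bm{x})=\mc{O}(\abs{\bm{x}}^{-2})$. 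Then the integrand over $\partial B_R$ is $\mc{O}(\abs{\bm{x}}^{-3})$ and the integral is $\mc{O}(R^{-2})\to0$. The same bounds show that $\abs{\nabla\bm{u}+(\nabla\bm{u})^{\rm T}}^2=\mc{O}(\abs{\bm{x}}^{-4})$ is integrable near infinity, while on the bounded set $\Omega_1$ it is bounded by the regularity of the single layer potential up to $\Gamma$ from Appendix \ref{layer_potential_appendix}; so the left-hand side is finite and the desired identity follows.

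The step I expect to be the main obstacle is the rigorous use of the divergence theorem up to $\Gamma$: for $\bm{F}$ merely in $C^0(\mbs)$ one only has the one-sided uniform limits \eqref{Siglim}, not continuity of $\Sigma$ on $\overline{\Omega_1}$ or $\overline{\Omega_2}$. I would handle this by first applying the divergence theorem on the regions $\set{\bm{x}\in\Omega_1}{\dist(\bm{x},\Gamma)>\delta}$ and $\set{\bm{x}\in\Omega_2\cap B_R}{\dist(\bm{x},\Gamma)>\delta}$, where all fields are smooth, and then letting $\delta\to0$, passing to the limit in the surface integrals via the uniform convergence in \eqref{Siglim}; alternatively one can first prove the identity for smooth mean-zero densities and then pass to the limit under $C^0(\mbs)$ approximation of $\bm{F}$. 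The far-field estimate, by contrast, is routine.
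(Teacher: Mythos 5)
Your proposal is correct and is precisely the ``usual integration by parts argument'' the paper invokes and omits: the pointwise identity $\nabla\cdot(\Sigma\bm{u})=\tfrac12\abs{\nabla\bm{u}+(\nabla\bm{u})^{\rm T}}^2$, the divergence theorem on $\Omega_1$ and $\Omega_2\cap B_R$ combined with the jump relation $\bm{F}_{\Omega_1}-\bm{F}_{\Omega_2}=\bm{F}$ from Theorem \ref{classical}, and the $\mc{O}(\abs{\bm{x}}^{-1})$, $\mc{O}(\abs{\bm{x}}^{-2})$ decay of $\bm{u}$ and $\Sigma$ coming from the mean-zero condition \eqref{Fzero}. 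One cosmetic remark: the asserted boundedness of $\nabla\bm{u}$ up to $\Gamma$ for merely $C^0$ density is neither proved in Appendix \ref{layer_potential_appendix} nor needed, since finiteness of the left-hand side already follows from your $\delta\to 0$ regularization (the truncated energies increase to the finite limit given by the boundary terms).
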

\begin{proof}
This follows from the usual integration by parts argument for \eqref{Stokes_interface} where we set $\bm{u}=\wh{\mc{S}}[\bm{F}]$ and $p=\mc{P}[\bm{F}]$. Integration by parts is justified by Theorem \ref{classical}, and the sufficiently fast decay of $\bm{u}$ and $\nabla \bm{u}$ at infinity, which is in turn guaranteed by \eqref{Fzero}. We omit the details.
\end{proof}

We state the uniqueness statement for the Stokes interface problem as follow.
\begin{proposition}\label{prop:class_well}
Suppose $\bm{X}\in C^2(\mbs)$ $\starnorm{\bm{X}}>0$ and $\bm{F}\in C^0(\mbs)$ and satisfies \eqref{Fzero}. Consider a solution to the Stokes 
interface problem \eqref{Stokes_interface} that satisfies the growth condition \eqref{updecay} at infinity.
Then, the unique solution $\bm{u}$ is given by $\bm{u}=\wh{\mc{S}}[\bm{F}]$ with $p=\mc{P}[\bm{F}]+c$ where $c$ is an arbitrary constant.
\end{proposition}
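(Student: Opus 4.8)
Existence is already in hand: by Theorem~\ref{classical} the pair $(\wh{\mc{S}}[\bm{F}],\mc{P}[\bm{F}])$ solves the Stokes interface problem~\eqref{Stokes_interface} and lies in the function spaces~\eqref{upfnspace}; since $\bm{F}$ has mean zero, the kernel identity $\int_{\mbs}\Pi(\bm{x})\bm{F}(s')\,ds'=0$ shows $\mc{P}[\bm{F}](\bm{x})=\mc{O}(\abs{\bm{x}}^{-2})\to 0$, so~\eqref{updecay} holds. For uniqueness I would take an arbitrary solution $(\bm{u},p)$ with the stated regularity and far-field behavior and set $\bm{v}=\bm{u}-\wh{\mc{S}}[\bm{F}]$, $q=p-\mc{P}[\bm{F}]$. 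Then $(\bm{v},q)$ solves the \emph{homogeneous} problem: $-\Delta\bm{v}+\nabla q=0$, $\nabla\cdot\bm{v}=0$ in $\mbr^2\setminus\Gamma$, $\jump{\bm{v}}=0$ (both solutions are continuous across $\Gamma$), $\jump{\Sigma[\bm{v},q]\bm{n}}=0$ (subtract the two copies of~\eqref{e:stress_jump}, both equal to $\bm{F}$), with $\bm{v}\to 0$ and $q$ bounded at infinity (using the decay of $\mc{P}[\bm{F}]$ just noted).

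The heart of the argument is that the vanishing of \emph{both} interface jumps promotes $(\bm{v},q)$ to a distributional solution of the homogeneous Stokes system on all of $\mbr^2$. For $\bm\varphi\in C_c^\infty(\mbr^2;\mbr^2)$ I would split $\int_{\mbr^2}$ into integrals over $\Omega_1$ and $\Omega_2$, integrate by parts in each region, and observe that the two boundary contributions along $\Gamma$ combine to $\int_\Gamma\jump{\Sigma\bm{n}}\cdot\bm\varphi\,ds=0$; the integration by parts is legitimate because the traction attains its boundary values in the uniform sense of item~\ref{item:stressjump} of Definition~\ref{STDP} and $q\in L^1_{\mathrm{loc}}$. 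Taking the divergence of the momentum equation shows $q$ is distributionally harmonic on $\mbr^2$; since $q\in L^1_{\mathrm{loc}}(\mbr^2)$, Weyl's lemma gives a smooth harmonic representative, after which $\bm{v}$ solves $\Delta\bm{v}=\nabla q$ with smooth right-hand side and is therefore smooth as well. Thus the singular set $\Gamma$ is removable. Now Liouville finishes it: the smooth representative of $q$ is a bounded entire harmonic function, hence a constant $c$; then $\bm{v}$ is harmonic on $\mbr^2$, continuous, and tends to $0$ at infinity, hence bounded, hence constant, hence $\bm{v}\equiv 0$. Therefore $\bm{u}=\wh{\mc{S}}[\bm{F}]$ and $p=\mc{P}[\bm{F}]+c$, which is the claim.

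The step I expect to be the main obstacle is the removable-singularity claim, precisely because $\Gamma$ is only $C^2$ and $q$ is a priori merely locally integrable (not bounded) near $\Gamma$: one must run the ``pressure is harmonic'' step in the distributional sense first and only then invoke Weyl's lemma, and one must verify carefully that integration by parts against $\bm\varphi$ produces exactly the term $\int_\Gamma\jump{\Sigma\bm{n}}\cdot\bm\varphi$, which hinges on the precise interface convergence built into Definition~\ref{STDP}. An alternative energy-identity proof is available — integrate $\bm{v}\cdot(-\Delta\bm{v}+\nabla q)$ over $B_R\setminus\Gamma$, using $\jump{\bm{v}}=0$ and $\jump{\Sigma\bm{n}}=0$ to annihilate the interface term as in Corollary~\ref{coroIP} — but then controlling the boundary integral over $\partial B_R$ as $R\to\infty$ requires decay of $\nabla\bm{v}$ and of $q$ beyond what the hypotheses supply, so the Liouville route seems the more economical one.
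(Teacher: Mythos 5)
Your proposal is correct and follows essentially the same route as the paper: after reducing to the homogeneous interface problem (you subtract the layer-potential solution, the paper simply sets $\bm{F}=0$), both arguments test against compactly supported functions so that the zero jumps make the system hold distributionally across $\Gamma$, deduce that the pressure is weakly harmonic, invoke Weyl's lemma and Liouville to get $p=c$, and then repeat the Weyl--Liouville step for the velocity. The only cosmetic difference is that the paper realizes your ``take the divergence'' step by the explicit test-function choices $\bm{w}=\nabla\phi$, $v=\Delta\phi$.
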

This is also well-known, but we have not been able to find this precise statement and proof. We include a proof of this fact for completeness.
\begin{proof}
That $\bm{u}=\wh{\mc{S}}[\bm{F}]$ and $p=\mc{P}[\bm{F}]+c$ satisfy \eqref{Stokes_interface} follows from Theorem \ref{classical}. 
It is also clear that they satisfy the decay estimate \eqref{updecay}. What remains to be shown is that the problem \eqref{Stokes_interface} with $\bm{F}=0$
only admits the trivial solution $\bm{u}=0$ and $p=c$ where $c$ is an arbitrary constant. Let $\bm{w}=(w_1,w_2)$ and $v$ be compactly supported smooth functions in $\mathbb{R}^2$.
Multiply the Stokes equations by $\bm{w}$ and the incompressibility condition by $v$. Integrate by parts and use the interface condition with $\bm{F}=0$ to obtain
\begin{equation}\label{weakup}
\int_{\mathbb{R}^2} \paren{\bm{u}\cdot\nabla (\nabla \cdot \bm{w})+ \bm{u}\cdot\Delta \bm{w} +p\nabla \cdot \bm{w}}d\bm{x}=0, \quad \int_{\mathbb{R}^2} \bm{u}\cdot \nabla v d\bm{x}=0,
\end{equation}
Let $\phi$ be a compactly supported smooth function, and let $\bm{w}=\nabla \phi$. Plugging this into the first equation in the above, we have
\begin{equation*}
\int_{\mathbb{R}^2} \paren{2\bm{u}\cdot\nabla (\Delta \phi)+p\Delta \phi}d\bm{x}=\int_{\mathbb{R}^2}p\Delta \phi d\bm{x}=0, 
\end{equation*}
where we used the second equation in \eqref{weakup} with $v=\Delta \phi$ in the first equality above.
Since $p\in L^1_{\rm loc}(\mathbb{R}^2)$, $p$ is a distribution, and is weakly harmonic. By a result of Weyl (see, for example Appendix B of \cite{lax2002functional}), weakly harmonic functions are harmonic. 
Thus, $p$ is smooth and satisfies $\Delta p=0$. Given \eqref{updecay}, $p=c$ by Liouville's theorem. 
Putting $p=c$ in the first equation of \eqref{weakup}, and using the 
second equation in \eqref{weakup} with $v=\nabla \cdot \bm{w}$ we have
\begin{equation*}
\int_{\mathbb{R}^2} \bm{u}\cdot\Delta \bm{w}d\bm{x}=0.
\end{equation*}
This again implies that each component of $\bm{u}$ is weakly harmonic, and thus, harmonic. Given \eqref{updecay}, $\bm{u}=0$ by Liouville's theorem.
\end{proof}

\subsection{Properties of Operators $\mc{Q}$ and $\mc{L}$}
Let $\bm{u},p,\sigma$ be a solution to the tension determination problem in the sense of Definition \ref{STDP}. 
According to Proposition \ref{prop:class_well}, $\bm{u}(\bm{X}(s))$ can be expressed in terms of $\sigma$ and $\bm{F}$ as follows
\begin{equation*}
\bm{u}(\bm{X}(s))=\wh{\mc{S}}[\bm{F}+\p_s(\sigma\p_s\bm{X})](\bm{X}(s))=\mc{S}[\bm{F}+\p_s(\sigma\p_s\bm{X})](s)
\end{equation*}
where $\mc{S}$ was defined in \eqref{uXs}. We now study the properties of $\p_s\mc{S}[\bm{F}]$.
For this purpose, we make use of the decomposition discussed in \eqref{QFT} and \eqref{FC}.
We start with some technical results.
\begin{lemma}\label{prelim_ests}
Let $\bm{X}=\paren{X_1, X_2} \in C^{2}$ with $\starnorm{\bm{X}} > 0$ and $Y\in C^1$. Consider
\begin{equation*}
A_i(s,s')=\frac{\Delta X_i}{\abs{\Delta \bm{X}}}, \quad B_i(s,s')=\frac{1}{\abs{\Delta \bm{X}}}\paren{\p_sX_i-\frac{\Delta X_i}{s-s'}}.
\end{equation*}
We have
\begin{align}
\label{YXest}
\abs{\p_s\paren{\frac{\Delta Y}{\abs{\Delta \bm{X}}}}}&\leq C\frac{\norm{Y}_{C^1}\norm{\bm{X}}_{C^1}}{\starnorm{\bm{X}}^2}\abs{s-s'}^{-1},\\
\label{Aiest}
\abs{\p_s A_i}\leq C\frac{\norm{\bm{X}}_{C^2}}{\starnorm{\bm{X}}},\quad \abs{\p_{s'} A_i}&\leq C\frac{\norm{\bm{X}}_{C^2}}{\starnorm{\bm{X}}},\quad
\abs{\p_{s'}\p_s A_i}\leq C\frac{\norm{\bm{X}}_{C^2}^2}{\starnorm{\bm{X}}^2}\abs{s-s'}^{-1}, \\
\label{Biest}
\abs{B_i}\leq \frac{\norm{\bm{X}}_{C^2}}{\starnorm{\bm{X}}}, \quad \abs{\p_sB_i}&\leq C\frac{\norm{\bm{X}}_{C^2}^2}{\starnorm{\bm{X}}^2}\abs{s-s'}^{-1}.
\end{align}
\end{lemma}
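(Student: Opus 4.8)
The plan is to derive all four estimates from two elementary facts: the lower bound $\abs{\Delta\bm{X}}\geq \starnorm{\bm{X}}\abs{s-s'}$, and the Taylor-type bounds that follow from $\bm{X}\in C^2$ — namely $\abs{\Delta\bm{X}/(s-s')}\leq \norm{\bm{X}}_{C^1}$, $\abs{\p_s\bm{X}-\Delta\bm{X}/(s-s')}\leq C\norm{\bm{X}}_{C^2}\abs{s-s'}$ (this last being the key ``second-order'' cancellation estimate, proved by writing $\p_s X_i - \Delta X_i/(s-s') = \int_0^1 (1-t)\,\p_s^2 X_i$ evaluated along the segment from $s'$ to $s$, after the change of variables). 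I would also record $\abs{\p_s \abs{\Delta\bm{X}}}\leq 1$ (since $\p_s\abs{\Delta\bm{X}} = \Delta\bm{X}\cdot\p_s\bm{X}/\abs{\Delta\bm{X}}$ and $\abs{\p_s\bm{X}}=1$... though in the general $C^2$ setting without arclength one gets $\leq \norm{\bm{X}}_{C^1}$; here the statement keeps general constants, so I would just bound $\abs{\p_s\abs{\Delta\bm{X}}}\leq \norm{\bm{X}}_{C^1}$).

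First I would prove \eqref{YXest}: expand $\p_s(\Delta Y/\abs{\Delta\bm{X}}) = \p_s\Delta Y/\abs{\Delta\bm{X}} - \Delta Y\,\p_s\abs{\Delta\bm{X}}/\abs{\Delta\bm{X}}^2$. The first term is bounded by $\norm{Y}_{C^1}/(\starnorm{\bm{X}}\abs{s-s'})$; for the second, use $\abs{\Delta Y}\leq \norm{Y}_{C^1}\abs{s-s'}$ together with $\abs{\p_s\abs{\Delta\bm{X}}}\leq \norm{\bm{X}}_{C^1}$ and the lower bound on $\abs{\Delta\bm{X}}^2$, giving $\norm{Y}_{C^1}\norm{\bm{X}}_{C^1}/(\starnorm{\bm{X}}^2\abs{s-s'})$. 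Combining gives \eqref{YXest}. Next, for \eqref{Aiest}: write $A_i = \Delta X_i/\abs{\Delta\bm{X}} = (\Delta X_i/(s-s'))\cdot((s-s')/\abs{\Delta\bm{X}})$. For $\p_s A_i$ I would instead apply \eqref{YXest} with $Y = X_i$ and then note a gain: the naive bound from \eqref{YXest} is $C\norm{\bm{X}}_{C^1}^2/(\starnorm{\bm{X}}^2\abs{s-s'})$, but the apparent singularity cancels. To see the cancellation cleanly, write $A_i = B_i(s-s')/\abs{\Delta\bm{X}}\cdot 0 + \dots$ — more directly, $\p_s A_i = \p_s X_i/\abs{\Delta\bm{X}} - \Delta X_i(\Delta\bm{X}\cdot\p_s\bm{X})/\abs{\Delta\bm{X}}^3$, and rewriting $\p_s X_i = (\p_s X_i - \Delta X_i/(s-s')) + \Delta X_i/(s-s')$ in the first term and $\Delta\bm{X}\cdot\p_s\bm{X} = \Delta\bm{X}\cdot(\p_s\bm{X}-\Delta\bm{X}/(s-s')) + \abs{\Delta\bm{X}}^2/(s-s')$ in the second, the two $1/(s-s')$ pieces cancel exactly, leaving only terms controlled by $C\norm{\bm{X}}_{C^2}/\starnorm{\bm{X}}$ via the second-order estimate. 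The bound for $\p_{s'}A_i$ is identical by symmetry. For $\p_{s'}\p_s A_i$ I would differentiate the cancelled expression for $\p_s A_i$ in $s'$; each resulting term carries at most one factor of $1/(s-s')$ (coming from differentiating a ratio $\Delta(\cdot)/(s-s')$ or $1/\abs{\Delta\bm{X}}$), and the $C^2$ norms enter quadratically, yielding $C\norm{\bm{X}}_{C^2}^2/(\starnorm{\bm{X}}^2\abs{s-s'})$.

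Finally \eqref{Biest}: $\abs{B_i}\leq \abs{\p_s X_i - \Delta X_i/(s-s')}/\abs{\Delta\bm{X}}\leq C\norm{\bm{X}}_{C^2}\abs{s-s'}/(\starnorm{\bm{X}}\abs{s-s'}) = C\norm{\bm{X}}_{C^2}/\starnorm{\bm{X}}$ (the stated bound absorbs $C$ into the constant). For $\p_s B_i$, differentiate $B_i = (\p_s X_i - \Delta X_i/(s-s'))/\abs{\Delta\bm{X}}$: the numerator's derivative is $\p_s^2 X_i - \p_s(\Delta X_i/(s-s'))$, and $\p_s(\Delta X_i/(s-s')) = \p_s X_i/(s-s') - \Delta X_i/(s-s')^2 = (\p_s X_i - \Delta X_i/(s-s'))/(s-s')$, which is $O(\norm{\bm{X}}_{C^2})$; so the numerator derivative is $O(\norm{\bm{X}}_{C^2})$, divided by $\abs{\Delta\bm{X}}\geq\starnorm{\bm{X}}\abs{s-s'}$ gives $C\norm{\bm{X}}_{C^2}/(\starnorm{\bm{X}}\abs{s-s'})$; the term from differentiating $1/\abs{\Delta\bm{X}}$ contributes the numerator ($O(\norm{\bm{X}}_{C^2}\abs{s-s'})$) times $\norm{\bm{X}}_{C^1}/\abs{\Delta\bm{X}}^2$, which is $C\norm{\bm{X}}_{C^2}\norm{\bm{X}}_{C^1}/(\starnorm{\bm{X}}^2\abs{s-s'})$. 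Bounding $\norm{\bm{X}}_{C^1}\leq\norm{\bm{X}}_{C^2}$ and $\starnorm{\bm{X}}\leq\norm{\bm{X}}_{C^1}$ to homogenize, this gives the claimed $C\norm{\bm{X}}_{C^2}^2/(\starnorm{\bm{X}}^2\abs{s-s'})$.

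The main obstacle is the exact-cancellation argument in \eqref{Aiest}: a naive term-by-term differentiation of $A_i$ produces a spurious $\abs{s-s'}^{-1}$ singularity, and one must algebraically exhibit its cancellation using the identity $\Delta\bm{X}\cdot\p_s\bm{X} = \Delta\bm{X}\cdot(\p_s\bm{X} - \Delta\bm{X}/(s-s')) + \abs{\Delta\bm{X}}^2/(s-s')$ before estimating. Everything else is bookkeeping with the two elementary bounds above, keeping careful track of powers of $\norm{\bm{X}}_{C^2}$ and $\starnorm{\bm{X}}$ and using $\starnorm{\bm{X}}\leq\norm{\bm{X}}_{C^1}\leq\norm{\bm{X}}_{C^2}$ to put each estimate in the stated homogeneous form.
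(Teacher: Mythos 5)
Your proposal is correct and follows essentially the same route as the paper: the lower bound $\abs{\Delta\bm{X}}\geq\starnorm{\bm{X}}\abs{s-s'}$, the Taylor estimate $\abs{\p_sX_i-\Delta X_i/(s-s')}\leq \norm{\bm{X}}_{C^2}\abs{s-s'}$, and the same explicit cancellation of the $1/(s-s')$ singularity in $\p_sA_i$ before estimating. The only deviations are cosmetic bookkeeping choices (differentiating the cancelled form of $\p_sA_i$ in $s'$ rather than the original quotient, and estimating $\p_sB_i$ directly instead of splitting $B_i$ into two pieces), which lead to the same terms and bounds.
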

A naive application of estimate \eqref{YXest} to $\p_sA_i$ will {\em not} produce the first 
inequality in \eqref{Aiest}, and here, we take advantage of an additional cancellation. Similar cancellations are used to establish \eqref{Biest}.
\begin{proof}
The estimate \eqref{YXest} can be established by direct computation as
\begin{equation*}
\abs{\p_s\paren{\frac{\Delta Y}{\abs{\Delta \bm{X}}}}}=\abs{\frac{\p_sY}{\abs{\Delta \bm{X}}}-\frac{\Delta Y\p_s\bm{X}\cdot \Delta \bm{X}}{\abs{\Delta \bm{X}}^3}}\leq C\frac{\norm{Y}_{C^1}\norm{\bm{X}}_{C^1}}{\starnorm{\bm{X}}^2}\abs{s-s'}^{-1}.
\end{equation*}
For the remaining inequalities, we make repeated use of the following:
\begin{equation}\label{psXdX}
    \abs{\pd{s}X_i- \frac{\Delta X_i}{s-s'}}
    %=&\pd{s}X_i-\frac{1}{s-s'}\int_0^1 \frac{d}{d \theta}  X_i \paren{\theta s+\paren{1-\theta}s'}d\theta\\
    =\abs{\int_0^1 \paren{\pd{s}X_i\paren{s}-\pd{s}X_i\paren{\theta s+\paren{1-\theta}s'}} d\theta}\leq \norm{\bm{X}}_{C^2}\abs{s-s'}.
\end{equation}
From this, the first inequality in \eqref{Biest} is immediate. Let us move on to the first inequality in \eqref{Aiest}.
\begin{equation*}
    \pd{s}\paren{\frac{\Delta X_i}{\abs{\Delta\bm{X}}}}
    = \frac{1}{\abs{\Delta\bm{X}}}\paren{\pd{s}X_i- \frac{\Delta X_i}{\abs{\Delta \bm{X}}}\frac{\Delta \bm{X}}{\abs{\Delta \bm{X}}}\cdot \p_s\bm{X}}.
\end{equation*}
The expression in the parenthesis on the right hand side of the above is
\begin{equation*}\label{psXdX2}
\begin{split}
&\abs{\pd{s}X_i- \frac{\Delta X_i}{\abs{\Delta \bm{X}}}\frac{\Delta \bm{X}}{\abs{\Delta \bm{X}}}\cdot \p_s\bm{X}}\\
    =&\abs{\pd{s}X_i- \frac{\Delta X_i}{s-s'}
    +\frac{\Delta X_i}{\abs{\Delta \bm{X}}}\sum_{k=1}^2\frac{\Delta X_k}{\abs{\Delta \bm{X}}}\paren{ \frac{\Delta X_k}{s-s'}-\pd{s}X_k}}\\
    \leq &C\norm{\bm{X}}_{C^2}\abs{s-s'}
\end{split}
\end{equation*}
where we used \eqref{psXdX} in the inequality. From this, we obtain the first inequality in \eqref{Aiest}. The second inequality in \eqref{Aiest} follows 
from the first inequality since
\begin{equation*}
\abs{\p_{s'}A_i}=\abs{\p_{s'}A_i(s',s)}.
\end{equation*}
For the third inequality in \eqref{Aiest}, we have
\begin{align*}
    \pd{s}\pd{s'}\paren{\frac{\Delta X_i}{\abs{\Delta\bm{X}}}}
    =   &\quad\frac{\Delta\bm{X}\cdot \pd{s'}\bm{X}'}{\abs{\Delta\bm{X}}^3}\paren{\pd{s}X_i- \frac{\Delta X_i}{\abs{\Delta \bm{X}}}\frac{\Delta \bm{X}}{\abs{\Delta \bm{X}}}\cdot \p_s\bm{X}}\\
        &+\frac{\Delta\bm{X}\cdot \pd{s}\bm{X}}{\abs{\Delta\bm{X}}^3}\paren{\pd{s'}X_i'- \frac{\Delta X_i}{\abs{\Delta \bm{X}}}\frac{\Delta \bm{X}}{\abs{\Delta \bm{X}}}\cdot \p_{s'}\bm{X}'}\\
        &+\frac{\Delta X_i}{\abs{\Delta\bm{X}}^3}\sum_{j=1}^2\pd{s}X_j
        \paren{\pd{s'}X_j'- \frac{\Delta X_j}{\abs{\Delta \bm{X}}}\frac{\Delta \bm{X}}{\abs{\Delta \bm{X}}}\cdot \p_s\bm{X}}.
        %\paren{\pd{s'}X_j'\abs{\Delta\bm{X}}^2-  \Delta X_j\sum_{k=1}^2\Delta X_k\pd{s}X_k}.
\end{align*}
The terms in the three parentheses on the right hand side of the above can be estimated in the same way as in \eqref{psXdX}. This yields the third inequality in \eqref{Aiest}.
Finally, for the second inequality in \eqref{Biest}, note first that
\begin{equation*}
 B_i=\frac{\Delta\p_{s} X_i}{\abs{\Delta \bm{X}}}+\frac{(\p_{s'} X'_i -\Delta X_i/(s-s'))}{\abs{\Delta \bm{X}}}=: B_{i,1}+B_{i,2}
 \end{equation*}
Using \eqref{YXest}, we have
 \begin{equation*}
 \abs{\p_sB_{i,1}}\leq C\frac{\norm{\bm{X}}_{C^2}^2}{\starnorm{\bm{X}}^2}\abs{s-s'}^{-1}.
 \end{equation*}
For $B_{i,2}$, we have
\begin{align*}
    \pd{s}B_{i,2}=-\frac{(\p_{s} X_i -\Delta X_i/(s-s'))}{\paren{s-s'}\abs{\Delta \bm{X}}}-\frac{\Delta\bm{X}\cdot\pd{s}\bm{X}}{\abs{\Delta \bm{X}}^3}\paren{\p_{s'} X'_i -\frac{\Delta X_i}{s-s'}}.
\end{align*}
Using \eqref{psXdX}, we see that
\begin{equation*}
\abs{\p_s B_{i,2}}\leq C\frac{\norm{\bm{X}}_{C^2}^2}{\starnorm{\bm{X}}^2}\abs{s-s'}^{-1}.
\end{equation*}
Combining the above estimates on $\p_s B_{i,1}$ and $\p_s B_{i,2}$, we obtain the second inequality in \eqref{Biest}.
\end{proof}

The above lemma allows us to prove the following estimates on $F_C[\bm{g}]$ and $F_T[\bm{g}]$, defined in \eqref{QFT} and \eqref{FC}.
\begin{proposition}\label{p:kernelest01}
Let $\bm{X}\in C^{2}(\mbs)$ with $\starnorm{\bm{X}} > 0$, $\bm{g} \in C^{0}(\mbs)$. Then, for any $\alpha \in (0, 1)$,
\begin{equation*}
    \hnorm{F_C[\bm{g}]}{0}{\alpha}\leq C\frac{\chnorm{\bm{X}}{2}^2}{\starnorm{\bm{X}}^2}\chnorm{\bm{g}}{0}.\label{e:kernelest01_1}
\end{equation*}
and
\begin{equation*}
    \hnorm{F_{T}[\bm{g}]}{0}{\alpha}\leq C\frac{\chnorm{\bm{X}}{2}^2}{\starnorm{\bm{X}}^2}\chnorm{\bm{g}}{0},\label{e:kernelest02_1}
\end{equation*}
where $C$ depends on $\alpha$, and is independent of $\bm{X}$ and $\bm{g}$.
\end{proposition}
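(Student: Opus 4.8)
The plan is to reduce both estimates to pointwise bounds on the relevant kernels and then invoke a standard H\"older‑regularization lemma. Put $M:=\chnorm{\bm{X}}{2}^2/\starnorm{\bm{X}}^2$, and note $M\ge 1$ because $\starnorm{\bm{X}}\le\abs{\p_s\bm{X}(s)}\le\chnorm{\bm{X}}{2}$ for every $s$. I will first record the routine fact that a scalar kernel $K$ on $\mbs\times\mbs$ satisfying $\abs{K(s,s')}\le CM$ and $\abs{\p_sK(s,s')}\le CM\abs{s-s'}^{-1}$ defines, via $\bm{g}\mapsto\int_{\mbs}K(s,s')\bm{g}(s')\,ds'$, a bounded map $C^0(\mbs)\to C^{0,\alpha}(\mbs)$ for every $\alpha\in(0,1)$ with norm $\le C_\alpha M$: the $C^0$‑bound is immediate, and for the seminorm one writes the difference of outputs at $s,s''$ as $\int(K(s,s')-K(s'',s'))\bm{g}(s')\,ds'$, splitting the $s'$‑integral into $\abs{s'-s}<2\abs{s-s''}$ (use the sup‑bound on $K$) and $\abs{s'-s}\ge2\abs{s-s''}$ (use the mean value theorem with the bound on $\p_sK$; the resulting $s'$‑integral is a logarithm, and $\abs{s-s''}\log(1/\abs{s-s''})\le C_\alpha\abs{s-s''}^\alpha$). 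This is the mechanism used in \cite{MRS2019}. Everything then reduces to verifying the two kernel bounds for $F_C$ and for $F_T$, with exactly this $M$.

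For $F_C$, the kernel $K_C(s,s')=\tfrac12\cot(\tfrac{s-s'}{2})-\Delta\bm{X}\cdot\p_s\bm{X}/\abs{\Delta\bm{X}}^2$ is smooth off the diagonal, where $\starnorm{\bm{X}}>0$ keeps $\abs{\Delta\bm{X}}$ bounded below and the bounds are trivial; so I work near the diagonal. Using the quantities $A_i,B_i$ of Lemma \ref{prelim_ests}, the identities $\p_sX_i/\abs{\Delta\bm{X}}=A_i/(s-s')+B_i$ and $\sum_iA_i^2=1$ give $\Delta\bm{X}\cdot\p_s\bm{X}/\abs{\Delta\bm{X}}^2=1/(s-s')+\sum_iA_iB_i$, hence $K_C(s,s')=r_0(s-s')-\sum_iA_iB_i$ with $r_0(z)=\tfrac12\cot(z/2)-1/z$ smooth near $0$. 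The bounds on $\abs{K_C}$ and $\abs{\p_sK_C}$ then follow from $\abs{A_i}\le1$, the estimates \eqref{Aiest}, \eqref{Biest}, and the fact that $\abs{s-s'}^{-1}$ is bounded below on $\mbs$; the powers of $\chnorm{\bm{X}}{2}/\starnorm{\bm{X}}$ multiply up to at most $M$.

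For $F_T$, the key point is that $\big(G_T(\Delta\bm{X})\big)_{ij}=\Delta X_i\,\Delta X_j/\abs{\Delta\bm{X}}^2=A_iA_j$. For fixed $s'$, $s\mapsto A_i(s,s')A_j(s,s')$ is Lipschitz with $s$‑derivative bounded (by \eqref{Aiest}), so differentiation under the integral is legitimate and the $i$‑th component of $F_T[\bm{g}]$ is the operator with scalar kernel $\p_s(A_iA_j)$ acting on $g_j$. Its sup‑bound is \eqref{Aiest}. For the $\p_s$‑bound I need $\p_s^2A_i$, which is not listed in Lemma \ref{prelim_ests}; I will produce it by first extracting the cancellation $\p_sA_i=B_i-A_i\sum_kA_kB_k$ (this is the cancellation underlying the first inequality of \eqref{Aiest}), then differentiating once more and estimating each term with \eqref{Aiest}, \eqref{Biest}. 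The borderline contributions are the ones containing $\p_sB_i$, which carry the admissible $\abs{s-s'}^{-1}$; everything else is $O(M)$. This gives $\abs{\p_s^2(A_iA_j)}\le CM\abs{s-s'}^{-1}$ and closes the argument.

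I expect the $\p_s$‑bound for $F_T$ to be the main obstacle: one is forced to go one order beyond Lemma \ref{prelim_ests}, and a naive double differentiation of $A_i=\Delta X_i/\abs{\Delta\bm{X}}$ yields a non‑integrable $\abs{s-s'}^{-2}$ singularity, so the cancellation identity for $\p_sA_i$ must be exploited to bring it down to $\abs{s-s'}^{-1}$. By comparison, the $F_C$ kernel analysis and the H\"older‑regularization lemma are standard, and the remaining bookkeeping (legitimacy of differentiation under the integral, tracking powers of $\chnorm{\bm{X}}{2}/\starnorm{\bm{X}}$ to land on exactly $M$, and the smooth off‑diagonal part) is routine.
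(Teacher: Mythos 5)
Your proposal is correct, and most of it coincides with the paper's own proof: your decomposition of $K_C$ is exactly the paper's $K_C=R_C+K_L$ (indeed $K_L=-\sum_iA_iB_i$, which is your identity $\Delta\bm{X}\cdot\p_s\bm{X}/\abs{\Delta\bm{X}}^2=(s-s')^{-1}+\sum_iA_iB_i$ in disguise), and your H\"older-regularization lemma — split the difference of outputs into the region $\abs{s'-s}<2\abs{s-s''}$ handled by the sup bound and the complementary region handled by the mean value theorem, ending with $h\abs{\log h}\leq C_\alpha h^\alpha$ — is precisely the paper's estimate of $\diff_h Q_1[u]$ with the intervals $\mc{I}_s,\mc{I}_f$. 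The one genuine difference is how the derivative bound for the $F_T$ kernel is obtained. The paper never estimates a pure second $s$-derivative: it writes $\p_s(A_iA_j)=-\p_{s'}(A_iA_j)+(\p_s+\p_{s'})(A_iA_j)=:-\p_{s'}K+K_2$, so the H\"older step only needs the mixed bound $\abs{\p_s\p_{s'}A_i}\leq C\chnorm{\bm{X}}{2}^2\starnorm{\bm{X}}^{-2}\abs{s-s'}^{-1}$ from \eqref{Aiest} for the first piece and, for $K_2$, the smallness of $\Delta\p_s\bm{X}$ via \eqref{YXest}; everything stays inside Lemma \ref{prelim_ests}. You instead prove $\abs{\p_s^2(A_iA_j)}\leq C\chnorm{\bm{X}}{2}^2\starnorm{\bm{X}}^{-2}\abs{s-s'}^{-1}$ directly, and your cancellation identity $\p_sA_i=B_i-A_i\sum_kA_kB_k$ is correct and does the job: differentiating it term by term produces only $\p_sB$-type terms, which carry the admissible $\abs{s-s'}^{-1}$ by \eqref{Biest}, plus products of $A$, $\p_sA$, $B$ that are merely $O(\chnorm{\bm{X}}{2}^2\starnorm{\bm{X}}^{-2})$, so the naive $\abs{s-s'}^{-2}$ loss is avoided exactly as you anticipated. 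The trade-off: the paper's route avoids going one order beyond Lemma \ref{prelim_ests} at the cost of introducing the auxiliary kernel $K_2$; yours keeps a single kernel but must establish one extra lemma-level estimate. Both routes exploit the same cancellation, feed the identical regularization argument, and (like the paper) tacitly differentiate under the integral sign in the definition of $F_T$ — a step your Lipschitz remark at least makes explicit.
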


\begin{proof}
First, let us estimate $F_{T}[\bm{g}]$. %Since
%\begin{align*}
   % \paren{F_{T}[\mathbf{g}]}_i=\frac{1}{4\pi}\sum_{j=1}^2\pd{s}\int_{\mbs}\frac{\Delta X_i\Delta X_j}{\abs{\Delta\bm{X}}^2}g_j'ds',
%\end{align*}
Let
\begin{align*}
    Q[u]=\p_{s}\int_\mbs K(s,s')u'ds', \; K(s,s')=\frac{\Delta X_i\Delta X_j}{\abs{\Delta\bm{X}}^2}, \; i,j=1,2.
\end{align*}
To estimate $F_T[\bm{g}]$, it suffices to estimate $Q[u]$.
Since
\begin{align*}
     &\pd{s} K(s,s')  
     =-\pd{s'} K(s,s')+\paren{\pd{s}+\pd{s'}}K(s,s')\\
    =&-\pd{s'} K(s,s')-2\frac{\Delta\bm{X}\cdot\Delta\pd{s}\bm{X}}{\abs{\Delta\bm{X}}^2}\frac{\Delta X_i\Delta X_j}{\abs{\Delta\bm{X}}^2}+\frac{\Delta \pd{s}X_i\Delta X_j+\Delta X_i\Delta\pd{s} X_j}{\abs{\Delta\bm{X}}^2}\\
    %  &+\sum_{i=0}^n\left\{\paren{\alpha_i \frac{\Delta\pd{s} Z_i}{\abs{\Delta\bm{Z}_0}}\frac{\Delta W_i}{\abs{\Delta\bm{Z}_0}}+\beta_i\frac{\Delta Z_i}{\abs{\Delta\bm{Z}_0}}\frac{\Delta\pd{s} W_i}{\abs{\Delta\bm{Z}_0}}}\right.\\
    %  &\qquad\qquad\left.\paren{\frac{\Delta Z_i}{\abs{\Delta\bm{Z}_0}}}^{\alpha_i-1}\paren{\frac{\Delta W_i}{\abs{\Delta\bm{Z}_0}}}^{\beta_i-1}\prod_{j \neq i}\paren{\frac{\Delta Z_j}{\abs{\Delta\bm{Z}_0}}}^{\alpha_j}\paren{\frac{\Delta W_j}{\abs{\Delta\bm{Z}_0}}}^{\beta_j}\right\}\\
     :=&-\pd{s'} K(s,s')+K_2(s,s').
\end{align*}
we can split $K[u]$ as
\begin{align*}
    Q[u]=\int_\mbs \paren{-\pd{s'} K(s,s')+K_2(s,s')}u'ds'
                :=Q_1[u]+Q_2[u].
\end{align*}
Let us estimate the kernel of $K_1$. 
\begin{equation}\label{ksprime}
    \abs{\p_{s'}K(s, s')}=\abs{\p_{s'}A_iA_j} \leq \abs{A_i\p_{s'}A_j+(\p_{s'}A_i)A_j}\leq C\frac{\chnorm{\bm{X}}{2}}{\starnorm{\bm{X}}},
    \end{equation}
    where we used the notation of Lemma \ref{prelim_ests} and used \eqref{Aiest} as well as the fact that $\abs{A_i}\leq 1$.
    We thus have
    \begin{equation}\label{K1C0}
    \abs{Q_1[u]}  \leq\int_\mbs \abs{ \p_{s'}K(s,s')u'}ds'\leq C\frac{\chnorm{\bm{X}}{2}}{\starnorm{\bm{X}}}\chnorm{u}{0}.
\end{equation}
To estimate the H\"older norm of $K_1[u]$, we need the following estimate
\begin{equation*}\label{kssprime}
\begin{split}
\abs{\p_s \p_{s'} K(s,s')} &\leq \abs{A_i\p_s\p_{s'}A_j+(\p_s\p_{s'}A_i)A_j+\p_sA_i\p_{s'}A_j+\p_{s'}A_i\p_sA_j}\\
&\leq C\frac{\chnorm{\bm{X}}{2}^2}{\starnorm{\bm{X}}^{2}}\abs{s-s'}^{-1},
\end{split}
\end{equation*}
where we used \eqref{Aiest} in the inequality above.
Without loss of generality, we set $0<h<2\pi$, and define the intervals
\begin{equation*}\label{e:interval_defns}
\mc{I}_s := (s - h/2 < s' < s + 3h/2),\quad \mc{I}_f := \mbs \setminus \mc{I}_s .
\end{equation*}
Then,
\begin{align*}
    \diff_h Q_1[u]&=\int_\mbs \diff_h \p_{s'}k(s,s')u'ds'=\int_{\mc{I}_s} \diff_h \p_{s'}K(s,s')u'ds'+\int_{\mc{I}_f} \diff_h \p_{s'}K(s,s')u'ds'.
\end{align*}
On $\mc{I}_s$,
\begin{equation*}\label{K1Isdiff}
    \abs{\int_{\mc{I}_s} \diff_h \p_{s'}K(s,s')u'ds'}
    \leq C\frac{\chnorm{\bm{X}}{2}}{\starnorm{\bm{X}}}\chnorm{u}{0}\int_{\mc{I}_s} ds'\leq C\frac{\chnorm{\bm{X}}{2}}{\starnorm{\bm{X}}}\chnorm{u}{0}h
\end{equation*}
where we used \eqref{ksprime}.
On $\mc{I}_f$, by Mean Value theorem, there exists $\xi$ between 0 and $h$ s.t. 
\begin{align*}
    \abs{\diff_h \p_{s'}K(s,s')}&=\abs{\pd{s}\p_{s'}k(s+\xi,s')}h\leq Ch\frac{\chnorm{\bm{X}}{2}^2}{\starnorm{\bm{X}}^{2}}|s+\xi - s'|^{-1}\\
    &\leq Ch\frac{\chnorm{\bm{X}}{2}^2}{\starnorm{\bm{X}}^{2}}\paren{|s+h - s'|^{-1}+|s- s'|^{-1}},
\end{align*}
so
\begin{align*}
    \abs{\int_{\mc{I}_f} \diff_h \p_{s'}K(s,s')u'ds'}
    \leq&  Ch\frac{\chnorm{\bm{X}}{2}^2}{\starnorm{\bm{X}}^{2}}\chnorm{u}{0}\int_{\mc{I}_f}\paren{|s+h - s'|^{-1}+|s- s'|^{-1}} ds'\\
    \leq&  C\frac{\chnorm{\bm{X}}{2}^2}{\starnorm{\bm{X}}^{2}}\chnorm{u}{0} h\abs{\log h}%C\frac{\chnorm{\bm{X}}{2}^2}{\starnorm{\bm{X}}^{2}}\chnorm{u}{0} h^\alpha, \; 0<\alpha<1.
\end{align*}
Using the above inequality and together with \eqref{K1Isdiff}, we have
\begin{equation*}
\begin{split}
\abs{\diff_h Q_1[u]}&\leq C\frac{\chnorm{\bm{X}}{2}}{\starnorm{\bm{X}}}\chnorm{u}{0}h+C\frac{\chnorm{\bm{X}}{2}^2}{\starnorm{\bm{X}}^{2}}\chnorm{u}{0} h\abs{\log h}\\
&\leq C\frac{\chnorm{\bm{X}}{2}^2}{\starnorm{\bm{X}}^{2}}\chnorm{u}{0} h^\alpha, \; 0<\alpha<1
\end{split}
\end{equation*}
where the last constant depends on $\alpha$. Together with \eqref{K1C0}, we obtain
\begin{align}\label{K1uest}
    \hnorm{Q_1[u]}{0}{\alpha}\leq C\frac{\chnorm{\bm{X}}{2}^2}{\starnorm{\bm{X}}^{2}}\chnorm{u}{0}.
\end{align}
%where $C$ depends on $\alpha$.

The estimate $Q_2[u]$ follows similarly. First, we have the following estimate, which directly follows from the expression for $K_2$
\begin{equation*}
\abs{K_2(s, s')}\leq C\frac{\chnorm{\bm{X}}{2}}{\starnorm{\bm{X}}}.
\end{equation*}
Using \eqref{YXest} and \eqref{Aiest}, we also have
    \begin{equation*}
    \abs{\pd{s} K_2(s, s')}\leq C\frac{\chnorm{\bm{X}}{2}^{2}}{\starnorm{\bm{X}}^{2}}|s - s'|^{ - 1}.
\end{equation*}
These estimates are the same as those of $\p_{s'}k(s,s')$, and thus, using the same steps as in the estimates for $K_1[u]$, we obtain
\begin{equation*}
\hnorm{Q_2[u]}{0}{\alpha}\leq C\frac{\chnorm{\bm{X}}{2}^2}{\starnorm{\bm{X}}^{2}}\chnorm{u}{0}.
\end{equation*}
where the constant $C$ depends only on $\alpha$. The above together with \eqref{K1uest} yields the estimate on $Q[u]$ and hence on $F_T[\bm{g}]$.

Next, for $F_{C}[\mathbf{g}]$, let us define
\begin{align}
R_C (s, s') :=  \frac{1}{2}\cot\paren{\frac{s - s'}{2}}-\frac{1}{s - s'} .\label{e:R_C_defn} 
\end{align}
We decompose the kernel $K_C(s,s')$ defined in \eqref{FC} into two parts
\begin{align*}
    K_C=K_L+R_C,
\end{align*}
where
\begin{align*}
K_L(s, s') = -\frac{\Delta\bm{X}\cdot \paren{\p_{s}\bm{X} - \frac{\Delta\bm{X}}{s - s}}}{\abs{\Delta\bm{X}}^2}.
\end{align*}
By Taylor expensions for $\cot$ function, $R_C$ is smooth in both $s$ and $s'$, and
\begin{align*}
    \abs{ R_C (s, s')}\leq C_0 \abs{s-s'},~~\abs{\p_s R_C (s, s')}\leq C_1
\end{align*}
for some constant $C_0, C_1$ which depend only on the expansion of $\cot(s)$.
Using \eqref{Aiest} and \eqref{Biest}, we see that
\begin{align*}
 \abs{K_L (s, s')} \leq C\frac{\chnorm{\bm{X}}{2}}{\starnorm{\bm{X}}}, \quad  \abs{\p_{s} K_L (s, s')} \leq C\frac{\chnorm{\bm{X}}{2}^2}{\starnorm{\bm{X}}^2}|s - s'|^{-1}.   
\end{align*}
Combining the the bounds on $R_C$ and $K_L$, we have
\begin{equation*}\label{e:K_C_bounds}
    \abs{K_C (s, s')} \leq C\frac{\chnorm{\bm{X}}{2}}{\starnorm{\bm{X}}}, \quad  \abs{\p_{s} K_C (s, s')} \leq C\frac{\chnorm{\bm{X}}{2}^2}{\starnorm{\bm{X}}^2}|s - s'|^{-1}
\end{equation*}
Using the same procedure used to prove the estimate on $Q_1[u]$, we obtain the estimate
\begin{align*}
    \hnorm{F_C[\bm{g}]}{0}{\alpha}\leq C\frac{\chnorm{\bm{X}}{2}^2}{\starnorm{\bm{X}}^2}\chnorm{\bm{g}}{0}
\end{align*}
for some constant $C$ which only depends on $\alpha$.
\end{proof}
We now have the following result on the the properties of $\p_s\mc{S}[\bm{F}]$.
\begin{corollary}\label{coropsS}
Let $\bm{X}\in C^2(\mbs)$ and $\starnorm{\bm{X}}>0$ and let $\bm{F}\in C^{0,\gamma}(\mbs)$.
Then, $\p_s\mc{S}[\bm{F}]\in C^{0,\gamma}(\mbs)$ and is given by
\begin{equation*}
\p_s\mc{S}[\bm{F}]=-\frac{1}{4}\mc{H}\bm{F}+F_C[\bm{F}]+F_T[\bm{F}].
\end{equation*}
If $\bm{F}\in C^0(\mbs)$, then the above is still valid, but $\p_s\mc{S}[\bm{F}]\in L^p(\mbs), 1<p<\infty$ and $\p_s\mc{S}[F]$ should 
be interpreted in the weak sense. That is, for any $\bm{w}\in C^1(\mbs)$, we have
\begin{equation*}
-\int_\mbs \p_s\bm{w}\cdot \bm{S}[\bm{F}]ds=\int_{\mbs} \bm{w}\cdot \p_s\mc{S}[\bm{F}]ds.
\end{equation*}
\end{corollary}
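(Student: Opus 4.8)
The statement contains three assertions: the explicit formula $\p_s\mc{S}[\bm{F}]=-\tfrac{1}{4}\mc{H}\bm{F}+F_C[\bm{F}]+F_T[\bm{F}]$, the $C^{0,\gamma}$ regularity of this derivative when $\bm{F}\in C^{0,\gamma}(\mbs)$, and the $L^p$ bound plus the weak-derivative identity when $\bm{F}$ is only $C^0(\mbs)$. The plan is to read the formula off the splitting $G=\tfrac{1}{4\pi}(G_L\mb{I}+G_T)$ that underlies \eqref{QFT}--\eqref{FC}, and then close everything with Proposition~\ref{p:kernelest01} and the classical mapping properties of the Hilbert transform. Write $\mc{S}[\bm{F}]=\tfrac{1}{4\pi}\mc{S}_L[\bm{F}]+\tfrac{1}{4\pi}\mc{S}_T[\bm{F}]$, where $\mc{S}_L[\bm{F}](s)=\int_\mbs G_L(\bm{X}-\bm{X}')\bm{F}'\,ds'$ and $\mc{S}_T[\bm{F}](s)=\int_\mbs G_T(\bm{X}-\bm{X}')\bm{F}'\,ds'$. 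For the $G_T$ piece the differentiation is routine: rewriting the $s$-derivative of the kernel $G_T(\bm{X}-\bm{X}')$ via $\p_s=-\p_{s'}+(\p_s+\p_{s'})$ turns it into a kernel that is bounded on $\mbs\times\mbs$ --- this is exactly the cancellation recorded in Lemma~\ref{prelim_ests} --- so differentiation under the integral is legitimate, $\mc{S}_T[\bm{F}]\in C^1(\mbs)$, and $\p_s\mc{S}_T[\bm{F}]=F_T[\bm{F}]$. For the $G_L$ piece one has $-\p_sG_L(\bm{X}-\bm{X}')=\frac{\Delta\bm{X}\cdot\p_s\bm{X}}{\abs{\Delta\bm{X}}^2}$, which carries a genuine $(s-s')^{-1}$ singularity, so the derivative can only be expected to equal a principal value, and \eqref{FC} splits that principal value as $-\tfrac{1}{4}\mc{H}\bm{F}+F_C[\bm{F}]$. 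Assembling the two pieces gives the claimed formula, modulo the justification of the principal-value differentiation.

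Granting the formula, the regularity statements follow from results already in the excerpt. Proposition~\ref{p:kernelest01} gives $F_C[\bm{F}],F_T[\bm{F}]\in C^{0,\alpha}(\mbs)$ for \emph{every} $\alpha\in(0,1)$, since that proposition only requires $\bm{F}\in C^0(\mbs)$. When $\bm{F}\in C^{0,\gamma}(\mbs)$, choose $\alpha=\gamma$ and recall that $\mc{H}$ is bounded on $C^{0,\gamma}(\mbs)$ (Privalov), so each of the three terms lies in $C^{0,\gamma}(\mbs)$ and hence $\p_s\mc{S}[\bm{F}]\in C^{0,\gamma}(\mbs)$. When $\bm{F}$ is only $C^0(\mbs)$, $\mc{H}\bm{F}$ need not be continuous but lies in $L^p(\mbs)$ for every $p\in(1,\infty)$ by the M.~Riesz theorem, while $F_C[\bm{F}],F_T[\bm{F}]\in C^{0,\alpha}\subset L^p(\mbs)$; so the right-hand side of the formula lies in $L^p(\mbs)$.

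To justify the principal-value differentiation of the $G_L$ piece and to obtain the weak identity I would regularize the logarithm: set $G_L^\delta(\bm{r})=-\tfrac{1}{2}\log(\abs{\bm{r}}^2+\delta^2)$ and $\mc{S}^\delta[\bm{F}]=\tfrac{1}{4\pi}\int_\mbs G_L^\delta(\bm{X}-\bm{X}')\bm{F}'\,ds'+\tfrac{1}{4\pi}\mc{S}_T[\bm{F}]$. Then $\mc{S}^\delta[\bm{F}]\in C^1(\mbs)$ (honest differentiation under the integral, the first kernel now being regular enough, the second already handled above), $\mc{S}^\delta[\bm{F}]\to\mc{S}[\bm{F}]$ uniformly by dominated convergence (using $\abs{\Delta\bm{X}}\geq\starnorm{\bm{X}}\abs{s-s'}$), and on the $G_L$ side $\p_s\mc{S}^\delta[\bm{F}]$ has the explicit kernel $-\tfrac{1}{4\pi}\frac{\Delta\bm{X}\cdot\p_s\bm{X}}{\abs{\Delta\bm{X}}^2+\delta^2}$. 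Writing $\frac{\Delta\bm{X}\cdot\p_s\bm{X}}{\abs{\Delta\bm{X}}^2+\delta^2}=\bigl(\tfrac{1}{2}\cot\tfrac{s-s'}{2}-K_C(s,s')\bigr)\frac{\abs{\Delta\bm{X}}^2}{\abs{\Delta\bm{X}}^2+\delta^2}$ separates a mollified Hilbert transform (from the $\tfrac{1}{2}\cot$ factor) from a remainder built on the bounded kernel $K_C$: the remainder converges to the $F_C$-term uniformly in $s$ by dominated convergence, and the mollified Hilbert transform converges to $\mc{H}\bm{F}$ --- uniformly when $\bm{F}\in C^{0,\gamma}$ (split into $\abs{s-s'}\leq 2\delta$, where the $-\log$ structure of the difference quotient and the Hölder modulus force an $O(\delta^\gamma)$ bound, and $\abs{s-s'}>2\delta$, where dominated convergence applies against the integrable majorant $\abs{s-s'}^{\gamma-1}$), and in $L^p(\mbs)$ when $\bm{F}$ is merely $C^0$ (by density of $C^\infty$ in $L^p$ together with the uniform $L^p$ bound on the truncated Hilbert transforms). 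In the $C^{0,\gamma}$ case all these limits are uniform, so the classical theorem on differentiating a uniformly convergent sequence with uniformly convergent derivatives gives the formula pointwise. In the $C^0$ case, for $\bm{w}\in C^1(\mbs)$ we integrate by parts on $\mbs$ in $-\int_\mbs\p_s\bm{w}\cdot\mc{S}^\delta[\bm{F}]\,ds=\int_\mbs\bm{w}\cdot\p_s\mc{S}^\delta[\bm{F}]\,ds$ and let $\delta\to0$, using uniform convergence of $\mc{S}^\delta[\bm{F}]$ and $L^p$ convergence of $\p_s\mc{S}^\delta[\bm{F}]$ (tested against $\bm{w}\in C^1\subset L^{p'}$); this yields the stated weak-derivative identity.

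The step I expect to be the main obstacle is this regularization limit for the $G_L$ piece: proving that the mollified kernels $\frac{\Delta\bm{X}\cdot\p_s\bm{X}}{\abs{\Delta\bm{X}}^2+\delta^2}$ converge to the principal-value operator appearing in the statement in the correct topology --- uniformly on $C^{0,\gamma}$ data, in $L^p$ on $C^0$ data. This forces one to isolate cleanly the truly singular Hilbert part, where only the classical $L^p$ and Hölder theory of $\mc{H}$ and of its truncations is available (together with the comparison $\abs{\bm{X}(s)-\bm{X}(s')}=\abs{s-s'}+O(\abs{s-s'}^2)$ coming from $\bm{X}\in C^2$ and $\starnorm{\bm{X}}>0$), from the remainder with bounded kernel $K_C$, for which elementary dominated convergence and Lemma~\ref{prelim_ests} suffice. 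Everything else --- boundedness of $F_C,F_T$ on $C^0$ from Proposition~\ref{p:kernelest01}, boundedness of $\mc{H}$ on $C^{0,\gamma}$ and on $L^p$, and integration by parts on the circle --- is standard bookkeeping.
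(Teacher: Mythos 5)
Your proposal is correct, and the regularity bookkeeping is the same as the paper's (Proposition \ref{p:kernelest01} with $\alpha=\gamma$, Privalov for $\mc{H}$ on $C^{0,\gamma}$, M.~Riesz for $\mc{H}$ on $L^p$), but the way you justify the identity $\p_s\mc{S}[\bm{F}]=-\tfrac14\mc{H}\bm{F}+F_C[\bm{F}]+F_T[\bm{F}]$ and the weak-derivative statement is genuinely different. The paper simply declares the decompositions \eqref{QFT}, \eqref{FC} ``valid pointwise'' for $\bm{F}\in C^{0,\gamma}$ (treating the principal-value differentiation of the logarithmic layer as standard), and then handles $\bm{F}\in C^0$ by approximating the \emph{density}: it picks $\bm{F}_k\in C^{0,\gamma}$ with $\bm{F}_k\to\bm{F}$ in $C^0$, writes the already-established identity for $\bm{F}_k$ tested against $\p_s(w$-type$)$ functions, and passes to the limit using only the $L^p$-boundedness of the single operator $\mc{H}$ together with the $C^0\to C^{0,\alpha}$ bounds on $F_C,F_T$. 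You instead regularize the \emph{kernel}, replacing $-\log\abs{\bm{r}}$ by $-\tfrac12\log(\abs{\bm{r}}^2+\delta^2)$, and send $\delta\to0$ in both cases. What your route buys is an explicit, self-contained justification of the principal-value differentiation that the paper elides, and it proves the $C^{0,\gamma}$ case and the $C^0$ case by one mechanism. What it costs is exactly the step you flag: for merely continuous $\bm{F}$ you need uniform-in-$\delta$ $L^p$ bounds for the mollified family with kernel $\tfrac12\cot\tfrac{s-s'}{2}\,\abs{\Delta\bm{X}}^2/(\abs{\Delta\bm{X}}^2+\delta^2)$, which forces a comparison with standard smooth truncations of $\mc{H}$ (via $\bigl|\abs{\Delta\bm{X}}^2-(s-s')^2\bigr|\le C\abs{s-s'}^3$ and an $O(\delta)$ error kernel in $L^1$); this is doable but is the heaviest piece of your argument and is only sketched. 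A hybrid would be cheapest: use your $\delta$-regularization only to nail the pointwise formula for H\"older densities, then dispose of the $C^0$ case by the paper's density-in-$C^0$ argument, which never needs uniform bounds for a whole family of operators.
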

\begin{proof}
When $\bm{F}\in C^{0,\gamma}(\mbs)$, the decompositions of \eqref{QFT} and \eqref{FC} are valid pointwise, so we have
\begin{equation*}
\p_s\mc{S}[\bm{F}]=-\frac{1}{4}\mc{H}\bm{F}+F_C[\bm{F}]+F_T[\bm{F}].
\end{equation*}
Note that $\bm{F}\in C^{0,\gamma}(\mbs)\subset C^0(\mbs)$, and thus, by letting $\alpha=\gamma$ in Proposition \ref{p:kernelest01}, we have $F_C[\bm{F}]+F_T[\bm{F}]\in C^{0,\gamma}(\mbs)$. 
Since the Hilbert transform maps $C^{0,\gamma}(\mbs)$ to itself, we have $\p_s\mc{S}[\bm{F}]\in  C^{0,\gamma}(\mbs)$. If $\bm{F}\in C^0(\mbs)$, we need to use a standard approximation argument. 
Let $\bm{F}_k\in C^{0,\gamma}(\mbs)$ be a sequence of 
functions that converges to $\bm{F}$ in $C^0(\mbs)$.  Then, we have
\begin{equation*}
\p_s\mc{S}[\bm{F}_k]=-\frac{1}{4}\mc{H}\bm{F}_k+F_C[\bm{F}_k]+F_T[\bm{F}_k].
\end{equation*}
Multiplying the above by $\bm{w}\in C^1(\mbs)$ and integrating by parts, we have
\begin{equation*}
-\int_\mbs\p_s \bm{w}\cdot \mc{S}[\bm{F}_k]ds=\int_\mbs \bm{w}\cdot\paren{-\frac{1}{4}\mc{H}\bm{F}_k+F_C[\bm{F}_k]+F_T[\bm{F}_k]}ds=\int_\mbs \bm{w}\cdot \p_s\mc{S}[\bm{F}_k]ds.
\end{equation*}
Letting $k\to \infty$ and noting that the Hilbert transform is bounded from $L^p(\mbs)$ to itself when $1<p<\infty$, we obtain the desired result.
\end{proof}
Note that, as a consequence of the above corollary, $\mc{Q}[\bm{F}]=\p_s\bm{X}\cdot \p_s\mc{S}[\bm{F}]$ is well-defined for $\bm{F}\in C^0(\mbs)$.
We now proceed to prove finer properties of the operator $\mc{Q}$ as stated in Proposition \ref{p:rhs_f}.
Define the following commutator
\begin{align*}
    \left[\mathcal{H},f\right]g:= \mathcal{H}\paren{fg}-f\mathcal{H}\paren{g}.
\end{align*}
%to estimate $\mc{H}\bm{F}$ and $\mc{H}\pd{s}\paren{\sigma\pd{s}\bm{X}}$ in section \ref{p:well-posedness}.
%The transform of these terms is that set $\abs{\bm{v}\paren{s}}^2=1$,
%\begin{align}
%\begin{split}
%        &\mc{H}\paren{f \bm{v}}
%        =\paren{\bm{v}\cdot\mc{H}\paren{f \bm{v}}} \bm{v}+\paren{\bm{v}^\perp\cdot\mc{H}\paren{f \bm{v}}} \bm{v}^\perp\\
%        =&\paren{\abs{\bm{v}}^2\mc{H}f+\bm{v}\cdot\mc{H}\paren{f \bm{v}}-\abs{\bm{v}}^2\mc{H}f} \bm{v}+\paren{\bm{v}^\perp\cdot\mc{H}\paren{f \bm{v}}-\mc{H}\paren{ \bm{v}^\perp\cdot f\bm{v}}} \bm{v}^\perp\\
%        =&\paren{\mc{H}f+\bm{v}\cdot\left[\mc{H},\bm{v}\right]f}\bm{v}-\paren{\left[\mc{H},\bm{v}\cdot\right]f\bm{v}}\bm{v}^\perp.
%\end{split}\label{e: commute_decom}
%\end{align}
%Hence, let us estimate the commutator operator $\left[\mathcal{H},f\right]g$
We prove the following estimate.
\begin{proposition}\label{p:commute}
Given $f\in C^{1}(\mbs)$ and $g\in C^0\paren{\mbs}$, then for any $\alpha \in (0, 1)$,
\begin{equation*}
    \hnorm{\left[\mathcal{H},f\right]g}{0}{\alpha}\leq C\chnorm{f}{1}\chnorm{g}{0},\label{e:commut}
\end{equation*}
where $C$ depends on $\alpha$, and is independent of $f,g$.
\end{proposition}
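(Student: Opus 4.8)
The plan is to exploit the elementary fact that spending one derivative of $f$ turns the singular Hilbert kernel into a bounded one. Writing out the definition of $\mathcal{H}$,
\begin{equation*}
[\mathcal{H},f]g(s)=\frac{1}{2\pi}\,\mbox{p.v.}\!\int_{\mbs}\cot\!\paren{\tfrac{s-s'}{2}}\paren{f(s')-f(s)}g(s')\,ds'=:\frac{1}{2\pi}\int_{\mbs}K(s,s')g(s')\,ds',
\end{equation*}
with $K(s,s'):=\cot\!\paren{\tfrac{s-s'}{2}}\paren{f(s')-f(s)}$. Since $\abs{\cot\!\paren{\tfrac{s-s'}{2}}}\leq C\abs{s-s'}^{-1}$ near the diagonal and $\abs{f(s')-f(s)}\leq \chnorm{f}{1}\abs{s-s'}$, the kernel satisfies $\abs{K(s,s')}\leq C\chnorm{f}{1}$; hence the principal value may be dropped and the bound $\supnorm{[\mathcal{H},f]g}\leq C\chnorm{f}{1}\chnorm{g}{0}$ is immediate.

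For the H\"older seminorm I would recycle the near/far splitting already used in the proof of Proposition \ref{p:kernelest01}. First I record the gradient estimate, which follows from
\begin{equation*}
\p_s K(s,s')=-\tfrac12\csc^2\!\paren{\tfrac{s-s'}{2}}\paren{f(s')-f(s)}-\cot\!\paren{\tfrac{s-s'}{2}}f'(s),
\end{equation*}
together with $\abs{\csc^2\!\paren{\tfrac{s-s'}{2}}}\leq C\abs{s-s'}^{-2}$, $\abs{f(s')-f(s)}\leq\chnorm{f}{1}\abs{s-s'}$ and $\abs{f'(s)}\leq\chnorm{f}{1}$, namely $\abs{\p_s K(s,s')}\leq C\chnorm{f}{1}\abs{s-s'}^{-1}$. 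Then, for $0<h<2\pi$, with $\mc{I}_s=(s-h/2,s+3h/2)$ and $\mc{I}_f=\mbs\setminus\mc{I}_s$ exactly as before, I split $\diff_h([\mathcal{H},f]g)(s)=\tfrac1{2\pi}\int_{\mbs}\diff_h K(s,s')\,g(s')\,ds'$ over $\mc{I}_s$ and $\mc{I}_f$. On $\mc{I}_s$, which contains both $s$ and $s+h$ and has length $2h$, the bound $\abs{\diff_h K}\leq \abs{K(s+h,s')}+\abs{K(s,s')}\leq C\chnorm{f}{1}$ contributes $\leq C\chnorm{f}{1}\chnorm{g}{0}h$. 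On $\mc{I}_f$, the mean value theorem and the gradient estimate give $\abs{\diff_h K(s,s')}\leq C\chnorm{f}{1}h\abs{s-s'}^{-1}$, whose integral over $\mc{I}_f$ contributes $\leq C\chnorm{f}{1}\chnorm{g}{0}h\abs{\log h}$. Since $h+h\abs{\log h}\leq C_\alpha h^\alpha$ on $(0,2\pi)$ for every $\alpha\in(0,1)$, adding the two contributions yields $\hsnorm{[\mathcal{H},f]g}{\alpha}\leq C\chnorm{f}{1}\chnorm{g}{0}$, and combining with the $C^0$ bound finishes the proof.

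I do not expect a genuine obstacle: modulo the cancellation $f(s')g(s')-f(s)g(s')=\paren{f(s')-f(s)}g(s')$, the argument is a transcription of the one for Proposition \ref{p:kernelest01} --- the $\mc{I}_s$ part producing an $O(h)$ term and the $\mc{I}_f$ part an $O(h\abs{\log h})$ term. The only point requiring mild care is the bookkeeping near the diagonal of $\mbs$: one must interpret $\abs{s-s'}$ as the geodesic distance $\dist(s,s')$ (so that $\abs{\cot\!\paren{\tfrac{s-s'}{2}}}$ and $\abs{\csc^2\!\paren{\tfrac{s-s'}{2}}}$ are controlled by $\dist(s,s')^{-1}$ and $\dist(s,s')^{-2}$ respectively), and check that on $\mc{I}_f$ the quantities $\dist(s,s')$, $\dist(s+h,s')$ and $\dist(\xi,s')$ for $\xi$ between $s$ and $s+h$ are mutually comparable so the mean value bound can be integrated; both are routine and identical to what is done in the earlier proof.
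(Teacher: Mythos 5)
Your proof is correct and follows essentially the same route as the paper's: the commutator cancellation $\paren{f(s')-f(s)}g(s')$ renders the kernel bounded, its $s$-derivative is $O(\abs{s-s'}^{-1})$, and the near/far splitting from Proposition \ref{p:kernelest01} yields the $h+h\abs{\log h}\leq C_\alpha h^\alpha$ bound. The only cosmetic difference is that you differentiate the full kernel $\cot\paren{\tfrac{s-s'}{2}}\paren{f(s')-f(s)}$ directly, whereas the paper first splits $\tfrac12\cot\paren{\tfrac{s-s'}{2}}$ into $\tfrac{1}{s-s'}$ plus the smooth remainder $R_C$ and treats the two pieces separately; both reduce to the same estimates.
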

\begin{proof}
First, we split $\left[\mathcal{H},f\right]g$ as
\begin{align*}
    \left[\mathcal{H},f\right]g
    =&\frac{1}{2\pi}\int_{\mbs}\cot\paren{\frac{s - s'}{2}}\paren{f\paren{s'}-f\paren{s}}g\paren{s'} ds'\\
    =&\frac{1}{\pi}\int_{\mbs}R_C (s, s')\paren{f\paren{s'}-f\paren{s}}g\paren{s'} ds'-\frac{1}{2\pi}\int_{\mbs}\frac{f\paren{s'}-f\paren{s}}{s'-s}g\paren{s'} ds'\\
    :=&I+II
\end{align*}
where $R_C$ is defined in \eqref{e:R_C_defn}.
Next, given the smoothness of $R_C$, we have
\begin{align*}
    \abs{I}\leq C \chnorm{f}{0}\chnorm{g}{0} \int_\mbs \abs{s-s'}ds'\leq C \chnorm{f}{0}\chnorm{g}{0},
\end{align*}
where $C$ only depends on the expansion of $\cot(s)$.
For $II$,
\begin{align*}
    \abs{II}\leq C\chnorm{f}{1}\chnorm{g}{0}\int_\mbs ds'\leq  C\chnorm{f}{1}\chnorm{g}{0}.
\end{align*}
Let us now examine $\p_s \paren{\left[\mathcal{H},f\right]g}$.
\begin{align*}
    \p_s I
    =&\quad\frac{1}{2\pi}\int_{\mbs}\p_s R_C (s, s')\paren{f\paren{s'}-f\paren{s}}g\paren{s'}ds'\\
     &-\frac{1}{2\pi}\int_{\mbs}R_C (s, s')\p_s f\paren{s}g\paren{s'} ds',
\end{align*}
so
\begin{align*}
    \abs{\p_s I}\leq C  \chnorm{f}{1}\chnorm{g}{0} \int_\mbs \abs{s-s'}ds' +C \chnorm{f}{1}\chnorm{g}{0} \int_\mbs \abs{s-s'}ds'
\end{align*}
where $C$ only depends on the expansion of $\cot(s)$.
For $\diff_h II$,
\begin{align*}
   \abs{ \p_s \frac{{f\paren{s'}-f\paren{s}}}{s-s'}}=\abs{\frac{1}{s-s'}\paren{\frac{f\paren{s}-f\paren{s'}}{s-s'}-\p_s f\paren{s}}}\leq \chnorm{f}{1}\frac{1}{\abs{ s-s'}}.
\end{align*}
With the technique of $\diff_h Q_1[u]$ in Proposition \ref{p:kernelest01},
\begin{align*}
    \abs{\diff_h II}\leq C  \chnorm{f}{1}\chnorm{g}{0} h\paren{1+\log h}\leq C  \chnorm{f}{1}\chnorm{g}{0} h^\alpha
\end{align*}
where $C$ only depends on $\alpha$. We thus obtain the desired estimate.
\end{proof}

%\section{Well-Posedness}\label{p:well-posedness}
We are now ready to prove Proposition \ref{p:rhs_f}.
\begin{proof}[Proof of Proposition \ref{p:rhs_f}]
Recall that 
\begin{equation*}\label{QF}
    \mc{Q}[\bm{F}]=-\frac{1}{4}\pd{s}\bm{X}\cdot\mc{H}\bm{F}+\frac{1}{4\pi}\pd{s}\bm{X}\cdot\paren{F_{C}\left[\bm{F}\right]+F_{T}\left[\bm{F}\right]}.
\end{equation*}
For the $\pd{s}\bm{X}\cdot F_{C}\left[\bm{F}\right]$ term,
\begin{align*}
    \pd{s}\bm{X}\cdot F_{C}\left[\bm{F}\right]=\pd{s}\bm{X}\cdot F_{C}\left[f_1 \p_{s}\bm{X}\right]+\pd{s}\bm{X}\cdot F_{C}\left[f_2 \p_{s}\bm{X}^\perp\right].
\end{align*}
By Proposition \ref{p:kernelest01},
\begin{align*}
        &\hnorm{\pd{s}\bm{X}\cdot F_{C}\left[f_1 \p_{s}\bm{X}\right]}{0}{\alpha}
    \leq C\hnorm{\p_{s}\bm{X}}{0}{\alpha}\hnorm{F_C \left[f_1 \p_{s}\bm{X}\right]}{0}{\alpha}\\
    \leq& C\frac{\chnorm{\bm{X}}{2}^3}{\starnorm{\bm{X}}^2}\chnorm{f_1 \p_{s}\bm{X}}{0}
    \leq C\frac{\chnorm{\bm{X}}{2}^3}{\starnorm{\bm{X}}^2}\chnorm{f_1 }{0}
\end{align*}
where we used $\abs{\p_s\bm{X}}=1$. Likewise, we have
\begin{align*}
    \hnorm{\pd{s}\bm{X}\cdot F_{C}\left[f_2 \p_{s}\bm{X}^\perp\right]}{0}{\alpha}\leq C\frac{\chnorm{\bm{X}}{2}^3}{\starnorm{\bm{X}}^2}\chnorm{f_2}{0}.
\end{align*}
Similarly, for the $\pd{s}\bm{X}\cdot F_{T}\left[\bm{F}\right]$ term, by Proposition \ref{p:kernelest01}, we obtain
\begin{align*}
        \hnorm{\pd{s}\bm{X}\cdot F_{T}\left[f_1 \p_{s}\bm{X}\right]}{0}{\alpha}
    \leq& C\frac{\chnorm{\bm{X}}{2}^3}{\starnorm{\bm{X}}^2}\chnorm{f_1 }{0},\\
    \hnorm{\pd{s}\bm{X}\cdot F_{T}\left[f_2 \p_{s}\bm{X}^\perp\right]}{0}{\alpha}
    \leq& C\frac{\chnorm{\bm{X}}{2}^3}{\starnorm{\bm{X}}^2}\chnorm{f_2}{0}.
\end{align*}
Let us now consider the first term in \eqref{QF} which involves the Hilbert transform.
First, note that, for $\bm{v}\in C^1$ and $f\in C^0$, we have
\begin{equation}\label{Hfv}
\mc{H}(f\bm{v})=(\mc{H}f)\bm{v}+[\mc{H},\bm{v}\cdot]f.
\end{equation}
Applying this to $f=f_1$ and $\bm{v}=\p_s\bm{X}$, we obtain
\begin{align*}
    \p_{s}\bm{X}\cdot\mc{H}\paren{f_1\p_{s}\bm{X}}=\mc{H}f_1+\p_{s}\bm{X}\cdot\paren{\left[\mc{H},\p_{s}\bm{X}\cdot\right]f_1}
\end{align*}
By Proposition \ref{p:commute}, we get
\begin{align*}
    \hnorm{\p_{s}\bm{X}\cdot\paren{\left[\mc{H},\p_{s}\bm{X}\right]f_1}}{0}{\alpha}
    \leq C\hnorm{\p_{s}\bm{X}}{0}{\alpha}\hnorm{\left[\mc{H},\p_{s}\bm{X}\cdot\right]f_1}{0}{\alpha}
    \leq C\chnorm{\bm{X}}{2}^2 \chnorm{f_1}{0}.
\end{align*}
Applying \eqref{Hfv} with $f=f_2$ and $\bm{v}=\p_{s}\bm{X}^\perp$ and using Proposition \ref{p:commute}, we obtain
\begin{align*}
        \hnorm{\p_{s}\bm{X}\cdot\mc{H}\paren{f_2\p_{s}\bm{X}^\perp}}{0}{\alpha}
    \leq\hnorm{\p_s\bm{X}\cdot\paren{\left[\mc{H},\p_{s}\bm{X}^\perp\cdot\right]f_2}}{0}{\alpha}
    \leq C\chnorm{\bm{X}}{2}^2 \chnorm{f_2}{0}.
\end{align*}
Therefore,
\begin{align*}
 \mc{Q}[\bm{F}]&= -\frac{1}{4}\mc{H}f_1 + \mc{M}_1(f_1)+\mc{M}_2(f_2),\\
    \mc{M}_1(f_1)=&-\frac{1}{4}\p_{s}\bm{X}\cdot\left[\mc{H},\p_{s}\bm{X}\right]f_1+\frac{1}{4\pi}\pd{s}\bm{X}\cdot\paren{F_{C}\left[f_1\p_{s}\bm{X}\right]+F_{T}\left[f_1\p_{s}\bm{X}\right]},\\
    \mc{M}_2(f_2)=&\frac{1}{4}\left[\mc{H},\p_{s}\bm{X}^\perp\cdot\right]f_2\p_{s}\bm{X}^\perp+\frac{1}{4\pi}\pd{s}\bm{X}\cdot\paren{F_{C}\left[f_2\p_{s}\bm{X}^\perp\right]+F_{T}\left[f_2\p_{s}\bm{X}^\perp\right]}.
\end{align*}
where $\mc{M}_1, \mc{M}_2$ are bounded maps from $C^0(\mathbb{S}^1)$ to $C^{0,\alpha}(\mbs)$ for any $\alpha\in (0,1)$. 
The statement on the mapping properties of $\mc{Q}$ follows from the well-known 
fact that the Hilbert transform is a bounded operator from $C^{0,\alpha}(\mbs)$ to itself for $\alpha\in (0,1)$ (see, for example, Section I.8.4 of \cite{katznelson2004introduction}).
\end{proof}

\begin{proof}[Proof of Proposition \ref{p:L_mapping_prop}]
%Let $\bm{X}\in C^{2}$. Further, suppose $\starnorm{\bm{X}} > 0$. 
% We will investigate the mapping properties of 
% \begin{align*}
% \mc{L}\sigma &= \p_{s}\bm{X}\cdot \p_{s}\mc{S}[\p_{s}(\sigma\p_{s}\bm{X})] = \p_{s}\bm{X} \cdot \p_{s} \int_{\mbs} G(\bm{X}, \bm{X}')\p_{s'}\paren{\sigma' \p_{s'}\bm{X}'}ds'\\
% &= \p_{s}\bm{X} \cdot \p_{s} \int_{\mbs} G_L(\bm{X}, \bm{X}')\p_{s'}\paren{\sigma' \p_{s'}\bm{X}'}ds'\\
% &+ \p_{s}\bm{X} \cdot \p_{s} \int_{\mbs} G_T(\bm{X}, \bm{X}')\p_{s'}\paren{\sigma' \p_{s'}\bm{X}'}ds' =: \mc{L}_L \sigma + \mc{L}_T \sigma.
% \end{align*}
% separately. We will show that $\mc{L}_L$ has the desired mapping properties and note that similar considerations hold for $\mc{L}_T$. We can rewrite the kernel of $\mc{L}_L$ as 
% \begin{align*}
% 4\pi\p_{s}G(\bm{X}, \bm{X}') = \frac{-\Delta\bm{X}\cdot \p_{s}\bm{X}}{\abs{\Delta\bm{X}}^2} = \frac{\Delta\bm{X}\cdot \p_{s'}\bm{X}'}{\abs{\Delta\bm{X}}^2} - \frac{\Delta\bm{X}\cdot\Delta\p_{s}\bm{X}}{\abs{\Delta\bm{X}}^2}.
% \end{align*}
Recall that
\begin{align*}
    \mc{L}\sigma=& \mc{Q}[\pd{s}\paren{\sigma\pd{s}\bm{X}}].
\end{align*}
Since
\begin{align*}
    \pd{s}\paren{\sigma\pd{s}\bm{X}}=\pd{s}\sigma\pd{s}\bm{X}+\sigma\pdd{s}{2}\bm{X}=\pd{s}\sigma\pd{s}\bm{X}+\widetilde{\sigma}\pd{s}\bm{X}^\perp
\end{align*}
where $\widetilde{\sigma}=\sigma\pd{s}\bm{X}^\perp\cdot\pdd{s}{2}\bm{X}$, we have
\begin{align*}
    \mc{L}\sigma=-\frac{1}{4}\mc{H}\pd{s}\sigma + \mc{M}_1(\pd{s}\sigma)+\mc{M}_2(\widetilde{\sigma})
\end{align*}
Since $\norm{\wt{\sigma}}_{C^0}\leq \norm{\bm{X}}_{C^2}\norm{\sigma}_{C^0}$, the result follows by Proposition \ref{p:rhs_f}.
\end{proof}

\subsection{Proof of Well-posedness}
We are now ready to prove the well-posedness of the tension determination problem.
\begin{proof}[Proof of Theorem \ref{t: well-posed_noncircle}, when $\Gamma$ is not a circle]
By Proposition \ref{prop:class_well}, solving the tension determination problem in the sense of Definition \ref{STDP} is equivalent to finding a $\sigma$ 
satisfying
\begin{equation}\label{STDP1}
\int_{\mbs} \p_s\paren{w\p_s\bm{X}}\cdot \mc{S}(\p_s(\sigma\p_s \bm{X})+\bm{F})ds=0 \text{ for any } w\in C^1(\mbs),
\end{equation}
which, thanks to and Proposition \ref{coropsS}, is equivalent to solving the following equation for $\sigma$
\begin{equation}\label{LsigQF}
\mc{L}\sigma = -\mc{Q}[\bm{F}].
\end{equation}
We would thus like to show that the above equation \eqref{LsigQF} has a unique solution in $C^{1, \gamma}(\mbs)$ when $\Gamma$ is not a circle. Proposition \ref{p:L_mapping_prop} implies
\begin{align*}
\mc{L}\sigma = \paren{-\frac{1}{4}\mc{H}\p_{s} + \mc{M}} \sigma
\end{align*}
where $\mc{M} $ is a bounded operator from $C^{1,\gamma}(\mathbb{S}^1)$ to $C^{\alpha}(\mathbb{S}^1)$ for any $\alpha$ in $(0, 1)$. We may take $\gamma<\alpha$. 
Thus, solving the above equation is equivalent to solving
\begin{align*}
\paren{\paren{I + \frac{1}{4}\mc{H}\p_{s}} - \paren{I + \mc{M}}} \sigma = -\mc{Q}[\bm{F}]
\end{align*}
where $I$ is the identity map.
It is well-known that 
\begin{align*}
\paren{I + \frac{1}{4}\mc{H}\p_{s}}^{-1}
\end{align*}
is a bounded operator from $C^{0,\alpha}(\mbs)$ to $C^{1,\alpha}(\mbs)$
for any $0 < \alpha < 1$. Thus, solving \eqref{e:rewrite_inextensible} is equivalent to solving
\begin{align*}
\paren{ I + \mc{K}} \sigma = \tilde{F}
\end{align*}
where 
\begin{align*}
\mc{K} &= \paren{I + \frac{1}{4}\mc{H}\p_{s}}^{-1}(I + \mc{M}),\\
\tilde{F} &= \paren{I + \frac{1}{4}\mc{H}\p_{s}}^{-1}\mc{Q}[\bm{F}] \in C^{1, \gamma}(\mbs).
\end{align*}
Note that $\mc{K}$ is a bounded operator from $C^{1, \gamma}$ to $C^{1, \alpha}$, and since we have chosen $\alpha>\gamma$, 
$\mc{K}$ is in fact a compact operator from $C^{1, \gamma}$ to itself. Thus, by the Fredholm Alternative Theorem, $I+\mc{K}$ is invertible if and only if
\begin{align*}
(I + \mc{K})\sigma = 0
\end{align*}
has a unique solution. We must thus show that, if $\sigma\in C^{1,\gamma}(\mbs)$ satisfies
$\mc{L}\sigma = 0$ then $\sigma=0$. %Consider the single layer solutions \eqref{upF} with force density $\p_s(\sigma\p_s\bm{X})$.
Let
\begin{equation*}
\bm{u}(\bm{x})=\wh{\mc{S}}[\p_s(\sigma\p_sX)](\bm{x}), \; p(\bm{x})=\mc{P}[\p_s(\sigma\p_sX)][\bm{x}].
\end{equation*}
The above $\bm{u}$ and $p$, together with $\sigma$, solves the tension determination problem in the sense of Definition \ref{STDP}.
In particular, we have (see also \eqref{STDP1}):
\begin{equation}\label{usig0}
\int_{\mbs} \bm{u}(\bm{X}(s))\cdot\p_s(w\p_s\bm{X})ds=0 \text{ for any } w\in C^1(\mbs).
\end{equation}
By Corollary \ref{coroIP}, we have
\begin{equation*}
\frac{1}{2}\int_{\mbr^2\setminus \Gamma} \abs{\nabla \bm{u} + (\nabla\bm{u})^T}^2 d\bm{x}=\int_{\Gamma} \bm{u}(\bm{X}(s)) \cdot \p_{s}\paren{\sigma\p_{s}\bm{X}} ds=0
\end{equation*}
where we used \eqref{usig0} in the last equality.
Noting that $\bm{u}$ is smooth in $\mathbb{R}^2\backslash \Gamma$, we have
\begin{align*}
\nabla\bm{u} + (\nabla\bm{u})^T = 0
\end{align*}
and therefore $\bm{u}$ must be a rigid rotation. Since $\bm{u} \to 0$ as $|\bm{x}|\to\infty$, we conclude that $\bm{u}=0$ in the exterior region $\Omega_2$.
Using Lemma \ref{classical}, $\bm{u}$ is continuous across $\Gamma$, and thus must be identically equal to $0$ in the interior region $\Omega_1$ as well. 
Since $\bm{u}$ and $p$ satisfy the Stokes equation \eqref{e:stokes}, we have
\begin{align*}
\nabla p = 0 \text{ for } \mathbb{R}^2\backslash \Gamma.
\end{align*}
Thus, $p$ is constant within $\Omega_1$ and $\Omega_2$. Let $\diff p := p|_{\Omega_1} - p|_{\Omega_2}$. Inserting this into \eqref{e:stress_jump} with $\bm{F}=0$, we have
\begin{equation}\label{diffp}
\diff p \bm{n} = \p_{s}(\sigma\bm{\tau}) = (\p_{s}\sigma)\bm{\tau} + \sigma \p_{s}\bm{\tau}.%\label{e:tension_kernel}
\end{equation}
Since $\bm{\tau}\cdot \p_{s}\bm{\tau}=0$, we have
\begin{equation}\label{sigmaconst}
\p_{s}\sigma = 0
\end{equation}
which implies that $\sigma$ is constant on $\Gamma$. Equating the normal components implies
\begin{equation}\label{sigkappa}
\diff p = \bm{n}\cdot \sigma \p_{s}\bm{\tau}=-\sigma \kappa(s),
\end{equation}
where $\kappa$ is the curvature.
If $\Gamma$ is not a circle, $\kappa(s)$ is not a constant. Thus, $\sigma=0$.
%Thus, the above is solvable if and only if $\sigma = \diff p = 0$. Therefore, if $\bm{X}$ is not a circle, 
%\begin{align*}
%\mc{L}\sigma = 0
%\end{align*}
%has the unique solution $\sigma = 0$. Therefore, by the Fredholm Alternative theorem, $\sigma\in C^{1, \gamma}(\Gamma)$ is unique.
%
%Next, $\bm{u}(s)=\mc{S}\left[\bm{F}+\p_{s}\paren{\sigma\p_{s}\bm{X}}\right](s)$.
%Since $\bm{F}+\p_{s}\paren{\sigma\p_{s}\bm{X}}\in C^{0, \gamma}(\Gamma)$, by \eqref{e:single_layer_potential} and previous propositions, $\pd{s}\bm{u}\in C^{0, \gamma}(\Gamma)$.
%By the smoothness of the single layer potentials of $\bm{u}$ and $p$, $\bm{u}, p\in C^{\infty}(\mbr^2\setminus\Gamma)$ and $p$ is unique up to a constant.

\end{proof}
%On the other hand, if $\Gamma$ is a circle, then, $\kappa(s)$ is constant and any $\diff p$, $\sigma$ with $\diff p / \sigma = \kappa$ is a solution.
%Therefore, we have to consider the case $\Gamma$ is a circle as a special case.
Let us now turn to the case when $\Gamma$ is a circle.
\begin{proof}[Proof of Theorem \ref{t: well-posed_noncircle} when $\Gamma$ is a circle]
We must solve \eqref{LsigQF}. To do so,
we first prove that, for $\bm{g}\in C^0(\mbs)$, we have
\begin{equation}\label{Qzero}
\int_{\mbs}\mc{Q}[\bm{g}]ds=0.%-\int_{\mbs} \p_{ss}\bm{X}\cdot \mc{S}[\bm{g}]ds
\end{equation}
Indeed, 
\begin{equation*}
\int_{\mbs}\mc{Q}[\bm{g}]ds=\int_{\mbs}\bm{\tau}\cdot \p_s\mc{S}[\bm{g}]ds=-\int_{\mbs} \p_s^2\bm{X}\cdot \mc{S}[\bm{g}]ds=\int_\mbs\bm{n}\cdot \mc{S}[\bm{g}]ds,
\end{equation*}
where we used the fact that $\Gamma$ is a unit circle, and thus, $\p_s^2\bm{X}=-\bm{n}$. Let $\bm{u}(\bm{x})=\wh{\mc{S}}[\bm{g}](\bm{x})$.
Then, 
\begin{equation}\label{nSg}
\int_\mbs\bm{n}\cdot \mc{S}[\bm{g}]ds=\int_\mbs \bm{n}\cdot \bm{u}(\bm{X}(s))ds=\int_{\Omega_1} \nabla \cdot \bm{u}d\bm{x}=0,
\end{equation}
where we used the divergence theorem and the fact that $\bm{u}$ is divergence free. We thus have \eqref{Qzero}.

Define the following function space with zero average
\begin{equation}\label{barC}
\bar{C}^{k,\gamma}(\mbs)=\lbrace w\in C^{k,\gamma}(\mbs) | \int_{\mbs} w ds=0\rbrace, \quad k=0,1,2,\cdots, \gamma\in (0,1).
\end{equation}
From \eqref{Qzero}, we see that $\mc{Q}[\bm{F}]\in \bar{C}^{0,\gamma}(\mbs)$.
When $\sigma\in \bar{C}^{1,\gamma}(\mbs)\subset C^{1,\gamma}(\mbs)$, clearly, $\mc{L}\sigma\in C^{0,\gamma}(\mbs)$. Since $\mc{L}\sigma=\mc{Q}[\p_s(\sigma\p_s\bm{X})]$, 
we see from \eqref{Qzero} that in fact, $\mc{L}\sigma\in \bar{C}^{0,\gamma}(\mbs)$.
We may thus regard \eqref{LsigQF} as an equation for $\sigma\in \bar{C}^{1,\gamma}(\mbs)$ with right hand side in $\bar{C}^{0,\gamma}(\mbs)$.
The question of well-posedness thus reduces to the question of invertibility of $\mc{L}$ as a map from $\bar{C}^{1,\gamma}(\mbs)$ to $\bar{C}^{0,\gamma}(\mbs)$.

Note that the operator $\mc{H}\p_s$ maps $\bar{C}^{1,\gamma}(\mbs)$ to $\bar{C}^{0,\gamma}(\mbs)$.
We may thus use exactly the same argument as in the non-circle case to show that $\mc{L}$ is invertible if and only if its kernel is trivial.
Thus, suppose $\mc{L}\sigma=0$. Again, using exactly the same argument as in the non-circle case, we deduce \eqref{diffp}, from which we 
see that $\sigma$ must be a constant, as in \eqref{sigmaconst}. However, since $\kappa=1$ (does not depend on $s$) for a circle, we cannot conclude that $\sigma=0$ as in \eqref{sigkappa}.
The kernel of $\mc{L}$ thus consists of constant $\sigma$. Since we have restricted $\mc{L}$ to act on $\bar{C}^{1,\gamma}(\mbs)$, this implies that $\sigma=0$.

For $\bm{F}$ satisfying the assumptions of the theorem statement, we thus see that \eqref{LsigQF} has a solution $\sigma_0\in \bar{C}^{1,\gamma}(\mbs)$. If we take any 
general solution $\sigma\in C^{1,\gamma}(\mbs)$, we have
\begin{equation*}
\mc{L}(\sigma-\sigma_0)=0.
\end{equation*}
The above argument shows that $\sigma-\sigma_0=c$ for some constant $c$.
\end{proof}
For future use, we collect some results that we proved above.
\begin{lemma}\label{r:Lcirc}
When $\Gamma$ is a circle, $\mc{S}[\p_s\bm{\tau}]=0$. The opearator $\mc{L}$ has an eigenvalue of $0$ with the constant functions as eigenfunctions. 
Moreover, $\mc{L}$ is an invertible operator from $\bar{C}^{1,\gamma}(\mathbb{S}^1)$ to $\bar{C}^\gamma(\mathbb{S}^1)$, $0<\gamma<1$.
\end{lemma}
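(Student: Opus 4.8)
The statement bundles three facts, and my plan is to extract all of them from the single genuinely new one, $\mc{S}[\p_s\bm{\tau}]=0$; the remaining two will then be essentially rereadings of computations already carried out in the proof of Theorem~\ref{t: well-posed_noncircle} for the circle.

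To prove $\mc{S}[\p_s\bm{\tau}]=0$, I would first normalize: since $\Gamma$ has length $2\pi$, a circle is a \emph{unit} circle, and because $\mc{S},\mc{P},\wh{\mc{S}}$ depend on $\bm{X}$ only through differences $\Delta\bm{X}$, I may translate and shift the parameter so that $\bm{X}(s)=(\cos s,\sin s)$. Then $\bm{\tau}=(-\sin s,\cos s)$, the outward normal is $\bm{n}(s)=\mc{R}_{\pi/2}\bm{\tau}=\bm{X}(s)$, and $\p_s\bm{\tau}=\pdd{s}{2}\bm{X}=-\bm{X}=-\bm{n}$; so it suffices to show $\wh{\mc{S}}[\bm{n}]\equiv 0$ on $\mbr^{2}$ and then restrict to $\Gamma$. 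The approach is via uniqueness. Since $\int_{\mbs}\bm{n}\,ds=0$, Theorem~\ref{classical} gives that $\bm{u}=\wh{\mc{S}}[\bm{n}]$, $p=\mc{P}[\bm{n}]$ is a solution of the Stokes interface problem \eqref{Stokes_interface} with $\bm{F}=\bm{n}$ satisfying \eqref{updecay}. But one can write down a second solution by hand: $\bm{u}^{*}\equiv 0$, $p^{*}=-\chi_{\Omega_1}$ (with $\chi_{\Omega_1}$ as in \eqref{indOmega1}). This pair lies in the spaces \eqref{upfnspace}, solves the Stokes equations trivially, has continuous velocity, and its stress $\Sigma^{*}=-p^{*}\mathbb{I}$ is piecewise constant with uniform one-sided limits $\bm{F}_{\Omega_1}=\mathbb{I}\bm{n}=\bm{n}$, $\bm{F}_{\Omega_2}=0$ in the sense of item~\ref{item:stressjump} of Definition~\ref{STDP}, so $\jump{\Sigma^{*}\bm{n}}=\bm{n}$; and it decays as in \eqref{updecay}. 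By Proposition~\ref{prop:class_well}, $\wh{\mc{S}}[\bm{n}]=\bm{u}^{*}\equiv 0$, hence $\mc{S}[\bm{n}]=0$ and therefore $\mc{S}[\p_s\bm{\tau}]=-\mc{S}[\bm{n}]=0$. If one prefers to avoid the uniqueness statement, the same conclusion follows from a short direct computation: with $\abs{\Delta\bm{X}}^{2}=4\sin^{2}\!\tfrac{s-s'}{2}$ and $\Delta\bm{X}\cdot\bm{n}(s')=\cos(s-s')-1=-2\sin^{2}\!\tfrac{s-s'}{2}$, the stresslet part of $\mc{S}[\bm{n}]$ equals $-\tfrac14\bm{X}(s)$, while the Fourier series $\log\abs{\sin\tfrac{u}{2}}=-\log 2-\sum_{k\ge1}\tfrac{\cos ku}{k}$ makes the logarithmic part equal $+\tfrac14\bm{X}(s)$, and the two cancel.

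Granting this, the second assertion is immediate: $\mc{L}1=\mc{Q}[\p_s(1\cdot\p_s\bm{X})]=\mc{Q}[\p_s\bm{\tau}]=\p_s\bm{X}\cdot\p_s\mc{S}[\p_s\bm{\tau}]=0$, so $0$ is an eigenvalue of $\mc{L}$ with the constants among its eigenfunctions (combining with the circle-case analysis in the proof of Theorem~\ref{t: well-posed_noncircle}, where \eqref{diffp}--\eqref{sigmaconst} force any solution of $\mc{L}\sigma=0$ to be constant, one sees the kernel is \emph{exactly} the constants, though that is not needed below). For the third assertion I would reuse the Fredholm argument from the circle case of Theorem~\ref{t: well-posed_noncircle}: by \eqref{Qzero}, $\mc{Q}$ sends $C^{0}(\mbs)$ into mean-zero functions, so $\mc{L}$ maps $\bar{C}^{1,\gamma}(\mbs)$ into $\bar{C}^{0,\gamma}(\mbs)$; writing $\mc{L}=-\tfrac14\mc{H}\p_s+\mc{M}$ as in Proposition~\ref{p:L_mapping_prop} and using that $I+\tfrac14\mc{H}\p_s$ is invertible from $\bar{C}^{1,\alpha}(\mbs)$ onto $\bar{C}^{0,\alpha}(\mbs)$ (the Hilbert transform annihilates constants and preserves the mean-zero subspace, so $\int_\mbs(I+\tfrac14\mc{H}\p_s)f\,ds=\int_\mbs f\,ds$), the Fredholm alternative reduces invertibility of $\mc{L}$ on $\bar{C}^{1,\gamma}(\mbs)$ to triviality of its kernel there, which holds because that kernel consists of constants and $0$ is the only mean-zero constant. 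Hence $\mc{L}\colon\bar{C}^{1,\gamma}(\mbs)\to\bar{C}^{\gamma}(\mbs)$ is invertible.

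The only part requiring genuinely new work is $\mc{S}[\p_s\bm{\tau}]=0$, and even there the argument is short; the sole point to handle with care is checking that the explicit pair $(\bm{u}^{*},p^{*})$ meets the stress condition in the precise one-sided-limit sense of item~\ref{item:stressjump}, which is routine because $\Sigma^{*}$ is piecewise constant. Everything else is bookkeeping over already-established results.
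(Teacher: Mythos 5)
Your proposal is correct, and its second and third parts coincide with the paper's treatment: the paper's proof of this lemma simply observes $\mc{L}1=\mc{Q}[\p_s\bm{\tau}]=0$ and then cites the circle case of the proof of Theorem \ref{t: well-posed_noncircle} for the kernel identification and the invertibility on the mean-zero spaces, exactly the bookkeeping you describe. Where you genuinely diverge is the key identity $\mc{S}[\p_s\bm{\tau}]=0$. The paper gets $\mc{S}[\bm{n}]=0$ by a short duality argument: by \eqref{nSg}, $\int_{\mbs}\bm{n}\cdot\mc{S}[\bm{g}]\,ds=0$ for every $\bm{g}\in C^0(\mbs)$ (divergence theorem applied to the divergence-free field $\wh{\mc{S}}[\bm{g}]$), and the symmetry of the kernel $G(\bm{X}(s)-\bm{X}(s'))$ then gives $\int_{\mbs}\bm{g}\cdot\mc{S}[\bm{n}]\,ds=0$ for all $\bm{g}$, hence $\mc{S}[\bm{n}]=0$; for the unit circle $\p_s\bm{\tau}=-\bm{n}$ finishes it. You instead exhibit the explicit competitor $(\bm{u}^*,p^*)=(0,-\chi_{\Omega_1})$, check it solves the interface problem with $\bm{F}=\bm{n}$ in the sense of Definition \ref{STDP} (your sign check is right: $\Sigma^*=\chi_{\Omega_1}\mb{I}$, so $\bm{F}_{\Omega_1}-\bm{F}_{\Omega_2}=\bm{n}$), and invoke the uniqueness statement of Proposition \ref{prop:class_well}; your Fourier computation of the logarithmic and $G_T$ contributions ($+\tfrac14\bm{X}$ and $-\tfrac14\bm{X}$) is also correct. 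Both routes are valid and of comparable length, and in fact both establish $\wh{\mc{S}}[\bm{n}]=0$ for an arbitrary closed $C^2$ curve, the circle entering only through $\p_s\bm{\tau}=-\bm{n}$. The paper's duality argument is slightly leaner, needing no verification of the pointwise stress-jump condition; yours has the advantage of making transparent the origin of the extra family $p+c_1\chi_{\Omega_1}$, $\sigma+c_1$ in the circle case of Theorem \ref{t: well-posed_noncircle} (a uniform interior pressure balanced by a constant tension), and of being checkable by direct computation.
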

\begin{proof}
The first statement follows from \eqref{nSg}. Indeed we have
\begin{equation*}
\int_{\mbs} \bm{n}\cdot \mc{S}[\bm{g}]ds=\int_{\mbs} \bm{g}\cdot \mc{S}[\bm{n}]ds=0,
\end{equation*}
where we used the symmetry of $\mc{S}$. Since $\bm{g}$ is arbitrary, we see that $\mc{S}[\bm{n}]=\mc{S}[\p_s\bm{\tau}]=0$.
The rest of the statements were already shown in the proof of the Theorem above.
\end{proof}

\section{Behavior of $\mc{L}$ when $\Gamma$ is close to a circle}\label{property_L}
\subsection{Operator $\mc{L}$ under general non-degenerate parametrization of $\Gamma$}\label{s:Ltheta}
In this subsection, we translate our results for $\mc{L}$ in the previous section to the case when $\bm{X}$ is given an arbitrary non-degenerate parametrization.
Let $\bm{X}(s)$ be a simple $C^2$ curve parametrized by the arclength coordinate $s\in \mathbb{S}_L^1=\mathbb{R}/L\mathbb{Z}$. Let us reparametrize this curve 
by a strictly monotone increasing $C^2$ function $s=\Phi(\theta), \; \theta\in \mathbb{S}^1=\mathbb{R}/2\pi\mathbb{Z}$ 
so that the new parametrization is given by $\wt{\bm{X}}(\theta)=\bm{X}(\Phi(\theta))$. 
Now that our curve is parametrized by $\theta$, the stress jump condition \eqref{e:stress_jump} in the tension determination problem is replaced by
\begin{equation*}
\jump{\paren{\nabla\bm{u} + (\nabla\bm{u})^T - p\mb{I}}\bm{n}}\p_\theta\Phi = \wt{\bm{F}}(\theta) + \p_{\theta}(\wt{\sigma}(\theta) \wt{\bm{\tau}}(\theta)), \; 
\wt{\bm{\tau}}=\frac{\p_\theta \wt{\bm{X}}}{\abs{\p_\theta \wt{\bm{X}}}} \quad \text{on } \Gamma.
\end{equation*}
where $\wt{\bm{F}}=\bm{F}(\Phi(\theta))\p_\theta\Phi$ and $\wt{\sigma}(\theta)=\sigma(\Phi(\theta))$. Using the steps identical to those that led to equation \eqref{e:L_defn}, we see that the equation satisfied by $\wt{\sigma}$ is given by
\begin{equation}\label{defLtheta}
\begin{split}
\mc{L}_\theta \wt{\sigma}&=-\mc{Q}_\theta[\wt{\bm{F}}], \; \mc{L}_\theta\wt{\sigma}=\mc{Q}_\theta[\p_\theta(\wt{\sigma}\wt{\bm{\tau}})], \; \mc{Q}_\theta[\wt{\bm{F}}]=\wt{\bm{\tau}}\cdot\p_\theta \mc{S}_\theta [\wt{\bm{F}}], \\
\mc{S}_\theta [\bm{g}]&=\int_{\mathbb{S}^1} G(\wt{\bm{X}}(\theta)-\wt{\bm{X}}(\theta'))\bm{g}(\theta')d\theta'.
\end{split}
\end{equation}
Let $\mc{L}_s$ be the operator $\mc{L}$ in \eqref{e:L_defn} (except here, the length of the curve is not necessarily normalized to $2\pi$).
It is easily seen that the map $\mc{L}_\theta$ and $\mc{L}_s$ have the following relationship. For a function $w(s), \; s\in \mathbb{S}_L^1$, define the map:
\begin{equation*}
(\Phi^* w)(\theta)=w(\Phi(\theta))
\end{equation*}
and likewise for $(\Phi^{-1})^*$. We have
\begin{equation*}
\mc{L}_\theta g(\theta)= (\p_\theta \Phi)((\mc{L}_sg(\Phi^{-1}(s)))(\Phi(\theta)))=(\p_\theta \Phi)\paren{\Phi^*\circ \mc{L}_s \circ (\Phi^{-1})^* g}(\theta)
\end{equation*}
Recall from Proposition \ref{p:L_mapping_prop} that $\mc{L}_s$ is a bounded operator from $C^{1,\gamma}(\mathbb{S}_L^1)$ to $C^\gamma(\mathbb{S}_L^1)$ where $0<\gamma<1$
(Proposition \ref{p:L_mapping_prop} proves this when length $L=2\pi$, but it is clear that this statement remains true for arbitrary $L$). 
Note also that $\Phi^*$ is an isomorphism from $C^{\alpha}(\mathbb{S}_L^1)$ to $C^\alpha(\mathbb{S}^1)$ for any $0\leq \alpha\leq 2$.
The map $(\Phi^{-1})^{*}$ is simply the inverse map of $\Phi^*$. 
Multiplication by $\p_\theta \Phi$ is an isomorphism from $C^\alpha(\mathbb{S}^1)$ to itself as long as $0\leq \alpha\leq 1$. 
The above expression thus implies that $\mc{L}_\theta$ is a bounded operator from $C^{1,\gamma}(\mathbb{S}^1)$ to $C^\gamma(\mathbb{S}^1)$ for $0<\gamma<1$.
Furthermore, we see that $\mc{L}_\theta$ is invertible if and only if $\mc{L}_s$ is invertible. 
We have the following proposition. It is convenient to introduce the $L^2$ inner product:
\begin{equation*}
\dual{f}{g}=\int_{\mathbb{S}^1} f(\theta)g(\theta)d\theta,\quad \dual{\bm{f}}{\bm{g}}=\int_{\mathbb{S}^1} \bm{f}(\theta)\cdot \bm{g}(\theta)d\theta.
\end{equation*}
% where $\bar{g}$ is the complex conjugate. 
We also note that the function spaces $\bar{C}^{k,\gamma}(\mathbb{S}^1)$ was defined in \eqref{barC}.
\begin{proposition}\label{p:Ltheta}
Let $\Gamma$ be a simple closed curve in $\mathbb{R}^2$ such that its arclength parametrization $\bm{X}(s)$ is a $C^2$ function.
Let $\theta$ be an alternate $C^2$ nondegenerate parametrization such that  $s=\Phi(\theta)$, $\p_\theta\Phi>0$.
The operator $\mc{L}_\theta$ defined in \eqref{defLtheta} is a bounded operator from $C^{1,\gamma}(\mathbb{S}^1)$ to $C^{\gamma}(\mathbb{S}^1)$ where $0<\gamma<1$. 
The spectrum of $\mc{L}_\theta$ consist of discrete and real eigenvalues.
Furthermore we have the following.
\begin{enumerate}
\item If $\Gamma$ is not a circle, all eigenvalues are negative. 
\item If $\Gamma$ is a circle, the eigenvalues are negative except for a simple eigenvalue at $0$. 
\end{enumerate}
\end{proposition}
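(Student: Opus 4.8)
The plan is to combine an analytic-Fredholm argument, which yields discreteness of the spectrum, with the negative semi-definiteness of $\mc{L}_\theta$ on $L^2(\mathbb{S}^1)$, which yields realness and the sign of the eigenvalues; the behaviour at $0$ is then read off from the well-posedness results of Section~\ref{est_IOs}. The boundedness $\mc{L}_\theta:C^{1,\gamma}(\mathbb{S}^1)\to C^\gamma(\mathbb{S}^1)$ is already a consequence of the conjugation identity $\mc{L}_\theta=(\p_\theta\Phi)\,\Phi^*\circ\mc{L}_s\circ(\Phi^{-1})^*$ together with Proposition~\ref{p:L_mapping_prop}. First I would record, writing $-\tfrac14\mc{H}\p_s=-\paren{I+\tfrac14\mc{H}\p_s}+I$ in Proposition~\ref{p:L_mapping_prop}, the decomposition $\mc{L}_\theta-\lambda I=\mc{E}+(\mc{C}-\lambda I)$, where $\mc{E}:=-(\p_\theta\Phi)\,\Phi^*\circ\paren{I+\tfrac14\mc{H}\p_s}\circ(\Phi^{-1})^*$ is an isomorphism of $C^{1,\gamma}(\mathbb{S}^1)$ onto $C^\gamma(\mathbb{S}^1)$ (a composition of isomorphisms, using invertibility of $I+\tfrac14\mc{H}\p_s$ from $C^{1,\gamma}$ to $C^\gamma$), and $\mc{C}:=(\p_\theta\Phi)\,\Phi^*\circ(I+\mc{M})\circ(\Phi^{-1})^*$ with $\mc{M}$ as in Proposition~\ref{p:L_mapping_prop}. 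Since $\mc{M}$ maps $C^{1,\gamma}$ into $C^{0,\alpha}$ for some $\alpha>\gamma$ and both $C^{0,\alpha}(\mathbb{S}^1)\hookrightarrow C^\gamma(\mathbb{S}^1)$ and $C^{1,\gamma}(\mathbb{S}^1)\hookrightarrow C^\gamma(\mathbb{S}^1)$ are compact, the family $\lambda\mapsto\mc{C}-\lambda I$ is compact from $C^{1,\gamma}(\mathbb{S}^1)$ to $C^\gamma(\mathbb{S}^1)$ and affine (hence analytic) in $\lambda$; consequently $\mc{L}_\theta-\lambda I$ is invertible iff $I+\mc{E}^{-1}(\mc{C}-\lambda I)$ is, which is identity plus an analytic compact family on $C^{1,\gamma}(\mathbb{S}^1)$.

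Next I would show $\mc{L}_\theta$ is symmetric and negative semi-definite on $L^2(\mathbb{S}^1)$. Symmetry is inherited from that of $\mc{L}_s$ — itself a consequence of the symmetry of the Stokeslet $G$, i.e.\ of the single layer potential $\mc{S}$ — via the substitution $s=\Phi(\theta)$, which gives $\dual{\mc{L}_\theta f}{g}=\dual{\mc{L}_s\wt f}{\wt g}_{L^2(\mathbb{S}^1_L)}$ with $\wt f=(\Phi^{-1})^*f$, $\wt g=(\Phi^{-1})^*g$ (the Jacobian $\p_\theta\Phi$ cancels exactly). For the sign, Corollary~\ref{coroIP} with $\bm F=\p_s(\sigma\p_s\bm X)$ gives $-\dual{\mc{L}_s\sigma}{\sigma}_{L^2(\mathbb{S}^1_L)}=\tfrac12\int_{\mathbb{R}^2\setminus\Gamma}\abs{\nabla\bm u+(\nabla\bm u)^T}^2\,d\bm x\ge 0$, hence the same holds for $\mc{L}_\theta$. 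Splitting a complex eigenfunction into real and imaginary parts and using symmetry then forces every eigenvalue to be real and $\le 0$; in particular no $\lambda\in\mathbb{C}\setminus(-\infty,0]$ is an eigenvalue, so — the operator $I+\mc{E}^{-1}(\mc{C}-\lambda I)$ being identity plus compact with trivial kernel there — $\mc{L}_\theta-\lambda I$ is invertible for such $\lambda$. With the resolvent set nonempty, the analytic Fredholm theorem applied to $\lambda\mapsto I+\mc{E}^{-1}(\mc{C}-\lambda I)$ shows $\lambda\mapsto(\mc{L}_\theta-\lambda I)^{-1}$ is meromorphic on $\mathbb{C}$ with a discrete pole set, and that these poles are exactly the eigenvalues (each of finite multiplicity, since $\mc{L}_\theta-\lambda I$ is Fredholm of index $0$). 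Thus the spectrum of $\mc{L}_\theta$ is a discrete set of real, nonpositive eigenvalues.

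For the dichotomy, since $\mc{L}_\theta$ is invertible if and only if $\mc{L}_s$ is, Theorem~\ref{t: well-posed_noncircle} shows that when $\Gamma$ is not a circle $0$ is not an eigenvalue, so all eigenvalues are strictly negative. When $\Gamma$ is a circle, $\mc{L}_\theta 1=(\p_\theta\Phi)\,\Phi^*(\mc{L}_s 1)=0$ by Lemma~\ref{r:Lcirc}, so $0$ is an eigenvalue; and if $\mc{L}_\theta\sigma=0$ then $\mc{L}_s((\Phi^{-1})^*\sigma)=0$, which by the rigidity argument in the proof of Theorem~\ref{t: well-posed_noncircle} (for a circle, $\diff p\,\bm n=\p_s(\sigma\bm\tau)$ forces $\sigma$ constant) implies $\sigma$ is constant, so the $0$-eigenspace is one-dimensional. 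Absence of a Jordan block at $0$ — so the eigenvalue is simple in the full sense — follows from symmetry: if $\mc{L}_\theta\sigma=c\ne 0$ for some constant $c$, then $2\pi c=\dual{\mc{L}_\theta\sigma}{1}=\dual{\sigma}{\mc{L}_\theta 1}=0$, a contradiction.

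The hard part is not the geometric or PDE input — negative semi-definiteness comes from the energy identity of Corollary~\ref{coroIP}, and the behaviour at $0$ from Theorem~\ref{t: well-posed_noncircle} — but the functional-analytic packaging: verifying that $\mc{L}_\theta-\lambda I$ genuinely has the form isomorphism-plus-compact on the Hölder pair $C^{1,\gamma}\to C^\gamma$ (which is exactly what forces the choice $\alpha>\gamma$ in Proposition~\ref{p:L_mapping_prop} and the use of the two compact embeddings), and that the resulting family is analytic in $\lambda$, so that the analytic Fredholm theorem delivers discreteness of the whole spectrum rather than merely of the set of eigenvalues of finite multiplicity.
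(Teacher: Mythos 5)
Your argument is correct, and the essential inputs are the same as the paper's (the decomposition of Proposition \ref{p:L_mapping_prop}, the energy identity of Corollary \ref{coroIP} giving symmetry and negative semi-definiteness in $L^2$, and Theorem \ref{t: well-posed_noncircle} together with Lemma \ref{r:Lcirc} for the behavior at $0$), but your functional-analytic packaging of the discreteness step differs from the paper's. The paper first establishes invertibility of $\mc{L}_\theta$ itself --- in the non-circle case from the well-posedness theorem, in the circle case by restricting to the mean-zero spaces $\bar{C}^{1,\gamma}$, $\bar{C}^{\gamma}$ --- and then obtains discreteness of the spectrum from the compactness of $\mc{L}_\theta^{-1}$ on $C^\gamma(\mathbb{S}^1)$ via Riesz--Schauder theory, treating the circle case by an analogous (omitted) argument. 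You instead work directly with the pencil $\mc{L}_\theta-\lambda I$, writing it as an isomorphism $\mc{E}$ plus a compact family affine in $\lambda$, use the $L^2$ semi-definiteness to show the resolvent set meets $\mathbb{C}\setminus(-\infty,0]$, and invoke the analytic Fredholm theorem. What your route buys: it treats the circle and non-circle cases uniformly (no detour through the mean-zero subspaces), and it makes explicit two points the paper leaves implicit --- that complex eigenvalues are excluded by extending the symmetric bilinear form to real and imaginary parts, and that the zero eigenvalue for the circle is algebraically simple (no Jordan block), via $\dual{\mc{L}_\theta\sigma}{1}=\dual{\sigma}{\mc{L}_\theta 1}=0$. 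What the paper's route buys is brevity, since the invertibility of $\mc{L}_s$ (and hence $\mc{L}_\theta$) had already been proved, so compactness of the inverse is immediate. Both arguments rely on the same unproved-but-standard fact that $I+\tfrac14\mc{H}\p_s$ is an isomorphism from $C^{1,\gamma}$ to $C^{\gamma}$, so no additional gap is introduced there.
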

\begin{proof}
We have already proved the first statement. When $\Gamma$ is not a circle, we know that $\mc{L}_\theta$ is invertible, and thus, $\mc{L}_\theta^{-1}$
exists and it is a bounded operator from $C^{\gamma}(\mathbb{S}^1)$ to $C^{1,\gamma}(\mathbb{S}^1)$. Thus, $\mc{L}_\theta^{-1}$ is a compact operator 
on $\mathbb{C}^\gamma(\mathbb{S}^1)$ with trivial kernel, and thus, the spectrum is discrete and consist of eigenvalues. 
Thus, the spectrum of $\mc{L}_\theta$ consists of eigenvalues, and they are discrete and non-zero.
To prove positivity, we note the following. Let $\sigma$ and $\mu$ be $C^{1,\gamma}(\mathbb{S}^1)$ functions that are real valued.
Then,
\begin{equation}\label{Lthetasym}
\begin{split}
\dual{\mu}{\mc{L}_\theta \sigma}&=\int_{\mathbb{S}^1}\mu(\theta) \wt{\bm{\tau}}(\theta)\cdot\p_\theta G(\bm{X}(\theta)-\bm{X}(\theta'))\paren{\sigma(\theta')\wt{\bm{\tau}}(\theta')}d\theta'd\theta\\
&=-\int_{\mathbb{S}^1}\p_\theta\paren{\mu(\theta)\wt{\bm{\tau}}(\theta)}\cdot G(\bm{X}(\theta)-\bm{X}(\theta'))\paren{\sigma(\theta')\wt{\bm{\tau}}(\theta')}d\theta'd\theta\\
&=\dual{\mc{L}_\theta \mu}{\sigma},
\end{split}
\end{equation}
where we integrated by parts in the second equality and used the symmetry of $G(\bm{X}(\theta)-\bm{X}(\theta'))$ and the Fubini's theorem in the last equality.
Substituting $\mu=\sigma$ into the above expression, we have
\begin{equation}
\begin{split}\label{sigmaLthetasigma}
\dual{\sigma}{\mc{L}_\theta \sigma}&=-\int_{\mathbb{S}^1}\p_\theta\paren{\sigma\wt{\bm{\tau}}}\cdot \mc{S}_\theta[\p_\theta(\sigma \wt{\bm{\tau}})]d\theta.
\end{split}
\end{equation}
As in \eqref{e:u_soln}, define
\begin{equation*}
\bm{u}(\bm{x})= \wh{\mc{S}}_\theta[\p_\theta(\sigma\wt{\bm{\tau}})](\bm{x}):= \int_{\mbs} G(\bm{x} - \bm{X}(\theta))\p_\theta(\sigma \wt{\bm{\tau}})d\theta.
\end{equation*}
Using Corollary \ref{coroIP} with $\Gamma$ parametrized by $\theta$, we find that:
\begin{equation*}
\int_{\mathbb{S}^1}\p_\theta\paren{\sigma\wt{\bm{\tau}}}\cdot \mc{S}_\theta[\p_\theta(\sigma \wt{\bm{\tau}})]d\theta=\frac{1}{2}\int_{\mathbb{R}^2\backslash \Gamma} \abs{\nabla \bm{u}+\paren{\nabla \bm{u}}^T}^2 d\bm{x}\geq 0.
\end{equation*}
Combining the above with \eqref{sigmaLthetasigma}, we have
\begin{equation*}
\dual{\sigma}{\mc{L}_\theta\sigma}\leq 0.
\end{equation*}
The symmetry \eqref{Lthetasym} with the semi-negativity above immediately shows that all eigenvalues must be non-positive. Since the eigenvalues of $\mc{L}_\theta$ are non-zero, 
they must be negative.

When $\Gamma$ is a circle we note that $\mc{L}_s$ is invertible as an operator from $\bar{C}^{1,\gamma}(\mathbb{S}_L^1)$ to $\bar{C}^{\gamma}(\mathbb{S}_L^1)$ as observed in 
proof of Theorem \ref{t: well-posed_noncircle} when $\Gamma$ is a circle (see the discussion following \eqref{barC} and Lemma \ref{r:Lcirc}). 
Using this fact, we can prove our assertion in the same way as in the non-circle case. We omit the details.
\end{proof}
\begin{remark}
The above proof shows that $\mc{L}_\theta$ is a symmetric negative semidefinite operator on $C^{1,\gamma}(\mathbb{S}^1)$, which is a dense subset of $L^2(\mathbb{S}^1)$. 
We thus see that $\mc{L}_\theta$ has a Friedrichs extension as a self-adjoint operator on $L^2(\mathbb{S}^1)$. We will not be making use of this fact in what follows.
\end{remark}

\subsection{Eigenvalue problem for $\mc{L}$ near a circle}\label{s:ep}
We shall henceforth consider the case when $\bm{X}$ is close to a unit circle. Suppose $\Gamma$ is close to a unit circle in the $C^2$ sense.
Then, it is clear that $\bm{X}$ can be written as
\begin{align*}
    \bm{X}(\theta)=\bm{X}_c(\theta)+\bm{Y}(\theta)=(1+g(\theta))\bm{X}_c(\theta), \; \bm{X}_c(\theta)=\begin{pmatrix} \cos(\theta)\\ \sin(\theta) \end{pmatrix}
\end{align*}
where $g(\theta)$ is a $C^2$ function. 
This is simply the polar coordinate representation of the curve $\Gamma$. 
In the above and henceforth, we drop the $\wt{\cdot}$ in Section \ref{s:Ltheta} when referring to quantities parametrized in the $\theta$ coordinate.
We will discuss the propoerties of $\mc{L}_\theta$ when $\bm{X}$ is given by \eqref{Xveps}, which we reproduce here:
\begin{equation*}
    \bm{X}=\bm{X}_\veps=\bm{X}_c+\varepsilon\bm{Y}=\paren{1+\veps g}\bm{X}_c.
\end{equation*}
We shall henceforth drop the subscript $\theta$ when we refer to $\mc{L}_\theta, \mc{S}_\theta$ and $\mc{Q}_\theta$ of \eqref{defLtheta}.
Instead, we will let $\mc{L}_\veps, \mc{S}_\veps, \mc{Q}_\veps$ denote the respective operators when $\bm{X}=\bm{X}_\veps$.
Let us write out $\mc{L}_\veps$
\begin{equation}\label{Lveps}
\begin{split}
    \mc{L}_\varepsilon\sigma
    &=\bm{\tau}_\varepsilon\cdot \pd{\theta}\mc{S}_\varepsilon\left[\pd{\theta}(\sigma\bm{\tau}_\varepsilon)\right]\\
    &=\bm{\tau}_\veps\cdot \frac{1}{4\pi}\pd{\theta}\int_\mbs (G_L(\Delta\bm{X}_\varepsilon)\mathbb{I}+G_T(\Delta \bm{X}_\veps))\pd{\theta'}(\sigma'\bm{\tau}'_\varepsilon)d\theta',\;
    \bm{\tau}_\veps=\frac{\pd{\theta}\bm{X}_\varepsilon}{\abs{\pd{\theta}\bm{X}_\varepsilon}}.
    \end{split}
\end{equation}
where $\sigma'=\sigma(\theta')$ and similarly for other symbols with a prime.

We are interested in the behavior of $\mc{L}_\veps$ when $\veps$ is close to $0$. 
For this purpose, we first examine the regularity of $\mc{L}_\veps$ with respect to $\veps$.
% We fix some notation.
For Banach spaces $U$ and $V$, let us denote by $\mc{B}(U,V)$ the set of Banach space of bounded operators from $U$ to $V$ 
topologized by the uniform operator topology. For a Banach space $W$ and an open interval $I\subset \mathbb{R}$ we shall use the notation $C^n (I; W)$ to denote the 
set of $n$-times continuously differentiable maps from $I$ to $W$. The smooth maps from $I$ to $W$ will be denoted by $C^\infty(I;W)$.
\begin{proposition}\label{p:Lvepsreg}
Suppose $\bm{Y}(\theta)$ is a $C^2$ function. Then, there is an $\veps_0>0$ such that $\mc{L}_\veps$ is a smooth map from $\veps\in (-\veps_0,\veps_0)$ to $\mc{B}(C^{1,\gamma}(\mathbb{S}^1),C^{\gamma}(\mathbb{S}^1))$. In other words, $\mc{L}_\veps\in C^\infty((-\epsilon_0,\epsilon_0);\mc{B}(C^{1,\gamma}(\mathbb{S}^1),C^{\gamma}(\mathbb{S}^1)))$. 
\end{proposition}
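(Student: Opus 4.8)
The plan is to isolate the $\veps$-independent singular part of $\mc{L}_\veps$ and reduce the statement to the smooth $\veps$-dependence of a finite list of elementary operators. First I would fix $\veps_0>0$ small enough that, for $\abs{\veps}<\veps_0$, the curve $\bm{X}_\veps=\bm{X}_c+\veps\bm{Y}$ satisfies $\abs{\pd{\theta}\bm{X}_\veps}\geq 1/2$, $\starnorm{\bm{X}_\veps}\geq\tfrac12\starnorm{\bm{X}_c}>0$, and $\chnorm{\bm{X}_\veps}{2}\leq\chnorm{\bm{X}_c}{2}+\chnorm{\bm{Y}}{2}$; since $\bm{X}_\veps$ is affine in $\veps$, all three follow from the triangle inequality and the bound $\abs{\Delta\bm{Y}}\leq\chnorm{\bm{Y}}{1}\abs{\theta-\theta'}$. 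On this interval $\veps\mapsto\pd{\theta}\bm{X}_\veps$ is an affine, hence smooth, map into $C^1(\mbs)$, and because composition with the smooth superposition map $u\mapsto u/\abs{u}$ (defined where $\abs{u}>1/4$) acts smoothly on $C^1(\mbs;\mathbb{R}^2)$, the quantities $\bm{\tau}_\veps=\pd{\theta}\bm{X}_\veps/\abs{\pd{\theta}\bm{X}_\veps}$, $\bm{n}_\veps=\bm{\tau}_\veps^\perp$, $1/\abs{\pd{\theta}\bm{X}_\veps}$, and $\abs{\pd{\theta}\bm{X}_\veps}$ are smooth maps of $\veps$ valued in $C^1(\mbs)$, while $\pd{\theta}\bm{\tau}_\veps$ is a smooth map valued in $C^0(\mbs)$.

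Next I would repeat the computations of Sections \ref{sect:classical}--\ref{s:Ltheta} with $s$ replaced by $\theta$ and $\bm{X}$ by $\bm{X}_\veps$. The key point is that the leading singular coefficient is $\veps$-independent: on the diagonal $\frac{\Delta\bm{X}_\veps\cdot\pd{\theta}\bm{X}_\veps}{\abs{\Delta\bm{X}_\veps}^2}=\tfrac12\cot\paren{\tfrac{\theta-\theta'}{2}}-K_{C,\veps}$ with leading coefficient exactly $1$ because $\abs{\pd{\theta}\bm{X}_\veps}^2$ cancels to leading order, and the $G_T$-contribution to $\pd{\theta}\mc{S}_\veps$ lies entirely in the smoothing class of Proposition \ref{p:kernelest01}. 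Writing $\pd{\theta}(\sigma\bm{\tau}_\veps)=(\pd{\theta}\sigma)\bm{\tau}_\veps+\wt{\sigma}_\veps\bm{n}_\veps$ with $\wt{\sigma}_\veps=\sigma\,(\pd{\theta}\bm{\tau}_\veps\cdot\bm{n}_\veps)$, and using the commutator identity \eqref{Hfv} together with $\abs{\bm{\tau}_\veps}=1$ and $\bm{\tau}_\veps\cdot\bm{n}_\veps=0$, one obtains $\mc{L}_\veps=-\tfrac14\,\mc{H}\pd{\theta}+\mc{M}_\veps$, where the principal term has no $\veps$-dependence and $\mc{M}_\veps$ is a finite sum of compositions of: multiplication operators whose multipliers are among the $C^1(\mbs)$-valued smooth-in-$\veps$ functions of the previous paragraph; the fixed operator $\pd{\theta}$; and integral operators, namely the commutators $[\mc{H},\bm{\tau}_\veps\cdot]$ and $[\mc{H},\bm{n}_\veps\cdot]$ (in the class of Proposition \ref{p:commute}) and $F_{C,\veps},F_{T,\veps}$ (in the class of Proposition \ref{p:kernelest01}, the $\veps$-independent piece $R_C$ of \eqref{e:R_C_defn} being trivial). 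Since operator composition is a bounded bilinear map, smoothness of $\mc{M}_\veps$ in $\veps$ reduces to the smoothness of each of these factors; the multiplication factors are smooth by the previous paragraph, so it remains to treat the integral-operator factors.

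Finally, for those integral operators I would verify that for every $n\geq 0$ the kernel $\pd{\veps}^nK_\veps$ exists, is jointly continuous in $(\veps,\theta,\theta')$, and satisfies the structural bounds used in Proposition \ref{p:kernelest01} (resp. Proposition \ref{p:commute}): $\abs{\pd{\veps}^nK_\veps}\leq C_n$ and $\abs{\pd{\theta}\pd{\veps}^nK_\veps}\leq C_n\abs{\theta-\theta'}^{-1}$, with $C_n$ depending only on $\chnorm{\bm{X}_\veps}{2}$, $\chnorm{\bm{Y}}{2}$ and $\starnorm{\bm{X}_\veps}$, hence uniform over $\abs{\veps}<\veps_0$ by the first paragraph. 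This is the $\veps$-dependent analogue of Lemma \ref{prelim_ests}: since $\pd{\veps}\bm{X}_\veps=\bm{Y}$, each $\veps$-differentiation merely replaces copies of $\bm{X}_\veps$ by $\bm{Y}$ and produces extra bounded factors such as $\Delta\bm{X}_\veps/\abs{\Delta\bm{X}_\veps}$ or $(\Delta\bm{X}_\veps\cdot\Delta\bm{Y})/\abs{\Delta\bm{X}_\veps}^2$, all controlled by the cancellation estimates \eqref{YXest}--\eqref{Biest} applied with $\bm{Y}$ in the role of $Y$; equivalently, $\pd{\veps}^n[\mc{H},\bm{\tau}_\veps\cdot]$ is again a commutator $[\mc{H},(\pd{\veps}^n\bm{\tau}_\veps)\cdot]$ to which Proposition \ref{p:commute} applies directly. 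Given these bounds, the proofs of Propositions \ref{p:kernelest01} and \ref{p:commute} apply verbatim to $\pd{\veps}^nK_\veps$ and bound the corresponding operator by $C_n$ in $\mc{B}(C^0(\mbs),C^{0,\alpha}(\mbs))$. Expressing $K_{\veps+h}-\sum_{j=0}^n\tfrac{h^j}{j!}\pd{\veps}^jK_\veps$ as an iterated integral of $\pd{\veps}^{n+1}K$ over a simplex of size $h$, the associated operator has $\mc{B}(C^0,C^{0,\alpha})$-norm $O(h^{n+1})$, which upgrades the uniform bounds to $C^n$ dependence for every $n$, i.e. to $C^\infty$. Composing with the continuous inclusions $C^{1,\gamma}\hookrightarrow C^0$ and $C^{0,\alpha}\hookrightarrow C^\gamma$ for $\alpha\geq\gamma$ then gives $\mc{L}_\veps\in C^\infty((-\veps_0,\veps_0);\mc{B}(C^{1,\gamma}(\mbs),C^{\gamma}(\mbs)))$.

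The step I expect to be the main obstacle is the last one: formulating the $\veps$-dependent refinement of Lemma \ref{prelim_ests} with exactly the cancellations needed so that $\veps$-differentiation does not degrade the symbol class of the smoothing kernels, and then carrying the resulting uniform-in-$\veps$ bounds through the somewhat lengthy list of kernels produced by the decomposition. Everything else — the choice of $\veps_0$, the smoothness of the multiplier coefficients, and the Taylor-remainder passage from uniform operator bounds to $C^\infty$ dependence — is routine bookkeeping.
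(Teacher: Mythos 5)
Your proposal is correct and follows essentially the same route as the paper: the paper likewise differentiates the kernels $G_L(\Delta\bm{X}_\veps)$, $G_T(\Delta\bm{X}_\veps)$ in $\veps$, observes that every $\veps$-derivative is a degree-zero rational expression in $(\Delta\bm{X}_\veps,\Delta\bm{Y})$ controlled uniformly by $\chnorm{\bm{X}_\veps}{2}$, $\chnorm{\bm{Y}}{2}$ and $\starnorm{\bm{X}_\veps}$, applies the Proposition \ref{p:kernelest01}-type kernel estimates to obtain uniform $\mc{B}(C^0,C^{0,\alpha})$ bounds on $\tfrac{d^n}{d\veps^n}\p_\theta\mc{S}_\veps$, and then combines this with the smoothness of $\bm{\tau}_\veps$ as a $C^1(\mbs)$-valued function of $\veps$. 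Your organizational differences---first splitting off the $\veps$-independent principal part $-\tfrac14\mc{H}\p_\theta$ via the $\theta$-analogue of Proposition \ref{p:rhs_f}, and making the Taylor-remainder upgrade from uniform derivative bounds to $C^\infty$ dependence explicit---are harmless (indeed slightly more careful) variants of the paper's terser treatment of the zeroth-order term.
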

\begin{proof}
Pick a $M>0$ and $m>0$ such that
\begin{equation*}
\begin{split}
\norm{\bm{Y}}_{C^2}\leq M, &\quad \sup_{\abs{\veps}<\veps_0}\norm{\bm{X}_\epsilon}_{C^2}\leq M,\\
\inf_{\abs{\veps}<\veps_0}\starnorm{\bm{X}_\epsilon}\geq m, &\quad
\starnorm{\bm{X}}=\inf_{\theta\neq \theta'} \frac{\abs{\bm{X}(\theta)-\bm{X}(\theta')}}{\abs{\theta-\theta'}}.
\end{split}
\end{equation*}
This is always possible by taking $\veps_0>0$ small enough.
It is clear that
\begin{equation}\label{tauveps}
    \bm{\tau}_\varepsilon\in C^\infty \paren{ (-\varepsilon_0,\varepsilon_0) ;C^{1}\paren{\mbs}}.
\end{equation}
Next, let us consider $\pd{\theta}\mc{S}_\varepsilon\left[\cdot\right]$.
Take derivatives of $G_L\paren{\Delta\bm{X}_\varepsilon}$ with respect to $\varepsilon$.
\begin{align*}
    \frac{d}{d\varepsilon}G_L\paren{\Delta\bm{X}_\varepsilon}=-\frac{\Delta\bm{X}_\varepsilon\cdot\Delta\bm{Y}}{\abs{\Delta\bm{X}_\varepsilon}^2}.
\end{align*}
Moreover, for all $\alpha_0,\alpha_1, \beta_0, \beta_1\in \mb{N}\cup \{0\}$,
\begin{align*}
    &\frac{d}{d\varepsilon}\frac{\paren{\Delta X_{\varepsilon,1}}^{\alpha_0}\paren{\Delta X_{\varepsilon,2}}^{\beta_0}\paren{\Delta Y_{1}}^{\alpha_1}\paren{\Delta Y_{2}}^{\beta_1}}{\abs{\Delta\bm{X}_\varepsilon}^{\alpha_0+\alpha_1+\beta_0+\beta_1}}\\
    =&-\paren{\alpha_0+\alpha_1+\beta_0+\beta_1}\frac{\paren{\Delta X_{\varepsilon,1}}^{\alpha_0}\paren{\Delta X_{\varepsilon,2}}^{\beta_0}\paren{\Delta Y_{1}}^{\alpha_1}\paren{\Delta Y_{2}}^{\beta_1}\left(\Delta X_{\varepsilon,1}\Delta Y_{1}+\Delta X_{\varepsilon,2}\Delta Y_{2}\right)}{\abs{\Delta\bm{X}_\varepsilon}^{\alpha_0+\alpha_1+\beta_0+\beta_1+2}}\\
     &+\frac{\paren{\Delta X_{\varepsilon,1}}^{\alpha_0-1}\paren{\Delta X_{\varepsilon,2}}^{\beta_0-1}\paren{\Delta Y_{1}}^{\alpha_1}\paren{\Delta Y_{2}}^{\beta_1}\left(\alpha_0 \Delta X_{\varepsilon,2}\Delta Y_{1}+\beta_0 \Delta X_{\varepsilon,1}\Delta Y_{2} \right)}{\abs{\Delta\bm{X}_\varepsilon}^{\alpha_0+\alpha_1+\beta_0+\beta_1}}.
\end{align*}
Therefore, $\frac{d^n}{d\varepsilon^n}G_L\paren{\Delta\bm{X}_\varepsilon}$ is the sum of the terms of the form:
\begin{align*}
    C_{\alpha_0,\alpha_1, \beta_0, \beta_1}\frac{\paren{\Delta X_{\varepsilon,1}}^{\alpha_0}\paren{\Delta X_{\varepsilon,2}}^{\beta_0}\paren{\Delta Y_{1}}^{\alpha_1}\paren{\Delta Y_{2}}^{\beta_1}}{\abs{\Delta\bm{X}_\varepsilon}^{\alpha_0+\alpha_1+\beta_0+\beta_1}},
\end{align*}
where $\alpha_0+\alpha_1+\beta_0+\beta_1\leq 2n$. Likewise, $\frac{d^n}{d\varepsilon^n}G_T\paren{\Delta\bm{X}_\varepsilon}$ is the sum of the terms of
\begin{align*}
    C_{\alpha_0,\alpha_1, \beta_0, \beta_1}\frac{\paren{\Delta X_{\varepsilon,1}}^{\alpha_0}\paren{\Delta X_{\varepsilon,2}}^{\beta_0}\paren{\Delta Y_{1}}^{\alpha_1}\paren{\Delta Y_{2}}^{\beta_1}}{\abs{\Delta\bm{X}_\varepsilon}^{\alpha_0+\alpha_1+\beta_0+\beta_1}},
\end{align*}
where $\alpha_0+\alpha_1+\beta_0+\beta_1\leq 2n+2$.
Hence, by an argument similar to the proof of Proposition \ref{p:kernelest01} (see also \cite[Lemma 2.2]{MRS2019}), for all $\abs{\varepsilon}<\varepsilon_0, n\in \mathbb{N}$, $\frac{d^{n+1}}{d\varepsilon^{n+1}}\pd{\theta}\mc{S}_\varepsilon\left[\cdot\right]$ exists, and
\begin{align*}
    \norm{\frac{d^{n+1}}{d\varepsilon^{n+1}}\pd{\theta}\mc{S}_\varepsilon\left[\cdot\right]}_{\mc{B}\paren{C^{0},C^{0,\alpha}}}
    \leq& \sum_{\alpha_0+\alpha_1+\beta_0+\beta_1\leq 2n+4} C_{\alpha_0,\alpha_1, \beta_0, \beta_1} \frac{\chnorm{\bm{X}_\varepsilon}{2}^2\chnorm{\bm{Y}}{2}^{\alpha_1+\beta_1}}{\starnorm{\bm{X}_\varepsilon}^{\alpha_1+\beta_1+2}}\\
    \leq & C\frac{M^{2n+6}}{m^{2n+6}}
\end{align*}
where $C$ depends on $n,\alpha$. We thus see that:
\begin{align*}
    \frac{d^{n}}{d\varepsilon^{n}}\pd{\theta}\mc{S}_\varepsilon\left[\cdot\right]\in C\paren{(-\varepsilon_0,\varepsilon_0); \mc{B}\paren{C^{0}\paren{\mbs},C^{0,\alpha}\paren{\mbs}}}.
\end{align*}
Using the above, \eqref{tauveps} and the expression for $\mc{L}_\veps$ given in \eqref{Lveps}, we see that 
\begin{equation*}
    \mc{L}_\varepsilon\in C^n\paren{(-\varepsilon_0,\varepsilon_0); \mc{B}\paren{C^{1,\gamma}\paren{\mbs},C^{0,\gamma}\paren{\mbs}}} \text{ for } n\in \mathbb{N}.
\end{equation*}
\end{proof}

We now consider the eigenvalue problem:
\begin{equation}\label{eigprob}
\mc{L}_\veps \sigma_\veps=\lambda_\veps \sigma_\veps, \quad \int_{\mathbb{S}^1}\sigma_\veps^2 d\theta=2\pi, \;  \text{ where } \lambda_0=0, \; \sigma_0=1.
\end{equation}
Note that, when $\veps=0$, the polar coordinate and the arclength coordinate coincide. Thus, 
from Lemma \ref{r:Lcirc}, we see that $\lambda_0=0$ is indeed an eigenvalue and the constant function $\sigma_0=1$ is an eigenvector. 
The above can be seen as an eigenvalue perturbation problem for small values of $\veps$. We now establish this solvability.
\begin{proposition}\label{p:eprob_reg}
There is a $\veps_1>0$ such that \eqref{eigprob} has a solution for $\abs{\veps}<\veps_1$, where $\lambda_\veps$ is smooth in $\veps$ and $\sigma_\veps$
is smooth in $\veps$ with values in $C^{1,\gamma}(\mathbb{S}^1), 0<\gamma<1$.
\end{proposition}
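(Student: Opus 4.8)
The plan is to produce $\lambda_\veps$ and $\sigma_\veps$ by a direct application of the implicit function theorem in the Banach space setting, feeding in the smoothness of $\veps\mapsto\mc{L}_\veps$ from Proposition \ref{p:Lvepsreg} and the structure of the unperturbed operator $\mc{L}_0$ from Lemma \ref{r:Lcirc}. First I would set up the map
\begin{equation*}
H:(-\veps_0,\veps_0)\times C^{1,\gamma}(\mbs)\times\mathbb{R}\longrightarrow C^{\gamma}(\mbs)\times\mathbb{R},\qquad
H(\veps,\sigma,\lambda)=\paren{\mc{L}_\veps\sigma-\lambda\sigma,\ \int_{\mbs}\sigma^2\,d\theta-2\pi}.
\end{equation*}
This $H$ is smooth: by Proposition \ref{p:Lvepsreg} the map $\veps\mapsto\mc{L}_\veps$ is $C^\infty$ into $\mc{B}(C^{1,\gamma}(\mbs),C^{\gamma}(\mbs))$, operator evaluation $(T,\sigma)\mapsto T\sigma$ and the product $(\lambda,\sigma)\mapsto\lambda\sigma$ are continuous bilinear maps, and $\sigma\mapsto\int_{\mbs}\sigma^2\,d\theta$ is a continuous quadratic form. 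Since $\mc{L}_0 1=0$ by Lemma \ref{r:Lcirc} and $\int_{\mbs}1\,d\theta=2\pi$, we have $H(0,\sigma_0,0)=(0,0)$ with $\sigma_0\equiv1$.

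The key step is to verify that the partial differential $D_{(\sigma,\lambda)}H(0,\sigma_0,0):C^{1,\gamma}(\mbs)\times\mathbb{R}\to C^{\gamma}(\mbs)\times\mathbb{R}$, which equals $[\psi,\mu]\mapsto\paren{\mc{L}_0\psi-\mu\,\sigma_0,\ 2\dual{\sigma_0}{\psi}}$, is a topological isomorphism. To see injectivity and surjectivity at once, I would split any $\psi\in C^{1,\gamma}(\mbs)$ as $\psi=\bar\psi+c$ with $\bar\psi$ of zero average and $c\in\mathbb{R}$, and likewise split a target $f\in C^{\gamma}(\mbs)$ as $f=\bar f+f_P$ with $f_P$ constant. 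Using \eqref{Qzero} (valid on the circle), $\mc{L}_0$ carries $C^{1,\gamma}(\mbs)$ into the zero-average subspace $\bar C^{\gamma}(\mbs)$ and kills constants, so the equation $\mc{L}_0\psi-\mu\sigma_0=f$ decouples into $\mc{L}_0\bar\psi=\bar f$ and $-\mu=f_P$; the former has a unique solution $\bar\psi\in\bar C^{1,\gamma}(\mbs)$ because $\mc{L}_0:\bar C^{1,\gamma}(\mbs)\to\bar C^{\gamma}(\mbs)$ is invertible by Lemma \ref{r:Lcirc}, and the normalization equation $2\dual{\sigma_0}{\psi}=4\pi c$ determines $c$. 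Boundedness of the inverse is then automatic by the open mapping theorem.

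The implicit function theorem now gives $\veps_1>0$ and smooth maps $\veps\mapsto\sigma_\veps\in C^{1,\gamma}(\mbs)$ and $\veps\mapsto\lambda_\veps\in\mathbb{R}$ on $(-\veps_1,\veps_1)$ with $\sigma_0\equiv1$, $\lambda_0=0$, and $H(\veps,\sigma_\veps,\lambda_\veps)=0$ — that is, $\mc{L}_\veps\sigma_\veps=\lambda_\veps\sigma_\veps$ with $\int_{\mbs}\sigma_\veps^2\,d\theta=2\pi$, which is exactly \eqref{eigprob} — and the construction also yields local uniqueness of this branch. The only genuinely substantive point, and hence the main obstacle, is the isomorphism property of the linearization: it reduces to (i) the kernel of $\mc{L}_0$ on $C^{1,\gamma}(\mbs)$ being exactly the constants and (ii) $\sigma_0$ being transverse to the range of $\mc{L}_0$, i.e. the eigenvalue $0$ being algebraically simple, so that the scalar unknown $\mu$ precisely fills the one-dimensional gap $C^{\gamma}(\mbs)=\bar C^{\gamma}(\mbs)\oplus\mathbb{R}$. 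Both facts come from Lemma \ref{r:Lcirc} together with \eqref{Qzero}; the negative semidefiniteness of $\mc{L}_0$ from Proposition \ref{p:Ltheta} additionally guarantees $\lambda_\veps\in\mathbb{R}$ but is not otherwise needed. All remaining verifications are routine.
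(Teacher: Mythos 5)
Your proposal is correct and follows essentially the same route as the paper: the paper also applies the implicit function theorem to the map $(\sigma,\lambda,\veps)\mapsto(\mc{L}_\veps\sigma-\lambda\sigma,\int_{\mbs}\sigma^2d\theta-2\pi)$, using Proposition \ref{p:Lvepsreg} for smoothness and Lemma \ref{r:Lcirc} (kernel of $\mc{L}_0$ equal to constants, invertibility on the zero-mean subspace, with the range condition coming from the symmetry of $\mc{L}_0$, equivalently \eqref{Qzero}) to verify that the linearization is an isomorphism. Your decoupled verification of injectivity and surjectivity is the same computation as the paper's, just organized slightly differently.
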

\begin{proof}
Let %$F: \mb{R}\times \paren{\mb{R}\times C^{1,\gamma}\paren{\mbs}}\rightarrow \mb{R}\times C^{0,\gamma}\paren{\mbs}$
\begin{align*}
    F\paren{\sigma,\lambda,\veps}=\begin{pmatrix} F_1 \\ F_2 \end{pmatrix}=
    \begin{pmatrix}
    \mc{L}_\varepsilon\sigma-\lambda\sigma\\
    \int_\mbs \sigma^2 d\theta-2\pi
    \end{pmatrix}.
\end{align*}
$F$ maps $(\sigma,\lambda,\veps)\in C^{1,\gamma}(\mathbb{S}^1)\times \mathbb{R}\times \mathbb{R}$ to $C^{\gamma}(\mathbb{S}^1)\times \mathbb{R}$.
Given Proposition \ref{p:Lvepsreg}, 
$F \in C^n(C^{1,\gamma}\paren{\mbs}\times\mathbb{R}\times (-\veps_0,\veps_0);C^{0,\gamma}\paren{\mbs}\times \mathbb{R}), n\in \mathbb{N}$. 
We check the invertibility of the Fr\'echet derivative of the above with respect to $\sigma$ and $\lambda$ at 
$\lambda=\lambda_0, \; \sigma=\sigma_0, \veps=0$. 
This derivative, which we denote by $DF(\sigma_0,\lambda_0,0)\in \mc{B}(C^{1,\gamma}(\mathbb{S}^1)\times \mathbb{R}; C^{\gamma}(\mathbb{S}^1)\times \mathbb{R})$ is given by
\begin{equation*}
DF(\sigma_0,\lambda_0,0)\begin{pmatrix} w\\ \mu \end{pmatrix}= \begin{pmatrix} \mc{L}_0 w- \mu \\ 2\int_{\mathbb{S}^1} w d\theta\end{pmatrix}, \; 
\begin{pmatrix} w\\ \mu \end{pmatrix}\in C^{1,\gamma}(\mathbb{S}^1)\times \mathbb{R}.
\end{equation*}
This linear operator is one-to-one and onto. To show that this is a one-to-one map, let us solve
\begin{equation*}
\mc{L}_0 w- \mu=0, \; \int_{\mathbb{S}^1} w d\theta=0.
\end{equation*}
From the first equation, we have
\begin{equation*}
\dual{1}{\mc{L}_0 w-\mu}=\dual{\mc{L}_0 1}{w}-2\pi \mu=-2\pi \mu=0,
\end{equation*}
where we used the symmetry of $\mc{L}_0$ (see \eqref{Lthetasym}) and $\mc{L}_0 1=0$. Thus, $\mu=0$. Lemma \ref{r:Lcirc} immediately shows that $w=0$.
To show that $DF(\sigma_0,\lambda_0,0)$ is onto, let us solve
\begin{equation}\label{DFeqn}
\mc{L}_0 w- \mu=f, \; 2\int_{\mathbb{S}^1} w d\theta=\nu, \; f\in C^{\gamma}(\mathbb{S}^1), \; \nu\in \mathbb{R}.
\end{equation}
We let
\begin{equation*}
\mu=-\frac{1}{2\pi}\dual{f}{1}, \; w=\mc{L}_0^{-1}\bar{f}+\frac{\nu}{4\pi}, \; \bar{f}=f-\frac{1}{2\pi}\dual{f}{1},
\end{equation*}
it is easily checked that this satisfies \eqref{DFeqn}. Note here that $\bar{f}\in \bar{C}^{\gamma}(\mathbb{S})$ and thus $\mc{L}_0^{-1}\bar{f}$ is well-defined 
by Lemma \ref{r:Lcirc}.

An application of the implicit function theorem yields the desired result.
\end{proof}

We have the following corollary, which shows that the behavior of $\lambda_\veps$ determines the near singularity of $\mc{L}_\veps$ when $\veps$ is small.
\begin{corollary}\label{c:Lvepsinv}
Suppose $\bm{X}_\veps$ is not a circle for $\epsilon\neq 0$. Then, there is an $\veps_2>0$ such that
\begin{equation*}
\norm{\mc{L}_\veps^{-1}}_{\mc{B}(C^{1,\gamma}(\mathbb{S}^1);C^\gamma(\mathbb{S}^1))}\leq C_1+\frac{C_2}{\abs{\lambda_\epsilon}} \text{ for } 0<\abs{\veps}\leq \veps_2
\end{equation*}
where the constants $C_1$ and $C_2$ do not depend on $\veps$ and $\lambda_\veps$ is the solution to \eqref{eigprob}.
\end{corollary}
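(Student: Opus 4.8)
The plan is to decouple the almost-kernel direction of $\mc{L}_\veps$, namely the eigenfunction $\sigma_\veps$ from \eqref{eigprob}, from its $L^2$-orthogonal complement, on which $\mc{L}_\veps$ will be seen to remain uniformly invertible. Write $\sigma_\veps$ for the (real-valued) eigenfunction furnished by Proposition \ref{p:eprob_reg}, and let $P_\veps$ be the rank-one $L^2$-orthogonal projection onto its span,
\[
P_\veps f=\frac{1}{2\pi}\dual{f}{\sigma_\veps}\,\sigma_\veps,
\]
which is well defined because $\dual{\sigma_\veps}{\sigma_\veps}=2\pi$ by the normalization in \eqref{eigprob}, and is a genuine projection. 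Since $\sigma_\veps$ is smooth in $\veps$ with values in $C^{1,\gamma}(\mbs)$ (Proposition \ref{p:eprob_reg}), after shrinking the parameter interval we have $\sup_{\abs{\veps}\le\veps_0}\hnorm{\sigma_\veps}{1}{\gamma}<\infty$; consequently $P_\veps$ and $I-P_\veps$ are bounded on $C^{\gamma}(\mbs)$ and on $C^{1,\gamma}(\mbs)$, $P_\veps$ is bounded from $C^{\gamma}(\mbs)$ to $C^{1,\gamma}(\mbs)$, and all these norms are bounded uniformly in $\veps$.

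Next I would record the algebraic consequences of the symmetry \eqref{Lthetasym} of $\mc{L}_\veps$ (this is $\mc{L}_\theta$ with $\bm{X}=\bm{X}_\veps\in C^2$) together with $\mc{L}_\veps\sigma_\veps=\lambda_\veps\sigma_\veps$: for $f\in C^{1,\gamma}(\mbs)$,
\[
P_\veps\mc{L}_\veps f=\frac{1}{2\pi}\dual{\mc{L}_\veps f}{\sigma_\veps}\sigma_\veps=\frac{1}{2\pi}\dual{f}{\mc{L}_\veps\sigma_\veps}\sigma_\veps=\lambda_\veps P_\veps f,\qquad \mc{L}_\veps P_\veps f=\lambda_\veps P_\veps f .
\]
Thus $\mc{L}_\veps$ is block diagonal for the splitting $I=P_\veps\oplus(I-P_\veps)$: it is multiplication by $\lambda_\veps$ on $\mathrm{Ran}\,P_\veps$ and it preserves $\mathrm{Ran}(I-P_\veps)$. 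Introduce the regularized operator
\[
\mc{B}_\veps:=\mc{L}_\veps(I-P_\veps)+P_\veps=\mc{L}_\veps+(1-\lambda_\veps)P_\veps\colon\ C^{1,\gamma}(\mbs)\to C^{\gamma}(\mbs),
\]
which agrees with $\mc{L}_\veps$ on $\mathrm{Ran}(I-P_\veps)$ and with the identity on $\mathrm{span}(\sigma_\veps)$, and from which one reads off $(I-P_\veps)\mc{L}_\veps=\mc{B}_\veps(I-P_\veps)$.

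The core step is to show $\mc{B}_\veps$ is an isomorphism with $\norm{\mc{B}_\veps^{-1}}_{\mc{B}(C^{\gamma};C^{1,\gamma})}$ bounded uniformly for small $\abs{\veps}$. At $\veps=0$ the polar and arclength parametrizations coincide, $\sigma_0=1$, $P_0f=\tfrac{1}{2\pi}\dual{f}{1}$, so $\mc{B}_0$ is the identity on constants and equals $\mc{L}_0$ on $\bar{C}^{1,\gamma}(\mbs)$ (notation of \eqref{barC}); since $\mc{L}_0\colon\bar{C}^{1,\gamma}(\mbs)\to\bar{C}^{\gamma}(\mbs)$ is invertible by Lemma \ref{r:Lcirc}, $\mc{B}_0$ is an isomorphism of $C^{1,\gamma}(\mbs)$ onto $C^{\gamma}(\mbs)$. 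By Proposition \ref{p:Lvepsreg} and the smoothness of $\sigma_\veps$ and $\lambda_\veps$ in $\veps$ (Proposition \ref{p:eprob_reg}), the map $\veps\mapsto\mc{B}_\veps$ is continuous into $\mc{B}(C^{1,\gamma}(\mbs),C^{\gamma}(\mbs))$; as the invertible operators form an open set on which inversion is continuous, there is $\veps_2\in(0,\veps_0]$ so that $\mc{B}_\veps$ is invertible for $\abs{\veps}\le\veps_2$ and $\sup_{\abs{\veps}\le\veps_2}\norm{\mc{B}_\veps^{-1}}_{\mc{B}(C^{\gamma};C^{1,\gamma})}=:C_1<\infty$.

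Finally I would assemble the resolvent identity. Since $\bm{X}_\veps$ is not a circle for $\veps\neq0$, Proposition \ref{p:Ltheta} gives that $\mc{L}_\veps$ is invertible and $0$ is not one of its (discrete, real) eigenvalues, hence $\lambda_\veps\neq0$ for $0<\abs{\veps}\le\veps_2$. Given $f$, set $\sigma=\mc{L}_\veps^{-1}f$; applying $P_\veps$ to $\mc{L}_\veps\sigma=f$ and using $P_\veps\mc{L}_\veps=\lambda_\veps P_\veps$ gives $P_\veps\sigma=\lambda_\veps^{-1}P_\veps f$, while applying $I-P_\veps$ and using $(I-P_\veps)\mc{L}_\veps=\mc{B}_\veps(I-P_\veps)$ gives $(I-P_\veps)\sigma=\mc{B}_\veps^{-1}(I-P_\veps)f$; adding,
\[
\mc{L}_\veps^{-1}=\frac{1}{\lambda_\veps}P_\veps+\mc{B}_\veps^{-1}(I-P_\veps).
\]
Taking operator norms from $C^{\gamma}(\mbs)$ to $C^{1,\gamma}(\mbs)$ and inserting the uniform bounds on $\norm{P_\veps}$, $\norm{I-P_\veps}$, and $\norm{\mc{B}_\veps^{-1}}$ yields $\norm{\mc{L}_\veps^{-1}}\le C_1+C_2/\abs{\lambda_\veps}$ with $C_1,C_2$ independent of $\veps$, as claimed. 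The only genuinely delicate point is the uniform invertibility of $\mc{B}_\veps$: it rests on the operator-norm continuity of $\veps\mapsto\mc{L}_\veps$ between the two Hölder scales (Proposition \ref{p:Lvepsreg}) and on the continuous dependence of $\sigma_\veps$ on $\veps$ in $C^{1,\gamma}(\mbs)$ (Proposition \ref{p:eprob_reg}), after which compactness of the $\veps$-interval closes the estimate; the remainder is bookkeeping with the rank-one projection $P_\veps$.
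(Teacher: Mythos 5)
Your proof is correct and follows essentially the same route as the paper: the rank-one projection onto $\sigma_\veps$, a regularized operator that replaces $\mc{L}_\veps$ on the eigendirection (your $\mc{B}_\veps$ differs from the paper's $\mc{N}_\veps$ only by the sign of the rank-one part), uniform invertibility near $\veps=0$ via Lemma \ref{r:Lcirc} and continuity in $\veps$, and the same resolvent formula $\mc{L}_\veps^{-1}=\lambda_\veps^{-1}P_\veps+\mc{B}_\veps^{-1}(I-P_\veps)$. The only difference is that you make explicit the intertwining relations $P_\veps\mc{L}_\veps=\lambda_\veps P_\veps$ and $(I-P_\veps)\mc{L}_\veps=\mc{B}_\veps(I-P_\veps)$ coming from the symmetry \eqref{Lthetasym}, which the paper treats as immediate.
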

\begin{proof}
Let $\lambda_\veps, \sigma_\veps$ be as in \eqref{eigprob}, whose existence and smooth dependence on $\veps$ is guaranteed by the previous proposition.
Define the following projection operator
\begin{equation*}
\mc{P}_\epsilon w= \frac{1}{2\pi} \dual{w}{\sigma_\veps} \sigma_{\veps}.
\end{equation*}
This is clearly a bounded operator on $C^{\gamma}(\mathbb{S}^1)$ as well as on $C^{1,\gamma}(\mathbb{S}^1)$. Now, define the operator
\begin{equation*}
\mc{N}_\veps w=\mc{L}_\veps (1-\mc{P}_\veps) w-\mc{P}_\veps w.
\end{equation*}
Clearly, $\mc{N}_\veps\in C^{\infty}((-\epsilon_1,\epsilon_1); \mc{B}(C^{1,\gamma}(\mathbb{S}^1);C^{\gamma}(\mathbb{S}^1)))$. Let us examine $\mc{N}_0$
\begin{equation*}
\mc{N}_0 w=\mc{L}_0 (1-\mc{P}_0) w-\mc{P}_0 w.
\end{equation*}
It is clear that this operator is invertible. Indeed, $(1-\mc{P}_0)w\in \bar{C}^{1,\gamma}(\mathbb{S}^1)$ if $w\in C^{1,\gamma}(\mathbb{S}^1)$, and thus we may 
use Lemma \ref{r:Lcirc}. Since $\mc{N}_\epsilon$ varies smoothly with $\veps$, $\mc{N}_\veps$ is invertible for $\abs{\veps}\leq \veps_2$ for some $\veps_2>0$.
Using this operator, we may write $\mc{L}_\veps$ as
\begin{equation*}
\mc{L}_\veps w=\mc{N}_\veps (1-\mc{P}_\veps) w+ \lambda_\veps \mc{P}_\veps w. 
\end{equation*}
From this, it is immediate that 
\begin{equation*}
\mc{L}_\veps^{-1} w=\mc{N}_\veps^{-1}(1-\mc{P}_\veps) w +\frac{1}{\lambda_\veps}\mc{P}_\veps w, \text{ for } 0<\abs{\veps}\leq \veps_2,
\end{equation*}
where we used the fact that $\lambda_\veps\neq 0$ given our assumption that $\bm{X}_\veps$ is not a circle for $\veps\neq 0$.
By taking the $C^{\gamma}(\mathbb{S}^1)$ norm on both sides of the above, we obtain the desired result.
\end{proof}

\subsection{Computation of $\lambda_2$}\label{s:lambda2}

Here, we explicitly compute the solution to \eqref{eigprob} in a power series expansion up to order $2$.
Let:
\begin{equation}\label{lambdaexp}
\lambda_\veps=\lambda_0+\lambda_1\veps+\lambda_2 \veps^2+\cdots.
\end{equation}
This power series expansion is justified since $\lambda_\veps$ is a smooth function of $\veps$ as shown in Proposition \ref{p:eprob_reg}.
We know that $\lambda_0=0$. We also know from Proposition \ref{p:Ltheta} that $\lambda_\veps\leq 0$. Thus, we immediately see that 
\begin{equation}\label{lam1zero}
\lambda_1=0.
\end{equation}
The first potentially non-trivial term in the expansion is thus $\lambda_2$.
Our goal in this subsection is to establish the last item in Theorem \ref{t:lambdaveps}.
We now solve \eqref{eigprob} in powers of $\veps$.
Let us expand $\lambda_\veps$ as in \eqref{lambdaexp}, and similarly expand $\mc{L}_\veps$ and $\sigma_\veps$. Substituting this expression into \eqref{eigprob}, we have:
\begin{equation*}
\begin{split}
&(\mc{L}_0+\veps \mc{L}_1+\veps^2 \mc{L}_2+\cdots)(\sigma_0+\veps \sigma_1+\veps^2\sigma_2+\cdots)\\
=&(\lambda_0+\veps \lambda_1+\veps^2\lambda_2+\cdots)(\sigma_0+\veps \sigma_1+\veps^2\sigma_2+\cdots),\\
&\int_0^{2\pi} (\sigma_0+\veps \sigma_1+\veps^2\sigma_2+\cdots)^2 d\theta=2\pi.
\end{split}
\end{equation*}
The leading order term in $\veps$ simply gives
\begin{equation*}
\mc{L}_0\sigma_0=\lambda_0\sigma_0, \; \int_0^{2\pi} \sigma_0^2 d\theta=2\pi,
\end{equation*}
for which the solution is $\lambda_0=0, \sigma_0=1$. The first order term in $\veps$ gives
\begin{equation*}
\mc{L}_0\sigma_1=\lambda_1\sigma_0-\mc{L}_1\sigma_0, \; \int_{\mathbb{S}^1} \sigma_1 d\theta=0.
\end{equation*}
By Lemma \ref{r:Lcirc}, the above equation for $\sigma_0$ can be solved uniquely if and only if the right hand side has the zero mean, i.e.
\begin{equation*}
\dual{1}{\lambda_1-\mc{L}_1 1}=0, \text{ and thus } \lambda_1=\frac{1}{2\pi} \dual{1}{\mc{L}_1 1}.
\end{equation*}
where we used $\sigma_0=1$.
We already know that $\lambda_1=0$. Thus, 
\begin{equation*}
\sigma_1=-\mc{L}_0^{-1} \mc{L}_1 1, \quad \dual{1}{\mc{L}_1 1}=0,
\end{equation*}
where $\mc{L}_0^{-1}$ is understood as being acting on $\bar{C}^{\gamma}(\mathbb{S}^1)$.
Let us look at the determination of $\lambda_2$. We have
\begin{equation*}
\mc{L}_0 \sigma_2=-\mc{L}_1\sigma_1-\mc{L}_2\sigma_0+\lambda_2 \sigma_0, \; \int_{\mathbb{S}^1} \sigma_2 d\theta=0.
\end{equation*}
Again, the above is uniquely solvable for $\sigma_2$ if and only if the right hand side has zero mean. From this, we see that
\begin{equation}\label{lambda2exp}
\lambda_2=\frac{1}{2\pi}\paren{\dual{1}{\mc{L}_2 1}+\dual{1}{\mc{L}_1\mc{L}_0^{-1}\mc{L}_1 1}}.
\end{equation}
Our task then is to compute the above expression.

\subsubsection{The operators $\p_\theta\mc{S}_0, \mc{Q}_0$ and $\mc{L}_0$}
%We compute $\p_\theta \mc{S}_0, \mc{Q}_0$ and $\mc{L}_0$. 
We compute the kernel $\p_\theta G_L(\Delta \bm{X}_c)$ and $\p_\theta G_T(\Delta \bm{X}_c)$ (see \eqref{Lveps}).
Note that
\begin{align*}
    \p_\theta\bm{X}_c=
    \begin{pmatrix}
    -\sin{\theta}\\
    \cos{\theta}
    \end{pmatrix},\quad
    \Delta \bm{X}_c=2\sin{\left(\frac{\theta-\theta'}{2}\right)}
    \begin{pmatrix}
    -\sin{\left(\frac{\theta+\theta'}{2}\right)}\\
    \cos{\left(\frac{\theta+\theta'}{2}\right)}
    \end{pmatrix}.
\end{align*}
We thus have
\begin{align*}
    \p_\theta G_L(\Delta \bm{X}_c(\theta))
    &= \frac{-\Delta\bm{X}_c\cdot \p_{\theta}\bm{X}_c}{\abs{\Delta\bm{X}_c}^2} 
    =-\frac{1}{2}\cot{\left(\frac{\theta-\theta'}{2}\right)},\\
    \p_{\theta}G_T(\Delta \bm{X}_c(\theta)) &=
    \p_\theta\paren{\frac{\Delta \bm{X}_c\otimes \Delta \bm{X}_c}{\abs{\Delta \bm{X}_c}^2}}
    =\frac{1}{2}
    \begin{pmatrix}
    \sin{(\theta+\theta')}&-\cos{(\theta+\theta')}\\
    -\cos{(\theta+\theta')}&-\sin{(\theta+\theta')}
    \end{pmatrix}.
\end{align*}
Therefore, we have
\begin{equation*}
\begin{split}
\p_\theta \mc{S}_0[\bm{f}]&=\frac{1}{4\pi}\int_{\mbs} \paren{\p_\theta G_L(\Delta \bm{X}_c(\theta)) \mathbb{I}+ \p_{\theta}G_T(\Delta \bm{X}_c(\theta))}\bm{f}(\theta')d\theta'\\
&=-\frac{1}{4}\mc{H}\bm{f}+\frac{1}{8\pi}\int_\mbs\begin{pmatrix}
    \sin{(\theta+\theta')}&-\cos{(\theta+\theta')}\\
    -\cos{(\theta+\theta')}&-\sin{(\theta+\theta')}
    \end{pmatrix} \bm{f}(\theta')d\theta'.
\end{split}
\end{equation*}
This is all we need in the computation of $\lambda_2$.
We will, however, compute $\mc{Q}_0$ and $\mc{L}_0$ for later reference. 
This computation will allow us to obtain an explicit solution to the tension determination problem when $\Gamma$ is a circle.
From the above, we immediately see that
\begin{align}
\mc{Q}_0[\bm{F}]&=\p_\theta \bm{X}_c\cdot\pd{\theta}\mc{S}[\bm{F}]
       =-\p_\theta \bm{X}_c\cdot\frac{1}{4}\mc{H}\bm{F}-\frac{1}{8\pi} \int_\mbs \bm{X}_c(\theta')\cdot \bm{F}(\theta')d\theta',
\label{e:circle_sigma_operator}
\end{align}
Furthermore, we have
\begin{equation*}
\mc{L}_0\sigma=\mc{Q}_0[\p_\theta(\sigma\p_\theta \bm{X}_c)]=-\p_\theta \bm{X}_c\cdot\frac{1}{4}\mc{H}(\p_\theta\paren{\sigma \p_\theta \bm{X}_c})+\frac{1}{8\pi} \int_\mbs \sigma(\theta') d\theta'
\end{equation*}
We may now solve the equation \eqref{e:rewrite_inextensible} explicitly when $\Gamma$ is a circle. 
The following results can be proved using Lemma \ref{c:hilbert_01} and Lemma \ref{c:hilbert_02}
\begin{lemma}
\begin{equation*}
\mc{L}_0 (\sin(n\theta))=-\frac{n}{4}\sin(n\theta), \quad \mc{L}_0(\cos(n\theta))=-\frac{n}{4}\cos(n\theta), \; n\in \mathbb{N}, \; 
\end{equation*}
The above in fact shows that $\mc{L}_0\sigma=-\frac{1}{4}\mc{H}\p_\theta\sigma$.
\end{lemma}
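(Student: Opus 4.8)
The plan is to test the explicit formula
\begin{equation*}
\mc{L}_0\sigma=-\frac14\,\p_\theta\bm{X}_c\cdot\mc{H}\bigl(\p_\theta(\sigma\,\p_\theta\bm{X}_c)\bigr)+\frac{1}{8\pi}\int_{\mbs}\sigma(\theta')\,d\theta'
\end{equation*}
derived just above against the trigonometric basis and then extend by linearity. First I would take $\sigma=\sin(n\theta)$ with $n\ge1$, for which the mean-value term vanishes. Using $\p_\theta\bm{X}_c=(-\sin\theta,\cos\theta)^{\mathrm T}$ and $\p_\theta^2\bm{X}_c=-\bm{X}_c$, write $\p_\theta(\sin(n\theta)\p_\theta\bm{X}_c)=n\cos(n\theta)\,\p_\theta\bm{X}_c-\sin(n\theta)\,\bm{X}_c$ and expand each Cartesian component with product-to-sum identities; the first component becomes a combination of $\sin((n\pm1)\theta)$ and the second a combination of $\cos((n\pm1)\theta)$, with coefficients $\tfrac{n\pm1}{2}$.

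Next I would apply $\mc{H}$ componentwise, using the values of the Hilbert transform on the trigonometric system recorded in Lemma \ref{c:hilbert_01} and Lemma \ref{c:hilbert_02} (in particular $\mc{H}\cos(k\theta)=\sin(k\theta)$ and $\mc{H}\sin(k\theta)=-\cos(k\theta)$ for $k\ge1$, with the degenerate index $k=n-1=0$ occurring only when $n=1$ and contributing nothing since $\mc{H}$ annihilates constants). Taking the dot product with $\p_\theta\bm{X}_c$ and expanding the resulting products once more yields a linear combination of $\sin((n+2)\theta)$, $\sin(n\theta)$ and $\sin((n-2)\theta)$; the outer $\sin((n\pm2)\theta)$ contributions cancel in pairs, leaving exactly $n\sin(n\theta)$. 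Hence $\mc{L}_0(\sin(n\theta))=-\tfrac{n}{4}\sin(n\theta)$. The computation for $\cos(n\theta)$ is word-for-word the same (alternatively, one may use that $\mc{L}_0$ commutes with the rotation $\theta\mapsto\theta+\tfrac{\pi}{2n}$, which interchanges $\sin(n\theta)$ and $\cos(n\theta)$), and for $\sigma\equiv1$ we already know $\mc{L}_01=0$ from Lemma \ref{r:Lcirc}.

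Finally, to obtain the identity $\mc{L}_0\sigma=-\tfrac14\mc{H}\p_\theta\sigma$, I would set $\mc{M}:=\mc{L}_0+\tfrac14\mc{H}\p_\theta$, which by Proposition \ref{p:L_mapping_prop} is a bounded operator from $C^1(\mbs)$ to $C^0(\mbs)$. A direct check shows $-\tfrac14\mc{H}\p_\theta$ has the same eigenvalues $-\tfrac{n}{4}$ on $\sin(n\theta),\cos(n\theta)$ and also annihilates constants, so $\mc{M}$ vanishes on every trigonometric polynomial; since the trigonometric polynomials are dense in $C^1(\mbs)$ (e.g.\ via Fej\'er means), $\mc{M}\equiv0$, and in particular $\mc{L}_0=-\tfrac14\mc{H}\p_\theta$ on $C^{1,\gamma}(\mbs)\subset C^1(\mbs)$. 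I do not expect a genuine obstacle here: the argument is an elementary Fourier computation once the kernels of $\p_\theta G_L(\Delta\bm{X}_c)$ and $\p_\theta G_T(\Delta\bm{X}_c)$ are in hand, and the only points requiring mild care are the two product-to-sum passes and the degenerate index $n=1$.
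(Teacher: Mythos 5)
Your proposal is correct and follows essentially the same route as the paper: evaluate the explicit formula for $\mc{L}_0$ (the one displayed just before the lemma) on the trigonometric basis using the Hilbert transform identities of Lemmas \ref{c:hilbert_01} and \ref{c:hilbert_02}, which yields the eigenvalues $-\tfrac{n}{4}$. Your closing density argument (boundedness of $\mc{L}_0+\tfrac14\mc{H}\p_\theta$ from $C^1$ to $C^0$ plus Fej\'er approximation) merely makes explicit the extension step the paper leaves implicit in ``the above in fact shows $\mc{L}_0\sigma=-\tfrac14\mc{H}\p_\theta\sigma$.''
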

\begin{lemma}
For $n \geq 2$,
\begin{align*}
    \mc{Q}_0\left[\cos \paren{n\theta}\bm{X}_c\right]&=\mc{Q}_0\left[\sin \paren{n\theta}\bm{X}_c\right]=0,\\
    \mc{Q}_0\left[\cos \paren{n\theta}\pd{\theta}\bm{X}_c\right]=-\frac{1}{4}\sin \paren{n\theta}&, \quad
    \mc{Q}_0\left[\sin \paren{n\theta}\pd{\theta}\bm{X}_c\right]=\frac{1}{4}\cos \paren{n\theta}.
\end{align*}
Moreover,
\begin{align*}
    \mc{Q}_0\left[\cos \theta\bm{X}_c\right]=\frac{1}{8}\cos \theta,&& 
    \mc{Q}_0\left[\sin \theta\bm{X}_c\right]=\frac{1}{8}\sin \theta,\\
    \mc{Q}_0\left[\cos \theta\pd{\theta}\bm{X}_c\right]=-\frac{1}{8}\sin \theta,&&
    \mc{Q}_0\left[\sin \theta\pd{\theta}\bm{X}_c\right]=\frac{1}{8}\cos \theta.
\end{align*}
and
\begin{align*}
    \mc{Q}_0\left[\bm{X}_c\right]=0,\quad
    \mc{Q}_0\left[\pd{\theta}\bm{X}_c\right]=0
\end{align*}
\end{lemma}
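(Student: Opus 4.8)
The plan is to read every entry of the lemma directly off the explicit formula \eqref{e:circle_sigma_operator},
\[
\mc{Q}_0[\bm{F}] = -\tfrac14\,\p_\theta\bm{X}_c\cdot\mc{H}\bm{F}\;-\;\tfrac{1}{8\pi}\int_\mbs \bm{X}_c(\theta')\cdot\bm{F}(\theta')\,d\theta',
\]
since each $\bm{F}$ in the statement has the form $\phi(\theta)\bm{X}_c(\theta)$ or $\phi(\theta)\p_\theta\bm{X}_c(\theta)$ with $\phi\in\{1,\cos(n\theta),\sin(n\theta)\}$. First I would dispose of the zeroth-order (integral) term. Because $\abs{\bm{X}_c}=1$ and $\bm{X}_c\cdot\p_\theta\bm{X}_c=0$, we have $\bm{X}_c\cdot(\phi\bm{X}_c)=\phi$ and $\bm{X}_c\cdot(\phi\,\p_\theta\bm{X}_c)=0$; hence the integral term vanishes for every tangential input, vanishes for $\bm{F}=\cos(n\theta)\bm{X}_c$ and $\bm{F}=\sin(n\theta)\bm{X}_c$ with $n\geq1$ by orthogonality of trigonometric functions, and contributes exactly $-\tfrac14$ in the single remaining case $\bm{F}=\bm{X}_c$.

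Next comes the Hilbert-transform term. Writing $\bm{F}=(F_1,F_2)$ and $\p_\theta\bm{X}_c=(-\sin\theta,\cos\theta)$, I would expand products such as $\cos(n\theta)\cos\theta$ and $\cos(n\theta)\sin\theta$ by the product-to-sum identities, apply $\mc{H}[\cos k\theta]=\sin k\theta$, $\mc{H}[\sin k\theta]=-\cos k\theta$ for $k\geq1$ and $\mc{H}[1]=0$, and then collapse $\p_\theta\bm{X}_c\cdot\mc{H}\bm{F}$ back to a single mode using angle-addition identities such as $\sin\theta\sin(k\theta)+\cos\theta\cos(k\theta)=\cos((k-1)\theta)$ and $\cos\theta\sin(k\theta)+\sin\theta\cos(k\theta)=\sin((k+1)\theta)$. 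For $n\geq2$ this is clean, because the mode $n-1$ is still $\geq1$ and so $\mc{H}$ acts on it generically; the two contributions at modes $n\pm1$ recombine so that $\mc{Q}_0[\cos(n\theta)\bm{X}_c]=\mc{Q}_0[\sin(n\theta)\bm{X}_c]=0$, while the tangential inputs give $\mc{Q}_0[\cos(n\theta)\p_\theta\bm{X}_c]=-\tfrac14\sin(n\theta)$ and $\mc{Q}_0[\sin(n\theta)\p_\theta\bm{X}_c]=\tfrac14\cos(n\theta)$. Equivalently — and this is the route I would actually present — all of these Hilbert-transform evaluations are already packaged in Lemma~\ref{c:hilbert_01} and Lemma~\ref{c:hilbert_02}, so the computation is pure substitution.

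The low modes must be treated separately, and this is the only point that requires care. When $n=1$ the ``$n-1$'' mode is the constant function, which $\mc{H}$ annihilates; this removes exactly one of the two equal pieces that produced the generic answer and accounts precisely for the extra factor of $\tfrac12$, giving $\mc{Q}_0[\cos\theta\,\bm{X}_c]=\tfrac18\cos\theta$, $\mc{Q}_0[\sin\theta\,\bm{X}_c]=\tfrac18\sin\theta$, $\mc{Q}_0[\cos\theta\,\p_\theta\bm{X}_c]=-\tfrac18\sin\theta$, $\mc{Q}_0[\sin\theta\,\p_\theta\bm{X}_c]=\tfrac18\cos\theta$; here $\mc{H}$ only needs to act on the $2\theta$ mode, e.g. $\cos^2\theta=\tfrac12(1+\cos2\theta)$ so $\mc{H}[\cos^2\theta]=\tfrac12\sin2\theta$. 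Finally, for $\bm{F}=\bm{X}_c$ one has $\mc{H}\bm{X}_c=-\p_\theta\bm{X}_c$, hence $\p_\theta\bm{X}_c\cdot\mc{H}\bm{X}_c=-\abs{\p_\theta\bm{X}_c}^2=-1$, so the Hilbert term contributes $\tfrac14$, which cancels the $-\tfrac14$ from the integral term and yields $\mc{Q}_0[\bm{X}_c]=0$; and for $\bm{F}=\p_\theta\bm{X}_c$ the integral term vanishes and $\mc{H}(\p_\theta\bm{X}_c)=\bm{X}_c$, so $\p_\theta\bm{X}_c\cdot\mc{H}(\p_\theta\bm{X}_c)=\p_\theta\bm{X}_c\cdot\bm{X}_c=0$, giving $\mc{Q}_0[\p_\theta\bm{X}_c]=0$. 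There is no genuine obstacle: the whole lemma is a bounded, finite trigonometric bookkeeping, and the only thing worth organizing is to prove the $n\geq2$ case once and then dispatch $n=1$ and $n=0$ as the two exceptional low modes.
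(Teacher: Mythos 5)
Your proposal is correct and follows essentially the route the paper intends: the paper proves this lemma by substituting into the explicit formula \eqref{e:circle_sigma_operator} for $\mc{Q}_0$ and invoking the Hilbert-transform computations of Lemma \ref{c:hilbert_01} (and Lemma \ref{c:hilbert_02}), which is exactly your "pure substitution" argument, with the integral term handled by $\abs{\bm{X}_c}=1$, $\bm{X}_c\cdot\p_\theta\bm{X}_c=0$ and orthogonality. All of your stated values, including the exceptional $n=1$ and $n=0$ modes, check out against direct computation.
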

From the above two lemmas, the following is immediate.
\begin{proposition}\label{t: computation_circle}
Suppose $\Gamma$ is a circle.
Suppose $\bm{F}$ is given in terms of the following Fourier expansion
\begin{align*}
    \bm{F}=\sum_{n=0}^\infty \paren{ a_n \cos \paren{n\theta}+b_n\sin \paren{n\theta}}\bm{X}_{c}+\sum_{n=0}^\infty \paren{c_n \cos \paren{n\theta}+d_n\sin \paren{n\theta}}\pd{\theta} \bm{X}_{c}.
\end{align*}
Then, the unique solution to \eqref{e:rewrite_inextensible}, satisfying $\int_{\mbs} \sigma d\theta=0$ is given by
\begin{align*}
    \sigma=\frac{a_1+d_1}{2}\cos\theta+\frac{b_1-c_1}{2}\sin\theta+\sum_{n=2}^\infty \paren{\frac{d_n}{n} \cos \paren{n\theta}-\frac{c_n}{n}\sin \paren{n\theta}}.
\end{align*}
\end{proposition}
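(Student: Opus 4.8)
The plan is to reduce the proposition to a termwise Fourier computation built on the two lemmas immediately preceding it. First I would expand $\bm{F}$ as in the statement and apply $\mc{Q}_0$ mode by mode, using the second of those lemmas as a black box. The constant ($n=0$) modes drop out because $\mc{Q}_0[\bm{X}_c]=\mc{Q}_0[\p_\theta\bm{X}_c]=0$; for $n\ge 2$ only the tangential part survives, contributing $\tfrac14(d_n\cos n\theta-c_n\sin n\theta)$; and combining the four base cases for $n=1$ gives $\tfrac18(a_1+d_1)\cos\theta+\tfrac18(b_1-c_1)\sin\theta$. Altogether,
\begin{equation*}
\mc{Q}_0[\bm{F}]=\frac{a_1+d_1}{8}\cos\theta+\frac{b_1-c_1}{8}\sin\theta+\sum_{n\ge 2}\paren{\frac{d_n}{4}\cos n\theta-\frac{c_n}{4}\sin n\theta}.
\end{equation*}

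Next I would invert $\mc{L}_0$. By the first of the two lemmas, $\mc{L}_0$ is diagonalized by the trigonometric system, acting on the $n$-th Fourier mode by multiplication by $-n/4$; equivalently $\mc{L}_0\sigma=-\tfrac14\mc{H}\p_\theta\sigma$. Writing the mean-zero ansatz $\sigma=\sum_{n\ge 1}\paren{\alpha_n\cos n\theta+\beta_n\sin n\theta}$ and matching it against $-\mc{Q}_0[\bm{F}]$ term by term yields $\alpha_1=\tfrac12(a_1+d_1)$ and $\beta_1=\tfrac12(b_1-c_1)$ from the $n=1$ modes, and $\alpha_n=d_n/n$, $\beta_n=-c_n/n$ from the $n\ge 2$ modes. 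This is precisely the claimed formula for $\sigma$.

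For uniqueness I would invoke Lemma \ref{r:Lcirc}: on the circle the arclength and polar parametrizations coincide, so $\mc{L}=\mc{L}_0$ is invertible as a map $\bar{C}^{1,\gamma}(\mbs)\to\bar{C}^{\gamma}(\mbs)$, while $\mc{Q}_0[\bm{F}]\in\bar{C}^{\gamma}(\mbs)$ by \eqref{Qzero}; hence the mean-zero solution of \eqref{e:rewrite_inextensible} is unique. The only point needing a little care is justifying the termwise manipulations when $\bm{F}$ is not a trigonometric polynomial: I would first prove the identity for trigonometric polynomials, where all sums are finite, and then pass to the general case using the continuity of $\mc{Q}_0$ on $C^{0,\gamma}(\mbs)$ together with the boundedness of $\mc{L}_0^{-1}$ from $\bar{C}^{\gamma}(\mbs)$ to $\bar{C}^{1,\gamma}(\mbs)$. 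I do not anticipate a genuine obstacle here; the real content of the argument sits inside the two quoted lemmas, whose proofs in turn reduce (via Lemma \ref{c:hilbert_01} and Lemma \ref{c:hilbert_02}) to explicit Hilbert-transform identities on the unit circle.
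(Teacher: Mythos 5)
Your proposal is correct and follows essentially the same route as the paper: the paper declares the result ``immediate'' from the two preceding lemmas, which is exactly the mode-by-mode evaluation of $\mc{Q}_0[\bm{F}]$, the diagonal inversion of $\mc{L}_0$ (eigenvalue $-n/4$ on the $n$-th mode), and the uniqueness supplied by Lemma \ref{r:Lcirc} that you spell out. Your extra care about passing from trigonometric polynomials to general $\bm{F}$ via boundedness of $\mc{Q}_0$ and $\mc{L}_0^{-1}$ is a harmless elaboration of a step the paper leaves implicit.
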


\subsubsection{Computation of $\mc{L}_1 1$}
Let us expand $\bm{\tau}_\veps$ and $\mc{S}_\veps$ in powers of $\veps$:
\begin{equation}\label{tauSexp}
\bm{\tau}_\veps=\bm{\tau}_0+\veps\bm{\tau}_1+\veps^2\bm{\tau}_2+\cdots, \quad
\mc{S}_\veps=\mc{S}_0+\veps \mc{S}_1+\veps^2\mc{S}_2+\cdots.
\end{equation}
Using \eqref{Lveps} and collecting the first order term in $\veps$, we obtain
\begin{equation*}
\mc{L}_1 1=\bm{\tau}_1 \cdot \p_\theta \mc{S}_0[\p_\theta\bm{\tau}_0]+\bm{\tau}_0 \cdot \p_\theta \mc{S}_1[\p_\theta\bm{\tau}_0]+ \bm{\tau}_0 \cdot \p_\theta \mc{S}_0[\p_\theta \bm{\tau}_1].
\end{equation*}
By Proposition \ref{r:Lcirc}, we know that $\mc{S}_0[\p_\theta \bm{\tau}_0]=0$. Thus, 
\begin{equation}\label{f:L1S0}
\mc{L}_1 1=\bm{\tau}_0 \cdot \p_\theta \mc{S}_1[\p_\theta\bm{\tau}_0]+ \bm{\tau}_0 \cdot \p_\theta \mc{S}_0[\p_\theta \bm{\tau}_1].
\end{equation}
To proceed further, we need the concrete expressions for $\mc{L}_1$.
Let us expand $G_L(\Delta\bm{X}_\varepsilon), G_T(\Delta\bm{X}_\varepsilon)$ in powers of $\veps$
\begin{equation*}
     G_L(\Delta\bm{X}_\varepsilon)= G_{L0}+G_{L1}\varepsilon+G_{L2}\varepsilon^2+\cdots,\quad G_T(\Delta\bm{X}_\varepsilon)= G_{T0}+G_{T1}\varepsilon+G_{T2}\varepsilon^2+\cdots.
\end{equation*}
Using the above, the operators $\mc{S}_i$ in \eqref{tauSexp} can be written as
\begin{align*}
        \mc{S}_i \left[\bm{f}\right]=\frac{1}{4\pi} \int_\mbs \paren{ G_{Li}+G_{Ti}}\bm{f}'d\theta', \; i=0,1,2.
\end{align*}
Now, let us examine the two terms on the right hand side of \eqref{f:L1S0}.
For $\bm{\tau}_0\cdot \pd{\theta}\mc{S}_1\left[\pd{\theta}\bm{\tau}_0\right]$, we have
\begin{align*}
    \bm{\tau}_0\cdot \pd{\theta}\mc{S}_1\left[\pd{\theta}\bm{\tau}_0\right]
    =&\pd{\theta}\paren{\bm{\tau}_0\cdot \mc{S}_1\left[\pd{\theta}\bm{\tau}_0\right]}-\pd{\theta}\bm{\tau}_0\cdot \mc{S}_1\left[\pd{\theta}\bm{\tau}_0\right]\\
    =&-\pd{\theta}\paren{\pd{\theta}\bm{X}_{c}\cdot \mc{S}_1\left[\bm{X}_{c}\right]}-\bm{X}_{c}\cdot \mc{S}_1\left[\bm{X}_{c}\right].
\end{align*}
By \eqref{t:GL01}, \eqref{t:GT01},and \eqref{e:GT01d00},
\begin{align*}
     &4\pi\pd{\theta}\bm{X}_{c}\cdot \mc{S}_1\left[\bm{X}_{c}\right]
    =\pd{\theta} \bm{X}_{c}\cdot\int_\mbs \paren{G_{L1}+G_{T1}}\bm{X}_{c}' d\theta' \\
    =&-\int_\mbs\frac{1}{2}\pd{\theta}\bm{X}_{c}\cdot\Delta\bm{Y}+\frac{\pd{\theta}\bm{X}_{c}\cdot\bm{X}_{c}'}{\abs{\Delta\bm{X}_{c}}^2}\paren{\bm{X}_{c}'+2\Delta\bm{X}_{c}}\cdot\Delta\bm{Y}d\theta'
\end{align*}
Then, by \eqref{t:GL01}, \eqref{t:GT01},and \eqref{e:GT0100},
\begin{align*}
     4\pi\bm{X}_{c}\cdot \mc{S}_1\left[\bm{X}_{c}\right]
    = \bm{X}_{c}\cdot\int_\mbs \paren{G_{L1}+G_{T1}}\bm{X}_{c}' d\theta' 
    =-\int_\mbs\bm{X}_{c}\cdot\bm{X}_{c}'\frac{\Delta\bm{X}_{c}\cdot\Delta \bm{Y}}{\abs{\Delta\bm{X}_{c}}^2}d\theta' .
\end{align*}
For the $\bm{\tau}_0\cdot\pd{\theta}\mc{S}_0\left[\pd{\theta}\bm{\tau}_1\right]$ term in \eqref{f:L1S0}, by \eqref{e:circle_sigma_operator} and \eqref{e:tau011},
\begin{align*}
    \bm{\tau}_0\cdot\pd{\theta}\mc{S}_0\left[\pd{\theta}\bm{\tau}_1\right]
    =-\frac{1}{4}\pd{\theta}\bm{X}_{c}\cdot \mc{H}\pd{\theta} \bm{\tau}_1 -\frac{1}{8\pi}\int_\mbs  \bm{X}_{c}\cdot\pd{\theta'}\bm{\tau}'_1 d\theta'
    =-\frac{1}{4}\pd{\theta}\bm{X}_{c}\cdot \mc{H} \pd{\theta} \bm{\tau}_1 .
\end{align*}
Therefore,
\begin{align}
\begin{split}
        \mc{L}_1 1
    =&\frac{1}{4\pi}\pd{\theta}\int_\mbs\frac{1}{2}\pd{\theta}\bm{X}_{c}\cdot\Delta\bm{Y}+\frac{\pd{\theta}\bm{X}_{c}\cdot\bm{X}_{c}'}{\abs{\Delta\bm{X}_{c}}^2}\paren{\bm{X}_{c}'+2\Delta\bm{X}_{c}}\cdot\Delta\bm{Y}d\theta'\\
     &+\frac{1}{4\pi}\int_\mbs\bm{X}_{c}\cdot\bm{X}_{c}'\frac{\Delta\bm{X}_{c}\cdot\Delta \bm{Y}}{\abs{\Delta\bm{X}_{c}}^2}d\theta'-\frac{1}{4}\pd{\theta}\bm{X}_{c}\cdot \mc{H} \pd{\theta} \bm{\tau}_1. 
\end{split}\label{f:L1S0_simple}
\end{align}

Recall from \eqref{Xveps}, \eqref{gexp} that 
\begin{equation}\label{f:fourier_Y}
    \bm{Y}=g\bm{X}_c,\quad g=g_0 +\sum_{n\geq 1} g_{n1}\cos \paren{n\theta}+g_{n2}\sin \paren{n\theta}.
\end{equation}
In the following computations, we will split $\Delta \bm{Y}$ as
\begin{align*}
    \Delta \bm{Y}   &=g\Delta\bm{X}_c+\Delta g\bm{X}_c'.
\end{align*}
Now, let us compute \eqref{f:L1S0_simple}.
\begin{proposition}\label{t: eig_1}
If $\bm{Y}$ is expanded as the form of \eqref{f:fourier_Y} in $C^{2}(\mbs)$, then
\begin{align*}
    \mc{L}_1 1=0.
\end{align*}
\end{proposition}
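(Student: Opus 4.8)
The plan is to evaluate the four terms on the right side of \eqref{f:L1S0_simple} explicitly and check that they cancel. I would feed in three ingredients: the first-order expansions of $G_L(\Delta\bm{X}_\veps)$ and $G_T(\Delta\bm{X}_\veps)$ from Appendix \ref{unitcircle_appendix} (the identities \eqref{t:GL01}, \eqref{t:GT01}, \eqref{e:GT01d00}, \eqref{e:GT0100}); the formula $\bm{\tau}_1=(\p_\theta g)\bm{X}_c$ for the first-order tangent from \eqref{e:tau011}, so that $\p_\theta\bm{\tau}_1=(\p_\theta^2 g)\bm{X}_c+(\p_\theta g)\p_\theta\bm{X}_c$; and the splitting $\Delta\bm{Y}=g\,\Delta\bm{X}_c+(\Delta g)\,\bm{X}_c'$ noted after \eqref{f:fourier_Y}. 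Because $G_{L1}$, $G_{T1}$ and $\bm{\tau}_1$ all depend linearly on $\bm{Y}=g\bm{X}_c$, the map $g\mapsto \mc{L}_1 1$ is linear, so it is enough to prove the identity for $g$ a single Fourier mode; I would work with $g=e^{in\theta}$, $n\in\mathbb{Z}$, and split into the $\cos$ and $\sin$ cases at the end.

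The mode $n=0$ is immediate from the structure of the problem: if $g$ is constant then $\bm{X}_\veps=(1+\veps g)\bm{X}_c$ is a circle of radius $1+\veps g$, for which $\p_\theta\bm{\tau}_\veps=-\bm{n}_\veps$, and the argument of Lemma \ref{r:Lcirc} (which applies to a circle of any radius, via the divergence-theorem identity \eqref{nSg} and the symmetry of $\mc{S}_\veps$) gives $\mc{S}_\veps[\bm{n}_\veps]=0$; hence $\mc{L}_\veps 1=-\bm{\tau}_\veps\cdot\p_\theta\mc{S}_\veps[\bm{n}_\veps]\equiv 0$, and in particular $\mc{L}_1 1=0$. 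For $n\geq 1$ the computation is direct: once $g$ is a single harmonic, each of the three integral terms in \eqref{f:L1S0_simple} is the convolution of a trigonometric polynomial with one of the elementary kernels $\tfrac12\cot\!\big(\tfrac{\theta-\theta'}{2}\big)$, $1$, $\frac{\p_\theta\bm{X}_c\cdot\bm{X}_c'}{\abs{\Delta\bm{X}_c}^2}$, or $\frac{\bm{X}_c\cdot\bm{X}_c'}{\abs{\Delta\bm{X}_c}^2}\Delta\bm{X}_c$, and expanding the trigonometric products by product-to-sum identities together with the Hilbert-transform evaluations of Lemma \ref{c:hilbert_01} and Lemma \ref{c:hilbert_02} turns each term into an explicit trigonometric polynomial in $\theta$. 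The final term $-\tfrac14\p_\theta\bm{X}_c\cdot\mc{H}\p_\theta\bm{\tau}_1$ is treated the same way using $\mc{H}\cos(m\theta)=\sin(m\theta)$, $\mc{H}\sin(m\theta)=-\cos(m\theta)$, $\mc{H}1=0$.

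Summing the four contributions for each fixed $n$, the claim is that they vanish identically; carrying out this cancellation — above all, keeping straight which frequencies each kernel generates and the outer $\p_\theta$ on the first two terms — is the only real work, and I expect the bookkeeping, not any conceptual issue, to be the main obstacle. A free consistency check along the way is that $\dual{1}{\mc{L}_1 1}=2\pi\lambda_1=0$ by \eqref{lam1zero}, so the mean of $\mc{L}_1 1$ must vanish; the proposition asserts the stronger statement that every Fourier coefficient of $\mc{L}_1 1$ does. One conceptual remark that frames the computation: $\mc{L}_\veps 1$ is (a positive factor times) the rate at which a uniform surface tension stretches the interface $\Gamma_\veps$, and this rate is identically zero on a circle, so $\mc{L}_1 1$ is the first variation of a quantity that is quadratically small near $\veps=0$ in the energy sense of Proposition \ref{p:Ltheta} — but extracting the full pointwise vanishing still requires the explicit kernel calculation above.
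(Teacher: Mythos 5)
Your setup is exactly the paper's: you start from \eqref{f:L1S0_simple}, feed in \eqref{t:GL01}, \eqref{t:GT01}, \eqref{e:GT0100}, \eqref{e:GT01d00}, the formula $\bm{\tau}_1=(\p_\theta g)\bm{X}_c$ from \eqref{e:tau011}, and the splitting $\Delta\bm{Y}=g\,\Delta\bm{X}_c+(\Delta g)\bm{X}_c'$, then evaluate everything in Fourier modes with the Hilbert-transform identities of Appendix \ref{unitcircle_appendix}. Two of your additions are genuinely fine: the reduction by linearity to a single harmonic is legitimate, and your treatment of the $n=0$ mode (constant $g$ makes $\bm{X}_\veps$ a circle of radius $1+\veps g_0$, so $\p_\theta\bm{\tau}_\veps=-\bm{n}_\veps$ and the divergence-theorem/symmetry argument of Lemma \ref{r:Lcirc} gives $\mc{S}_\veps[\bm{n}_\veps]=0$, hence $\mc{L}_\veps 1\equiv 0$ along that one-parameter family) is a clean conceptual shortcut the paper does not use.

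The gap is that you stop exactly where the proof begins. The proposition is a computational identity, and its entire content is the cancellation of the four contributions in \eqref{f:L1S0_simple}; you assert that "the claim is that they vanish" and defer this to bookkeeping you "expect" to work, which is not a proof. Moreover the cancellation is not a uniform-in-$n$ pattern that one can wave through: in the paper's evaluation the first group of integrals produces $\tfrac18 g_{11}\cos\theta+\tfrac18 g_{12}\sin\theta+\tfrac14\sum_{n\geq 2}n\paren{g_{n1}\cos n\theta+g_{n2}\sin n\theta}$, the second integral contributes $\tfrac18\paren{g_{11}\cos\theta+g_{12}\sin\theta}$ only at $n=1$, and the term $-\tfrac14\p_\theta\bm{X}_c\cdot\mc{H}\p_\theta\bm{\tau}_1$ gives $-\tfrac14\sum_{n\geq 1}n\paren{g_{n1}\cos n\theta+g_{n2}\sin n\theta}$; so the $n=1$ mode cancels through a different combination of terms ($\tfrac18+\tfrac18$ against $-\tfrac14$) than the $n\geq 2$ modes ($\tfrac n4$ against $-\tfrac n4$). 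Until you actually produce these mode-by-mode evaluations (your mean-zero check $\dual{1}{\mc{L}_1 1}=2\pi\lambda_1=0$ only constrains the zeroth Fourier coefficient, which is the one mode you already handled), the argument for $n\geq 1$ is a plan, not a proof; completing it would make your write-up essentially identical to the paper's.
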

We remark that the above result gives an independent proof of the fact that $\lambda_1=0$.
\begin{proof}[Proof of Proposition \ref{t: eig_1}]
We leave some computations in Lemma \ref{c:Y_01}.
In the first integral term of \eqref{f:L1S0_simple},
\begin{align*}
    \int_\mbs \pd{\theta}\bm{X}_c\cdot \Delta \bm{Y}d\theta'
    =-\int_\mbs g' \pd{\theta}\bm{X}_c\cdot\bm{X}_c'd\theta'
    =\pi \left(g_{11}\sin\theta-g_{12} \cos\theta\right).
\end{align*}

\begin{align*}
     &\int_\mbs\frac{\paren{\pd{\theta}\bm{X}_c\cdot\bm{X}_c'}\paren{\Delta\bm{Y}\cdot\bm{X}_c'}}{\abs{\Delta\bm{X}_c}^2}d\theta'
    =\int_\mbs -\frac{1}{2}g \pd{\theta}\bm{X}_c\cdot\bm{X}_c'+\frac{\pd{\theta}\bm{X}_c\cdot\bm{X}_c'}{\abs{\Delta\bm{X}_c}^2}\Delta g d\theta'\\
    =&\pi\sum_{n\geq 1} g_{n1}\sin \paren{n\theta}-g_{n2}\cos \paren{n\theta}.
\end{align*}

\begin{align*}
     2\int_\mbs \frac{\Delta\bm{X}_c\cdot\Delta \bm{Y}}{\abs{\Delta\bm{X}_c}^2}\pd{\theta}\bm{X}_c\cdot \bm{X}_c'd\theta'
    =\int_\mbs  \left(g+g'\right)\pd{\theta}\bm{X}_c\cdot \bm{X}_c'd\theta'
    =\pi \left(g_{12} \cos\theta-g_{11}\sin\theta\right)
\end{align*}
Therefore,
\begin{align*}
     &\frac{1}{4\pi}\int_\mbs\frac{1}{2}\pd{\theta}\bm{X}_c\cdot\Delta\bm{Y}+\frac{\paren{\pd{\theta}\bm{X}_c\cdot\bm{X}_c'}\paren{\Delta\bm{Y}\cdot\bm{X}_c'}}{\abs{\Delta\bm{X}_c}^2}+2\frac{\paren{\Delta\bm{X}_c\cdot\Delta\bm{Y}}}{\abs{\Delta\bm{X}_c}^2}\paren{\pd{\theta}\bm{X}_c\cdot\bm{X}_c'}d\theta'\\
    =&\frac{1}{8}g_{11}\cos\theta+\frac{1}{8}g_{12} \sin\theta+\frac{1}{4}\sum_{n\geq 2} n\paren{ g_{n1}\cos \paren{n\theta}+g_{n2}\sin \paren{n\theta}}.
\end{align*}
In the second integral term of \eqref{f:L1S0_simple}, by Lemma \ref{c:Y_01},
\begin{align*}
     &\frac{1}{4\pi}\int_\mbs\bm{X}_c\cdot\bm{X}_c'\frac{\Delta\bm{X}_c\cdot\Delta \bm{Y}}{\abs{\Delta\bm{X}_c}^2}d\theta'
    =\frac{1}{4\pi}\bm{X}_c\cdot\int_\mbs\frac{1}{2}\paren{g+g'}\bm{X}_c' d\theta'\\
    =&\frac{1}{8}\left(g_{11}\cos\theta+g_{12}\sin\theta\right).
\end{align*}
Finally, in the last term, by Lemma \ref{c:tau_01},
\begin{align*}
    -\frac{1}{4}\pd{\theta}\bm{X}_c\cdot \mc{H} \pd{\theta} \bm{\tau}_1
    =-\frac{1}{4}\sum_{n\geq 1} n g_{n1}\cos \paren{n\theta}+n g_{n2}\sin \paren{n\theta}.
\end{align*}
In conclusion, after summing the all terms, we obtain
\begin{align*}
    \mc{L}_1 1=0.
\end{align*}
\end{proof}

\subsubsection{Computation of $\dual{1}{\mc{L}_2 1}$}
Substituting the expansions of $\mc{S}_\veps$ and $\bm{\tau}_\veps$ given in \eqref{tauSexp} into \eqref{Lveps} and collecting terms of order $\veps^2$, we have
\begin{equation*}
\begin{split}
\mc{L}_2 1=& \bm{\tau}_2\cdot \pd{\theta}\mc{S}_0[\pd{\theta}\bm{\tau}_0]+\bm{\tau}_0\cdot \pd{\theta}\mc{S}_2\left[\pd{\theta}\bm{\tau}_0\right]+\bm{\tau}_0\cdot \pd{\theta}\mc{S}_0\left[\pd{\theta}\bm{\tau}_2\right]\\
                +&\bm{\tau}_1\cdot \pd{\theta}\mc{S}_1\left[\pd{\theta}\bm{\tau}_0\right]+\bm{\tau}_1\cdot \pd{\theta}\mc{S}_0\left[\pd{\theta}\bm{\tau}_1\right]+\bm{\tau}_0\cdot \pd{\theta}\mc{S}_1\left[\pd{\theta}\bm{\tau}_1\right].
                \end{split}
\end{equation*}
Our goal is to compute $\dual{1}{\mc{L}_2 1}$.
By Lemma \ref{r:Lcirc}, we have
\begin{align*}
    \bm{\tau}_2\cdot \pd{\theta}\mc{S}_0\left[\pd{\theta}\bm{\tau}_0\right]=0, \quad
    \dual{\bm{\tau}_0}{\pd{\theta}\mc{S}_0\left[\pd{\theta}\bm{\tau}_2\right]}=-\dual{\p_\theta \bm{\tau}_2}{\mc{S}_0\left[\pd{\theta}\bm{\tau}_0\right]}=0
\end{align*}
Since the kernel of $\mc{S}_1$ is symmetric, we have
\begin{align*}
\dual{\bm{\tau}_1}{\pd{\theta}\mc{S}_1\left[\pd{\theta}\bm{\tau}_0\right]}=
-\dual{\p_\theta\bm{\tau}_1}{\mc{S}_1\left[\pd{\theta}\bm{\tau}_0\right]}=
-\dual{\p_\theta\bm{\tau}_0}{\mc{S}_1\left[\pd{\theta}\bm{\tau}_1\right]}=
\dual{\bm{\tau}_0}{\pd{\theta}\mc{S}_1\left[\pd{\theta}\bm{\tau}_1\right]}.
\end{align*}
Thus, we have
\begin{equation}\label{f:L2S0}
\dual{1}{\mc{L}_2 1}=
\dual{\bm{\tau}_0}{\pd{\theta}\mc{S}_2[\pd{\theta}\bm{\tau}_0]}+2\dual{\bm{\tau}_1}{\pd{\theta}\mc{S}_1[\pd{\theta}\bm{\tau}_0]}+\dual{\bm{\tau}_1}{\pd{\theta}\mc{S}_0[\pd{\theta}\bm{\tau}_1]}
\end{equation}
We will evaluate these three terms in turn.

\begin{lemma}\label{L:020}
\begin{align*}
      \dual{\bm{\tau}_0}{\pd{\theta}\mc{S}_2\left[\pd{\theta}\bm{\tau}_0\right]}
     =-\frac{\pi}{4}\sum_{n\geq 1}n\paren{g_{n1}^2+g_{n2}^2}
\end{align*}
\end{lemma}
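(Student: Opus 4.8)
The plan is to first collapse the expression onto a single double kernel integral and then evaluate it mode by mode using the second‑order expansions of the Stokeslet kernels about the unit circle. Since $\veps=0$ makes the polar and arclength parametrizations coincide, $\bm{\tau}_0=\p_\theta\bm{X}_c=(-\sin\theta,\cos\theta)$ and hence $\p_\theta\bm{\tau}_0=\p_\theta^2\bm{X}_c=-\bm{X}_c$. All functions in sight are $2\pi$‑periodic, so integrating by parts in $\theta$ in the outer pairing gives
\begin{equation*}
\dual{\bm{\tau}_0}{\pd{\theta}\mc{S}_2[\pd{\theta}\bm{\tau}_0]}=-\dual{\pd{\theta}\bm{\tau}_0}{\mc{S}_2[\pd{\theta}\bm{\tau}_0]}=-\dual{\bm{X}_c}{\mc{S}_2[\bm{X}_c]},
\end{equation*}
so it suffices to prove $\dual{\bm{X}_c}{\mc{S}_2[\bm{X}_c]}=\tfrac{\pi}{4}\sum_{n\geq 1}n(g_{n1}^2+g_{n2}^2)$.

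Next I would write $\mc{S}_2[\bm{X}_c](\theta)=\tfrac{1}{4\pi}\int_\mbs\bigl(G_{L2}(\theta,\theta')\mb{I}+G_{T2}(\theta,\theta')\bigr)\bm{X}_c(\theta')\,d\theta'$, where $G_{L2},G_{T2}$ are the order‑$\veps^2$ Taylor coefficients of $G_L(\Delta\bm{X}_\veps)$ and $G_T(\Delta\bm{X}_\veps)$ with $\Delta\bm{X}_\veps=\Delta\bm{X}_c+\veps\Delta\bm{Y}$; differentiating $G_L(\bm{r})=-\log\abs{\bm{r}}$ twice yields
\begin{equation*}
G_{L2}=-\frac12\frac{\abs{\Delta\bm{Y}}^2}{\abs{\Delta\bm{X}_c}^2}+\frac{(\Delta\bm{X}_c\cdot\Delta\bm{Y})^2}{\abs{\Delta\bm{X}_c}^4},
\end{equation*}
and the scalar $\bm{X}_c\cdot G_{T2}\bm{X}_c'$ is the order‑$\veps^2$ coefficient of $\tfrac{(\bm{X}_c\cdot\Delta\bm{X}_\veps)(\bm{X}_c'\cdot\Delta\bm{X}_\veps)}{\abs{\Delta\bm{X}_\veps}^2}$; these are precisely the second‑order kernel expansions carried out in Appendix~\ref{unitcircle_appendix} (the analogues of \eqref{t:GL01} and \eqref{t:GT01}). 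I would then simplify the inner integrand using the elementary circle identities $\abs{\Delta\bm{X}_c}^2=2(1-\cos(\theta-\theta'))$, $\bm{X}_c\cdot\bm{X}_c'=\cos(\theta-\theta')$, and $\bm{X}_c\cdot\Delta\bm{X}_c=-\bm{X}_c'\cdot\Delta\bm{X}_c=\tfrac12\abs{\Delta\bm{X}_c}^2$; the latter makes several terms telescope (for instance the $G_{T2}$ contraction collapses to $\tfrac{(\bm{X}_c\cdot\Delta\bm{Y})(\bm{X}_c'\cdot\Delta\bm{Y})}{\abs{\Delta\bm{X}_c}^2}+\tfrac14\abs{\Delta\bm{Y}}^2$).

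Then, using $\bm{Y}=g\bm{X}_c$ from \eqref{f:fourier_Y} together with the splitting $\Delta\bm{Y}=g\,\Delta\bm{X}_c+\Delta g\,\bm{X}_c'$ and the Fourier series $g=g_0+\sum_{n\ge1}(g_{n1}\cos n\theta+g_{n2}\sin n\theta)$ from \eqref{gexp}, the double integral $\dual{\bm{X}_c}{\mc{S}_2[\bm{X}_c]}$ reduces to a finite combination of single integrals over $\theta'$ of the three types $\int_\mbs\frac{\Delta g\,\bm{X}_c'}{\abs{\Delta\bm{X}_c}^2}(\cdots)\,d\theta'$, $\int_\mbs\frac{(\Delta g)^2}{\abs{\Delta\bm{X}_c}^2}(\cdots)\,d\theta'$, and $\int_\mbs g g'(\cdots)\,d\theta'$, with trigonometric weights; each of these is evaluated by the circle computations collected in Lemma~\ref{c:hilbert_01}, Lemma~\ref{c:hilbert_02} and Lemma~\ref{c:Y_01} — the same building blocks already used in the proof of Proposition~\ref{t: eig_1}. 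The remaining outer integral in $\theta$ kills all but matched Fourier modes by orthogonality, producing $\sum_{n\ge1}n(g_{n1}^2+g_{n2}^2)$ up to an overall constant; tracking the $\tfrac12,\tfrac14$ factors in $G_{L2},G_{T2}$ and the $\tfrac1{4\pi}$ in $\mc{S}_2$ pins the constant down to $\tfrac{\pi}{4}$, and the minus sign from the integration by parts yields the statement.

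The main obstacle is purely the bookkeeping: correctly assembling the several pieces of $G_{L2}$ and the contracted $G_{T2}$, keeping all signs and rational prefactors straight, and matching each kernel integral to the right entry of the appendix lemmas. A convenient internal check is that the $n=0$ mode of $g$ (a pure dilation, which keeps $\Gamma$ a circle) must drop out — indeed $G_L$ then changes by an additive constant, killed upon integration against the mean‑zero field $\bm{X}_c'$, while $G_T$ is scale invariant — which is consistent with the sum in the claimed formula starting at $n\ge1$.
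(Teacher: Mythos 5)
Your proposal is correct and follows essentially the same route as the paper: integrate by parts to reduce to $-\dual{\p_\theta\bm{\tau}_0}{\mc{S}_2[\p_\theta\bm{\tau}_0]}=-\dual{\bm{X}_c}{\mc{S}_2[\bm{X}_c]}$, insert the second-order kernels \eqref{t:GL02} and \eqref{e:GT0200}, split $\Delta\bm{Y}=g\,\Delta\bm{X}_c+\Delta g\,\bm{X}_c'$ via Lemma \ref{c:Y_01}, and evaluate Fourier-mode integrals. The only step you leave implicit is how the singular term involving $\abs{\Delta g}^2/\sin^2\paren{\tfrac{\theta-\theta'}{2}}$ produces the factor $n$; the paper does this with Toland's identity (Lemma \ref{t:Toland}) rather than Lemmas \ref{c:hilbert_01}--\ref{c:hilbert_02}, but this is a minor bookkeeping point, not a different approach.
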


\begin{proof}
By \eqref{t:GL02} and \eqref{e:GT0200}, we have
\begin{equation*}
\begin{split}
    &4\pi\int_\mbs \bm{\tau}_0\cdot \pd{\theta}\mc{S}_2\left[\pd{\theta}\bm{\tau}_0\right]d\theta
    =-4\pi\int_\mbs \pd{\theta}\bm{\tau}_0\cdot \mc{S}_2\left[\pd{\theta}\bm{\tau}_0\right]d\theta\\
    =&\quad \int_\mbs\int_\mbs \left[ \frac{1}{2}\frac{\abs{\Delta\bm{Y}}^2}{\abs{\Delta\bm{X}_c}^2}-\frac{\paren{\Delta\bm{X}_c\cdot\Delta\bm{Y}}^2}{\abs{\Delta\bm{X}_c}^4}\right]\bm{X}_c\cdot \bm{X}_c'  d\theta' d\theta\\
     &-\int_\mbs\int_\mbs\frac{\paren{\bm{X}_c\cdot\Delta\bm{Y}}\paren{\Delta\bm{Y}\cdot \bm{X}_c'}}{\abs{\Delta\bm{X}_c}^2}+\frac{1}{4}\abs{\Delta\bm{Y}}^2 d\theta' d\theta
\end{split}\label{e:eig22_020}    
\end{equation*}
For the first term, by Lemma \ref{c:Y_01},
\begin{align*}
     \frac{1}{2}\frac{\abs{\Delta\bm{Y}}^2}{\abs{\Delta\bm{X}_c}^2}-\frac{\paren{\Delta\bm{X}_c\cdot\Delta\bm{Y}}^2}{\abs{\Delta\bm{X}_c}^4}
    =-\frac{1}{4}\left(g^2+{g'}^2\right)+\frac{\abs{\Delta g}^2}{8\sin^2\paren{\frac{\theta-\theta'}{2}}}.
\end{align*}
Then,
\begin{align*}
    \int_\mbs\int_\mbs g^2\bm{X}_c\cdot \bm{X}_c' d\theta' d\theta=
    \int_\mbs\int_\mbs {g'}^2\bm{X}_c\cdot \bm{X}_c' d\theta' d\theta=0.
\end{align*}
Next, by Lemma \ref{t:Toland},
\begin{align*}
     &\int_\mbs\int_\mbs\frac{\abs{\Delta g}^2}{8\sin^2\paren{\frac{\theta-\theta'}{2}}}d\theta' d\theta
    =\pi \int_\mbs g\mc{H}\pd{\theta}gd\theta-\pi \int_\mbs \frac{1}{2}\mc{H}\pd{\theta}g^2 d\theta\\
    =&\pi^2\sum_{n\geq 1} n\left[g_{n1}^2+g_{n2}^2\right].
\end{align*}
Moreover,
\begin{align*}
     \int_\mbs\int_\mbs\frac{\abs{\Delta g}^2}{8\sin^2\paren{\frac{\theta-\theta'}{2}}}\abs{\Delta \bm{X}_c}^2d\theta' d\theta
    =\int_\mbs\int_\mbs\frac{\abs{\Delta g}^2}{2}d\theta' d\theta
    =2\pi^2\sum_{n\geq 1} \paren{g_{n1}^2+g_{n2}^2}.
\end{align*}
Therefore,
\begin{align*}
    &\int_\mbs\int_\mbs\frac{1}{2}\frac{\abs{\Delta\bm{Y}}^2}{\abs{\Delta\bm{X}_c}^2}-\frac{\paren{\Delta\bm{X}_c\cdot\Delta\bm{Y}}^2}{\abs{\Delta\bm{X}_c}^4}d\theta' d\theta
    =\int_\mbs\int_\mbs\frac{\abs{\Delta g}^2}{8\sin^2\paren{\frac{\theta-\theta'}{2}}}\bm{X}_c\cdot \bm{X}_c'd\theta' d\theta\\
    =&\int_\mbs\int_\mbs\frac{\abs{\Delta g}^2}{8\sin^2\paren{\frac{\theta-\theta'}{2}}}\paren{1-\frac{1}{2}\abs{\Delta \bm{X}_c}^2}d\theta' d\theta
    =\pi^2\sum_{n\geq 1}\paren{n-1} \paren{g_{n1}^2+g_{n2}^2}.
\end{align*}

Next, by Lemma \ref{c:Y_01},
\begin{align*}
     \frac{\paren{\bm{X}_c\cdot\Delta\bm{Y}}\paren{\Delta\bm{Y}\cdot \bm{X}_c'}}{\abs{\Delta\bm{X}_c}^2}+\frac{1}{4}\abs{\Delta\bm{Y}}^2
    =\quad-\frac{\abs{\Delta g}^2}{4}+\frac{\abs{\Delta g}^2}{4\sin^2\paren{\frac{\theta-\theta'}{2}}}.
\end{align*}
Again, by the above results,
\begin{align*}
     \int_\mbs\int_\mbs\frac{\paren{\bm{X}_c\cdot\Delta\bm{Y}}\paren{\Delta\bm{Y}\cdot \bm{X}_c'}}{\abs{\Delta\bm{X}_c}^2}+\frac{1}{4}\abs{\Delta\bm{Y}}^2 d\theta' d\theta
    = \pi^2\sum_{n\geq 1}\paren{2n-1} \paren{g_{n1}^2+g_{n2}^2}.
\end{align*}
In conclusion, we obtain
\begin{align*}
     \int_\mbs \bm{\tau}_0\cdot \pd{\theta}\mc{S}_2\left[\pd{\theta}\bm{\tau}_0\right]d\theta
     =-\frac{\pi}{4}\sum_{n\geq 1}n \paren{g_{n1}^2+g_{n2}^2}.
\end{align*}
\end{proof}

\begin{lemma}\label{L:110}
\begin{align*}
     \dual{\bm{\tau}_1}{\pd{\theta}\mc{S}_1\left[\pd{\theta}\bm{\tau}_0\right]}
    =\frac{\pi}{4}\sum_{n\geq 1}n\paren{g_{n1}^2+g_{n2}^2}.
\end{align*}
\end{lemma}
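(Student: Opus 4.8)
The plan is to move the $\theta$-derivative off of $\mc{S}_1$ and reduce everything to a pairing against $\mc{S}_1[\bm{X}_c]$, which is already under control from the proof of Proposition~\ref{t: eig_1}. First I would integrate by parts in $\theta$ on the circle; since $\bm{\tau}_0=\pd{\theta}\bm{X}_c$ and $\pd{\theta}^2\bm{X}_c=-\bm{X}_c$, this gives
\begin{equation*}
\dual{\bm{\tau}_1}{\pd{\theta}\mc{S}_1[\pd{\theta}\bm{\tau}_0]}=-\dual{\pd{\theta}\bm{\tau}_1}{\mc{S}_1[\pd{\theta}\bm{\tau}_0]}=\dual{\pd{\theta}\bm{\tau}_1}{\mc{S}_1[\bm{X}_c]}.
\end{equation*}
The boundary-term-free integration by parts is legitimate because $\mc{S}_1$ gains one derivative (hence $\mc{S}_1[\bm{X}_c]\in C^{1,\alpha}(\mbs)$) while $\bm{\tau}_1\in C^1(\mbs)$.

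Next I would insert the first-order expansion of the tangent vector, $\bm{\tau}_1=(\pd{\theta}g)\bm{X}_c$ (see \eqref{tauSexp}, \eqref{e:tau011} and Lemma~\ref{c:tau_01}), so that $\pd{\theta}\bm{\tau}_1=(\pd{\theta}^2 g)\bm{X}_c+(\pd{\theta}g)\pd{\theta}\bm{X}_c$. Since $\{\bm{X}_c(\theta),\pd{\theta}\bm{X}_c(\theta)\}$ is an orthonormal frame of $\mbr^2$ at every $\theta$, I decompose $\mc{S}_1[\bm{X}_c]=b\,\bm{X}_c+a\,\pd{\theta}\bm{X}_c$ with $a=\pd{\theta}\bm{X}_c\cdot\mc{S}_1[\bm{X}_c]$ and $b=\bm{X}_c\cdot\mc{S}_1[\bm{X}_c]$, and orthonormality collapses the pairing to the scalar integral
\begin{equation*}
\dual{\pd{\theta}\bm{\tau}_1}{\mc{S}_1[\bm{X}_c]}=\int_{\mbs}\bigl((\pd{\theta}^2 g)\,b+(\pd{\theta}g)\,a\bigr)\,d\theta .
\end{equation*}
Both scalars $a,b$ were already computed inside the proof of Proposition~\ref{t: eig_1}: using the kernel expansions \eqref{t:GL01}, \eqref{t:GT01} together with the Fourier identities of Lemma~\ref{c:Y_01} one gets $b(\theta)=-\tfrac{1}{8}(g_{11}\cos\theta+g_{12}\sin\theta)$ and $\pd{\theta}a(\theta)=-\tfrac{1}{8}(g_{11}\cos\theta+g_{12}\sin\theta)-\tfrac{1}{4}\sum_{n\ge 2}n(g_{n1}\cos n\theta+g_{n2}\sin n\theta)$, from which the Fourier series of $a$ follows by one antidifferentiation (any additive constant in $a$ or $b$ drops out, since $\int_{\mbs}\pd{\theta}g\,d\theta=\int_{\mbs}\pd{\theta}^2 g\,d\theta=0$). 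One could equally well compute $\mc{S}_1[\bm{X}_c]$ directly from $G_{L1},G_{T1}$; the point is that, $\mc{S}_1$ being linear in $\bm{Y}$, only Hilbert-transform-type singular integrals $\cot\tfrac{\theta-\theta'}{2}\,\Delta g$ occur, so no Toland-type identity is needed here, in contrast to the proof of Lemma~\ref{L:020}.

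Finally I would evaluate $\int_{\mbs}\bigl((\pd{\theta}^2 g)b+(\pd{\theta}g)a\bigr)d\theta$ by Parseval with $g=g_0+\sum_{n\ge 1}(g_{n1}\cos n\theta+g_{n2}\sin n\theta)$. The term $(\pd{\theta}^2 g)b$ couples only to the first harmonic and contributes $\tfrac{\pi}{8}(g_{11}^2+g_{12}^2)$; the term $(\pd{\theta}g)a$ contributes $\tfrac{\pi}{8}(g_{11}^2+g_{12}^2)+\tfrac{\pi}{4}\sum_{n\ge 2}n(g_{n1}^2+g_{n2}^2)$; adding them yields $\tfrac{\pi}{4}(g_{11}^2+g_{12}^2)+\tfrac{\pi}{4}\sum_{n\ge 2}n(g_{n1}^2+g_{n2}^2)=\tfrac{\pi}{4}\sum_{n\ge 1}n(g_{n1}^2+g_{n2}^2)$, which is the asserted identity. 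I expect the main obstacle to be purely bookkeeping: keeping straight that the ``first integral term'' displayed in the proof of Proposition~\ref{t: eig_1} already carries an outer $\pd{\theta}$ (so it equals $-\pd{\theta}a$, not $-a$), treating the $n=1$ harmonic of $a$ separately from the $n\ge 2$ ones, and noticing that the first-harmonic contributions of the two pieces combine so that the final answer is a single sum over all $n\ge 1$ with the uniform weight $n$.
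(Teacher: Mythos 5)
Your argument is correct, and it takes a genuinely different (more economical) route than the paper's. Both proofs begin with the same integration by parts, moving $\pd{\theta}$ onto $\bm{\tau}_1$; after that the paper stays with the double integral, contracts $\pd{\theta}\bm{\tau}_1$ against the kernels $G_{L1},G_{T1}$ via \eqref{e:GL0110}--\eqref{e:GT0110}, splits off the extra term $\pi\int_\mbs\abs{\bm{\tau}_1}^2d\theta$, and then evaluates four separate double integrals (one through a Hilbert-transform computation) using Lemmas \ref{c:tau_01} and \ref{c:Y_01}. You instead collapse everything onto the single function $\mc{S}_1[\bm{X}_c]$, decompose it in the orthonormal frame $\{\bm{X}_c,\pd{\theta}\bm{X}_c\}$, and recycle its components $a,b$ from the intermediate formulas in the proof of Proposition \ref{t: eig_1}, finishing with Parseval. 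Your bookkeeping is right where it matters: the first displayed quantity in that proof is indeed $-\pd{\theta}a$ rather than $-a$, and your values $b=-\tfrac18\paren{g_{11}\cos\theta+g_{12}\sin\theta}$ and $\pd{\theta}a=-\tfrac18\paren{g_{11}\cos\theta+g_{12}\sin\theta}-\tfrac14\sum_{n\ge2}n\paren{g_{n1}\cos n\theta+g_{n2}\sin n\theta}$ check out directly against \eqref{t:GL01}, \eqref{t:GT01} and Lemma \ref{c:Y_01}; the two first-harmonic contributions $\tfrac\pi8\paren{g_{11}^2+g_{12}^2}$ then combine with the $n\ge2$ part exactly as you say to give $\tfrac{\pi}{4}\sum_{n\ge1}n\paren{g_{n1}^2+g_{n2}^2}$. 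What your organization buys is that no new double integrals, no kernel identities \eqref{e:GL0110}, \eqref{e:GT0110}, and no separate $\abs{\bm{\tau}_1}^2$ term are needed; the cost is that the proof leans on a careful reading of intermediate steps of Proposition \ref{t: eig_1} rather than being self-contained. One minor caveat: the passing claim that $\mc{S}_1$ ``gains one derivative'' is not something the paper establishes, but it is also not needed --- the explicit Fourier expressions for $a$ and $b$ (with $g\in C^2\paren{\mbs}$) already justify the boundary-term-free integration by parts, which the paper performs at the same point without further comment.
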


\begin{proof}
By \eqref{e:GL0110} and \eqref{e:GT0110},
\begin{align*}
    &4\pi \int_\mbs \bm{\tau}_1\cdot \pd{\theta}\mc{S}_1\left[\pd{\theta}\bm{\tau}_0\right] d\theta\\
    =&-\int_\mbs\int_\mbs \pd{\theta}\bm{\tau}_1 \cdot\paren{G_{L1}+G_{T1}}\pd{\theta'}\bm{\tau}_0'd\theta' d\theta\\
    =&\int_\mbs\int_\mbs \frac{\paren{\pd{\theta}\bm{\tau}_1\cdot\Delta\bm{X}_c}\bm{X}_c-\paren{\pd{\theta}\bm{\tau}_1\cdot\bm{X}_c'}\Delta\bm{X}_c}{\abs{\Delta\bm{X}_c}^2}\cdot\Delta \bm{Y} d\theta' d\theta-\frac{1}{2}\int_\mbs\int_\mbs\pd{\theta}\bm{\tau}_1\cdot\Delta \bm{Y}d\theta' d\theta\\
\end{align*}
and by \eqref{e:tau011},
\begin{align*}
    &-\frac{1}{2}\int_\mbs\int_\mbs\pd{\theta}\bm{\tau}_1\cdot\Delta \bm{Y}d\theta' d\theta
    =\frac{1}{2}\int_\mbs\int_\mbs\bm{\tau}_1\cdot\pd{\theta}\Delta \bm{Y}d\theta' d\theta\\
    =&\frac{1}{2}\int_\mbs\int_\mbs\paren{\pd{\theta}\bm{Y}\cdot\bm{X}_c}\bm{X}_c\cdot\pd{\theta}\bm{Y}d\theta' d\theta
    =\pi\int_\mbs\abs{\bm{\tau}_1}^2 d\theta
\end{align*}
Therefore,
\begin{align}
\begin{split}
    &4\pi \int_\mbs \bm{\tau}_1\cdot \pd{\theta}\mc{S}_1\left[\pd{\theta}\bm{\tau}_0\right] d\theta\\
    =&\int_\mbs\int_\mbs \frac{\paren{\pd{\theta}\bm{\tau}_1\cdot\Delta\bm{X}_c}\bm{X}_c-\paren{\pd{\theta}\bm{\tau}_1\cdot\bm{X}_c'}\Delta\bm{X}_c}{\abs{\Delta\bm{X}_c}^2}\cdot\Delta \bm{Y} d\theta' d\theta+\pi\int_\mbs\abs{\bm{\tau}_1}^2 d\theta.
\end{split}\label{e:eig22_110}
\end{align}

First, by Lemma \ref{c:tau_01} and Lemma \ref{c:Y_01},
\begin{align*}
     &\frac{\paren{\pd{\theta}\bm{\tau}_1\cdot\Delta\bm{X}_c}\paren{\bm{X}_c\cdot\Delta \bm{Y}}}{\abs{\Delta\bm{X}_c}^2}-\frac{\paren{\pd{\theta}\bm{\tau}_1\cdot\bm{X}_c'}\paren{\Delta\bm{X}_c\cdot\Delta \bm{Y}}}{\abs{\Delta\bm{X}_c}^2}\\
    =&\quad\frac{1}{2}g\pdd{\theta}{2} g-\frac{1}{2}\pdd{\theta}{2} g\paren{g+2g'}\bm{X}_c\cdot \bm{X}_c'
     -\frac{1}{2}\pd{\theta}g\paren{2g+g'}\pd{\theta}\bm{X}_c\cdot\bm{X}_c'\\
     &+\frac{1}{2}\cot\paren{\frac{\theta-\theta'}{2}}\Delta g \pd{\theta}g\bm{X}_c\cdot\bm{X}_c'.
\end{align*}
Then, when we integrate the four above terms,
\begin{align*}
     \int_\mbs\int_\mbs \frac{1}{2}g \pdd{\theta}{2} gd\theta' d\theta
    =-\pi^2\sum_{n\geq 1}n^2\paren{g_{n1}^2+g_{n2}^2}.
\end{align*}
\begin{align*}
     -\int_\mbs\int_\mbs\frac{1}{2}\pdd{\theta}{2} g\paren{g+2g'}\bm{X}_c\cdot \bm{X}_c' d\theta' d\theta
    =\pi^2\paren{g_{11}^2+g_{12}^2}.
\end{align*}
\begin{align*}
     -\int_\mbs\int_\mbs\frac{1}{2}\pd{\theta}g\paren{2g+g'}\pd{\theta}\bm{X}_c\cdot\bm{X}_c' d\theta' d\theta
    =-\frac{\pi^2}{2}\left(g_{11}^2+g_{12}^2\right).
\end{align*}

\begin{align*}
     &\int_\mbs\int_\mbs \frac{1}{2}\cot\paren{\frac{\theta-\theta'}{2}}\Delta g \pd{\theta}g\bm{X}_c\cdot\bm{X}_c' d\theta' d\theta
    =-\pi\int_\mbs \pd{\theta}g\bm{X}_c\cdot\mc{H}\left[g\bm{X}_c\right]d\theta\\
    =&\pi^2\left[\frac{1}{2}\paren{g_{11}^2+g_{12}^2}+\sum_{n\geq 2}n\paren{g_{n1}^2+g_{n2}^2}\right].
\end{align*}
Next, by Lemma \ref{c:tau_01}
\begin{align*}
     \int_\mbs \abs{\bm{\tau}_1}^2d\theta
    =\pi \sum_{n\geq 1}n^2 \paren{  g_{n2}^2+ g_{n1}^2}.
\end{align*}
Therefore,
\begin{align*}
     \int_\mbs \bm{\tau}_1\cdot \pd{\theta}\mc{S}_1\left[\pd{\theta}\bm{\tau}_0\right] d\theta
    =\frac{\pi}{4}\sum_{n\geq 1}n\paren{  g_{n2}^2+ g_{n1}^2}.
\end{align*}
\end{proof}

\begin{lemma}\label{L:101}
\begin{align*}
    \int_\mbs \bm{\tau}_1\cdot \pd{\theta}\mc{S}_0\left[\pd{\theta}\bm{\tau}_1\right] d\theta
    =-\frac{\pi}{4}\sum_{n\geq 1}n^3\left( g_{n1}^2+ g_{n2}^2\right).
\end{align*}
\end{lemma}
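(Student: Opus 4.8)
The plan is to combine the first-order expansion $\bm{\tau}_1=(\pd{\theta}\bm{Y}\cdot\bm{X}_c)\bm{X}_c=(\pd{\theta}g)\bm{X}_c$ (Lemma~\ref{c:tau_01}, cf.\ \eqref{e:tau011}) with the explicit formula for $\pd{\theta}\mc{S}_0$ established above in Section~\ref{s:lambda2},
\begin{equation*}
\pd{\theta}\mc{S}_0[\bm{f}]=-\frac{1}{4}\mc{H}\bm{f}+\frac{1}{8\pi}\int_\mbs M(\theta,\theta')\bm{f}(\theta')\,d\theta',\qquad M(\theta,\theta')=\begin{pmatrix}\sin(\theta+\theta') & -\cos(\theta+\theta')\\ -\cos(\theta+\theta') & -\sin(\theta+\theta')\end{pmatrix}.
\end{equation*}
This writes the integral in question as $-\tfrac14\int_\mbs\bm{\tau}_1\cdot\mc{H}(\pd{\theta}\bm{\tau}_1)\,d\theta+\tfrac{1}{8\pi}\int_\mbs\int_\mbs\bm{\tau}_1(\theta)\cdot M(\theta,\theta')\,\pd{\theta'}\bm{\tau}_1(\theta')\,d\theta'\,d\theta$, and I would handle the two pieces in turn.

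First I would show the $M$-piece vanishes. Integrating by parts in $\theta'$ moves the derivative onto $M$, and the identities $\cos(\theta+\theta')\cos\theta'+\sin(\theta+\theta')\sin\theta'=\cos\theta$ and $\sin(\theta+\theta')\cos\theta'-\cos(\theta+\theta')\sin\theta'=\sin\theta$, together with $\bm{\tau}_1(\theta')=(\pd{\theta'}g)(\cos\theta',\sin\theta')$, give $(\pd{\theta'}M(\theta,\theta'))\bm{\tau}_1(\theta')=(\pd{\theta'}g(\theta'))\,\bm{X}_c(\theta)$; hence the inner integral equals $-\bm{X}_c(\theta)\int_\mbs\pd{\theta'}g\,d\theta'=0$. (Equivalently, the operator with kernel $M$ only probes the first positive Fourier mode of its argument, and $\bm{\tau}_1^{\mathbb{C}}=(\pd{\theta}g)e^{i\theta}$ has no $e^{i\theta}$-mode because $\pd{\theta}g$ has mean zero.)

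It then remains to evaluate $\int_\mbs\bm{\tau}_1\cdot\mc{H}(\pd{\theta}\bm{\tau}_1)\,d\theta$. Writing $\pd{\theta}\bm{\tau}_1=(\pdd{\theta}{2}g)\bm{X}_c+(\pd{\theta}g)\,\pd{\theta}\bm{X}_c$ and expanding $g$ as in \eqref{gexp}, this reduces to a finite combination of integrals of the form $\int_\mbs(\pd{\theta}g)\bm{X}_c\cdot\mc{H}[h\bm{X}_c]\,d\theta$ and $\int_\mbs(\pd{\theta}g)\bm{X}_c\cdot\mc{H}[h\,\pd{\theta}\bm{X}_c]\,d\theta$ with $h\in\{\pdd{\theta}{2}g,\pd{\theta}g\}$, each of which is computed term by term from the Hilbert-transform identities of Lemmas~\ref{c:hilbert_01} and~\ref{c:hilbert_02}. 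Equivalently, passing to complex Fourier coefficients ($\hat g_n=\tfrac12(g_{n1}-ig_{n2})$ for $n\ge1$) and using $\mc{H}e^{in\theta}=-i\,\sgn(n)e^{in\theta}$ with Parseval, one obtains $\int_\mbs\bm{\tau}_1\cdot\mc{H}(\pd{\theta}\bm{\tau}_1)\,d\theta=\pi\sum_{n\ge1}n^3(g_{n1}^2+g_{n2}^2)$, and multiplying by $-\tfrac14$ gives the stated identity. (The sign agrees with the negative semidefiniteness of $\mc{L}_\theta$ from Proposition~\ref{p:Ltheta}, though only the Fourier computation yields the exact constant.)

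The only genuine difficulty is bookkeeping: the factor $\bm{X}_c$ inside $\bm{\tau}_1$ shifts Fourier modes by one, so one must track how the $n$-th mode of $g$ contributes to the $(n\pm1)$-st modes of $(\pd{\theta}g)e^{i\theta}$ and then recombine; the $n=1$ contributions require slightly separate care, and there the $e^{i\theta}$-mode again drops out because $\pd{\theta}g$ has mean zero. No conceptual obstacle is expected. This lemma, together with Lemmas~\ref{L:020} and~\ref{L:110}, then feeds \eqref{f:L2S0}: the weights $-n$, $+2n$ and $-n^3$ from the three terms combine to $n-n^3=-n(n^2-1)$, which after division by $2\pi$ produces $\lambda_2$ in the form \eqref{lambdaest}.
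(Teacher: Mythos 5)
Your proposal is correct and follows essentially the same route as the paper: reduce $\bm{\tau}_1\cdot\p_\theta\mc{S}_0[\p_\theta\bm{\tau}_1]$ via the explicit circle formula for $\p_\theta\mc{S}_0$ to $-\tfrac14\bm{\tau}_1\cdot\mc{H}\p_\theta\bm{\tau}_1$ plus a smooth-kernel term that vanishes because $\int_\mbs\p_\theta g\,d\theta=0$, then evaluate the Hilbert-transform term by Fourier expansion to get $\pi\sum_{n\ge1}n^3(g_{n1}^2+g_{n2}^2)$. The only cosmetic difference is that you keep the $\sin(\theta+\theta')$, $\cos(\theta+\theta')$ kernel and integrate by parts, while the paper first contracts with $\bm{X}_c$ to land directly on $\int\p_{\theta'}\bm{X}_c'\cdot\p_{\theta'}\bm{\tau}_1'\,d\theta'=0$; the substance is identical.
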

\begin{proof}
For $\bm{\tau}_1\cdot \pd{\theta}\mc{S}_0\left[\pd{\theta}\bm{\tau}_1\right]$, similar with the result of \eqref{e:circle_sigma_operator} and by \eqref{e:tau011},
\begin{align*}
     &\bm{\tau}_1\cdot\pd{\theta}\mc{S}_0\left[\pd{\theta}\bm{\tau}_1\right]=\paren{\bm{X}_c\cdot\pd{\theta}\bm{Y}}\bm{X}_c\cdot\pd{\theta}\mc{S}_0\left[\pd{\theta}\bm{\tau}_1\right]\\
    =&\paren{\bm{X}_c\cdot\pd{\theta}\bm{Y}}\left[-\frac{1}{4}\bm{X}_c\cdot  \mc{H}\pd{\theta}\bm{\tau}_1-\frac{1}{8\pi}\int_\mbs  \pd{\theta'}\bm{X}'_c\cdot\pd{\theta'}\bm{\tau}'_1 d\theta'\right]\\
    =&-\frac{1}{4}\bm{\tau}_1\cdot  \mc{H}\pd{\theta}\bm{\tau}_1-\frac{1}{8\pi}\paren{\bm{X}_c\cdot\pd{\theta}\bm{Y}}\int_\mbs  \bm{X}'_c\cdot\pd{\theta'}\bm{Y}' d\theta'.
\end{align*}
For the first term of \eqref{e:eig22_110}, by Lemma \ref{c:tau_01},
\begin{align*}
     \bm{X}_c\cdot\mc{H}\pd{\theta}\bm{\tau}_1
    =\sum_{n\geq 1} n^2\left[g_{n2}\cos \paren{n\theta}- g_{n1}\sin \paren{n\theta}\right],
\end{align*}
so
\begin{align*}
     \int_\mbs\bm{\tau}_1\cdot\mc{H}\pd{\theta}\bm{\tau}_1d\theta
    =\pi\sum_{n\geq 1}n^3\paren{ g_{n1}^2+ g_{n2}^2}.
\end{align*}
Then, for the second term,
\begin{align*}
    \bm{X}'_c\cdot\pd{\theta'}\bm{Y}'=\pd{\theta}g\Longrightarrow
    \int_\mbs  \bm{X}'_c\cdot\pd{\theta'}\bm{Y}' d\theta'=0.
\end{align*}
Therefore,
\begin{align*}
    \int_\mbs \bm{\tau}_1\cdot \pd{\theta}\mc{S}_0\left[\pd{\theta}\bm{\tau}_1\right] d\theta
    =-\frac{\pi}{4}\sum_{n\geq 1}n^3\paren{ g_{n1}^2+ g_{n2}^2}.
\end{align*}
\end{proof}

\begin{proof}[Proof of Theorem \ref{t:lambdaveps}]
The first item was proved in Proposition \ref{p:eprob_reg} and the non-positivity of $\lambda_\veps$ was proved in Proposition \ref{p:Ltheta}.
The second item was proved in Corollary \ref{c:Lvepsinv}.
We prove the last item. In the expansion \eqref{lambdaexp}, we already know that $\lambda_0=\lambda_1=0$ (see \eqref{lam1zero}).
Let us compute $\lambda_2$ using \eqref{lambda2exp}. Since $\mc{L}_1 1=0$ by Proposition \ref{t: eig_1}, the second term in \eqref{lambda2exp} vanishes.
By \eqref{f:L2S0}, we have
\begin{equation*}
\begin{split}
\lambda_2&=\frac{1}{2\pi}\dual{1}{\mc{L}_2 1}\\
&=\frac{1}{2\pi}\paren{\dual{\bm{\tau}_0}{\pd{\theta}\mc{S}_2[\pd{\theta}\bm{\tau}_0]}+2\dual{\bm{\tau}_1}{\pd{\theta}\mc{S}_1[\pd{\theta}\bm{\tau}_0]}+\dual{\bm{\tau}_1}{\pd{\theta}\mc{S}_0[\pd{\theta}\bm{\tau}_1]}}\\
&=-\frac{1}{8}\sum_{n\geq 2} n(n^2-1)\paren{g_{n1}^2+g_{n2}^2}
\end{split}
\end{equation*}
where we used Lemma \ref{L:020}, \ref{L:110} and \ref{L:101}.
\end{proof}

\subsection{Numerical Results}\label{s:numerics}
In this section, we will use the computational boundary integral method to numerically verify our analytical results above.% $\lambda_1, \lambda_2$.
We discretize the equation 
\begin{align*}
     \bm{\tau}\cdot\pd{\theta} \mc{S}\left[\pd{\theta}(\sigma\bm{\tau})\right]=\bm{\tau}\cdot\pd{\theta} \mc{S}\left[\bm{F}\right].
\end{align*}
For the partial derivative with respect to $\theta$ we use fast Fourier transform to compute the discrete derivative.
For the singular integral operator $\mc{S}[\cdot]$, we use the discretization in \cite[Section 3.1]{SL2010}. 
In the case when $\Gamma$ is a circle, the zero mean condition on $\sigma$ is solved together with the above to obtain a unique solution.

We first check our numerical method against the analytical expressions obtained in Proposition \ref{t: computation_circle}. 
%We $\sigma$ and $\bm{u}$ for $\bm{F}$ on $\Gamma$ with low frequency.
Set $\Gamma$ to be a unit circle, $\bm{X}(\theta)=(\cos \theta,\sin \theta)$. For $\bm{F}$, we let
\begin{equation*}
 \bm{F}(\theta)=
    \begin{bmatrix}
    \sin 2\theta+ 4\cos\theta-4\sin\theta\\
    -\cos 2\theta+ 4\sin\theta+4\cos\theta
    \end{bmatrix}
\end{equation*}
Since $\Gamma$ is a unit circle, $\sigma$ can only be found up to an additive constant. Imposing the zero mean condition on $\sigma$, Proposition \ref{t: computation_circle} shows us that
\begin{equation}\label{UX}
\sigma(\theta)=\sin(\theta), \; \bm{U}(\theta)=\bm{u}(\bm{X}(\theta))=\begin{pmatrix} -2\sin(\theta) \\ 2\cos(\theta)\end{pmatrix}
\end{equation}
%The given force $\bm{F}$ on $\Gamma$ and the solutions $\bm{u}, p$ in $\Omega\setminus\Gamma$, $\sigma$ on $\Gamma$ are
%\begin{align*}
%    \bm{u}^+&=
%    \begin{bmatrix}
%    \frac{-2y}{x^2+y^2}\\
%    \frac{2x}{x^2+y^2}
%    \end{bmatrix},
%    &\bm{u}^-=
%    \begin{bmatrix}
%    \frac{-2y}{r^2}\\
%    \frac{2x}{r^2}
%    \end{bmatrix},\\
%    p^+&=\pi r^2,   & p^-=\pi r^2-4L^2,
%\end{align*}
%\begin{align*}
%    \sigma&=r\sin\theta\\
%    \bm{F}&=
%    \begin{bmatrix}
%    r\sin 2\theta+ 4rL^2\cos\theta-\frac{4}{r}\sin\theta\\
%    -r\cos 2\theta+ 4rL^2\sin\theta+\frac{4}{r}\cos\theta
%    \end{bmatrix}
%\end{align*}
%Now, we set $r=1, L=1$. Since $\Gamma$ is a circle, 
The numerical results (Table \ref{tab:err_ unit_circle}) show the accuracy of the computational boundary integral method
and indicates that the method is spectrally accurate.

\begin{table}[htbp]
    \centering
    \begin{tabular}{ccccccccc}
    \hline
     $N$&$\norm{\sigma_h-\sigma}_\infty$&$\norm{\sigma_h-\sigma}_2$&$\norm{\bm{U}_h-\bm{U}}_\infty$&$\norm{\bm{U}_h-\bm{U}}_\infty$\\
     \hline
     32 &5.3290E-15 &1.3114E-15 &5.1833E-15 &1.6195E-15 \\
     64 &6.4392E-15 &2.1441E-15 &5.8937E-15 &2.0663E-15 \\
%     128&7.6605E-15 &3.3306E-15 &7.3368E-15 &3.0437E-15 \\
%     256&9.8809E-15 &6.5288E-15 &9.0382E-15 &5.0520E-15 \\
%     512&1.6209E-14 &1.2601E-14 &1.1193E-14 &9.4445E-15 \\
%    1024&2.7200E-14 &3.1342E-14 &1.4827E-14 &1.6936E-14 \\
%    2048&3.3639E-14 &9.1245E-14 &2.1085E-14 &3.6027E-14 \\
     \hline
    \end{tabular}
    \caption{The errors of $\sigma, \bm{U} $ on the unit circle $\Gamma$ measured in the $L^\infty$ and $L^2$ norms. $\sigma_h$ and $\bm{U}_h$ 
    denote the numerical solution for $\sigma$ and $\bm{U}$ (see \eqref{UX}) respectively and $N$ is the number of discretization points on $\Gamma$.}
    \label{tab:err_ unit_circle}
\end{table}
Next, we compare the numerical and theoretical values of $\lambda_\varepsilon$.
We consider two examples: (1) $\bm{Y}=\cos\theta\bm{X}$ and (2) $\bm{Y}=\frac{1}{2}\paren{1+\cos\paren{ 2\theta}}\bm{X}$.
We numerically compute the leading eigenvalue of the discretization $\mc{L}_\veps$, and compare the resulting value with 
the asymptotic expression \eqref{lambdaest} in Theorem \ref{t:lambdaveps}.
%$\lambda_2$ term of $\lambda_\varepsilon$ now, we will use the values of $\lambda_2$ of our examples to expect the behavior of $\lambda_\varepsilon$.
\quad\\
\textbf{Example 1}\\
$\bm{Y}=\cos\theta\bm{X}$, so $\lambda_2=0$ by Theorem \ref{t:lambdaveps}.
Since $\lambda_\varepsilon\leq 0$, we must have $\lambda_3=0$ and $\lambda_4\leq 0$.
That means $\lambda_\varepsilon=\mc{O}\paren{ \varepsilon^4}$ when $\varepsilon$ is the neighborhood of $0$.
Therefore, for the numerical results of $\lambda_\varepsilon$, we expect
\begin{align*}
    \lim_{\varepsilon\rightarrow 0} \frac{\lambda_\varepsilon}{\varepsilon^2}=0 \mbox{ and } \lim_{\varepsilon\rightarrow 0} \frac{\lambda_\varepsilon}{\varepsilon^4}\leq 0.
\end{align*}
Figure \ref{fig:ex01_1} shows the results of $\lambda_\varepsilon$.
As we can see, $\lambda_\varepsilon$ is very flat in the neighborhood of $0$.
This shows $\abs{\lambda_2}$ is very small.
Next, Figure \ref{fig:ex01_2} shows the numerical values of $\frac{\lambda_\varepsilon}{\varepsilon^2}$ and $\frac{\lambda_\varepsilon}{\varepsilon^4}$.
In the left figure, $\lim_{\varepsilon\rightarrow 0} \frac{\lambda_\varepsilon}{\varepsilon^2}\approx 0$.
In the right figure, $\lim_{\varepsilon\rightarrow 0} \frac{\lambda_\varepsilon}{\varepsilon^4}\approx -0.046875= -\frac{3}{64}<0$.
The numerical results seem to suggest that $\lambda_4=-\frac{3}{64}$.
\begin{figure}[htbp]
    \centering
    \includegraphics[width=1\textwidth]{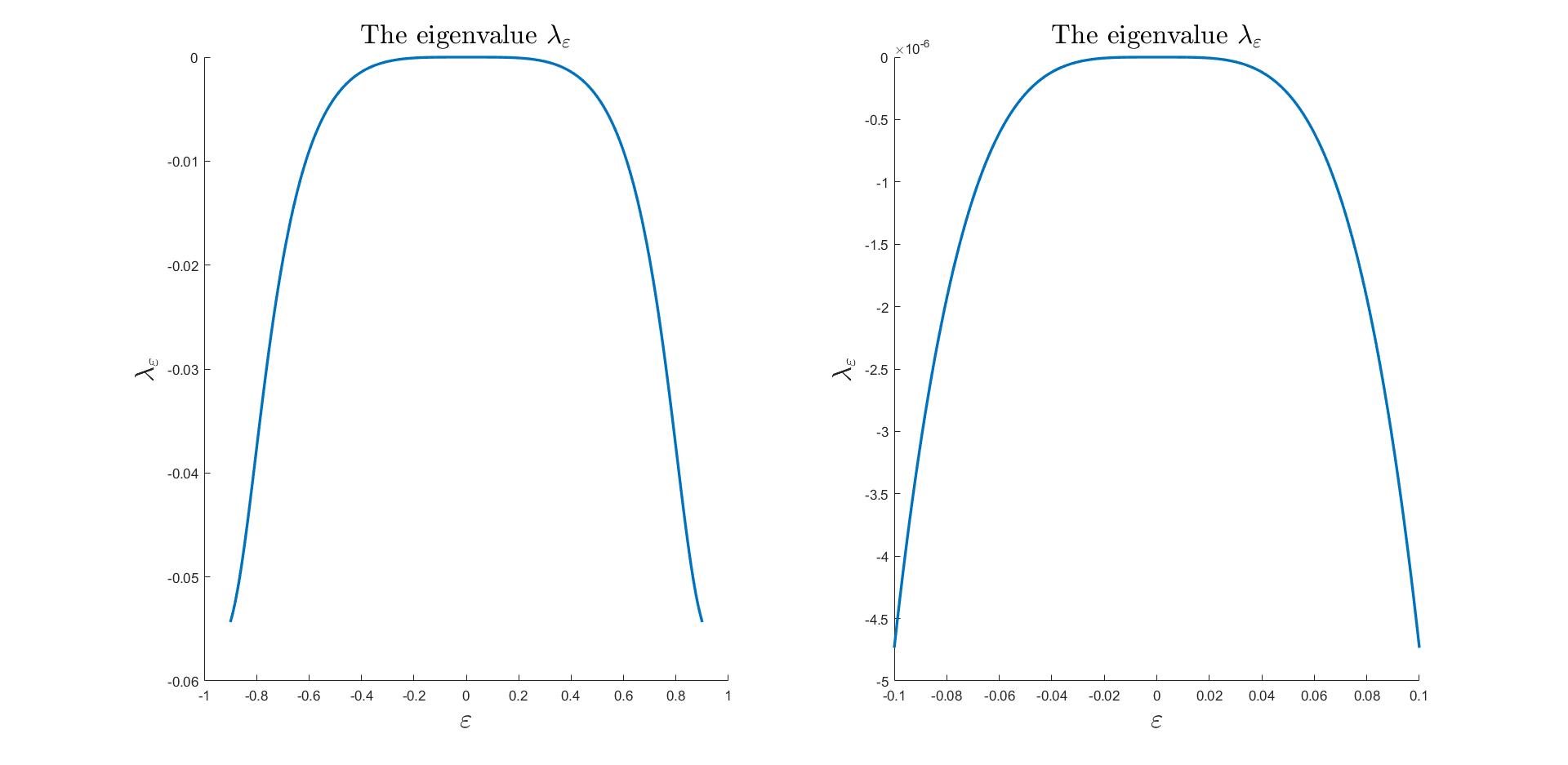}
    \caption{The eigenvalue $\lambda_\varepsilon$ respect to $\varepsilon$ in \textbf{Example 1} : $\bm{X}_\varepsilon\paren{\theta}=\paren{1+\varepsilon\cos{\theta}}\bm{X}\paren{\theta}$}
    \label{fig:ex01_1}
\end{figure}
\begin{figure}[htbp]
    \centering
    \includegraphics[width=1\textwidth]{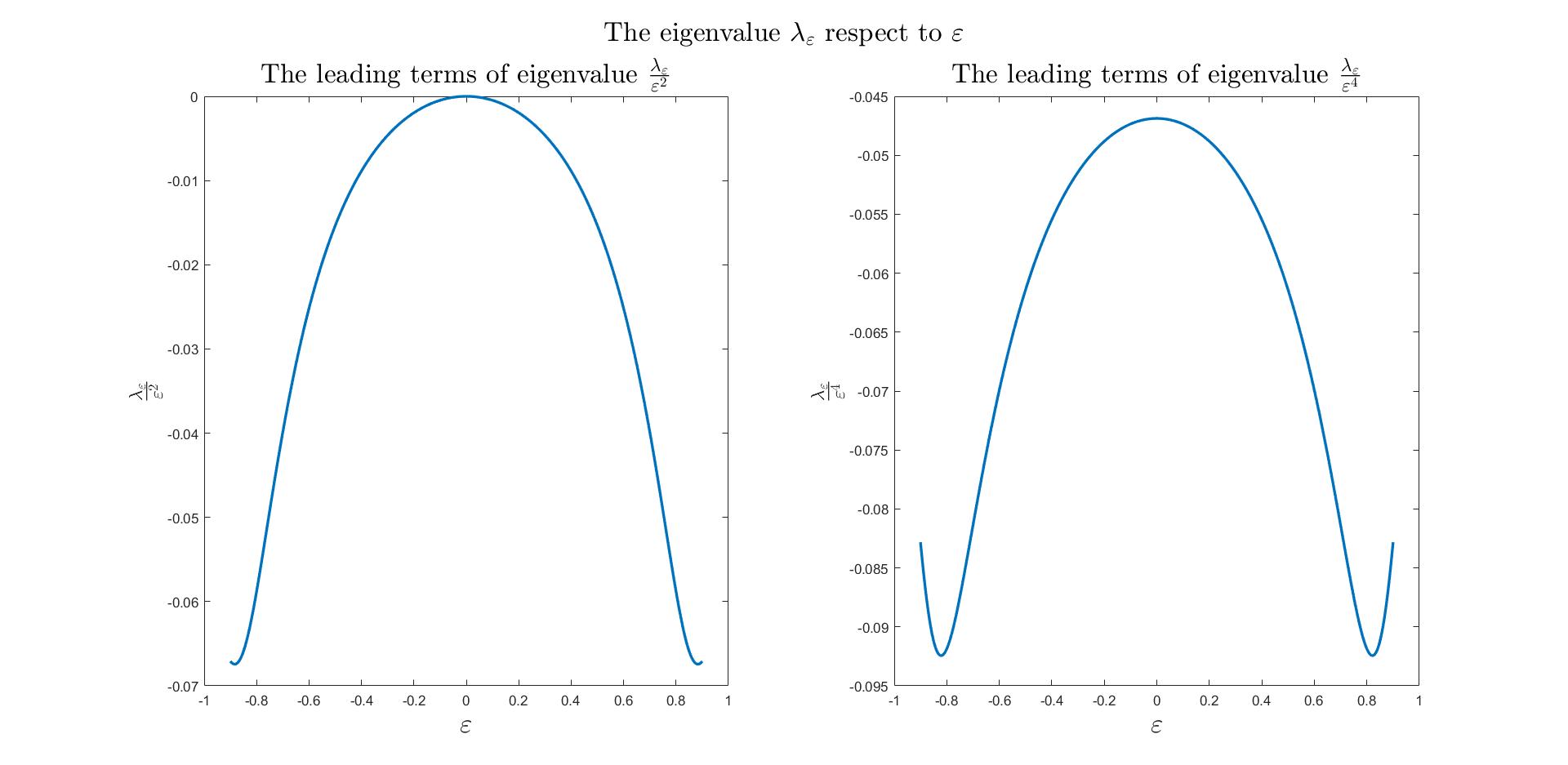}
    \caption{The leading terms of the eigenvalue $\lambda_\varepsilon$ respect to $\varepsilon$ in \textbf{Example 1}. Left: the value $\frac{\lambda_\varepsilon}{\varepsilon^2}$. Right: the value $\frac{\lambda_\varepsilon}{\varepsilon^4}$}
    \label{fig:ex01_2}
\end{figure}
\quad\\
\textbf{Example 2}\\
$\bm{Y}=\frac{1}{2}\paren{1+\cos\paren{ 2\theta}}\bm{X}$, so $\lambda_2=-\frac{3}{16}$.
We thus expect $\lambda_\varepsilon\approx -\frac{3}{16}\varepsilon^2$ in the neighborhood of $0$.
In the left figure of Figure \ref{fig:ex021}, the numerical values of $\lambda_\varepsilon$ matches $\lambda_2\varepsilon^2$ when $\varepsilon$ is near $0$.
The right figure shows that %$\frac{\lambda_\varepsilon}{\varepsilon^2}$, 
\begin{align*}
        \lim_{\varepsilon\rightarrow 0} \frac{\lambda_\varepsilon}{\varepsilon^2}=-\frac{3}{16} \mbox{ and } \frac{\lambda_\varepsilon}{\varepsilon^2}\approx -\frac{3}{16}+\frac{3}{16}\varepsilon.
\end{align*}
Hence, as we can see in the left figure, $\lambda_\varepsilon$ matches $-\frac{3}{16}\varepsilon^2+\frac{3}{16}\varepsilon^3$ well.
\begin{figure}[htbp]
    \centering
    \includegraphics[width=1\textwidth]{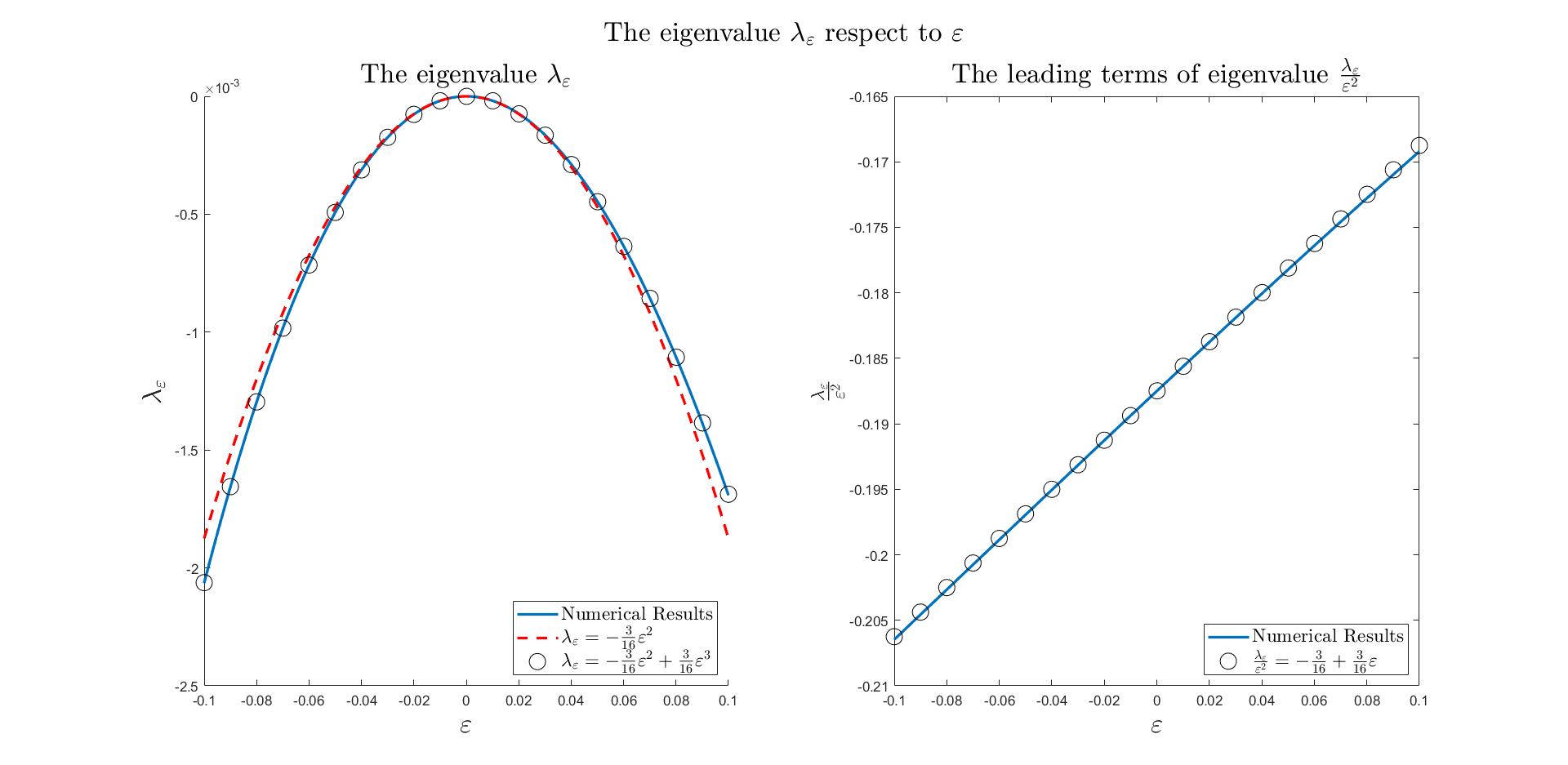}
    \caption{The leading terms of the eigenvalue $\lambda_\varepsilon$ respect to $\varepsilon$ in \textbf{Example 2}. Left: the value $\lambda_\varepsilon$. Right: the value $\frac{\lambda_\varepsilon}{\varepsilon^2}$}
    \label{fig:ex021}
\end{figure}
% \begin{figure}[htbp]
%     \centering
%     \includegraphics[width=1\textwidth]{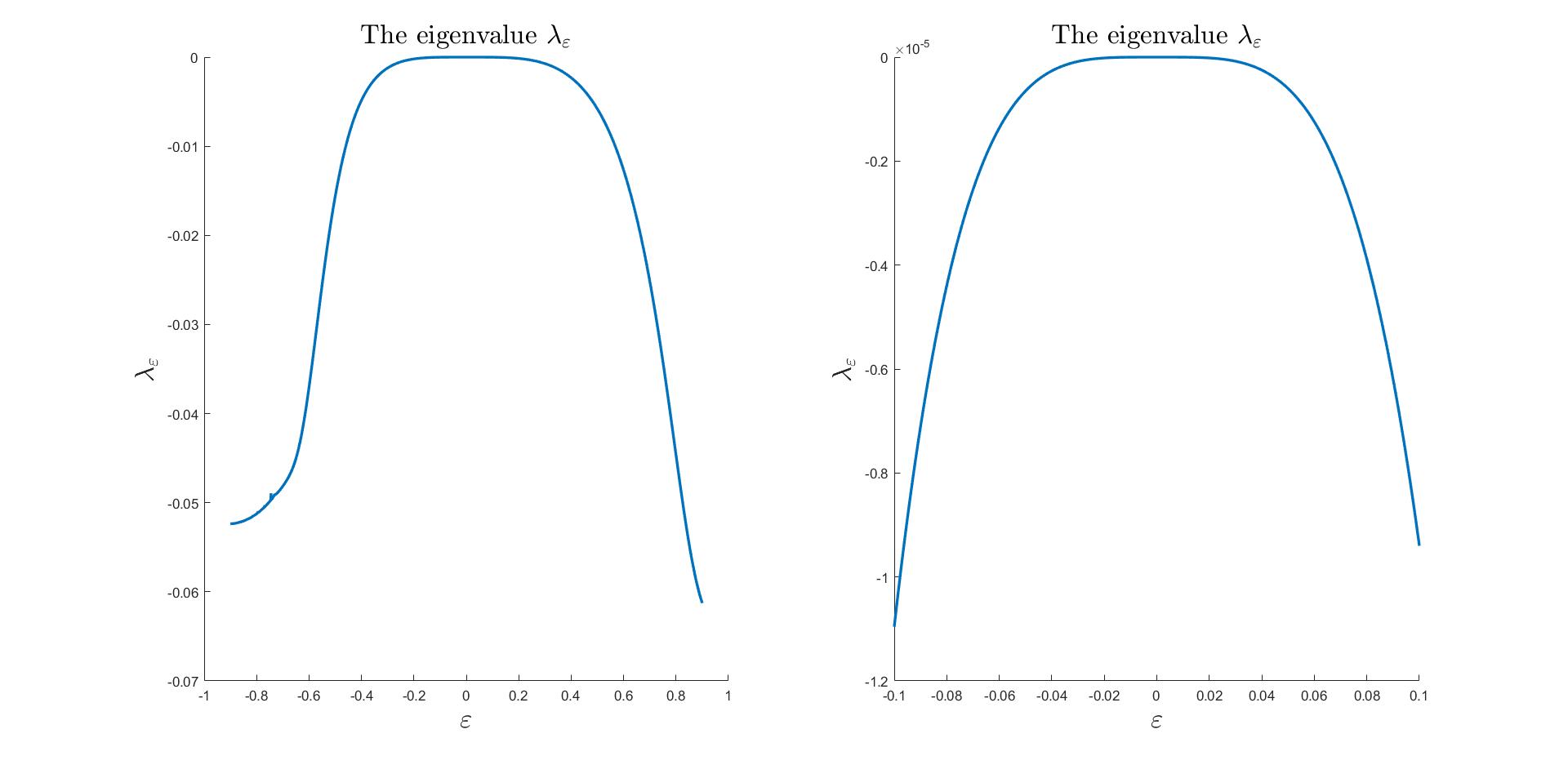}
%     \includegraphics[width=1\textwidth]{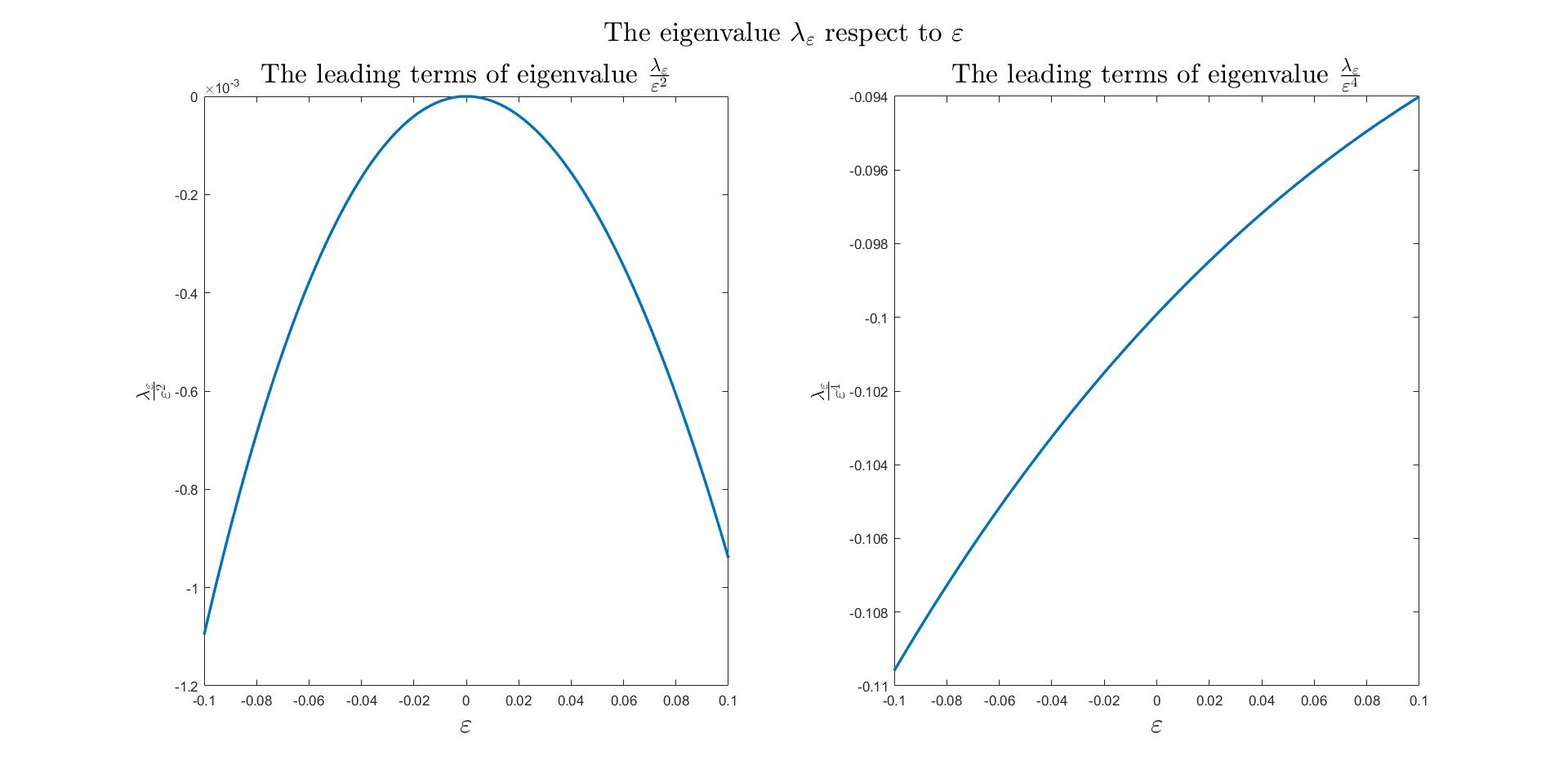}
%     \includegraphics[width=1\textwidth]{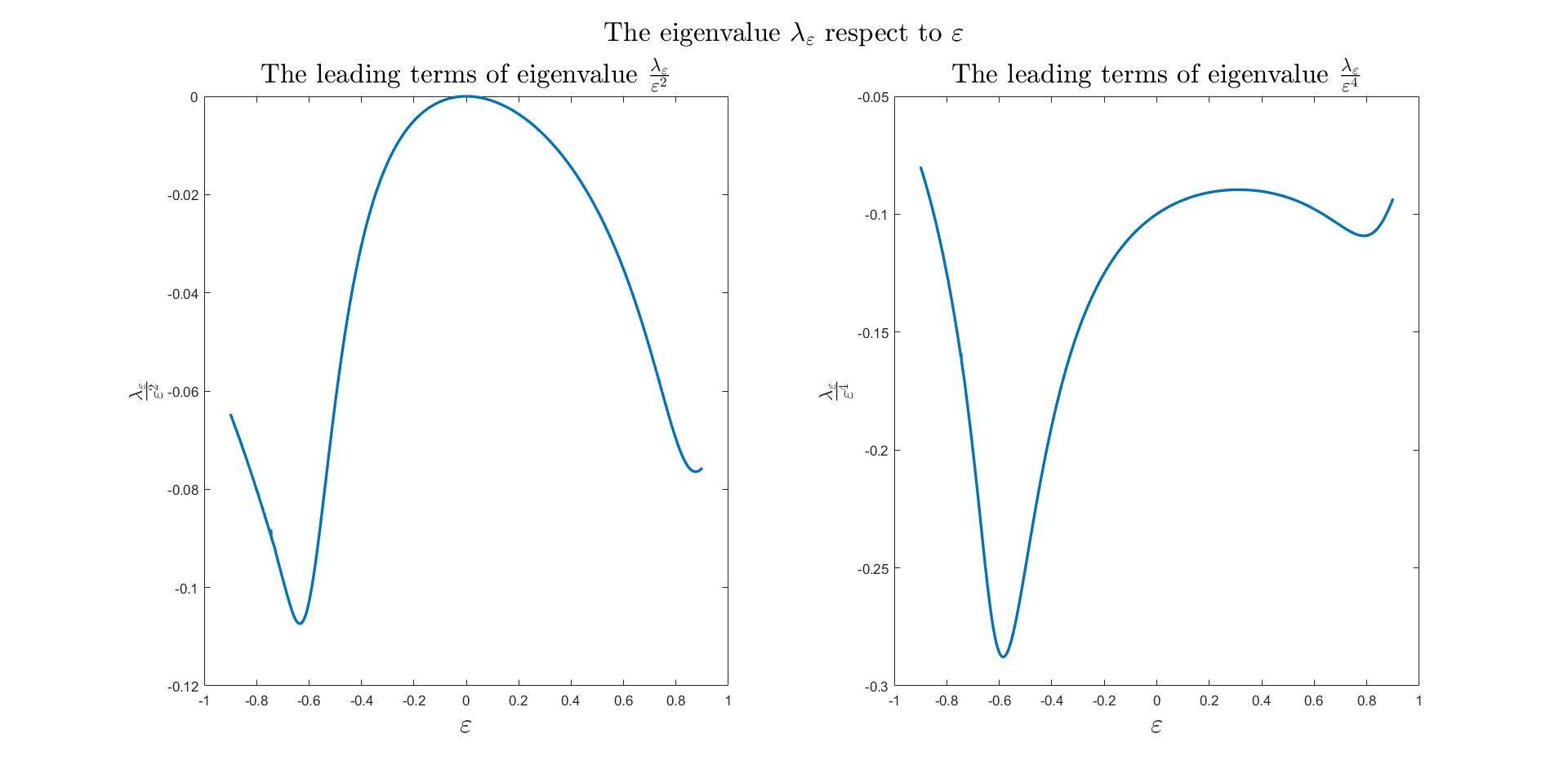}
%     \caption{Caption}
%     \label{fig:ex03}
% \end{figure}
\section{Conclusion and Future Outlook}\label{conclusion}
In this paper, we established the well-posedness of the tension determination problem for a 1D interface in 2D Stokes flow.
In Theorem \ref{t: well-posed_noncircle} we show that the tension $\sigma$ can be determined uniquely if and only if $\Gamma$ is not a circle.
We have also established estimates on the tension $\sigma$ given the force density $\bm{F}$. When $\Gamma$ is close to a circle, 
the tension determination problem becomes increasingly singular. This approach to singularity was studied in detail 
leading to the results of Theorem \ref{t:lambdaveps}.

The static tension determination problem treated here is one component of the dynamic inextensible interface problem 
discussed in Section \ref{motivation}. The analytical understanding gained here is expected to be a key ingredient in the 
analytical study of the dynamic problem. It is also hoped that our study will lead to the development of better numerical 
methods for this and related problems. The analysis in \cite{LSX2019} was indeed motivated by the need to develop 
better numerical methods for the dynamic problem. 
%Our results indicate that the determination of $\sigma$ becomes 
%increasingly singular as $\Gamma$ approaches a circle. This implies that numerical methods for the inextensible 
%interface problem may become increasingly difficult to solve when $\Gamma$ is close to a unit circle.
%It is hoped that our detailed analysis may lead to 

We also point out that the related problem of inextensible filaments in a 3D Stokes fluid, which has 
been studied by many authors as models of swimming filaments \cite{tornberg2004simulating,moreau2018asymptotic,maxian2021integral,maxian2022hydrodynamics,mori2022well}.
It is hoped that the analysis here may also aid in understanding these problems.

\appendix
\section{Layer Potentials}\label{layer_potential_appendix}
In this section, we will discuss the single layer potentials for $\bm{u}$, $p$, and $\Sigma$, which are expressed as
\begin{align*}
\bm{u}(\bm{x}) &= \wh{\mc{S}}[\wh{\bm{F}}](\bm{x}):= \int_{\mbs} G(\bm{x} - \bm{X}(s')) \wh{\bm{F}}(s')ds',\\
p(\bm{x}) &:= \mc{P}[\wh{\bm{F}}](\bm{x})=\int_{\mbs} \Pi(\bm{x} - \bm{X}(s')) \wh{\bm{F}}(s')ds',\\
\Sigma_{ik} \paren{\bm{x}}&:=\mc{T}[\wh{\bm{F}}](\bm{x}):=\int_{\mbs} \Theta_{ijk}(\bm{x} - \bm{X}(s')) \wh{F}_j(s')ds',
\end{align*}
where
\begin{equation*}
G(\bm{r}) = G_L(\bm{r}) + G_T (\bm{r}) := \frac{1}{4\pi}\paren{-\log \abs{\bm{r}}\mb{I} + \frac{\bm{r}\otimes \bm{r}}{\abs{\bm{r}}^2}}, \quad \Pi(\bm{r}) = \frac{1}{2\pi}\frac{\bm{r}^{T}}{\abs{\bm{r}}^2},
\end{equation*}
and
\begin{align*}
    \Theta_{ijk}(\bm{r})=\pd{k}G_{ij}(\bm{r})+\pd{i}G_{kj}(\bm{r})-\Pi_j(\bm{r})\delta_{ik}=-\frac{1}{\pi}\frac{r_i r_j r_k}{\abs{\bm{r}}^4}.
\end{align*}
Since for all $\bm{r}\neq \bm{0}$, $\pd{i} G_{ij}(\bm{r})=0$ and,
\begin{align}
\begin{split}
        \pd{k}\Theta_{ijk}(\bm{r})
    =&\pdd{k}{2}G_{ij}(\bm{r})+\pd{i}\pd{k}G_{kj}(\bm{r})-\pd{k}\Pi_j(\bm{r})\delta_{ik}\\
    =&\Delta G_{ij}(\bm{r}) +\pd{i}\Pi_j(\bm{r})=0,
\end{split}\label{e:stoke_kernel}
\end{align}
we have that $\bm{u}\in C^2\paren{\mbr^2\setminus \Gamma}$, $p\in C^1\paren{\mbr^2\setminus \Gamma}$, and $\Sigma\in C^1\paren{\mbr^2\setminus \Gamma}$ satisfy the Stokes equations in $\mbr^2\setminus \Gamma$, i.e.,
\begin{align*}
    \nabla\cdot \bm{u}=0,\quad -\nabla\cdot \Sigma= -\Delta \bm{u}+\nabla p=0.
\end{align*}
Next, since $G,\Pi,\Theta_{ijk} \in C^\infty \paren{\mbr^2\setminus \{0\}}$, then $\bm{u}, p, \Sigma\in C^\infty\paren{\mbr^2\setminus \Gamma}$.
%Therefore, we only have to prove $\bm{u}$ is continuous at $x\in \Gamma$ for $\bm{u}\in C\paren{\mbr^2}$.
\begin{lemma}\label{l:u_conti}
If $\bm{X}\in C^2\paren{\mbs}$ and $\starnorm{\bm{X}}>0$, then $\bm{u}\in C^\infty\paren{\mbr^2\setminus \Gamma}\bigcap C\paren{\mbr^2}$.
\end{lemma}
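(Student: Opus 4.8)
The plan is to establish two things: first, that $\bm{u} = \wh{\mc{S}}[\wh{\bm{F}}]$ is smooth away from $\Gamma$ (which was essentially already observed above from the smoothness of the kernel $G$ on $\mbr^2 \setminus \{0\}$), and second — the substantive part — that $\bm{u}$ extends continuously across $\Gamma$. Since the tangential part $G_T(\bm{r}) = (\bm{r}\otimes\bm{r})/\abs{\bm{r}}^2$ is bounded (indeed $\abs{G_T(\bm{r})} \le 1$), the only potential trouble is the logarithmic part $G_L(\bm{r})\mathbb{I} = -\log\abs{\bm{r}}\,\mathbb{I}$, which is weakly singular. So the heart of the matter is to show that the single-layer potential with a $\log$ kernel, integrated against a continuous density over the $C^2$ curve $\Gamma$, is continuous up to and across $\Gamma$.

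First I would reduce to a local statement: fix a point $\bm{x}_0 = \bm{X}(s_0) \in \Gamma$ and show $\bm{u}(\bm{x}) \to \bm{u}(\bm{x}_0)$ as $\bm{x} \to \bm{x}_0$ in $\mbr^2$ (not merely from one side). Split the integral $\int_{\mbs} G(\bm{x} - \bm{X}(s'))\wh{\bm{F}}(s')\,ds'$ into a piece over $\{|s' - s_0| < \delta\}$ and the complement. On the complement the integrand is continuous in $\bm{x}$ near $\bm{x}_0$ uniformly in $s'$ (the argument $\bm{x} - \bm{X}(s')$ stays bounded away from $\bm0$, using $\starnorm{\bm{X}} > 0$ to control $|\bm{X}(s') - \bm{X}(s_0)| \ge \starnorm{\bm{X}}|s'-s_0| \ge \starnorm{\bm{X}}\delta$), so that piece passes to the limit by dominated convergence. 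For the near piece, I would bound $\big|\int_{|s'-s_0|<\delta} G(\bm{x}-\bm{X}(s'))\wh{\bm{F}}(s')\,ds'\big|$ uniformly for $\bm{x}$ in a neighborhood of $\bm{x}_0$: the $G_T$ contribution is $\le C\delta\,\norm{\wh{\bm{F}}}_0$ trivially, and the $G_L$ contribution is estimated by the standard fact that $\int_{|s'-s_0|<\delta} \big|\log|\bm{x} - \bm{X}(s')|\big|\,ds' \le C\delta(1 + |\log\delta|)$, which one obtains by using $|\bm{x} - \bm{X}(s')| \ge c|s'-s_0|$ for $s'$ away from the (at most one) nearest point on the arc to $\bm{x}$, together with $\int_0^\delta |\log t|\,dt < \infty$. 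Choosing $\delta$ small makes the near piece uniformly small, and then the near piece also converges (to a correspondingly small quantity) because $G_L$ is locally integrable and $\bm{x}\mapsto \log|\bm{x}-\cdot|$ depends continuously on $\bm{x}$ in $L^1_{\mathrm{loc}}$ of the arc; combining, $\bm{u}(\bm{x}) \to \bm{u}(\bm{x}_0)$.

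For the smoothness claim $\bm{u} \in C^\infty(\mbr^2\setminus\Gamma)$, I would note that for $\bm{x}$ in any compact subset $K \subset \mbr^2\setminus\Gamma$ we have $\mathrm{dist}(\bm{x},\Gamma) \ge d_K > 0$, so $G(\bm{x}-\bm{X}(s'))$ and all its $\bm{x}$-derivatives are continuous and bounded uniformly for $(\bm{x},s') \in K\times\mbs$; differentiation under the integral sign is then justified and yields $\bm{u} \in C^\infty(K)$. The main obstacle is genuinely the uniformity in the logarithmic estimate near $\Gamma$ — one must be careful that the bound $|\bm{x} - \bm{X}(s')| \gtrsim |s' - s_0|$ can fail near a single point $s'_*(\bm{x})$ (the foot of the perpendicular from $\bm{x}$), and handle that neighborhood separately using $|\bm{x}-\bm{X}(s')| \ge \mathrm{dist}(\bm{x},\Gamma)$ there or simply absorbing it into the $\int|\log t|\,dt$ bound. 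Everything else is routine once this weak-singularity bookkeeping is set up, and indeed this is a classical single-layer-potential fact for which the excerpt's appendix is merely recording a self-contained proof.
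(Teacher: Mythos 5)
Your proposal is correct and follows essentially the same route as the paper's proof: split the single-layer integral into a near piece over a small parameter interval around $s_0$ and a far piece, show the near piece is uniformly $O(\varepsilon(1+\abs{\log \varepsilon}))$ by exploiting the weak (logarithmic) singularity together with a chord-type lower bound on $\abs{\bm{x}-\bm{X}(s')}$, and pass to the limit in the far piece using smoothness of $G$ away from the diagonal. The only cosmetic difference is in how the log is tamed: the paper rotates to a local graph representation $y_2=f(y_1)$ of the curve and uses $\abs{\bm{z}-(y_1,f(y_1))}\geq\abs{z_1-y_1}$, whereas you work in the arclength parameter with the nearest-point bound $\abs{\bm{x}-\bm{X}(s')}\geq c\abs{s'-s_*}$ — the same estimate in slightly different coordinates.
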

\begin{proof}
We have obtained $\bm{u}\in C^\infty\paren{\mbr^2\setminus \Gamma}$, so let us prove $\bm{u}$ is continuous at $\bm{x}\in \Gamma$.
Given $\bm{x}_0 \in \Gamma$ and $\bm{X}\paren{s_0}=\bm{x}_0$,  we define $\bm{Y}\paren{s; s_0}$ as
\begin{align*}
    \bm{Y}\paren{s; s_0}=\mc{R}_{\theta}\paren{\bm{X}-\bm{x}_0} 
\end{align*}
where $\theta=\arg \paren{\pd{s}\bm{X}\paren{s_0}}$ and $\mc{R}_\theta$ is a rotation matrix with angle $\theta$ in a clockwise direction
\begin{align*}
    \mc{R}_\theta=
    \begin{bmatrix}
    \cos \theta & \sin \theta\\
    -\sin \theta&\cos \theta
    \end{bmatrix}.
\end{align*}
Since $\bm{X}\in C^2\paren{\mbs}$ and $\starnorm{\bm{X}}>0$, there exist an $\varepsilon_0$, $0<\varepsilon_0<\frac{1}{2}$ and a function $y_2=f\paren{y_1}$ such that for all $\abs{\bm{Y}\paren{s; s_0}}\leq \varepsilon_0$,
\begin{align*}
    Y_2\paren{s; s_0}=f\paren{Y_1\paren{s; s_0}}.
\end{align*}
Obviously, $f(0)=0$, $\pd{y_1} f(0)=0$ and there exist an $\varepsilon_1$, $0<\varepsilon_1<\varepsilon_0$ and $C>0$ depending on $\varepsilon_0$ such that $\abs{\paren{y_1,f\paren{y_1}}}<\varepsilon_0$ and for all $\abs{y_1}<\varepsilon_1$, $\abs{\pd{y_1} f\paren{y_1}}<C$.
Moreover, $s\paren{y_1}$ exists with
\begin{align*}
    \frac{ds}{dy_1}=\sqrt{1+\paren{\pd{y_1}f\paren{y_1}}^2}.%d y_1.
\end{align*}
% Now, for all $\varepsilon<\varepsilon_1$, we set $B_\varepsilon, \mc{I}_\varepsilon$ as
% \begin{align*}
%     B_\varepsilon=\left\{\bm{x}\in \Gamma\left| \abs{\bm{x}-\bm{x}_0}< \varepsilon \right.\right\}, \quad \mc{I}_\varepsilon=\left\{s\in \mbs\left| \bm{X}\paren{s}\in B_\varepsilon \right.\right\}.
% \end{align*}
Now, for all $\varepsilon<\varepsilon_1$, we set $\mc{I}_\varepsilon$ as
\begin{align*}
     \mc{I}_\varepsilon=\left\{s\in \mbs\left| \abs{\bm{x}-\bm{x}_0}< \varepsilon \right.\right\}.
\end{align*}
Then, for all $\abs{\bm{x}-\bm{x}_0}< \frac{\varepsilon}{2}$, we may split $\bm{u}\paren{\bm{x}}-\bm{u}\paren{\bm{x}_0}$ into
\begin{align*}
\begin{split}
    \bm{u}\paren{\bm{x}}-\bm{u}\paren{\bm{x}_0}
    =   &\quad\int_{\mc{I}_\varepsilon}G\paren{\bm{x}-\bm{X}\paren{s'}}\wh{\bm{F}}(s')ds'-\int_{\mc{I}_\varepsilon}G\paren{\bm{x}_0-\bm{X}\paren{s'}}\wh{\bm{F}}(s')ds'\\
        &+\int_{\mbs\setminus\mc{I}_\varepsilon}\paren{ G\paren{\bm{x}-\bm{X}\paren{s'}}-G\paren{\bm{x}_0-\bm{X}\paren{s'}}}\wh{\bm{F}}(s')ds'.
\end{split}
\end{align*}

Set $\bm{z}=\mc{R}_{\theta}\paren{\bm{x}-\bm{x}_0}, \bm{z}_0=\mc{R}_{\theta}\paren{\bm{x}_0-\bm{x}_0}=0$, we have $\abs{\bm{z}}< \frac{\varepsilon}{2}<\frac{\varepsilon_0}{2}$.
Then, for the first two terms, we obtain  $\abs{Y_1\paren{s; s_0}}<\varepsilon<\varepsilon_1$ on $\mc{I}_\varepsilon$ and 
\begin{align*}
    &\abs{\int_{\mc{I}_\varepsilon}G\paren{\bm{x}-\bm{X}\paren{s'}}\wh{\bm{F}}(s')ds'}
    =\abs{\int_{\mc{I}_\varepsilon}G\paren{\bm{z}-\bm{Y}\paren{s'; s_0}}\wh{\bm{F}}(s')ds'}\\
    \leq &\norm{\wh{\bm{F}}}_{C^0} \int_{\mc{I}_\varepsilon}\abs{ G\paren{\bm{z}-\bm{Y}\paren{s'; s_0}}(s')}ds'\\
    \leq &\norm{\wh{\bm{F}}}_{C^0} \int_{-\varepsilon}^\varepsilon\abs{ G\paren{\bm{z}-\paren{y_1, f\paren{y_1}}}}\sqrt{1+\paren{\pd{y_1}f\paren{y_1}}^2} d y_1
\end{align*}
Since $\abs{\bm{z}}, \abs{\paren{y_1, f\paren{y_1}}}<\frac{1}{2}$,
\begin{align*}
    0<\log \frac{1}{\abs{\bm{z}-\paren{y_1, f\paren{y_1}}}}\leq \log \frac{1}{\abs{z_1-y_1}}.
\end{align*}
Therefore,
\begin{align*}
\begin{split}
        &\int_{-\varepsilon}^\varepsilon\abs{ G\paren{\bm{z}-\paren{y_1, f\paren{y_1}}}}\sqrt{1+\paren{\pd{y_1}f\paren{y_1}}^2} d y_1\\
    \leq&C\int_{-\varepsilon}^\varepsilon\abs{ G_L\paren{\bm{z}-\paren{y_1, f\paren{y_1}}}+G_T\paren{\bm{z}-\paren{y_1, f\paren{y_1}}}} d y_1\\
    \leq&\frac{C}{4\pi}\int_{-\varepsilon}^\varepsilon\log \frac{1}{\abs{z_1-y_1}}+1~~ d y_1
    \leq\frac{C}{2\pi}\varepsilon\paren{3+\log \frac{1}{2\varepsilon}}
\end{split}
\end{align*}
where $C$ depends on $\varepsilon_0$ 
since $\abs{z_1}<\varepsilon$.
Next, for the last term, since $\abs{\bm{x}-\bm{x}_0}< \frac{\varepsilon}{2}$, there exists a $C>0$ such that for all $s'\in\mbs\setminus \mc{I}_\varepsilon$ and $0\leq t\leq 1$
\begin{align*}
\begin{split}
    \abs{\nabla G\paren{t\bm{x}+(1-t)\bm{x}_0-\bm{X}\paren{s'}}}\leq \frac{C}{2\pi}\frac{1}{\varepsilon}.
\end{split}
\end{align*}
Then,
\begin{align*}
\begin{split}
        &\abs{\int_{\mbs\setminus\mc{I}_\varepsilon}\paren{ G\paren{\bm{x}-\bm{X}\paren{s'}}-G\paren{\bm{x}_0-\bm{X}\paren{s'}}}\wh{\bm{F}}(s')ds'}\\
    \leq&\norm{\wh{\bm{F}}}_{C^0}\int_{\mbs\setminus\mc{I}_\varepsilon}\abs{ G\paren{\bm{x}-\bm{X}\paren{s'}}-G\paren{\bm{x}_0-\bm{X}\paren{s'}}}ds'\\
    \leq&\norm{\wh{\bm{F}}}_{C^0}\int_{\mbs\setminus\mc{I}_\varepsilon}\int_0^1\abs{\pd{t} G\paren{t\bm{x}+(1-t)\bm{x}_0-\bm{X}\paren{s'}}}dtds'\\
    \leq&\norm{\wh{\bm{F}}}_{C^0}\int_{\mbs\setminus\mc{I}_\varepsilon}\int_0^1\abs{\nabla G\paren{t\bm{x}+(1-t)\bm{x}_0-\bm{X}\paren{s'}}}\abs{\bm{x}-\bm{x}_0}dtds'\\
    \leq&\frac{C}{2\pi}\norm{\wh{\bm{F}}}_{C^0}\frac{1}{\varepsilon}\abs{\bm{x}-\bm{x}_0}\int_{\mbs\setminus\mc{I}_\varepsilon}\int_0^1dtds'
    \leq C\norm{\wh{\bm{F}}}_{C^0}\frac{1}{\varepsilon}\abs{\bm{x}-\bm{x}_0}.
\end{split}
\end{align*}
Finally, 
\begin{align*}
\begin{split}
        &\lim_{\bm{x}\rightarrow\bm{x}_0}\abs{\bm{u}\paren{\bm{x}}-\bm{u}\paren{\bm{x}_0}}
    \leq\lim_{\bm{x}\rightarrow\bm{x}_0}\frac{C}{\pi}\varepsilon\paren{3+\log \frac{1}{2\varepsilon}}+C\norm{\wh{\bm{F}}}_{C^0}\frac{1}{\varepsilon}\abs{\bm{x}-\bm{x}_0}\\
    \leq&\frac{C}{\pi}\varepsilon\paren{3+\log \frac{1}{2\varepsilon}}.
\end{split}
\end{align*}
Since $0<\varepsilon<\varepsilon_1$ is arbitrary and $C$ only depends on $\varepsilon_0$, taking $\varepsilon\rightarrow 0$, then we obtain that $\bm{u}\paren{\bm{x}}$ is continuous at $\bm{x}_0\in \Gamma$. 
Therefore, $\bm{u}\in C^\infty\paren{\mbr^2\setminus \Gamma}\bigcap C\paren{\mbr^2}$.
\end{proof}

Next, we will prove $\Sigma_{ik}$ satisfies
\begin{align*}
    \jump{\paren{\Sigma\bm{n}}_i}=\jump{\Sigma_{ik}n_k}=\wh{F}_i
\end{align*}
We first set a double layer potential of the flow as
\begin{align*}
    \wh{\mc{D}}[\wh{\bm{F}}](\bm{x}):=\int_{\mbs} K\paren{\bm{X}\paren{s},\bm{x}} \wh{\bm{F}}(s')ds'
\end{align*}
where the kernel $K_{ij}\paren{\bm{y},\bm{x}}:=\Theta_{ijk}\paren{\bm{y}-\bm{x}}n_k\paren{\bm{y}}$.
Then, we have the following result for its integral on $\Gamma$.
\begin{lemma}
\begin{align}
    \int_\Gamma K_{ij}\paren{\bm{y},\bm{x}}ds \paren{\bm{y}}=\left\{
        \begin{matrix}
            -\delta_{ij} & \mbox{ if } \bm{x}\in \Omega_1\\
            0           & \mbox{ if } \bm{x}\in \Omega_2\\
            -\frac{1}{2}\delta_{ij} & \mbox{ if } \bm{x}\in \Gamma
        \end{matrix}
    \right.\label{e:kernel_integral}
\end{align}
\end{lemma}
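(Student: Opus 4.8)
The plan is to recognize \eqref{e:kernel_integral} as the classical flux/jump identity for the Stokes double layer potential, whose only substantive inputs are the two structural facts about the kernel already recorded in this appendix: $\Theta_{ijk}$ is smooth and homogeneous of degree $-2$ away from the origin, and $\partial_k\Theta_{ijk}(\bm{r})=0$ for $\bm{r}\neq\bm{0}$ by \eqref{e:stoke_kernel}. Writing $\bm{r}=\bm{y}-\bm{x}$, the proof will proceed by applying the divergence theorem to the divergence-free field $\bm{y}\mapsto\Theta_{ij\cdot}(\bm{y}-\bm{x})$ on suitable domains, treating the three positions of $\bm{x}$ in turn. A computation that will be used in all cases: on a circle $\partial B_\epsilon(\bm{x})$ one has $\Theta_{ijk}(\bm{r})r_k/|\bm{r}|=-\tfrac1\pi\, r_ir_j/|\bm{r}|^3=-\tfrac1{\pi\epsilon^3}r_ir_j$, and $\int\hat r_i\hat r_j\,d\phi$ equals $\pi\delta_{ij}$ over the full circle and $\tfrac\pi2\delta_{ij}$ over any half-circle (since $\int\cos\phi\sin\phi\,d\phi=0$ over an interval of length $\pi$ or $2\pi$).

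First, for $\bm{x}\in\Omega_2$ the integrand $\bm{y}\mapsto\Theta_{ijk}(\bm{y}-\bm{x})$ is smooth on a neighbourhood of $\overline{\Omega_1}$, so the divergence theorem on $\Omega_1$ (whose outward normal is $\bm{n}$), together with $\partial_k\Theta_{ijk}=0$, gives $\int_\Gamma K_{ij}(\bm{y},\bm{x})\,ds(\bm{y})=\int_{\Omega_1}\partial_{y_k}\Theta_{ijk}(\bm{y}-\bm{x})\,d\bm{y}=0$. Next, for $\bm{x}\in\Omega_1$, I would choose $\epsilon>0$ with $\overline{B_\epsilon(\bm{x})}\subset\Omega_1$ and apply the divergence theorem on $\Omega_1\setminus\overline{B_\epsilon(\bm{x})}$; since $\Theta$ is divergence-free there, the $\Gamma$-integral equals $\int_{\partial B_\epsilon(\bm{x})}\Theta_{ijk}(\bm{r})r_k/|\bm{r}|\,ds$, where the sign comes from the fact that the boundary normal on $\partial B_\epsilon(\bm{x})$ points towards $\bm{x}$. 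By the computation above this equals $-\tfrac1\pi\int_0^{2\pi}\hat r_i\hat r_j\,d\phi=-\delta_{ij}$ for every admissible $\epsilon$, giving the first line of \eqref{e:kernel_integral}.

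The boundary case $\bm{x}\in\Gamma$ is the one that requires care, and is where I expect the real work to be. Two preliminary observations are needed. (i) Since $\bm{X}\in C^2$, a short local expansion (using the Frenet relation $\bm{n}'=-\kappa\bm{\tau}$) shows $(\bm{y}-\bm{x})\cdot\bm{n}(\bm{y})=O(|\bm{y}-\bm{x}|^2)$ for $\bm{y}\in\Gamma$ near $\bm{x}$; hence $K_{ij}(\bm{y},\bm{x})$ is bounded near $\bm{x}$ and $\int_\Gamma K_{ij}(\cdot,\bm{x})\,ds$ is an absolutely convergent integral, not merely a principal value. (ii) Excising $B_\epsilon(\bm{x})$ again—now only the portion $B_\epsilon(\bm{x})\cap\Omega_1$ is removed—the divergence theorem on $\Omega_1\setminus\overline{B_\epsilon(\bm{x})}$ reduces $\int_{\Gamma\setminus B_\epsilon(\bm{x})}\Theta_{ijk}(\bm{y}-\bm{x})n_k(\bm{y})\,ds(\bm{y})$ to $\int_{\partial B_\epsilon(\bm{x})\cap\Omega_1}\Theta_{ijk}(\bm{r})r_k/|\bm{r}|\,ds$. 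Letting $\epsilon\to0$, the left side converges to $\int_\Gamma K_{ij}(\cdot,\bm{x})\,ds$ by (i); for the right side, writing $\Gamma$ locally as a $C^2$ graph over its tangent line at $\bm{x}$ (exactly the local parametrization used in Lemma~\ref{l:u_conti}) shows that $\partial B_\epsilon(\bm{x})\cap\Omega_1$ is a single arc whose endpoints deviate from those of the exact half-circle on the $\Omega_1$-side of the tangent line by $O(\epsilon)$; since the integrand $-\tfrac1\pi\hat r_i\hat r_j$ is bounded, the integral converges to $-\tfrac1\pi\cdot\tfrac\pi2\delta_{ij}=-\tfrac12\delta_{ij}$, which is the last line of \eqref{e:kernel_integral}. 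The main obstacle is precisely the geometric bookkeeping in this last case: the $O(|\bm{y}-\bm{x}|^2)$ flux estimate, the connectedness and half-circle approximation of $\partial B_\epsilon(\bm{x})\cap\Omega_1$, and the vanishing of the $O(\epsilon)$ endpoint discrepancy—all of which follow from the $C^2$ graph representation of $\Gamma$ near $\bm{x}$.
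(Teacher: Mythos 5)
Your proposal is correct and follows essentially the same route as the paper: divergence theorem with $\partial_k\Theta_{ijk}=0$ for the interior and exterior cases, and for $\bm{x}\in\Gamma$ an excised small disk whose arc $\partial B_\varepsilon\cap\Omega_1$ is compared to an exact half-circle, with the angular discrepancy controlled by the $C^2$ (graph) regularity of $\Gamma$ and the bounded integrand, yielding $-\tfrac12\delta_{ij}$. Your explicit use of $(\bm{y}-\bm{x})\cdot\bm{n}(\bm{y})=O(|\bm{y}-\bm{x}|^2)$ to justify absolute convergence of the $\Gamma$-integral is the same ingredient the paper records immediately after the lemma, so there is no substantive difference.
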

\begin{proof}
First, given $\bm{x}\in \Omega_2$, since $\abs{\bm{x}-\bm{y}}>0$ for all $\bm{y}\in\Gamma$, by \eqref{e:stoke_kernel},
\begin{align*}
    \int_\Gamma K_{ij}\paren{\bm{y},\bm{x}}ds\paren{\bm{y}}
    =\int_\Gamma \Theta_{ijk}\paren{\bm{y}-\bm{x}}n_k\paren{\bm{y}}ds\paren{\bm{y}}
    =\int_{\Omega_1} \pd{k}\Theta_{ijk}\paren{\bm{y}-\bm{x}}d\bm{y}=0.
\end{align*}
Next, given $\bm{x}\in \Omega_1$, set $B_\varepsilon=\left\{\bm{y}\in \Gamma\left| \abs{\bm{y}-\bm{x}}< \varepsilon \right.\right\}$ with $0<\varepsilon\ll 1$ and on $\partial B_\varepsilon$, $\bm{n}\paren{\theta}=\paren{\cos \theta, \sin \theta}$ and $\bm{y}\paren{\theta}=\bm{x}+\varepsilon\bm{n}\paren{\theta}$.
Then, we have
\begin{align*}
\begin{split}
    0=&\int_{\Omega_1\setminus B_\varepsilon} \pd{k}\Theta_{ijk}\paren{\bm{y}-\bm{x}}d\bm{y}\\
    =&\int_\Gamma \Theta_{ijk}\paren{\bm{y}-\bm{x}}n_k\paren{\bm{y}}ds\paren{\bm{y}}-\int_{\partial B_\varepsilon} \Theta_{ijk}\paren{\bm{y}-\bm{x}}n_k\paren{\bm{y}}ds\paren{\bm{y}}\\
    =&\int_\Gamma K_{ij}\paren{\bm{y},\bm{x}}ds \paren{\bm{y}}-\int_0^{2\pi} \Theta_{ijk}\paren{\varepsilon\bm{n}\paren{\theta}}n_k\paren{\theta}\varepsilon d\theta\\
    =&\int_\Gamma K_{ij}\paren{\bm{x},\bm{y}}ds \paren{\bm{y}}+\frac{1}{\pi}\int_0^{2\pi} \frac{n_i\paren{\theta} n_j\paren{\theta} }{\abs{\bm{n}\paren{\theta}}^4} d\theta.
\end{split}
\end{align*}
If $i\neq j$,
\begin{align*}
\begin{split}
    \int_0^{2\pi} \frac{n_i\paren{\theta} n_j\paren{\theta} }{\abs{\bm{n}\paren{\theta}}^4} d\theta
    =\int_0^{2\pi} \cos\theta \sin{\theta} d\theta=0.
\end{split}
\end{align*}
If $i=j=1\paren{=2}$,
\begin{align*}
    \int_0^{2\pi} \frac{n_i\paren{\theta} n_j\paren{\theta} }{\abs{\bm{n}\paren{\theta}}^4} d\theta
    =\int_0^{2\pi} \cos^2\theta  d\theta\paren{=\int_0^{2\pi} \sin^2\theta  d\theta}=\pi.
\end{align*}
Therefore,
\begin{align*}
    \int_\Gamma K_{ij}\paren{\bm{x},\bm{y}}ds \paren{\bm{y}}=-\frac{1}{\pi}\int_0^{2\pi} \frac{n_i\paren{\theta} n_j\paren{\theta} }{\abs{\bm{n}\paren{\theta}}^4} d\theta=-\delta_{ij}.
\end{align*}
Finally, set $\partial B^1_\varepsilon=\partial B_\varepsilon\bigcap \Omega_1$ and $\partial B^2_\varepsilon=\left\{\bm{y}\in \partial B_\varepsilon\left|\bm{n}\paren{\bm{y}}\cdot\bm{n}\paren{\bm{x}}< 0\right.\right\}$. 
Again, 
\begin{align*}
\begin{split}
    0=&\int_{\Omega_1\setminus B_\varepsilon} \pd{k}\Theta_{ijk}\paren{\bm{y}-\bm{x}}d\bm{y}\\
    =&\int_\Gamma \Theta_{ijk}\paren{\bm{y}-\bm{x}}n_k\paren{\bm{y}}ds\paren{\bm{y}}-\int_{\partial B^1_\varepsilon} \Theta_{ijk}\paren{\bm{y}-\bm{x}}n_k\paren{\bm{y}}ds\paren{\bm{y}},
\end{split}
\end{align*}
and
\begin{align*}
\begin{split}
    &\int_{\partial B^2_\varepsilon} \Theta_{ijk}\paren{\bm{y}-\bm{x}}n_k\paren{\bm{y}}ds\paren{\bm{y}}
    =\int_0^{2\pi} \Theta_{ijk}\paren{\varepsilon\bm{n}\paren{\theta}}n_k\paren{\theta}\varepsilon d\theta\\
    =&-\frac{1}{\pi}\int_{\theta_0}^{\theta_0+\pi} \frac{n_i\paren{\theta} n_j\paren{\theta} }{\abs{\bm{n}\paren{\theta}}^4}d\theta
    =-\frac{1}{2}
\end{split}
\end{align*}
where $\theta_0=\arg\paren{\bm{n}\paren{\bm{x}}}+\frac{\pi}{2}$.
The remaining part is only the difference of integrals between on $\partial B^1_\varepsilon$ and $\partial B^2_\varepsilon$.
Since $\Gamma\in C^2$, $\pdd{s}{2}\bm{X}\paren{s}$ is bounded, the symmetric difference between $\partial B^1_\varepsilon$ and $\partial B^2_\varepsilon$ is contained in 
\begin{align*}
    \partial B^3_\varepsilon=\left\{\bm{y}\in \partial B_\varepsilon\left|\quad\abs{\bm{n}\paren{\bm{y}}\cdot\bm{n}\paren{\bm{x}}}\leq \frac{\norm{\pdd{s}{2}\bm{X}\paren{s}}_{C^0}}{4} \varepsilon^2 \right.\right\}.
\end{align*}
Thus, as $\varepsilon\rightarrow 0$,
\begin{align*}
\begin{split}
        &\abs{\int_{\partial B^1_\varepsilon} \Theta_{ijk}\paren{\bm{y}-\bm{x}}n_k\paren{\bm{y}}ds\paren{\bm{y}}-\int_{\partial B^2_\varepsilon} \Theta_{ijk}\paren{\bm{y}-\bm{x}}n_k\paren{\bm{y}}ds\paren{\bm{y}}}\\
    \leq&\frac{1}{\pi}\int_{\partial B^3_\varepsilon}\abs{ \frac{n_i\paren{\bm{y}} n_j\paren{\bm{y}} }{\varepsilon\abs{\bm{n}\paren{\bm{y}}}^4}}ds\paren{\bm{y}}
    \leq\frac{4}{\pi}\sin^{-1}\paren{\frac{\norm{\pdd{s}{2}\bm{X}\paren{s}}_{C^0}}{4} \varepsilon^2}\longrightarrow 0.
\end{split}
\end{align*}
Therefore,
\begin{align*}
\begin{split}
    &\int_\Gamma \Theta_{ijk}\paren{\bm{y}-\bm{x}}n_k\paren{\bm{y}}ds\paren{\bm{y}}
    =\lim_{\varepsilon\rightarrow 0}\int_{\partial B^1_\varepsilon} \Theta_{ijk}\paren{\bm{y}-\bm{x}}n_k\paren{\bm{y}}ds\paren{\bm{y}}\\
    =&-\frac{1}{2}+\lim_{\varepsilon\rightarrow 0}\paren{\int_{\partial B^1_\varepsilon} -\int_{\partial B^2_\varepsilon}} \Theta_{ijk}\paren{\bm{y}-\bm{x}}n_k\paren{\bm{y}}ds\paren{\bm{y}}=-\frac{1}{2}.
\end{split}
\end{align*}

\end{proof}
Using the fact that $\Gamma$ is $C^2$, from \cite[Lemma 3.15]{F1995}, there exist a constant $C_\Gamma>0$ s.t. for all $\bm{x}, \bm{y}\in \Gamma$,
\begin{align}
    \abs{\paren{\bm{x}-\bm{y}}\cdot \bm{n}\paren{\bm{y}}}\leq C_\Gamma \abs{\bm{x}-\bm{y}}^2\label{e:tech_regularity}.
\end{align}
Thus, for all $\bm{x},\bm{y}\in \Gamma$, 
\begin{align}
    \abs{K_{ij}\paren{\bm{y},\bm{x}}}\leq \frac{C_\Gamma}{\pi},\label{e:kernel_regularity00}
\end{align}
so $\norm{K\paren{\cdot,\bm{x}}}_{L^1\paren{\Gamma}}$ is uniformly bounded on $ \Gamma$.
Next, we will claim $\norm{K\paren{\cdot,\bm{x}}}_{L^1\paren{\Gamma}}$ is uniformly bounded in $\mbr^2\setminus \Gamma$.
\begin{lemma}\label{l:kernel_L1bnd}
There exists a constant $C<\infty$ s.t. $\forall \bm{x}\in \mbr^2\setminus \Gamma$,
\begin{align}
    \int_\Gamma \abs{K_{ij}\paren{\bm{y},\bm{x}}}d s\paren{\bm{y}}\leq C\label{e:kernel_L1bnd}
\end{align}
\end{lemma}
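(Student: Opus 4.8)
The plan is to reduce the bound to the scalar kernel $\abs{(\bm y-\bm x)\cdot\bm n(\bm y)}/\abs{\bm y-\bm x}^2$, then for fixed $\bm x$ split $\Gamma$ into a small arc near a nearest point of $\bm x$ on $\Gamma$ and its complement, controlling the near arc with the $C^2$-regularity estimate \eqref{e:tech_regularity} and the non-degeneracy $\starnorm{\bm X}>0$, and the complement by crude size estimates.

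First I would record the pointwise bound $\abs{K_{ij}(\bm y,\bm x)}\le\tfrac1\pi\,\abs{(\bm y-\bm x)\cdot\bm n(\bm y)}\,\abs{\bm y-\bm x}^{-2}$, immediate from $\Theta_{ijk}(\bm r)=-\tfrac1\pi r_ir_jr_k/\abs{\bm r}^4$ and $\abs{\bm n(\bm y)}=1$. Thus it suffices to bound $I(\bm x):=\int_\Gamma\abs{(\bm y-\bm x)\cdot\bm n(\bm y)}\,\abs{\bm y-\bm x}^{-2}\,ds(\bm y)$ uniformly over $\bm x\in\mbr^2\setminus\Gamma$. Fix such an $\bm x$. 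By compactness of $\Gamma$ there is $\bm x_0=\bm X(s_0)\in\Gamma$ with $d:=\abs{\bm x-\bm x_0}=\dist(\bm x,\Gamma)>0$, and first-order optimality of $s_0$ for $s\mapsto\abs{\bm x-\bm X(s)}^2$ (using $\bm X\in C^1$) gives $(\bm x-\bm x_0)\cdot\p_s\bm X(s_0)=0$, so $\bm x-\bm x_0=\pm d\,\bm n(\bm x_0)$.

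Next I would fix $\delta>0$ depending only on $C_\Gamma$ and $\starnorm{\bm X}$, say $\delta<\min\{1/(2C_\Gamma),\,\pi\starnorm{\bm X}\}$, and split $\Gamma=\Gamma_{\rm n}\cup\Gamma_{\rm f}$ with $\Gamma_{\rm n}=\{\bm y\in\Gamma:\abs{\bm y-\bm x_0}<\delta\}$. On $\Gamma_{\rm f}$ one has $\abs{\bm y-\bm x}\ge\delta/2$: if $d\le\delta/2$ because $\abs{\bm y-\bm x}\ge\abs{\bm y-\bm x_0}-d\ge\delta/2$, and if $d>\delta/2$ because $\bm x_0$ is a nearest point so $\abs{\bm y-\bm x}\ge d$; hence the integrand is $\le2/\delta$ and $\int_{\Gamma_{\rm f}}\le2\abs{\Gamma}/\delta$. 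On $\Gamma_{\rm n}$ I would apply \eqref{e:tech_regularity} twice, as $\abs{(\bm y-\bm x_0)\cdot\bm n(\bm y)}\le C_\Gamma\abs{\bm y-\bm x_0}^2$ and as $\abs{(\bm y-\bm x_0)\cdot\bm n(\bm x_0)}\le C_\Gamma\abs{\bm y-\bm x_0}^2$. The first, together with $\abs{(\bm x_0-\bm x)\cdot\bm n(\bm y)}=d\,\abs{\bm n(\bm x_0)\cdot\bm n(\bm y)}\le d$, bounds the numerator by $C_\Gamma\abs{\bm y-\bm x_0}^2+d$. The second, inserted into $\abs{\bm y-\bm x}^2=\abs{\bm y-\bm x_0}^2+d^2\pm2d\,\bm n(\bm x_0)\cdot(\bm y-\bm x_0)$ and combined with $2d\abs{\bm y-\bm x_0}^2\le\delta(d^2+\abs{\bm y-\bm x_0}^2)$ and $\delta<1/(2C_\Gamma)$, bounds the denominator below by $\tfrac12(\abs{\bm y-\bm x_0}^2+d^2)$. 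Therefore the integrand on $\Gamma_{\rm n}$ is at most $2C_\Gamma+2d\,(\abs{\bm y-\bm x_0}^2+d^2)^{-1}$. Parametrizing $\Gamma_{\rm n}$ by the arclength variable $s$ and using $\abs{\bm y-\bm x_0}\ge\starnorm{\bm X}\abs{s-s_0}$ — which also confines $\Gamma_{\rm n}$ to an arc of length $\le2\delta/\starnorm{\bm X}$ — the $2C_\Gamma$ term contributes $O(1)$ while the second term contributes at most $2\int_{\mbr}d\,(\starnorm{\bm X}^2t^2+d^2)^{-1}\,dt=2\pi/\starnorm{\bm X}$, uniformly in $d$ and in $\bm x$. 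Adding the $\Gamma_{\rm n}$ and $\Gamma_{\rm f}$ contributions gives the claimed uniform bound $C$.

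The main obstacle is the lower bound $\abs{\bm y-\bm x}^2\gtrsim\abs{\bm y-\bm x_0}^2+d^2$ on $\Gamma_{\rm n}$: the cross term $2d\,\bm n(\bm x_0)\cdot(\bm y-\bm x_0)$ must be absorbed into the two positive terms, and this is exactly where the $C^2$ hypothesis on $\Gamma$ enters through \eqref{e:tech_regularity}, forcing $\delta$ to be chosen in terms of $C_\Gamma$. Everything else — the far arc, the arclength change of variables powered by $\starnorm{\bm X}>0$, and the resulting elementary $\arctan$ integral — is routine.
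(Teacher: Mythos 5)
Your argument is correct, and its skeleton is the same as the paper's: reduce to the scalar kernel $\abs{(\bm{y}-\bm{x})\cdot\bm{n}(\bm{y})}/\abs{\bm{y}-\bm{x}}^2$, split $\Gamma$ into a small arc about a nearest point $\bm{x}_0$ and its complement, use \eqref{e:tech_regularity} both to bound the numerator and to absorb the cross term in $\abs{\bm{y}-\bm{x}}^2\geq \tfrac12(\abs{\bm{y}-\bm{x}_0}^2+d^2)$ (the paper's \eqref{e: proj_lower_bnd}), and finish with the uniform $\arctan$-type integral $\int a/(r^2+a^2)$. Where you genuinely deviate is in the preliminaries: the paper first asserts (without proof) a tubular-neighborhood constant $\varepsilon_0$ with unique nearest-point projection and a monotonicity constant $C_0$ for $r_0(s)=\abs{\bm{X}(s)-\bm{X}(s_0)}$, treats $\dist(\bm{x},\Gamma)\geq \varepsilon_0/2$ as a separate case, and changes variables from arclength to $r_0$ using $C_0$; you instead handle every $\bm{x}\in\mathbb{R}^2\setminus\Gamma$ at once by taking an arbitrary nearest point and deriving $\bm{x}-\bm{x}_0=\pm d\,\bm{n}(\bm{x}_0)$ from first-order optimality, and you replace $C_0$ by the chord-arc bound $\abs{\bm{y}-\bm{x}_0}\geq\starnorm{\bm{X}}\abs{s-s_0}$, which both confines the near arc and bounds the singular integral directly in the arclength variable. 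The payoff of your version is that all constants are expressed through $C_\Gamma$ and $\starnorm{\bm{X}}$, quantities already introduced in the paper, and no unproved projection/monotonicity facts are needed; the paper's version, in exchange, sets up the tubular coordinates it reuses later in the appendix.
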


\begin{proof}
Define $\dist \paren{\bm{x}, \Gamma}$ as the distance between point $\bm{x}$ and set $\Gamma$, and then there exist $0<\varepsilon_0<\frac{1}{2C_\Gamma}$ and $C_0>0$ s.t.
(1) for all $\bm{x}$ with $\dist \paren{\bm{x}, \Gamma}<\frac{1}{2}\varepsilon_0$, there exists a unique $\bm{x}_0=\bm{X}\paren{s_0}\in \Gamma$ and $t\in \paren{-\frac{1}{2}\varepsilon_0,\frac{1}{2}\varepsilon_0}$ s.t. $\bm{x}=\bm{x}_0+t \bm{n}\paren{\bm{x}_0}$,
(2) define $r_0\paren{s}=\abs{\bm{X}\paren{s}-\bm{X}\paren{s_0}}$, then $\sgn\paren{s-s_0}\pd{s}r_0\paren{s}>C_0$ for all $0<r_0\paren{s}< \varepsilon_0 $.

We will prove this result for two different cases.

(i) Given $\dist \paren{\bm{x}, \Gamma}\geq\frac{1}{2}\varepsilon_0$, we have $\abs{K_{ij}\paren{\bm{y},\bm{x}}}\leq \frac{1}{2\pi}\frac{1}{\varepsilon_0}$ for all $\bm{y}\in \Gamma$, so
\begin{align*}
    \int_\Gamma \abs{K_{ij}\paren{\bm{y},\bm{x}}}d s\paren{\bm{y}}\leq \frac{1}{2\pi}\frac{1}{\varepsilon_0}\int_\Gamma d s\paren{\bm{y}}\leq C_1\frac{1}{\varepsilon_0}
\end{align*}
where $C_1$ is only depends on $\Gamma$.

(ii) Given $\dist \paren{\bm{x}, \Gamma}<\frac{1}{2}\varepsilon_0$, set  $\bm{x}_0\in \Gamma$ be the unique point s.t. $\bm{x}=\bm{x}_0+t \bm{n}\paren{\bm{x}_0}$ with $t\in \paren{-\frac{1}{2}\varepsilon_0,\frac{1}{2}\varepsilon_0}$, and define $B_{\varepsilon_0}=\left\{\bm{y}\in \Gamma\left| \abs{\bm{y}-\bm{x}_0}< \varepsilon_0 \right.\right\}$.
We split the integral into
\begin{align*}
    \int_\Gamma \abs{K_{ij}\paren{\bm{y},\bm{x}}}d s\paren{\bm{y}}=\int_{B_{\varepsilon_0}} \abs{K_{ij}\paren{\bm{y},\bm{x}}}d s\paren{\bm{y}}+\int_{\Gamma\setminus B_{\varepsilon_0}} \abs{K_{ij}\paren{\bm{y},\bm{x}}}d s\paren{\bm{y}}
\end{align*}
In the second term, for all $\bm{y}\in {\Gamma\setminus B_{\varepsilon_0}}$,
\begin{align*}
    \abs{\bm{y}-\bm{x}}\geq \abs{\bm{y}-\bm{x}_0}-\abs{\bm{x}_0-\bm{x}}\geq \frac{1}{2}\varepsilon_0,
\end{align*}
so
\begin{align*}
    \int_{\Gamma\setminus B_{\varepsilon_0}} \abs{K_{ij}\paren{\bm{y},\bm{x}}}d s\paren{\bm{y}}\leq \frac{1}{2\pi}\frac{1}{\varepsilon_0}\int_{\Gamma\setminus B_{\varepsilon_0}} d s\paren{\bm{y}}\leq C_1\frac{1}{\varepsilon_0}.
\end{align*}
For the first term ,by \eqref{e:tech_regularity}, we obtain
\begin{align*}
\begin{split}
    &\pi \abs{K_{ij}\paren{\bm{y},\bm{x}}}
    =   \frac{\abs{\paren{y_i-x_i}\paren{y_j-x_j}\paren{\bm{y}-\bm{x}}\cdot\bm{n}\paren{\bm{y}}}}{\abs{\bm{y}-\bm{x}}^4}\\
    \leq&  \frac{\abs{\paren{\bm{y}-\bm{x}_0}\cdot\bm{n}\paren{\bm{y}}}+\abs{\paren{\bm{x}_0-\bm{x}}\cdot\bm{n}\paren{\bm{y}}}}{\abs{\bm{y}-\bm{x}}^2}
    \leq  \frac{C_\Gamma\abs{\bm{y}-\bm{x}_0}^2+\abs{\bm{x}_0-\bm{x}}}{\abs{\bm{y}-\bm{x}}^2}
\end{split}
\end{align*}
Moreover,
\begin{align*}
    \abs{\bm{y}-\bm{x}}^2=\abs{\bm{y}-\bm{x}_0}^2+\abs{\bm{x}_0-\bm{x}}^2+2\paren{\bm{y}-\bm{x}_0}\cdot\paren{\bm{x}_0-\bm{x}}
\end{align*}
Since
\begin{align*}
    \bm{x}_0-\bm{x}=t\bm{n}\paren{\bm{x}_0},\quad\mbox{ where }\quad \abs{t}=\abs{\bm{x}_0-\bm{x}}
\end{align*}
by \eqref{e:tech_regularity},
\begin{align*}
    \abs{\paren{\bm{y}-\bm{x}_0}\cdot\paren{\bm{x}_0-\bm{x}}}=\abs{\paren{\bm{y}-\bm{x}_0}\cdot\paren{\bm{x}_0}}\abs{\bm{x}_0-\bm{x}}\leq C_\Gamma\abs{\bm{y}-\bm{x}_0}^2\abs{\bm{x}_0-\bm{x}}.
\end{align*}
Then, $C_\Gamma\abs{\bm{y}-\bm{x}_0}< C_\Gamma\varepsilon_0\leq \frac{1}{2}$, so
\begin{align}\label{e: proj_lower_bnd}
\begin{split}
    \abs{\bm{y}-\bm{x}}^2
    \geq&\abs{\bm{y}-\bm{x}_0}^2+\abs{\bm{x}_0-\bm{x}}^2- \abs{\bm{y}-\bm{x}_0}\abs{\bm{x}_0-\bm{x}}\\
    \geq&\frac{1}{2}\paren{\abs{\bm{y}-\bm{x}_0}^2+\abs{\bm{x}_0-\bm{x}}^2}.
\end{split}  
\end{align}
Therefore,
\begin{align*}
\begin{split}
    \abs{K_{ij}\paren{\bm{y},\bm{x}}}
    \leq& \frac{2}{\pi} \frac{C_\Gamma\abs{\bm{y}-\bm{x}_0}^2+\abs{\bm{x}_0-\bm{x}}}{\abs{\bm{y}-\bm{x}_0}^2+\abs{\bm{x}_0-\bm{x}}^2}\\
    \leq& \frac{2}{\pi} C_\Gamma+\frac{2}{\pi} \frac{\abs{\bm{x}_0-\bm{x}}}{\abs{\bm{y}-\bm{x}_0}^2+\abs{\bm{x}_0-\bm{x}}^2}.
\end{split}
\end{align*}
For the second term, set $r=\abs{\bm{y}-\bm{x}_0}$ and $a=\abs{\bm{x}_0-\bm{x}}$.
Since $\sgn\paren{s-s_0}\pd{s}r_0\paren{s}>C_0$ for all $0<r_0\paren{s}< \varepsilon_0 $,
\begin{align*}
\begin{split}
    \int_{B_{\varepsilon_0}} \frac{\abs{\bm{x}_0-\bm{x}}}{\abs{\bm{y}-\bm{x}_0}^2+\abs{\bm{x}_0-\bm{x}}^2} ds\paren{\bm{y}}
    \leq \frac{2}{C_0} \int_0^{\varepsilon_0} \frac{a}{r^2+a^2}dr
    \leq\frac{2}{C_0} \int_0^{\infty} \frac{1}{r^2+1}dr.
\end{split}
\end{align*}
Hence,
\begin{align*}
\begin{split}
    \int_\Gamma \abs{K_{ij}\paren{\bm{y},\bm{x}}}d s\paren{\bm{y}}
    \leq& C_1\frac{1}{\varepsilon_0}+\frac{2}{\pi} C_\Gamma\int_{B_{\varepsilon_0}}ds\paren{\bm{y}}+\frac{2}{C_0} \int_0^{\infty} \frac{1}{r^2+1}dr\\
    \leq& C_1\frac{1}{\varepsilon_0}+C_2
\end{split}
\end{align*}
Since $\varepsilon_0, C_1, C_2$ only depend on $\Gamma$,
\begin{align*}
    \int_\Gamma \abs{K_{ij}\paren{\bm{y},\bm{x}}}d s\paren{\bm{y}}\leq C
\end{align*}
where $C$ only depends on $\Gamma$.

\end{proof}
\begin{remark}
Obviously, if $\wh{\bm{F}}\paren{s}\in C^0\paren{\mbs}$, $\wh{\mc{D}}[\wh{\bm{F}}](\bm{x})$ is smooth and bounded in $\mbr^2\setminus \Gamma$.
Then, by \eqref{e:kernel_regularity00} and \cite[Proposition 3.12]{F1995}, $\wh{\mc{D}}[\wh{\bm{F}}](\bm{x})$ exists and is bounded on $\Gamma$.
Note that $\wh{\mc{D}}[\wh{\bm{F}}](\bm{x})$ may be discontinuous across $\Gamma$.
\end{remark}
\begin{lemma}\label{l:dlayer_conti}
Given $\Gamma\in C^2$ and $\wh{\bm{F}}\in C\paren{\Gamma}$, if $\wh{\bm{F}}\paren{s_0}=0$ where $s_0\in \mbs$, then $\wh{\mc{D}}[\wh{\bm{F}}]\paren{\bm{x}}=\int_{\mbs} K\paren{\bm{X}\paren{s'},\bm{x}} \wh{\bm{F}}(s')ds'$ is continuous at $\bm{x}_0=\bm{X}\paren{s_0}$
\end{lemma}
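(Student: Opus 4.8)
The plan is to establish continuity at $\bm x_0=\bm X(s_0)$ by the standard splitting of the double layer potential into a contribution near $s_0$ and one away from it, using the vanishing of $\wh{\bm F}$ at $s_0$ to control the near part and the uniform $L^1$ bound on the kernel from Lemma~\ref{l:kernel_L1bnd} to make that control quantitative. First note that, by \eqref{e:kernel_regularity00}, for $\bm x\in\Gamma$ the integral $\wh{\mc{D}}[\wh{\bm F}](\bm x)=\int_{\mbs}K(\bm X(s'),\bm x)\wh{\bm F}(s')ds'$ is absolutely convergent with bounded integrand; it thus suffices to show that $\wh{\mc{D}}[\wh{\bm F}](\bm x)\to\wh{\mc{D}}[\wh{\bm F}](\bm x_0)$ as $\bm x\to\bm x_0$ in $\mbr^2$.

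Fix $\delta>0$. Since $\wh{\bm F}(s_0)=0$ and $\wh{\bm F}$ is continuous at $s_0$, choose $\varepsilon>0$ small enough that $\abs{\wh{\bm F}(s')}<\delta$ for all $s'$ in $B_\varepsilon:=\{s'\in\mbs:\abs{\bm X(s')-\bm x_0}<\varepsilon\}$. For $\bm x$ with $\abs{\bm x-\bm x_0}<\varepsilon/2$ write
\begin{align*}
\wh{\mc{D}}[\wh{\bm F}](\bm x)-\wh{\mc{D}}[\wh{\bm F}](\bm x_0)
&=\int_{B_\varepsilon}\paren{K(\bm X(s'),\bm x)-K(\bm X(s'),\bm x_0)}\wh{\bm F}(s')ds'\\
&\quad+\int_{\mbs\setminus B_\varepsilon}\paren{K(\bm X(s'),\bm x)-K(\bm X(s'),\bm x_0)}\wh{\bm F}(s')ds'
=:I_\varepsilon(\bm x)+II_\varepsilon(\bm x).
\end{align*}
For $I_\varepsilon$ I would estimate $\abs{I_\varepsilon(\bm x)}\le\delta\paren{\int_{B_\varepsilon}\abs{K(\bm X(s'),\bm x)}ds'+\int_{B_\varepsilon}\abs{K(\bm X(s'),\bm x_0)}ds'}$; the first integral is at most the constant $C$ of Lemma~\ref{l:kernel_L1bnd} when $\bm x\notin\Gamma$, and at most $\tfrac{C_\Gamma}{\pi}\abs{\Gamma}$ when $\bm x\in\Gamma$ by \eqref{e:kernel_regularity00}, while the second is at most $\tfrac{C_\Gamma}{\pi}\abs{\Gamma}$ by \eqref{e:kernel_regularity00}. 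Hence $\abs{I_\varepsilon(\bm x)}\le C\delta$ with $C$ depending only on $\Gamma$.

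For $II_\varepsilon$, if $s'\notin B_\varepsilon$ then every point $\bm z$ of the segment $[\bm x_0,\bm x]$ satisfies $\abs{\bm X(s')-\bm z}\ge\varepsilon-\tfrac{\varepsilon}{2}=\tfrac{\varepsilon}{2}>0$, so $\bm z\mapsto K(\bm X(s'),\bm z)$ is smooth on that segment with $\abs{\nabla_{\bm z}K(\bm X(s'),\bm z)}\le C\varepsilon^{-2}$; by the mean value theorem $\abs{K(\bm X(s'),\bm x)-K(\bm X(s'),\bm x_0)}\le C\varepsilon^{-2}\abs{\bm x-\bm x_0}$, whence $\abs{II_\varepsilon(\bm x)}\le C\varepsilon^{-2}\norm{\wh{\bm F}}_{C^0}\abs{\bm x-\bm x_0}\to0$ as $\bm x\to\bm x_0$. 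Combining the two bounds gives $\limsup_{\bm x\to\bm x_0}\abs{\wh{\mc{D}}[\wh{\bm F}](\bm x)-\wh{\mc{D}}[\wh{\bm F}](\bm x_0)}\le C\delta$, and letting $\delta\downarrow0$ proves continuity at $\bm x_0$.

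The only nonroutine ingredient is the uniform $L^1$ control $\int_\Gamma\abs{K(\bm y,\bm x)}ds(\bm y)\le C$ used in the estimate of $I_\varepsilon$; this is precisely Lemma~\ref{l:kernel_L1bnd}, whose proof in turn relies on the $C^2$ regularity of $\Gamma$ through the near-tangency estimate \eqref{e:tech_regularity}. I would therefore simply invoke that lemma rather than reprove it, and the rest of the argument is the routine near/far splitting together with a mean value estimate; the vanishing hypothesis $\wh{\bm F}(s_0)=0$ is essential precisely because without it the near term retains the jump $\pm\tfrac12\wh{\bm F}(s_0)$ that makes $\wh{\mc{D}}[\wh{\bm F}]$ discontinuous across $\Gamma$ in general.
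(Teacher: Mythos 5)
Your proposal is correct and follows essentially the same route as the paper: a near/far splitting about $s_0$, with the near part controlled by the smallness of $\wh{\bm F}$ near $s_0$ together with the uniform kernel bounds (the pointwise bound \eqref{e:kernel_regularity00} on $\Gamma$ and the $L^1$ bound of Lemma \ref{l:kernel_L1bnd} off $\Gamma$), and the far part by a mean value/gradient estimate of order $\varepsilon^{-2}\abs{\bm x-\bm x_0}$. The paper's proof uses exactly these ingredients (its constants $C_0$, $C_1$ play the role of your two kernel bounds), so no further changes are needed.
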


\begin{proof}
By \eqref{e:tech_regularity} and \eqref{e:kernel_L1bnd}, there exists $C_0,C_1>0$ s.t.
\begin{align*}
    \int_{\mbs} \abs{K_{ij}\paren{\bm{X}\paren{s'},\bm{X}\paren{s}}}d s&\leq C_0,\quad \forall s \neq \mbs,\\
    \int_{\mbs} \abs{K_{ij}\paren{\bm{X}\paren{s'},\bm{x}}}d s&\leq C_1, \quad \forall \bm{x}\neq \Gamma.
\end{align*}
Given $\varepsilon>0$, we choose $0<\eta\ll 1$ s.t. $\abs{\wh{\bm{F}}(s)}<\frac{\varepsilon}{2\paren{C_0+C_1}}$ 
for all $s\in \mc{I}_\eta:=\left\{s\in \mbs \left|\abs{\bm{X}\paren{s}-\bm{X}\paren{s_0}}<\eta\right.\right\}$, and set $B_\eta=\bm{X}\paren{ \mc{I}_\eta}$.
Then, we use the technique in Lemma \ref{l:u_conti}, and for all $\abs{\bm{x}-\bm{x}_0}<\frac{\eta}{2}$,
\begin{align*}
\begin{split}
        &\abs{\wh{\mc{D}}_i[\wh{\bm{F}}]\paren{\bm{x}}-\wh{\mc{D}}_i[\wh{\bm{F}}]\paren{\bm{x}_0}}\\
    \leq&\quad \int_{\mc{I}_\eta} \paren{\abs{ K_{ij}\paren{\bm{X}\paren{s'},\bm{x}}}+\abs{ K_{ij}\paren{\bm{X}\paren{s'},\bm{X}\paren{s_0}}}}\abs{ \wh{\bm{F}}(s')}ds'\\
        &+\int_{\mbs\setminus\mc{I}_\eta} \abs{ K_{ij}\paren{\bm{X}\paren{s'},\bm{x}}-K_{ij}\paren{\bm{X}\paren{s'},\bm{x}_0}}\abs{ \wh{\bm{F}}(s')}ds'\\
    \leq&\epsilon+C\norm{ \wh{\bm{F}}}_{C^0\paren{\mbs}} \frac{1}{\eta^2}\abs{\bm{x}-\bm{x}_0}.
\end{split}
\end{align*}
Hence,
\begin{align*}
    \lim_{\bm{x}\rightarrow\bm{x_0}}\abs{\wh{\mc{D}}_i[\wh{\bm{F}}]\paren{\bm{x}}-\wh{\mc{D}}_i[\wh{\bm{F}}]\paren{\bm{x}_0}}\leq \epsilon.
\end{align*}
Letting $\varepsilon\rightarrow 0$,
\begin{align*}
    \lim_{\bm{x}\rightarrow\bm{x_0}}\abs{\wh{\mc{D}}[\wh{\bm{F}}]\paren{\bm{x}}-\wh{\mc{D}}[\wh{\bm{F}}]\paren{\bm{x}_0}}=0.
\end{align*}

\end{proof}
Now, we define limits $D_{\Omega_1}$ and $D_{\Omega_2}$ on $\Gamma$ as
\begin{align*}
    D_{\Omega_1}\paren{\bm{x}}=\lim_{t\rightarrow 0^+}\wh{\mc{D}}[\wh{\bm{F}}](\bm{x}-t \bm{n}\paren{\bm{x}}), \;  D_{\Omega_2}\paren{\bm{x}}=\lim_{t\rightarrow 0^+}\wh{\mc{D}}[\wh{\bm{F}}](\bm{x}+t \bm{n}\paren{\bm{x}}).
\end{align*}
We have the following results
\begin{theorem}
Given $\Gamma\in C^2$ and $\wh{\bm{F}}\in C\paren{\Gamma}$, 
\begin{align*}
    D_{\Omega_1}\paren{\bm{X}\paren{s}}=&-\frac{1}{2} \wh{\bm{F}}\paren{s}+\int_{\mbs} K\paren{\bm{X}\paren{s'},\bm{X}\paren{s}} \wh{\bm{F}}(s')ds'\\
    D_{\Omega_2}\paren{\bm{X}\paren{s}}=&\frac{1}{2} \wh{\bm{F}}\paren{s}+\int_{\mbs} K\paren{\bm{X}\paren{s'},\bm{X}\paren{s}} \wh{\bm{F}}(s')ds'
\end{align*}
\end{theorem}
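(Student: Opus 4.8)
The plan is to reduce everything to the two facts already in hand: the continuity statement of Lemma~\ref{l:dlayer_conti} for double layer potentials whose density vanishes at the target point, and the explicit evaluation of the kernel integral in \eqref{e:kernel_integral}. Fix $s_0\in\mbs$ and set $\bm{x}_0=\bm{X}(s_0)$. I would split the density as $\wh{\bm{F}}=\wh{\bm{G}}+\wh{\bm{F}}(s_0)$, where $\wh{\bm{G}}(s):=\wh{\bm{F}}(s)-\wh{\bm{F}}(s_0)\in C(\Gamma)$ vanishes at $s_0$ and $\wh{\bm{F}}(s_0)$ is a constant vector, so that by linearity $\wh{\mc{D}}[\wh{\bm{F}}]=\wh{\mc{D}}[\wh{\bm{G}}]+\wh{\mc{D}}[\wh{\bm{F}}(s_0)]$.

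For the $\wh{\bm{G}}$ term, Lemma~\ref{l:dlayer_conti} gives that $\wh{\mc{D}}[\wh{\bm{G}}]$ is continuous at $\bm{x}_0$, so both one-sided limits of $\wh{\mc{D}}[\wh{\bm{G}}](\bm{x}_0\mp t\bm{n}(\bm{x}_0))$ as $t\to0^+$ exist and coincide with the boundary value $\wh{\mc{D}}[\wh{\bm{G}}](\bm{x}_0)=\int_{\mbs}K(\bm{X}(s'),\bm{x}_0)\wh{\bm{G}}(s')\,ds'$ (the integral converges by \eqref{e:kernel_regularity00}). Using the third case of \eqref{e:kernel_integral}, i.e.\ $\int_{\mbs}K_{ij}(\bm{X}(s'),\bm{x}_0)\,ds'=-\tfrac12\delta_{ij}$, I would rewrite this boundary value as $\int_{\mbs}K(\bm{X}(s'),\bm{x}_0)\wh{\bm{F}}(s')\,ds'+\tfrac12\wh{\bm{F}}(s_0)$. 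For the constant term, $\wh{\mc{D}}[\wh{\bm{F}}(s_0)](\bm{x})=\big(\int_{\mbs}K(\bm{X}(s'),\bm{x})\,ds'\big)\wh{\bm{F}}(s_0)$, which by the three cases of \eqref{e:kernel_integral} equals $-\wh{\bm{F}}(s_0)$ on $\Omega_1$, $\bm{0}$ on $\Omega_2$, and $-\tfrac12\wh{\bm{F}}(s_0)$ on $\Gamma$.

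To finish, I would note that, since $\bm{n}(\bm{x}_0)$ is the outward unit normal and $\bm{X}\in C^2$ with $\starnorm{\bm{X}}>0$, the points $\bm{x}_0-t\bm{n}(\bm{x}_0)$ and $\bm{x}_0+t\bm{n}(\bm{x}_0)$ lie in $\Omega_1$ and $\Omega_2$ respectively for all sufficiently small $t>0$. Hence $\lim_{t\to0^+}\wh{\mc{D}}[\wh{\bm{F}}(s_0)](\bm{x}_0-t\bm{n}(\bm{x}_0))=-\wh{\bm{F}}(s_0)$ and $\lim_{t\to0^+}\wh{\mc{D}}[\wh{\bm{F}}(s_0)](\bm{x}_0+t\bm{n}(\bm{x}_0))=\bm{0}$, and adding the two contributions yields
\begin{align*}
D_{\Omega_1}(\bm{x}_0)&=\int_{\mbs}K(\bm{X}(s'),\bm{x}_0)\wh{\bm{F}}(s')\,ds'+\tfrac12\wh{\bm{F}}(s_0)-\wh{\bm{F}}(s_0)=\int_{\mbs}K(\bm{X}(s'),\bm{x}_0)\wh{\bm{F}}(s')\,ds'-\tfrac12\wh{\bm{F}}(s_0),\\
D_{\Omega_2}(\bm{x}_0)&=\int_{\mbs}K(\bm{X}(s'),\bm{x}_0)\wh{\bm{F}}(s')\,ds'+\tfrac12\wh{\bm{F}}(s_0),
\end{align*}
which is the assertion after renaming $s_0$ back to $s$. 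The only point needing a little care is this last step — checking that the approach points sit strictly on the correct side of $\Gamma$ for small $t$ — but this is immediate from the regularity of $\bm{X}$. I do not anticipate any genuine obstacle, since all of the substantive analysis (the uniform $L^1$ bound of Lemma~\ref{l:kernel_L1bnd}, the continuity in Lemma~\ref{l:dlayer_conti}, and the computation of the kernel integral \eqref{e:kernel_integral}) has already been carried out.
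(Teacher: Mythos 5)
Your proposal is correct and is essentially the paper's own argument: the paper likewise writes $\wh{\mc{D}}[\wh{\bm{F}}]$ as the constant density $\wh{\bm{F}}(s)$ plus the density $\wh{\bm{F}}(s')-\wh{\bm{F}}(s)$ vanishing at the target point, evaluates the constant part via the three cases of \eqref{e:kernel_integral}, and uses Lemma \ref{l:dlayer_conti} to pass the vanishing-density part continuously to the boundary. The only difference is cosmetic (you split the density before integrating rather than adding and subtracting inside the integral), so no further comment is needed.
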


\begin{proof}
For $D_{\Omega_1}$, since $\bm{X}\paren{s}-t \bm{n}\paren{\bm{X}\paren{s}}\in\Omega_1$ for all $0<t\ll 1$, by \eqref{e:kernel_integral},
\begin{align*}
\begin{split}
    &\wh{\mc{D}}[\wh{\bm{F}}](\bm{X}\paren{s}-t \bm{n}\paren{\bm{X}\paren{s}})
    =\int_{\mbs} K\paren{\bm{X}\paren{s'},\bm{X}\paren{s}-t \bm{n}\paren{\bm{X}\paren{s}}} \wh{\bm{F}}(s')ds'\\
    =   &\quad\wh{\bm{F}}(s)\int_{\mbs} K\paren{\bm{X}\paren{s'},\bm{X}\paren{s}-t \bm{n}\paren{\bm{X}\paren{s}}} ds'\\
        &+\int_{\mbs} K\paren{\bm{X}\paren{s'},\bm{X}\paren{s}-t \bm{n}\paren{\bm{X}\paren{s}}}\paren{ \wh{\bm{F}}(s')-\wh{\bm{F}}(s)}ds'\\
    =   &-\wh{\bm{F}}\paren{s}+\int_{\mbs} K\paren{\bm{X}\paren{s'},\bm{X}\paren{s}-t \bm{n}\paren{\bm{X}\paren{s}}}\paren{ \wh{\bm{F}}(s')-\wh{\bm{F}}(s)}ds'.
\end{split}
\end{align*}
Then, $\wh{\bm{F}}(s')-\wh{\bm{F}}(s)=0$ when $s'=s$, so by Lemma \ref{l:dlayer_conti}, the second term is continuous at $t= 0$.
Therefore, by \eqref{e:kernel_integral},
\begin{align*}
        &\lim_{t\rightarrow 0^+}\wh{\mc{D}}[\wh{\bm{F}}](\bm{x}-t \bm{n}\paren{\bm{x}})
    =-\wh{\bm{F}}\paren{s}+\int_{\mbs} K\paren{\bm{X}\paren{s'},\bm{X}\paren{s}}\paren{ \wh{\bm{F}}(s')-\wh{\bm{F}}(s)}ds'\\
    =   &-\wh{\bm{F}}\paren{s}-\wh{\bm{F}}(s)\int_{\mbs} K\paren{\bm{X}\paren{s'},\bm{X}\paren{s}}ds'\\
        &+\int_{\mbs} K\paren{\bm{X}\paren{s'},\bm{X}\paren{s}}\wh{\bm{F}}(s')ds'\\
    =   &-\frac{1}{2} \wh{\bm{F}}\paren{s}+\int_{\mbs} K\paren{\bm{X}\paren{s'},\bm{X}\paren{s}} \wh{\bm{F}}(s')ds'.
\end{align*}
Next, for $D_{\Omega_2}$, we use the same technique. 
Since $\bm{X}\paren{s}+t \bm{n}\paren{\bm{X}\paren{s}}\in\Omega_2$ for all $0<t\ll 1$, by \eqref{e:kernel_integral},
\begin{align*}
\begin{split}
    &\wh{\mc{D}}[\wh{\bm{F}}](\bm{X}\paren{s}+t \bm{n}\paren{\bm{X}\paren{s}})
    =\int_{\mbs} K\paren{\bm{X}\paren{s'},\bm{X}\paren{s}+t \bm{n}\paren{\bm{X}\paren{s}}} \wh{\bm{F}}(s')ds'\\
    =   &\int_{\mbs} K\paren{\bm{X}\paren{s'},\bm{X}\paren{s}+t \bm{n}\paren{\bm{X}\paren{s}}}\paren{ \wh{\bm{F}}(s')-\wh{\bm{F}}(s)}ds'.
\end{split}
\end{align*}
Thus,
\begin{align*}
        \lim_{t\rightarrow 0^+}\wh{\mc{D}}[\wh{\bm{F}}](\bm{x}+t \bm{n}\paren{\bm{x}})
    =   \frac{1}{2} \wh{\bm{F}}\paren{s}+\int_{\mbs} K\paren{\bm{X}\paren{s'},\bm{X}\paren{s}} \wh{\bm{F}}(s')ds'
\end{align*}
\end{proof}
Next, since $\Gamma\in C^2$, there exists an $\varepsilon_0>0$ s.t. for all $\bm{x}$ with $\dist\paren{\bm{x},\Gamma}<\varepsilon_0$, there exists a unique $s\in\mbs$ and $t\in\paren{-\varepsilon_0,\varepsilon_0}$ s.t. $\bm{x}=\bm{X}\paren{s}+t\bm{n}\paren{s}$.
Thus, we may define a tubular set $V=V\paren{\varepsilon_0, \Gamma}$ as $V:=\left\{ \bm{x}\mid  \dist\paren{\bm{x},\Gamma}<\varepsilon_0\right\}$,
within $V$, we may set functions $s\paren{\bm{x}}$ and $\bm{X}\paren{\bm{x}}=\bm{X}\paren{s\paren{\bm{x}}}$. 
Now, let us compute the limits \eqref{Siglim}:
\begin{align*}
    \bm{F}_{\Omega_1}(s)=\lim_{t\to 0+} \Sigma(\bm{X}(s)-t\bm{n}(s))\bm{n}(s), \; \bm{F}_{\Omega_2}(s)=\lim_{t\to 0+} \Sigma(\bm{X}(s)+t\bm{n}(s))\bm{n}(s).
\end{align*}
\begin{theorem}
Given $\Gamma\in C^2$ and $\wh{\bm{F}}\in C\paren{\Gamma}$, 
\begin{align*}
    \bm{F}_{\Omega_1}(s)=&\frac{1}{2} \wh{\bm{F}}\paren{s}+\int_{\mbs} K\paren{\bm{X}\paren{s},\bm{X}\paren{s'}} \wh{\bm{F}}(s')ds',\\
    \bm{F}_{\Omega_2}(s)=&-\frac{1}{2} \wh{\bm{F}}\paren{s}+\int_{\mbs} K\paren{\bm{X}\paren{s},\bm{X}\paren{s'}} \wh{\bm{F}}(s')ds'.
\end{align*}
Thus,
\begin{align*}
    \jump{\Sigma\bm{n}}=\bm{F}_{\Omega_1}-\bm{F}_{\Omega_2}=\wh{\bm{F}}
\end{align*}
\end{theorem}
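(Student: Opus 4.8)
The plan is to reduce the statement to the jump relations for the double layer potential $\wh{\mc{D}}$ established just above. Fix $s\in\mbs$ and set $\bm{c}=\bm{n}(s)$, regarded as a constant vector. For $\bm{x}\in\mbr^2\setminus\Gamma$ one has, componentwise, $\paren{\Sigma(\bm{x})\bm{c}}_i=\int_\mbs \Theta_{ijk}(\bm{x}-\bm{X}(s'))c_j\wh{F}_k(s')\,ds'$. Since $\Theta_{ijk}(\bm{r})=-\tfrac1\pi r_ir_jr_k/\abs{\bm{r}}^4$ is odd and fully symmetric in its indices, relabelling the dummy indices $j\leftrightarrow k$ and using $\Theta_{ijk}(\bm{x}-\bm{X}')=-\Theta_{ijk}(\bm{X}'-\bm{x})$ turns this into $-\int_\mbs \Theta_{ijk}(\bm{X}(s')-\bm{x})c_k\wh{F}_j(s')\,ds'$. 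Writing $c_k=n_k(\bm{X}(s'))+\paren{n_k(\bm{X}(s))-n_k(\bm{X}(s'))}$ and recognizing $\Theta_{ijk}(\bm{X}(s')-\bm{x})n_k(\bm{X}(s'))=K_{ij}(\bm{X}(s'),\bm{x})$, I obtain the decomposition $\Sigma(\bm{x})\bm{n}(s)=-\wh{\mc{D}}[\wh{\bm{F}}](\bm{x})-\mc{E}(\bm{x})$, where $\mc{E}_i(\bm{x}):=\int_\mbs \Theta_{ijk}(\bm{X}(s')-\bm{x})\paren{n_k(\bm{X}(s))-n_k(\bm{X}(s'))}\wh{F}_j(s')\,ds'$.

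Next I would show that $\mc{E}$ extends continuously from $\mbr^2\setminus\Gamma$ up to $\Gamma$ from either side, with the common one-sided limit $\mc{E}(\bm{X}(s))$. The point is that although $\Theta_{ijk}$ is only $O(\abs{\bm{r}}^{-1})$, the extra factor $n_k(\bm{X}(s))-n_k(\bm{X}(s'))$ renders the kernel of $\mc{E}$ bounded: since $\bm{X}\in C^2(\mbs)$ and $\starnorm{\bm{X}}>0$ we have $\bm{n}\in C^1(\mbs)$, so $\abs{n_k(\bm{X}(s))-n_k(\bm{X}(s'))}\leq C\abs{\bm{X}(s)-\bm{X}(s')}$; and for $\bm{x}=\bm{X}(s)\mp t\bm{n}(s)$ with $0<t<\veps_0$ one has $\dist(\bm{x},\Gamma)=t$, hence $\abs{\bm{X}(s')-\bm{x}}\geq t$ and $\abs{\bm{X}(s)-\bm{X}(s')}\leq t+\abs{\bm{X}(s')-\bm{x}}$, so the kernel is bounded by $\tfrac{2C}{\pi}$ uniformly in $t$ and $s'$. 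With this bound, the near/far splitting of $\mbs$ into $\set{s'}{\abs{s-s'}<\delta}$ and its complement, carried out exactly as in Lemmas \ref{l:u_conti} and \ref{l:dlayer_conti} (and in fact without even a logarithmic loss, the kernel being bounded), gives $\mc{E}(\bm{X}(s)\mp t\bm{n}(s))\to \mc{E}(\bm{X}(s))$ as $t\to 0^+$, uniformly in $s$; in particular $\mc{E}$ has no jump across $\Gamma$.

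Finally I would assemble the pieces. By the preceding theorem, $\lim_{t\to 0^+}\wh{\mc{D}}[\wh{\bm{F}}](\bm{X}(s)-t\bm{n}(s))=D_{\Omega_1}(\bm{X}(s))=-\tfrac12\wh{\bm{F}}(s)+\int_\mbs K(\bm{X}(s'),\bm{X}(s))\wh{\bm{F}}(s')\,ds'$, with $K_{ij}(\bm{X}(s'),\bm{X}(s))=\Theta_{ijk}(\bm{X}(s')-\bm{X}(s))n_k(\bm{X}(s'))$. Combining this with the decomposition and the limit of $\mc{E}$, the $n_k(\bm{X}(s'))$ contributions coming from $D_{\Omega_1}$ and from $\mc{E}$ cancel, and what survives is $\bm{F}_{\Omega_1,i}(s)=\tfrac12\wh{F}_i(s)+\int_\mbs \Theta_{ijk}(\bm{X}(s)-\bm{X}(s'))n_k(\bm{X}(s))\wh{F}_j(s')\,ds'=\tfrac12\wh{F}_i(s)+\int_\mbs K_{ij}(\bm{X}(s),\bm{X}(s'))\wh{F}_j(s')\,ds'$. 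Running the same computation from the $\Omega_2$ side, with $D_{\Omega_2}(\bm{X}(s))=\tfrac12\wh{\bm{F}}(s)+\int_\mbs K(\bm{X}(s'),\bm{X}(s))\wh{\bm{F}}(s')\,ds'$ and the same limit for $\mc{E}$, yields $\bm{F}_{\Omega_2,i}(s)=-\tfrac12\wh{F}_i(s)+\int_\mbs K_{ij}(\bm{X}(s),\bm{X}(s'))\wh{F}_j(s')\,ds'$; subtracting gives $\jump{\Sigma\bm{n}}=\bm{F}_{\Omega_1}-\bm{F}_{\Omega_2}=\wh{\bm{F}}$.

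The main obstacle is the second step: the continuity of $\mc{E}$ up to $\Gamma$ and the identification of its two one-sided limits. Everything there rests on the cancellation $\abs{n_k(\bm{X}(s))-n_k(\bm{X}(s'))}\lesssim\abs{\bm{X}(s)-\bm{X}(s')}$ coming from the $C^2$ regularity of $\Gamma$, which is precisely what lets me regard $\mc{E}$ as an operator with a bounded kernel and hence run a near/far argument milder than the one already used for Lemma \ref{l:u_conti}. The remaining steps are bookkeeping with the parity and full index symmetry of $\Theta_{ijk}$ and with the jump relations for $\wh{\mc{D}}$ that are already in place.
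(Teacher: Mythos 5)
Your proposal is correct and follows essentially the same route as the paper: there, too, $\Sigma\bm{n}$ is written as (minus) the double layer potential $\wh{\mc{D}}[\wh{\bm{F}}]$ plus a remainder whose kernel $\Theta_{ijk}\paren{\bm{X}(s')-\bm{x}}\paren{n_k(s)-n_k(s')}$ is bounded thanks to the $C^2$ cancellation $\abs{\bm{n}(s)-\bm{n}(s')}\leq C\abs{\bm{X}(s)-\bm{X}(s')}$, that remainder is shown to be continuous up to $\Gamma$ by the same near/far splitting, and the conclusion then follows from the already-established jump relations for $\wh{\mc{D}}$. The only cosmetic difference is that you freeze the normal as the constant vector $\bm{n}(s)$ along the normal line, whereas the paper uses the nearest-point extension $\bm{n}(s(\bm{x}))$ in a tubular neighborhood; along the limiting normal segment these coincide, so the two arguments are the same.
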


\begin{proof}
First, we set $K^*\paren{\bm{y},\bm{x}}$ in $V\times V$ as
\begin{align*}
    K^*\paren{\bm{y},\bm{x}}:=\Theta_{ijk}\paren{\bm{x}-\bm{y}}n_k\paren{s\paren{\bm{x}}}.
\end{align*}
It is obvious that $K^*\paren{\bm{y},\bm{x}}=K\paren{\bm{x}, \bm{y}}$ for all $\bm{x},\bm{y}\in\Gamma$.
By \eqref{e:kernel_regularity00}, $\norm{K\paren{\bm{x},\cdot}}_{L^1\paren{\Gamma}}$ is uniformly bounded on $ \Gamma$, so we may set a function in $V$ as
\begin{align*}
    \bm{f}[\wh{\bm{F}}]\paren{\bm{x}}=\Sigma\paren{\bm{x}}\bm{n}\paren{s\paren{\bm{x}}}=\int_{\mbs} K^*\paren{\bm{X}\paren{s'},\bm{x}} \wh{\bm{F}}(s')ds'.
\end{align*}
Next, we will claim $\bm{f}\paren{\bm{x}}$ is continuous in $V$ where $\bm{f}\paren{\bm{x}}=\wh{\mc{D}}[\wh{\bm{F}}]\paren{\bm{x}}+\bm{F}[\wh{\bm{F}}]\paren{\bm{x}}$.
It is clear that $\bm{f}\paren{\bm{x}}$ is continuous in $\mbr^2\setminus\Gamma$, and by \eqref{e:kernel_regularity00} and \cite[Proposition 3.12]{F1995},$\bm{f}\paren{\bm{x}}$ is continuous on $\Gamma$.
Thus, we only have to prove $\bm{f}\paren{\bm{x}}$ is continuous at $\bm{x}_0=\bm{X}\paren{s}$ for all $s\in\mbs$.
We use the technique of the proofs of Lemma \ref{l:u_conti} and Lemma \ref{l:dlayer_conti} again.
Define $\mc{I}_\eta:=\set{s\in\mbs}{\abs{\bm{X}\paren{s}-\bm{X}\paren{s_0}}<\eta}$. Then for all $\abs{\bm{x}-\bm{x}_0}<\frac{1}{2}\eta$,
\begin{align*}
\begin{split}
    &\abs{\bm{f}\paren{\bm{x}}-\bm{f}\paren{\bm{x}_0}}\\
    \leq&\quad\int_{\mc{I}_\eta} \abs{ K_{ij}\paren{\bm{X}\paren{s'},\bm{x}}+ K^*_{ij}\paren{\bm{X}\paren{s'},\bm{x}}}\abs{ \wh{\bm{F}}(s')}ds'\\
        &+\int_{\mc{I}_\eta} \abs{ K_{ij}\paren{\bm{X}\paren{s'},\bm{x}_0}+ K^*_{ij}\paren{\bm{X}\paren{s'},\bm{x}_0}}\abs{ \wh{\bm{F}}(s')}ds'\\
        &+\int_{\mbs\setminus\mc{I}_\eta}\abs{ K_{ij}\paren{\bm{X}\paren{s'},\bm{x}}-K_{ij}\paren{\bm{X}\paren{s'},\bm{x}_0}}\abs{ \wh{\bm{F}}(s')}ds'\\
        &+\int_{\mbs\setminus\mc{I}_\eta}\abs{ K^*_{ij}\paren{\bm{X}\paren{s'},\bm{x}}-K^*_{ij}\paren{\bm{X}\paren{s'},\bm{x}_0}}\abs{ \wh{\bm{F}}(s')}ds'\\
    \leq&\quad\int_{\mc{I}_\eta} \abs{ K_{ij}\paren{\bm{X}\paren{s'},\bm{x}}+ K^*_{ij}\paren{\bm{X}\paren{s'},\bm{x}}}\abs{ \wh{\bm{F}}(s')}ds'\\
        &+\int_{\mc{I}_\eta} \abs{ K_{ij}\paren{\bm{X}\paren{s'},\bm{x}_0}+ K^*_{ij}\paren{\bm{X}\paren{s'},\bm{x}_0}}\abs{ \wh{\bm{F}}(s')}ds'\\
        &+C\norm{ \wh{\bm{F}}}_{C^0\paren{\mbs}} \frac{1}{\eta^2}\abs{\bm{x}-\bm{x}_0}.
\end{split}
\end{align*}
For the first two terms, we use some techniques in the proof of Lemma \ref{l:kernel_L1bnd}.
Consider $0<\eta<\frac{1}{2}\varepsilon_0$, where $\varepsilon_0$ is defined in the proof of Lemma \ref{l:kernel_L1bnd}. Then for all $s'\in \mc{I}_\eta$,
\begin{align*}
    \abs{\bm{n}\paren{s'}-\bm{n}\paren{s\paren{\bm{x}}}}\leq \norm{\pdd{s}{2}\bm{X}}_{C^0}\abs{s'-s\paren{\bm{x}}}\leq \frac{\norm{\pdd{s}{2}\bm{X}}_{C^0}}{\snorm{\bm{X}}}\abs{\bm{X}\paren{s'}-\bm{X}\paren{\bm{x}}}
\end{align*}
and $\abs{\bm{X}\paren{s'}-\bm{x}}>\frac{1}{2}\abs{\bm{X}\paren{s'}-\bm{X}\paren{\bm{x}}}$ by \eqref{e: proj_lower_bnd}.
\begin{align*}
\begin{split}
        &K_{ij}\paren{\bm{X}\paren{s'},\bm{x}}+ K^*_{ij}\paren{\bm{X}\paren{s'},\bm{x}}\\
    =   &-\frac{1}{\pi}\frac{\paren{X_i\paren{s}-x_i}\paren{X_j\paren{s}-x_j}\paren{\bm{X}\paren{s'}-\bm{x}}\cdot\bm{n}\paren{s'}}{\abs{\bm{X}\paren{s'}-\bm{x}}^4}\\
        &-\frac{1}{\pi}\frac{\paren{x_i-X_i\paren{s}}\paren{x_j-X_j\paren{s}}\paren{\bm{x}-\bm{X}\paren{s'}}\cdot\bm{n}\paren{s\paren{\bm{x}}}}{\abs{\bm{x}-\bm{X}\paren{s'}}^4}\\
    =   &-\frac{1}{\pi}\frac{\paren{X_i\paren{s}-x_i}\paren{X_j\paren{s}-x_j}\paren{\bm{X}\paren{s'}-\bm{x}}\cdot\paren{\bm{n}\paren{s'}-\bm{n}\paren{s\paren{\bm{x}}}}}{\abs{\bm{X}\paren{s'}-\bm{x}}^4},
\end{split}
\end{align*}
so
\begin{align*}
\begin{split}
        &\int_{\mc{I}_\eta} \abs{ K_{ij}\paren{\bm{X}\paren{s'},\bm{x}}+ K^*_{ij}\paren{\bm{X}\paren{s'},\bm{x}}}\abs{ \wh{\bm{F}}(s')}ds'\\
    \leq&\norm{ \wh{\bm{F}}}_{C^0\paren{\mbs}}\int_{\mc{I}_\eta} \abs{ K_{ij}\paren{\bm{X}\paren{s'},\bm{x}}+ K^*_{ij}\paren{\bm{X}\paren{s'},\bm{x}}}ds'\\
    \leq&\frac{2\norm{ \wh{\bm{F}}}_{C^0\paren{\mbs}}}{\pi}\int_{\mc{I}_\eta}ds'
    \leq\frac{4\norm{ \wh{\bm{F}}}_{C^0\paren{\mbs}}}{C_0\pi}\eta
\end{split}
\end{align*}
where $C_0$ is defined in the proof of Lemma \ref{l:kernel_L1bnd}.
Therefore,
\begin{align*}
    \lim_{\bm{x}\rightarrow\bm{x}_0}\abs{\bm{f}\paren{\bm{x}}-\bm{f}\paren{\bm{x}_0}}\leq \frac{4\norm{ \wh{\bm{F}}}_{C^0\paren{\mbs}}}{C_0\pi}\eta.
\end{align*}
Letting $\eta\rightarrow 0$, we see that $\bm{f}\paren{\bm{x}}$ continuous at $\bm{x}_0$.
Finally, we have
\begin{align*}
    \bm{F}_{\Omega_1}(s)
    =&\lim_{t\to 0+} \wh{\mc{D}}[\wh{\bm{F}}](\bm{X}(s)-t\bm{n}(s))\\
    =&\lim_{t\to 0+} \bm{f}(\bm{X}(s)-t\bm{n}(s))-\lim_{t\to 0+} \wh{\mc{D}}[\wh{\bm{F}}](\bm{X}(s)-t\bm{n}(s))\\
    =&\bm{f}(\bm{X}(s))-D_{\Omega_1}\paren{\bm{X}\paren{s}}\\
    =&\frac{1}{2} \wh{\bm{F}}\paren{s}+\int_{\mbs} K^*\paren{\bm{X}\paren{s'},\bm{X}(s)} \wh{\bm{F}}(s')ds'\\
    =&\frac{1}{2} \wh{\bm{F}}\paren{s}+\int_{\mbs} K\paren{\bm{X}(s),\bm{X}\paren{s'}} \wh{\bm{F}}(s')ds'.
\end{align*}
Similarly, one can obtain
\begin{align*}
    \bm{F}_{\Omega_2}(s)=-\frac{1}{2} \wh{\bm{F}}\paren{s}+\int_{\mbs} K\paren{\bm{X}(s),\bm{X}\paren{s'}} \wh{\bm{F}}(s')ds'.
\end{align*}
\end{proof}
Now, we will prove the necessary and sufficient condition of $\bm{u}\paren{\bm{x}}$ vanishes at infinity. 
\begin{lemma}
\begin{align*}
    \abs{\bm{u}\paren{\bm{x}}}\rightarrow 0 \mbox{ as } \abs{\bm{x}}\rightarrow \infty \Longleftrightarrow \int_\mbs \wh{\bm{F}}(s)ds=0
\end{align*}
\end{lemma}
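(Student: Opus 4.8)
The plan is to use the explicit form of the single-layer kernel
$G(\bm{r})=\tfrac{1}{4\pi}\bigl(-\log\abs{\bm{r}}\,\mb{I}+\bm{r}\otimes\bm{r}/\abs{\bm{r}}^2\bigr)$ together with the fact that $\bm{X}(s')$ ranges over the compact curve $\Gamma$ while $\abs{\bm{x}}\to\infty$. Set $M=\sup_{s'\in\mbs}\abs{\bm{X}(s')}<\infty$ (finite since $\bm{X}\in C^2(\mbs)$) and restrict to $\abs{\bm{x}}\ge 2M$, so that $\abs{\bm{x}-\bm{X}(s')}\ge\abs{\bm{x}}/2$ uniformly in $s'$. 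First I would split off the $s'$-independent part of the kernel: since $\int_\mbs G(\bm{x})\wh{\bm{F}}(s')\,ds'=G(\bm{x})\,\bm{a}$ with $\bm{a}:=\int_\mbs\wh{\bm{F}}(s')\,ds'$, we may write
\begin{equation*}
\bm{u}(\bm{x})=G(\bm{x})\,\bm{a}+\int_\mbs\bigl(G(\bm{x}-\bm{X}(s'))-G(\bm{x})\bigr)\wh{\bm{F}}(s')\,ds'.
\end{equation*}

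For the remainder integral I would use a mean-value estimate for the kernel difference, uniform in $s'$: the logarithmic part satisfies $\bigl|\log\abs{\bm{x}-\bm{X}(s')}-\log\abs{\bm{x}}\bigr|=\bigl|\log(\abs{\bm{x}-\bm{X}(s')}/\abs{\bm{x}})\bigr|\le C\abs{\bm{X}(s')}/\abs{\bm{x}}\le CM/\abs{\bm{x}}$, while the degree-zero part $G_T(\bm{r})=\bm{r}\otimes\bm{r}/\abs{\bm{r}}^2$ obeys $\abs{\nabla G_T(\bm{r})}\le C/\abs{\bm{r}}$ and hence $\abs{G_T(\bm{x}-\bm{X}(s'))-G_T(\bm{x})}\le CM/\abs{\bm{x}}$. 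Thus the remainder is bounded by $CM\norm{\wh{\bm{F}}}_{C^0}/\abs{\bm{x}}$, which tends to $0$. For the leading term, $G(\bm{x})\bm{a}=\tfrac{1}{4\pi}\bigl(-\log\abs{\bm{x}}\,\bm{a}+(\hat{\bm{x}}\cdot\bm{a})\hat{\bm{x}}\bigr)$ with $\hat{\bm{x}}=\bm{x}/\abs{\bm{x}}$; the second summand is bounded by $\abs{\bm{a}}/(4\pi)$ and the first has magnitude $\bigl|\log\abs{\bm{x}}\bigr|\,\abs{\bm{a}}/(4\pi)$. Consequently, if $\bm{a}=0$ the leading term vanishes identically and $\bm{u}(\bm{x})\to 0$; if $\bm{a}\ne 0$ then $\abs{G(\bm{x})\bm{a}}\ge\tfrac{1}{4\pi}\bigl(\log\abs{\bm{x}}\,\abs{\bm{a}}-\abs{\bm{a}}\bigr)\to\infty$, which, together with the vanishing of the remainder, forces $\abs{\bm{u}(\bm{x})}\to\infty$ and in particular $\bm{u}$ does not vanish at infinity. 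This proves both implications.

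The argument is essentially elementary, so I do not expect a substantive obstacle. The only point requiring a little care is making the kernel-difference bound genuinely uniform in $s'\in\mbs$; this is exactly where the lower bound $\abs{\bm{x}-\bm{X}(s')}\ge\abs{\bm{x}}/2$ and the boundedness of $\Gamma$ (a consequence of $\bm{X}\in C^2(\mbs)$ and the compactness of $\mbs$) enter, and beyond that everything reduces to the elementary facts that $\log$ is Lipschitz on $[1/2,2]$ and that $G_T$ is smooth away from the origin with gradient of size $O(1/\abs{\bm{r}})$.
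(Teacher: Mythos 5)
Your proposal is correct and follows essentially the same route as the paper's proof: split $\bm{u}(\bm{x})$ into $G(\bm{x})\int_{\mbs}\wh{\bm{F}}\,ds$ plus a remainder, kill the remainder with a mean-value estimate on the kernel difference (uniform in $s'$ since $\Gamma$ is bounded and $\abs{\bm{x}-t\bm{X}(s')}\gtrsim\abs{\bm{x}}$), and then read off the equivalence from the behavior of the leading term. Your handling of the leading term, via the lower bound $\abs{G(\bm{x})\bm{a}}\geq\tfrac{1}{4\pi}(\log\abs{\bm{x}}-1)\abs{\bm{a}}$, is if anything slightly more explicit than the paper's remark that $G_{ii}\to\infty$ while the off-diagonal entries stay bounded.
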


\begin{proof}
Since $\Omega_1$ is bounded, there exists a constant $R_0>0$ s.t. $\abs{\bm{X}\paren{s}}<R_0$ for all $s\in \mbs$.
Then, 
\begin{align*}
\begin{split}
     G\paren{\bm{x}-\bm{X}\paren{s}}-G\paren{\bm{x}} =  \int_0^1 \nabla G\paren{\bm{x}-t\bm{X}\paren{s}} \cdot \bm{X}\paren{s}  dt
\end{split}    
\end{align*}
so there exists a constant $C>0$ s.t. for all $\abs{\bm{x}}>2R_0$,
\begin{align*}
\begin{split}
    \abs{G\paren{\bm{x}}-G\paren{\bm{x}-\bm{X}\paren{s}}}\leq C \sup_{0\leq t\leq 1} \frac{\abs{\bm{X}\paren{s}}}{\abs{\bm{x}-t\bm{X}\paren{s}}}\leq C  \frac{R_0}{\abs{\bm{x}}-R_0}.
\end{split}
\end{align*}
Next, for all $\abs{\bm{x}}>2R_0$
\begin{align*}
    \bm{u}\paren{\bm{x}}
    =&\int_\mbs G\paren{\bm{x}-\bm{X}\paren{s'}}\wh{\bm{F}}(s')ds'\\
    =&\int_\mbs \paren{ G\paren{\bm{x}-\bm{X}\paren{s'}}-G\paren{\bm{x}}}\wh{\bm{F}}(s')ds'+G\paren{\bm{x}}\int_\mbs \wh{\bm{F}}(s')ds'.
\end{align*}
Since
\begin{align*}
    \lim_{\abs{\bm{x}}\rightarrow \infty}\abs{\int_\mbs \paren{ G\paren{\bm{x}-\bm{X}\paren{s'}}-G\paren{\bm{x}}}\wh{\bm{F}}(s')ds'}
    \leq\lim_{\abs{\bm{x}}\rightarrow \infty}C  \frac{R_0}{\abs{\bm{x}}-R_0}
    =0
\end{align*}
and
\begin{align*}
    \lim_{\abs{\bm{x}}\rightarrow \infty}G_{ii}\paren{\bm{x}}=\infty,\quad
    \lim_{\abs{\bm{x}}\rightarrow \infty}\abs{ G_{i\paren{2-i}}\paren{\bm{x}}}\leq 1,
\end{align*}
we obtain
\begin{align*}
    \lim_{\abs{\bm{x}}\rightarrow \infty}\abs{\bm{u}\paren{\bm{x}}}\rightarrow 0  \Longleftrightarrow \int_\mbs \wh{\bm{F}}(s)ds=0
\end{align*}
\end{proof}

\section{Some Calculus for the Operator $\mc{L}$ near the unit circle}\label{unitcircle_appendix}
\subsection{Computation for ${\mc{S}_i,\tau_i} $ }\quad\\
Set 
\begin{align*}
    \bm{X}_c=
    \begin{bmatrix}
    \cos \theta\\
    \sin \theta\\
    \end{bmatrix}
    \mbox{ and } \bm{X}_\varepsilon=\bm{X}_c+\varepsilon\bm{Y}.
\end{align*}
Then,

\begin{align*}
    4\pi G_L\paren{\Delta\bm{X}_\varepsilon}=-\log\abs{\bm{X}_\varepsilon}=G_{L0}+G_{L1}\varepsilon+G_{L2}\varepsilon^2+O\paren{\varepsilon^3},
\end{align*}
where
\begin{align}
    G_{L0}&=-\log\abs{\Delta\bm{X}_c}\label{t:GL00}\\
    G_{L1}&=-\frac{\Delta\bm{X}_c\cdot\Delta\bm{Y}}{\abs{\Delta\bm{X}_c}^2}\label{t:GL01}\\
    G_{L2}&=\frac{\paren{\Delta\bm{X}_c\cdot\Delta\bm{Y}}^2}{\abs{\Delta\bm{X}_c}^4}-\frac{1}{2}\frac{\abs{\Delta\bm{Y}}^2}{\abs{\Delta\bm{X}_c}^2},\label{t:GL02}
\end{align}
and
\begin{align*}
    4\pi G_T\paren{\Delta\bm{X}_\varepsilon}=\frac{\Delta\bm{X}_\varepsilon\otimes\Delta\bm{X}_\varepsilon}{\abs{\Delta\bm{X}_\varepsilon}^2}=G_{T0}+G_{T1}\varepsilon+G_{T2}\varepsilon^2+O\paren{\varepsilon^3},
\end{align*}
where
\begin{align}
    G_{T0}&=\frac{\Delta\bm{X}_c\otimes\Delta\bm{X}_c}{\abs{\Delta\bm{X}_c}^2}\label{t:GT00}\\
    G_{T1}&=\frac{\Delta\bm{Y}\otimes\Delta\bm{X}_c+\Delta\bm{X}_c\otimes\Delta\bm{Y}}{\abs{\Delta\bm{X}_c}^2}-2\frac{\paren{\Delta\bm{X}_c\cdot\Delta\bm{Y}}\paren{\Delta\bm{X}_c\otimes\Delta\bm{X}_c}}{\abs{\Delta\bm{X}_c}^4}\label{t:GT01}
    \end{align}
\begin{align}
\begin{split}
        G_{T2}
    =&\quad\frac{\Delta\bm{Y}\otimes\Delta\bm{Y}}{\abs{\Delta\bm{X}_c}^2}
    -2\frac{\paren{\Delta\bm{X}_c\cdot\Delta\bm{Y}}\paren{\Delta\bm{Y}\otimes\Delta\bm{X}_c+\Delta\bm{X}_c\otimes\Delta\bm{Y}}}{\abs{\Delta\bm{X}_c}^4}\\
    &+4\frac{\paren{\Delta\bm{X}_c\cdot\Delta\bm{Y}}^2\paren{\Delta\bm{X}_c\otimes\Delta\bm{X}_c}}{\abs{\Delta\bm{X}_c}^6}
    -\frac{\abs{\Delta\bm{Y}}^2\paren{\Delta\bm{X}_c\otimes\Delta\bm{X}_c}}{\abs{\Delta\bm{X}_c}^4}\label{t:GT02}
\end{split}
\end{align}
Next,
\begin{align*}
    \bm{\tau}_\varepsilon=\frac{\pd{\theta}\bm{X}_\varepsilon}{\abs{\pd{\theta}\bm{X}_\varepsilon}}=\bm{\tau}_0+\bm{\tau}_1\varepsilon+\bm{\tau}_2\varepsilon^2+O\paren{\varepsilon^3},
\end{align*}
where
\begin{align}
    \bm{\tau}_0&=\pd{\theta}\bm{X}_c, \quad \pd{\theta}\bm{\tau}_0=-\bm{X}_c\label{e:tau001}\\
    \bm{\tau}_1&=\pd{\theta}\bm{Y}-\paren{\pd{\theta}\bm{X}_c\cdot\pd{\theta}\bm{Y}}\pd{\theta}\bm{X}_c=\paren{\bm{X}_c\cdot\pd{\theta}\bm{Y}}\bm{X}_c\label{e:tau011}
\end{align}

Therefore, for $G_{T1}, G_{T2}$, we have some results for some computations in Section \ref{s:lambda2}
\begin{align}
    \bm{X}_c\cdot G_{T1}  \bm{X}_c'
    =&0\label{e:GT0100} \\    
    \pd{\theta}\bm{X}_c\cdot G_{T1}  \bm{X}_c'
    =&-\frac{1}{2}\pd{\theta}\bm{X}_c\cdot\Delta\bm{Y}-\frac{\paren{\pd{\theta}\bm{X}_c\cdot\bm{X}_c'}}{\abs{\Delta\bm{X}_c}^2}\Delta\bm{Y}\cdot\paren{\bm{X}_c'+\Delta\bm{X}_c}\label{e:GT01d00}\\
    \bm{X}_c\cdot G_{T2}  \bm{X}_c'
    =&\frac{\paren{\bm{X}_c\cdot\Delta\bm{Y}}\paren{\Delta\bm{Y}\cdot \bm{X}_c'}}{\abs{\Delta\bm{X}_c}^2}+\frac{1}{4}\abs{\Delta\bm{Y}}^2\label{e:GT0200}
\end{align}
% \begin{align}
% \begin{split}
%     \bm{X}_c\cdot G_{T1}  \bm{X}_c'
%     =0\label{e:GT0100}
% \end{split}
% \end{align}

% \begin{align}
% \begin{split}
%     \bm{X}_c\cdot G_{T2}  \bm{X}_c'
%     =\frac{\paren{\bm{X}_c\cdot\Delta\bm{Y}}\paren{\Delta\bm{Y}\cdot \bm{X}_c'}}{\abs{\Delta\bm{X}_c}^2}+\frac{1}{4}\abs{\Delta\bm{Y}}^2\label{e:GT0200}
% \end{split}
% \end{align}

% \begin{align}
% \begin{split}
%     \pd{\theta}\bm{X}_c\cdot G_{T1}  \bm{X}_c'
%     =-\frac{1}{2}\pd{\theta}\bm{X}_c\cdot\Delta\bm{Y}-\frac{\paren{\pd{\theta}\bm{X}_c\cdot\bm{X}_c'}}{\abs{\Delta\bm{X}_c}^2}\Delta\bm{Y}\cdot\paren{\bm{X}_c'+\Delta\bm{X}_c}
% \end{split}\label{e:GT01d00}
% \end{align}
Moreover,
\begin{align}
    \pd{\theta}\bm{\tau}_1\cdot G_{L1}\pd{\theta'}\bm{\tau}_0'
    =&\frac{\paren{\pd{\theta}\bm{\tau}_1\cdot\bm{X}_c'}\Delta\bm{X}_c}{\abs{\Delta\bm{X}_c}^2}\cdot\Delta \bm{Y}\label{e:GL0110}\\
    \pd{\theta}\bm{\tau}_1\cdot G_{T1}\pd{\theta'}\bm{\tau}_0'
    =&-\frac{\paren{\pd{\theta}\bm{\tau}_1\cdot\Delta\bm{X}_c}\bm{X}_c}{\abs{\Delta\bm{X}_c}^2}\cdot\Delta \bm{Y}+\frac{1}{2}\pd{\theta}\bm{\tau}_1\cdot\Delta \bm{Y}\label{e:GT0110}
\end{align}

Furthermore, set $\bm{Y}=g\bm{X}_c$, then we have
\begin{align*}
    \Delta \bm{Y}   &=g\Delta\bm{X}_c+\Delta g\bm{X}_c'
\end{align*}
We obtain some results of $\Delta \bm{Y}$ for computations in section \ref{s:lambda2}.
\begin{lemma}[The computations for $\Delta \bm{Y}$]\label{c:Y_01}
\quad\\
(1)
\begin{align*}
    \frac{\Delta \bm{X}_c\cdot\Delta \bm{Y}}{\abs{\Delta\bm{X}_c}^2}
    =&\frac{1}{2}\paren{g+g'}
\end{align*}
(2)
\begin{align*}
     \bm{X}_c\cdot\Delta \bm{Y}
     =&\frac{1}{2}g\abs{\Delta\bm{X}_c}^2+\Delta g \bm{X}_c\cdot \bm{X}_c'\\
     \bm{X}_c'\cdot\Delta \bm{Y}
     =&-\frac{1}{2}g\abs{\Delta\bm{X}_c}^2+\Delta g\\
     \pd{\theta}\bm{X}_c\cdot\Delta \bm{Y}
     =&-g \pd{\theta}\bm{X}_c\cdot\bm{X}_c'+\Delta g \pd{\theta}\bm{X}_c\cdot\bm{X}_c'
\end{align*}
(3)
\begin{align*}
    \abs{\Delta\bm{Y}}^2
    =&g g'\abs{\Delta\bm{X}_c}^2+\abs{\Delta g}^2\\
    \frac{\abs{\Delta\bm{Y}}^2}{\abs{\Delta\bm{X}_c}^2}
    =&g g'+\frac{\abs{\Delta g}^2}{\abs{\Delta\bm{X}_c}^2}
\end{align*}
\end{lemma}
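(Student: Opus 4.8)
The plan is to reduce all six identities to the single decomposition
\begin{equation*}
\Delta \bm{Y}=g\,\Delta\bm{X}_c+\Delta g\,\bm{X}_c',
\end{equation*}
which is immediate from $\bm{Y}=g\bm{X}_c$ since $\Delta(g\bm{X}_c)=g(\theta)\bm{X}_c(\theta)-g(\theta')\bm{X}_c(\theta')=g\,(\bm{X}_c-\bm{X}_c')+(g-g')\,\bm{X}_c'$, together with the elementary relations satisfied by the unit-circle parametrization. Concretely, since $\abs{\bm{X}_c}=\abs{\bm{X}_c'}=1$ one has $\abs{\Delta\bm{X}_c}^2=2(1-\bm{X}_c\cdot\bm{X}_c')$, hence $\bm{X}_c\cdot\Delta\bm{X}_c=1-\bm{X}_c\cdot\bm{X}_c'=\tfrac12\abs{\Delta\bm{X}_c}^2$ and $\bm{X}_c'\cdot\Delta\bm{X}_c=\bm{X}_c\cdot\bm{X}_c'-1=-\tfrac12\abs{\Delta\bm{X}_c}^2$; moreover $\pd{\theta}\bm{X}_c\cdot\bm{X}_c=0$, so $\pd{\theta}\bm{X}_c\cdot\Delta\bm{X}_c=-\pd{\theta}\bm{X}_c\cdot\bm{X}_c'$. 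I would record these four facts once at the outset and then verify each displayed formula by a single substitution.

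First I would prove (1): dotting $\Delta\bm{X}_c$ into the decomposition gives $\Delta\bm{X}_c\cdot\Delta\bm{Y}=g\abs{\Delta\bm{X}_c}^2+\Delta g\,(\bm{X}_c'\cdot\Delta\bm{X}_c)=(g-\tfrac12\Delta g)\abs{\Delta\bm{X}_c}^2$, and since $g-\tfrac12\Delta g=\tfrac12(g+g')$, dividing through by $\abs{\Delta\bm{X}_c}^2$ yields the claim. Next, for (2) I would dot each of $\bm{X}_c$, $\bm{X}_c'$ and $\pd{\theta}\bm{X}_c$ into the decomposition: for $\bm{X}_c$ the $\Delta\bm{X}_c$-term contributes $\tfrac12 g\abs{\Delta\bm{X}_c}^2$ and the $\bm{X}_c'$-term contributes $\Delta g\,(\bm{X}_c\cdot\bm{X}_c')$; for $\bm{X}_c'$ one gets $-\tfrac12 g\abs{\Delta\bm{X}_c}^2$ and $\Delta g$ (using $\abs{\bm{X}_c'}=1$); for $\pd{\theta}\bm{X}_c$, using $\pd{\theta}\bm{X}_c\cdot\Delta\bm{X}_c=-\pd{\theta}\bm{X}_c\cdot\bm{X}_c'$ one gets $(-g+\Delta g)\,\pd{\theta}\bm{X}_c\cdot\bm{X}_c'$. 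Finally, for (3) I would expand the square, $\abs{\Delta\bm{Y}}^2=g^2\abs{\Delta\bm{X}_c}^2+2g\Delta g\,(\bm{X}_c'\cdot\Delta\bm{X}_c)+(\Delta g)^2\abs{\bm{X}_c'}^2$; the cross term equals $-g\Delta g\abs{\Delta\bm{X}_c}^2$, and $g^2-g\Delta g=g g'$, giving $\abs{\Delta\bm{Y}}^2=g g'\abs{\Delta\bm{X}_c}^2+(\Delta g)^2$, with the last identity just this divided by $\abs{\Delta\bm{X}_c}^2$.

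I do not expect a genuine obstacle: the lemma is pure bookkeeping, needed only to streamline the perturbative computations of Section \ref{s:lambda2}. The one point demanding care is keeping the convention $\Delta g=g(\theta)-g(\theta')$ consistent when simplifying combinations such as $g-\tfrac12\Delta g=\tfrac12(g+g')$ and $g^2-g\Delta g=g g'$, and remembering that $\bm{X}_c$ being a unit vector is precisely what makes the half-angle identity $\abs{\Delta\bm{X}_c}^2=2(1-\bm{X}_c\cdot\bm{X}_c')$ and its corollaries $\bm{X}_c\cdot\Delta\bm{X}_c=\pm\tfrac12\abs{\Delta\bm{X}_c}^2$ available. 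Once those relations are in hand, each of the six formulas is a one-line check.
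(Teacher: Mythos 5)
Your proposal is correct and follows essentially the same route as the paper: both rest on the decomposition $\Delta\bm{Y}=g\,\Delta\bm{X}_c+\Delta g\,\bm{X}_c'$ and the unit-circle identities $\bm{X}_c\cdot\Delta\bm{X}_c=\tfrac12\abs{\Delta\bm{X}_c}^2$, $\bm{X}_c'\cdot\Delta\bm{X}_c=-\tfrac12\abs{\Delta\bm{X}_c}^2$, $\pd{\theta}\bm{X}_c\cdot\bm{X}_c=0$, after which each identity is a one-line substitution exactly as in the paper's proof.
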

\begin{proof}
\quad\\
(1)
\begin{align*}
    \Delta \bm{X}_c\cdot\Delta \bm{Y}
    =g\abs{\Delta\bm{X}_c}^2-\Delta g\paren{1-\bm{X}_c'\cdot\bm{X}_c}
    =g\abs{\Delta\bm{X}_c}^2-\frac{1}{2}\Delta g\abs{\Delta\bm{X}_c}^2,
\end{align*}
so
\begin{align*}
    \frac{\Delta \bm{X}_c\cdot\Delta \bm{Y}}{\abs{\Delta\bm{X}_c}^2}
    =g-\frac{1}{2}\Delta g
    =\frac{1}{2}\paren{g+g'}
\end{align*}
(2)
\begin{align*}
     \bm{X}_c\cdot\Delta \bm{Y}
     =&\bm{X}_c\cdot \left(g\Delta\bm{X}_c+\Delta g\bm{X}_c'\right)
     =g\paren{1-\bm{X}_c\cdot\bm{X}_c'}+\Delta g \bm{X}_c\cdot \bm{X}_c'\\
     =&\frac{1}{2}g\abs{\Delta\bm{X}_c}^2+\Delta g \bm{X}_c\cdot \bm{X}_c'\\
     \bm{X}_c'\cdot\Delta \bm{Y}
     =&\bm{X}_c'\cdot \left(g\Delta\bm{X}_c+\Delta g\bm{X}_c'\right)
     =g\paren{\bm{X}_c\cdot\bm{X}_c'-1}+\Delta g\\ 
     =&-\frac{1}{2}g\abs{\Delta\bm{X}_c}^2+\Delta g\\
     \pd{\theta}\bm{X}_c\cdot\Delta \bm{Y}
     =&\pd{\theta}\bm{X}_c\cdot\paren{g\Delta\bm{X}_c+\Delta g\bm{X}_c'}
     =-g \pd{\theta}\bm{X}_c\cdot\bm{X}_c'+\Delta g \pd{\theta}\bm{X}_c\cdot\bm{X}_c'
\end{align*}
(3)
\begin{align*}
    \abs{\Delta\bm{Y}}^2
    =&\left(g\Delta\bm{X}_c+\Delta g\bm{X}_c'\right)^2
    =g^2\abs{\Delta\bm{X}_c}^2-g\Delta g\abs{\Delta\bm{X}_c}^2+\abs{\Delta g}^2\\
    =&g g'\abs{\Delta\bm{X}_c}^2+\abs{\Delta g}^2
\end{align*}
\end{proof}
Moreover, for $g$, we have the following equation of Hilbert transforms.
\begin{lemma}[Toland]%\todo{write the reference}
\label{t:Toland}
If $g\in C^1\paren{\mbs}$, then
\begin{align*}
    g\mc{H}\pd{\theta}g-\frac{1}{2}\mc{H}\pd{\theta}g^2=\frac{1}{8\pi}\int_\mbs \frac{\abs{\Delta g}^2}{\sin^2\paren{\frac{\theta-\theta'}{2}}}d\theta'.
\end{align*}
\end{lemma}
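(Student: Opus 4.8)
The plan is to recognize the left-hand side as (minus) a commutator with the Hilbert transform and then integrate by parts in the resulting kernel. First I would note that $\pd\theta(g^2)=2g\,\pd\theta g$, so $\tfrac12\mc H\pd\theta(g^2)=\mc H(g\,\pd\theta g)$, and hence
\[
g\,\mc H\pd\theta g-\tfrac12\mc H\pd\theta g^2=g\,\mc H(\pd\theta g)-\mc H(g\,\pd\theta g)=-[\mc H,g](\pd\theta g),
\]
where $[\mc H,g]\psi:=\mc H(g\psi)-g\,\mc H\psi$ is exactly the commutator studied in Proposition~\ref{p:commute}. This rewriting is the conceptual point: for $g\in C^1(\mbs)$ neither $\mc H(\pd\theta g)$ nor $\mc H(g\,\pd\theta g)$ need be a classical function, but the commutator is, and the proof of Proposition~\ref{p:commute} exhibits it as the absolutely convergent integral
\[
[\mc H,g](\pd\theta g)(\theta)=\frac{1}{2\pi}\int_{\mbs}\cot\paren{\frac{\theta-\theta'}{2}}\paren{g(\theta')-g(\theta)}\,\pd{\theta'}g(\theta')\,d\theta',
\]
the factor $g(\theta')-g(\theta)=O(\abs{\theta-\theta'})$ absorbing the simple pole of $\cot$.

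Next I would use the elementary identity $\paren{g(\theta')-g(\theta)}\,\pd{\theta'}g(\theta')=\tfrac12\,\pd{\theta'}[(\Delta g)^2]$, with $\Delta g=g(\theta)-g(\theta')$, to write
\[
g\,\mc H\pd\theta g-\tfrac12\mc H\pd\theta g^2=-\frac{1}{4\pi}\int_{\mbs}\cot\paren{\frac{\theta-\theta'}{2}}\,\pd{\theta'}[(\Delta g)^2]\,d\theta',
\]
and then integrate by parts in $\theta'$. Since $\cot\paren{\frac{\theta-\theta'}{2}}$ has a simple pole at $\theta'=\theta$, I would perform this on $\{\abs{\theta'-\theta}>\varepsilon\}$ and let $\varepsilon\to0$: the two boundary terms are $\cot\paren{\frac{\theta-\theta'}{2}}(\Delta g)^2$ evaluated at $\theta'=\theta\pm\varepsilon$, hence of size $O(\varepsilon^{-1})\cdot O(\varepsilon^2)=O(\varepsilon)\to0$, while $\pd{\theta'}\cot\paren{\frac{\theta-\theta'}{2}}=\tfrac12\sin^{-2}\paren{\frac{\theta-\theta'}{2}}$ and $(\Delta g)^2\sin^{-2}\paren{\frac{\theta-\theta'}{2}}$ is bounded, so the excised integral converges to the ordinary integral over $\mbs$. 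Collecting constants gives
\[
g\,\mc H\pd\theta g-\tfrac12\mc H\pd\theta g^2=-\frac{1}{4\pi}\cdot\paren{-\frac12}\int_{\mbs}\frac{(\Delta g)^2}{\sin^2\paren{\frac{\theta-\theta'}{2}}}\,d\theta'=\frac{1}{8\pi}\int_{\mbs}\frac{\abs{\Delta g}^2}{\sin^2\paren{\frac{\theta-\theta'}{2}}}\,d\theta',
\]
which is the claimed identity.

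The only delicate points are the precise interpretation of the left-hand side for merely $C^1$ data and the justification of the integration by parts across the pole. I would dispose of both by first establishing the identity for $g\in C^\infty(\mbs)$ --- where every term is classical and the excision argument above is routine --- and then extending to $g\in C^1(\mbs)$ by density, since both $g\mapsto-[\mc H,g](\pd\theta g)$ (by the estimate in Proposition~\ref{p:commute}) and $g\mapsto\frac{1}{8\pi}\int_{\mbs}\abs{\Delta g}^2\sin^{-2}\paren{\frac{\theta-\theta'}{2}}\,d\theta'$ are continuous from $C^1(\mbs)$ into $C^0(\mbs)$; in the application in Section~\ref{s:lambda2} the function $g$ is in fact $C^2$, so no subtlety arises at all. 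I do not anticipate any real obstacle here: the algebra is short and the sole analytic input --- the convergent-integral representation of the commutator --- is already in hand.
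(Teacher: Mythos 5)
Your proof is correct and follows essentially the same route as the paper's: rewrite $\tfrac12\mc{H}\p_\theta g^2=\mc{H}(g\p_\theta g)$ to combine the two terms into a single kernel integral with $\Delta g$, use $\Delta g\,\p_{\theta'}g'=-\tfrac12\p_{\theta'}\abs{\Delta g}^2$, and integrate by parts using $\p_{\theta'}\cot\paren{\tfrac{\theta-\theta'}{2}}=\tfrac12\sin^{-2}\paren{\tfrac{\theta-\theta'}{2}}$. The only difference is that you spell out the excision of the pole, the vanishing boundary terms, and the density argument for merely $C^1$ data, details the paper leaves implicit.
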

\begin{proof}
\begin{align*}
     &g\mc{H}\pd{\theta}g-\frac{1}{2}\mc{H}\pd{\theta}g^2
     =g\mc{H}\pd{\theta}g-\mc{H}g\pd{\theta}g\\
     =&g\frac{1}{2\pi}\int_\mbs \cot \paren{\frac{\theta-\theta'}{2}}\pd{\theta'}g' d\theta'-\frac{1}{2\pi}\int_\mbs \cot \paren{\frac{\theta-\theta'}{2}}g'\pd{\theta'}g' d\theta'\\
     =&\frac{1}{2\pi}\int_\mbs \cot \paren{\frac{\theta-\theta'}{2}}\Delta g\pd{\theta'}g' d\theta'
     =-\frac{1}{4\pi}\int_\mbs \cot \paren{\frac{\theta-\theta'}{2}}\pd{\theta'}\abs{\Delta g}^2 d\theta'\\
     =&\frac{1}{8\pi}\int_\mbs \frac{\abs{\Delta g}^2}{\sin^2\paren{\frac{\theta-\theta'}{2}}}d\theta'.
\end{align*}
\end{proof}

\subsection{Some computations of Hilbert transform for Fourier series}
In this section,we will compute some Hilbert transforms for Section \ref{s:lambda2}.
First, for Proposition \ref{t: computation_circle}, we need to compute Hilbert transforms for trigonometric functions. 
\begin{lemma}\label{c:hilbert_01}
For $n\geq 2$,
\begin{align*}
    \begin{array}{ll}
        \mc{H} \left[\cos \paren{n\theta}\bm{X}_c\right]=\sin \paren{n\theta}\bm{X}_c, & \mc{H} \left[\cos \paren{n\theta}\pd{\theta}\bm{X}_c\right]=\sin \paren{n\theta}\pd{\theta}\bm{X}_c, \\
        \mc{H} \left[\sin \paren{n\theta}\bm{X}_c\right]=-\cos \paren{n\theta}\bm{X}_c,& \mc{H} \left[\sin \paren{n\theta}\pd{\theta}\bm{X}_c\right]=-\cos \paren{n\theta}\pd{\theta}\bm{X}_c.
    \end{array}
\end{align*}
Moreover,
\begin{align*}
    &\mc{H} \left[\cos \theta\bm{X}_c\right]=\frac{1}{2}
    \begin{bmatrix}
    \sin \paren{2\theta}\\
    -\cos \paren{2\theta}\\
    \end{bmatrix},
    &\mc{H} \left[\cos \theta\pd{\theta}\bm{X}_c\right]=\frac{1}{2}
    \begin{bmatrix}
    \cos \paren{2\theta}\\
    \sin \paren{2\theta}\\
    \end{bmatrix},\\
    &\mc{H} \left[\sin \theta\bm{X}_c\right]=-\frac{1}{2}
    \begin{bmatrix}
    \cos \paren{2\theta}\\
    \sin \paren{2\theta}\\
    \end{bmatrix},
    &\mc{H} \left[\sin \theta\pd{\theta}\bm{X}_c\right]=\frac{1}{2}
    \begin{bmatrix}
    \sin \paren{2\theta}\\
    \cos \paren{2\theta}\\
    \end{bmatrix}.\\
\end{align*}
\begin{align*}
    \mc{H} \left[\bm{X}_c\right]=-\pd{\theta}\bm{X}_c,\quad
    \mc{H} \left[\pd{\theta}\bm{X}_c\right]=\bm{X}_c.
\end{align*}
\end{lemma}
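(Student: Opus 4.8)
The plan is to reduce everything to the elementary Fourier action of the Hilbert transform. With the convention $(\mc{H}f)(\theta)=\frac{1}{2\pi}\,\mathrm{p.v.}\int_\mbs \cot\paren{\tfrac{\theta-\theta'}{2}}f(\theta')\,d\theta'$ used throughout the paper, one has $\mc{H}[1]=0$ and, for every integer $m\ge 1$, $\mc{H}[\cos m\theta]=\sin m\theta$ and $\mc{H}[\sin m\theta]=-\cos m\theta$; this follows at once from computing the Fourier coefficients of $\cot(\cdot/2)$ (equivalently from the classical identity $\int_0^\pi \cot(u/2)\sin(mu)\,du=\pi$), and is standard (see e.g. Section I.8.4 of \cite{katznelson2004introduction}). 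Since $\mc{H}$ acts componentwise on $\mbr^2$-valued functions, the task reduces to expanding the scalar components of $\cos(n\theta)\bm{X}_c$, $\sin(n\theta)\bm{X}_c$, $\cos(n\theta)\pd{\theta}\bm{X}_c$, $\sin(n\theta)\pd{\theta}\bm{X}_c$ into trigonometric polynomials and applying $\mc{H}$ term by term.

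Concretely, using $\bm{X}_c=(\cos\theta,\sin\theta)^T$ and $\pd{\theta}\bm{X}_c=(-\sin\theta,\cos\theta)^T$ together with the product-to-sum formulas $\cos(n\theta)\cos\theta=\tfrac12[\cos((n+1)\theta)+\cos((n-1)\theta)]$, $\cos(n\theta)\sin\theta=\tfrac12[\sin((n+1)\theta)-\sin((n-1)\theta)]$, and the analogous identities, each component of these four products becomes a combination of $\cos(m\theta)$ and $\sin(m\theta)$ with $m\in\{n-1,n+1\}$. For $n\ge 2$ all these frequencies are $\ge 1$, so applying $\mc{H}$ simply swaps $\cos$ and $\sin$ with the correct signs; collecting terms and running the product-to-sum identities backwards then identifies the result with $\sin(n\theta)\bm{X}_c$, $-\cos(n\theta)\bm{X}_c$, $\sin(n\theta)\pd{\theta}\bm{X}_c$, and $-\cos(n\theta)\pd{\theta}\bm{X}_c$ respectively. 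The four cases are mechanically identical.

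The cases $n=0$ and $n=1$ must be handled directly, and this is precisely where the different-looking formulas come from: for $n=1$ the $\{n-1\}$-term in the expansion above has frequency $0$, so one component picks up a constant that $\mc{H}$ annihilates; for $n=0$ the vectors $\bm{X}_c$, $\pd{\theta}\bm{X}_c$ are already trigonometric polynomials and $\mc{H}$ is read off immediately, giving $\mc{H}[\bm{X}_c]=(\sin\theta,-\cos\theta)^T=-\pd{\theta}\bm{X}_c$ and $\mc{H}[\pd{\theta}\bm{X}_c]=(\cos\theta,\sin\theta)^T=\bm{X}_c$. For $n=1$ one computes e.g. $\cos\theta\,\bm{X}_c=\paren{\tfrac12+\tfrac12\cos 2\theta,\ \tfrac12\sin 2\theta}^T$, whence $\mc{H}[\cos\theta\,\bm{X}_c]=\paren{\tfrac12\sin 2\theta,\ -\tfrac12\cos 2\theta}^T$, and similarly for the other three products. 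There is no real obstacle here — the content is entirely bookkeeping — the only points requiring care being (i) fixing the sign convention of $\mc{H}$ consistently with the rest of the paper, and (ii) remembering the zero-frequency term killed by $\mc{H}$ in the $n=1$ expansions, which is exactly why those identities are not special cases of the $n\ge 2$ formula. As an internal consistency check one may also derive the $n=1$, $\pd{\theta}\bm{X}_c$ identities from the $\bm{X}_c$ ones by writing, for instance, $\sin\theta\,\pd{\theta}\bm{X}_c=\pd{\theta}(\sin\theta\,\bm{X}_c)-\cos\theta\,\bm{X}_c$ and using that $\mc{H}$ commutes with $\pd{\theta}$.
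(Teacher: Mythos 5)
Your method is the intended one: the paper states Lemma \ref{c:hilbert_01} without proof, and the argument it presupposes is exactly what you describe, namely the multiplier action $\mc{H}[\cos m\theta]=\sin m\theta$, $\mc{H}[\sin m\theta]=-\cos m\theta$, $\mc{H}[1]=0$ for the convention \eqref{e:R_C_defn}-adjacent kernel $\tfrac{1}{2\pi}\cot\tfrac{\theta-\theta'}{2}$, applied componentwise after product-to-sum expansion, with the $n=0,1$ cases treated separately because of the zero-frequency terms. All of that, including your explicit $n=1$ computation for $\cos\theta\,\bm{X}_c$ and the $n\geq 2$ bookkeeping, is correct.

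There is, however, one point you should not leave under ``similarly'': if you carry out your own procedure for the fourth $n=1$ product you get
\begin{equation*}
\sin\theta\,\pd{\theta}\bm{X}_c=\begin{pmatrix}-\tfrac12+\tfrac12\cos 2\theta\\[2pt] \tfrac12\sin 2\theta\end{pmatrix},
\qquad
\mc{H}\left[\sin\theta\,\pd{\theta}\bm{X}_c\right]=\frac12\begin{pmatrix}\sin 2\theta\\ -\cos 2\theta\end{pmatrix},
\end{equation*}
whereas the lemma as printed has $+\cos 2\theta$ in the second component. Your own consistency check, $\sin\theta\,\pd{\theta}\bm{X}_c=\pd{\theta}\paren{\sin\theta\,\bm{X}_c}-\cos\theta\,\bm{X}_c$ together with $\mc{H}\pd{\theta}=\pd{\theta}\mc{H}$, reproduces the minus sign, so this is a typo in the statement rather than a convention issue on your side: with the minus sign, \eqref{e:circle_sigma_operator} yields $\mc{Q}_0\left[\sin\theta\,\pd{\theta}\bm{X}_c\right]=\tfrac18\cos\theta$, which is what the subsequent unnumbered lemma and Proposition \ref{t: computation_circle} require, while the printed $+\cos 2\theta$ would give $-\tfrac18\cos 3\theta$ instead. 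So your proof is right in method and in every other entry, but as written it claims to verify a formula your computation actually contradicts; state the corrected value explicitly (and flag the typo) rather than subsuming that case under ``similarly''.
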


\begin{lemma}\label{c:hilbert_02}
For $n\geq 2$,
\begin{align*}
    \mc{H} \left[\pd{\theta}\paren{\cos \paren{n\theta}\bm{X}_c}\right]&=n\cos \paren{n\theta}\bm{X}_c+\sin \paren{n\theta}\pd{\theta}\bm{X}_c,\\
    \mc{H} \left[\pd{\theta}\paren{\sin \paren{n\theta}\bm{X}_c}\right]&=n\sin \paren{n\theta}\bm{X}_c-\cos \paren{n\theta}\pd{\theta}\bm{X}_c,\\
    \mc{H} \left[\pd{\theta}\paren{\cos \paren{n\theta}\pd{\theta}\bm{X}_c}\right]&=-\sin \paren{n\theta}\bm{X}_c+n\cos \paren{n\theta}\pd{\theta}\bm{X}_c,\\
    \mc{H} \left[\pd{\theta}\paren{\sin \paren{n\theta}\pd{\theta}\bm{X}_c}\right]&=\cos \paren{n\theta}\bm{X}_c+n\sin \paren{n\theta}\pd{\theta}\bm{X}_c.
\end{align*}
Moreover,
\begin{align*}
    \begin{array}{ll}
        \mc{H} \left[\pd{\theta}\paren{\cos \theta\bm{X}_c}\right]=
        \begin{bmatrix}
        \cos \paren{2\theta}\\
        \sin \paren{2\theta}\\
        \end{bmatrix}, &  
        \mc{H} \left[\pd{\theta}\paren{\cos \theta\pd{\theta}\bm{X}_c}\right]=
        \begin{bmatrix}
        -\sin \paren{2\theta}\\
        \cos \paren{2\theta}\\
        \end{bmatrix},\\
        &\\
         \mc{H} \left[\pd{\theta}\paren{\sin \theta\bm{X}_c}\right]=
        \begin{bmatrix}
        \sin \paren{2\theta}\\
        -\cos \paren{2\theta}\\
        \end{bmatrix},&
        \mc{H} \left[\pd{\theta}\paren{\sin \theta\pd{\theta}\bm{X}_c}\right]=
        \begin{bmatrix}
        \cos \paren{2\theta}\\
        \sin \paren{2\theta}\\
        \end{bmatrix}.
    \end{array}
\end{align*}
\end{lemma}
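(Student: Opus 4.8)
The plan is to deduce Lemma \ref{c:hilbert_02} from Lemma \ref{c:hilbert_01} by differentiating the latter in $\theta$, exploiting the fact that the Hilbert transform commutes with $\pd{\theta}$. On $\mbs$, both $\pd{\theta}$ and $\mc{H}$ are Fourier multiplier operators: $\pd{\theta}$ acts on the $k$-th Fourier mode by multiplication by $ik$, while $\mc{H}$ acts by multiplication by $-i\sgn(k)$ (with the convention $\sgn(0)=0$). Since these symbols commute, $\mc{H}\pd{\theta}f=\pd{\theta}\mc{H}f$ for every $f\in C^1(\mbs)$, and componentwise for vector-valued $f$; equivalently, $\mc{H}$ is a convolution operator and hence translation invariant. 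First I would record this commutation, and then apply it with $f$ equal to each of the eight fields $\cos(n\theta)\bm{X}_c$, $\sin(n\theta)\bm{X}_c$, $\cos(n\theta)\pd{\theta}\bm{X}_c$, $\sin(n\theta)\pd{\theta}\bm{X}_c$ (for $n\ge 2$, and then for $n=1$). This converts the left-hand side of each line of Lemma \ref{c:hilbert_02} into $\pd{\theta}$ applied to the right-hand side of the corresponding line of Lemma \ref{c:hilbert_01}.

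What then remains is to carry out these differentiations, which is entirely routine: one applies the product rule together with the relation $\pdd{\theta}{2}\bm{X}_c=-\bm{X}_c$ (recorded in \eqref{e:tau001}). For instance, for $n\ge 2$, $\mc{H}[\pd{\theta}(\cos(n\theta)\pd{\theta}\bm{X}_c)]=\pd{\theta}(\sin(n\theta)\pd{\theta}\bm{X}_c)=n\cos(n\theta)\pd{\theta}\bm{X}_c-\sin(n\theta)\bm{X}_c$, which is precisely the third claimed identity; the other three identities for $n\ge 2$, and all four identities for $n=1$, follow in exactly the same way from the matching lines of Lemma \ref{c:hilbert_01}.

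There is no genuine obstacle here. The only two points deserving a moment's attention are the commutation statement itself (elementary, but it is what makes the argument work) and the role of the zero Fourier mode in the $n=1$ cases: the products $\cos\theta\,\bm{X}_c$, $\sin\theta\,\bm{X}_c$, and their $\pd{\theta}\bm{X}_c$ analogues contain a constant mode which $\mc{H}$ annihilates, and this is exactly why the $n=1$ formulas in Lemma \ref{c:hilbert_01}---and hence in Lemma \ref{c:hilbert_02}---take a different shape from their $n\ge 2$ counterparts. As an alternative to invoking Lemma \ref{c:hilbert_01}, one could instead prove Lemma \ref{c:hilbert_02} directly: expand each vector field into its finitely many Fourier modes using product-to-sum identities, apply $\pd{\theta}$, then apply $\mc{H}$ mode by mode via $\mc{H}(\cos k\theta)=\sin k\theta$, $\mc{H}(\sin k\theta)=-\cos k\theta$, $\mc{H}(1)=0$, and recombine---but routing through Lemma \ref{c:hilbert_01} and the commutation relation is the shorter path.
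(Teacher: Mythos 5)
Your route is fine in principle, and since the paper states Lemma \ref{c:hilbert_02} (like Lemma \ref{c:hilbert_01}) without proof, leaving it as a direct Fourier computation, there is no paper argument to compare against; the commutation $\mc{H}\pd{\theta}=\pd{\theta}\mc{H}$ (both are Fourier multipliers on $\mbs$, with symbols $ik$ and $-i\,\sgn(k)$) followed by term-by-term differentiation of Lemma \ref{c:hilbert_01}, using $\pdd{\theta}{2}\bm{X}_c=-\bm{X}_c$, is exactly the natural path and it does reproduce all four identities for $n\geq 2$.

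There is, however, one caveat you glossed over by asserting that all four $n=1$ identities ``follow in exactly the same way'' without carrying them out: the last $n=1$ entry of Lemma \ref{c:hilbert_01} as printed carries a sign misprint. Directly, $\sin\theta\,\pd{\theta}\bm{X}_c=\paren{-\tfrac12+\tfrac12\cos 2\theta,\ \tfrac12\sin 2\theta}^{T}$, so with the paper's convention ($\mc{H}\cos k\theta=\sin k\theta$, $\mc{H}\sin k\theta=-\cos k\theta$, $\mc{H}1=0$) one gets $\mc{H}\left[\sin\theta\,\pd{\theta}\bm{X}_c\right]=\tfrac12\paren{\sin 2\theta,\ -\cos 2\theta}^{T}$, not $\tfrac12\paren{\sin 2\theta,\ \cos 2\theta}^{T}$ as stated there. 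If you differentiate the printed formula you obtain $\mc{H}\left[\pd{\theta}\paren{\sin\theta\,\pd{\theta}\bm{X}_c}\right]=\paren{\cos 2\theta,\ -\sin 2\theta}^{T}$, which contradicts the entry $\paren{\cos 2\theta,\ \sin 2\theta}^{T}$ claimed in Lemma \ref{c:hilbert_02}; a direct check ($\pd{\theta}\paren{\sin\theta\,\pd{\theta}\bm{X}_c}=\paren{-\sin 2\theta,\ \cos 2\theta}^{T}$, then apply $\mc{H}$) confirms that Lemma \ref{c:hilbert_02} is the correct one and the misprint sits in Lemma \ref{c:hilbert_01}. So your argument goes through once that input is corrected, but as written the deduction of this single $n=1$ identity from the printed Lemma \ref{c:hilbert_01} fails; for the $n=1$ block you should either fix the cited entry or fall back on the mode-by-mode computation you mention as an alternative. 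The other three $n=1$ identities and all $n\geq 2$ identities are confirmed by your derivation.
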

Next, when we compute $\lambda_2$ in Section \ref{s:lambda2}, we expand $\bm{Y}$ as a Fourier series
\begin{align*}
    \bm{Y}\paren{\theta}=g\bm{X}_c\paren{\theta}, \mbox{ where } g=g_0+\sum_{n\geq 1} g_{n1}\cos\paren{n\theta}+g_{n2}\sin\paren{n\theta}
\end{align*}
Then, we obtain some results for $\bm{\tau_1}$

\begin{lemma}[The computation for $\bm{\tau}_1$]\label{c:tau_01}\quad\\
(1)
\begin{align*}
    \bm{\tau}_1 =&\pd{\theta}g\bm{X}_c=\sum_{n\geq 1} n\left[ g_{n2}\cos \paren{n\theta}-g_{n1}\sin \paren{n\theta}\right]\bm{X}_c\\
    \pd{\theta}\bm{\tau}_1
    =&-\sum_{n\geq 1} n^2\left[ g_{n1}\cos \paren{n\theta}+g_{n2}\sin \paren{n\theta}\right]\bm{X}_c\\
     &+\sum_{n\geq 1} n\left[ g_{n2}\cos \paren{n\theta}-g_{n1}\sin \paren{n\theta}\right]\pd{\theta}\bm{X}_c
\end{align*}
(2)
\begin{align*}
    \mc{H}\pd{\theta}\bm{\tau}_1
    =&\quad\sum_{n\geq 2} n^2\left[g_{n2}\cos \paren{n\theta}- g_{n1}\sin \paren{n\theta}\right]\bm{X}_c\\
     &+\sum_{n\geq 2} n\left[ g_{n1}\cos \paren{n\theta}+ g_{n2}\sin \paren{n\theta}\right]\pd{\theta}\bm{X}_c\\
     &+ g_{12}
    \begin{bmatrix}
    \cos \paren{2\theta}\\
    \sin \paren{2\theta}\\
    \end{bmatrix}
    + g_{11}
    \begin{bmatrix}
    -\sin \paren{2\theta}\\
    \cos \paren{2\theta}\\
    \end{bmatrix}
\end{align*}
(3)
\begin{align*}
    \pd{\theta}\bm{\tau}_1\cdot\Delta\bm{X}_c&=\frac{1}{2}\pdd{\theta}{2} g\abs{\Delta\bm{X}_c}^2-\pd{\theta}g\sin(\theta-\theta')\\
    \pd{\theta}\bm{\tau}_1\cdot\bm{X}_c'&=\pdd{\theta}{2} g\bm{X}_c\cdot\bm{X}_c'+\pd{\theta}g\pd{\theta}\bm{X}_c\cdot\bm{X}_c'\\
\end{align*}

\end{lemma}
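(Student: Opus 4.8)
The plan is to prove all three parts by direct computation, using the first-order expansion of the unit tangent recorded in \eqref{e:tau011}, namely $\bm{\tau}_1=\paren{\bm{X}_c\cdot\pd{\theta}\bm{Y}}\bm{X}_c$, together with the polar representation $\bm{Y}=g\bm{X}_c$ with $g=g_0+\sum_{n\geq 1}\paren{g_{n1}\cos\paren{n\theta}+g_{n2}\sin\paren{n\theta}}$.

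First I would dispatch (1). Differentiating $\bm{Y}=g\bm{X}_c$ gives $\pd{\theta}\bm{Y}=\pd{\theta}g\,\bm{X}_c+g\,\pd{\theta}\bm{X}_c$, and since $\abs{\bm{X}_c}=1$ and $\bm{X}_c\cdot\pd{\theta}\bm{X}_c=0$ one gets $\bm{X}_c\cdot\pd{\theta}\bm{Y}=\pd{\theta}g$, hence $\bm{\tau}_1=\pd{\theta}g\,\bm{X}_c$. Differentiating once more yields $\pd{\theta}\bm{\tau}_1=\pdd{\theta}{2}g\,\bm{X}_c+\pd{\theta}g\,\pd{\theta}\bm{X}_c$, and inserting the term-by-term derivatives $\pd{\theta}g=\sum_{n\geq 1}n\paren{g_{n2}\cos\paren{n\theta}-g_{n1}\sin\paren{n\theta}}$ and $\pdd{\theta}{2}g=-\sum_{n\geq 1}n^2\paren{g_{n1}\cos\paren{n\theta}+g_{n2}\sin\paren{n\theta}}$ gives the two displayed formulas.

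For (2) I would apply $\mc{H}$ componentwise to the expression $\pd{\theta}\bm{\tau}_1=\pdd{\theta}{2}g\,\bm{X}_c+\pd{\theta}g\,\pd{\theta}\bm{X}_c$ obtained in (1), invoking the Hilbert-transform identities collected in Lemma \ref{c:hilbert_01}. The one point requiring care is the split between the $n\geq 2$ Fourier modes and the $n=1$ mode: for $n\geq 2$ the vector products $\cos\paren{n\theta}\bm{X}_c$, $\sin\paren{n\theta}\bm{X}_c$, $\cos\paren{n\theta}\pd{\theta}\bm{X}_c$, $\sin\paren{n\theta}\pd{\theta}\bm{X}_c$ decompose by product-to-sum into frequencies $n-1$ and $n+1$, both nonzero, so $\mc{H}$ acts on these as a clean quarter-turn and reproduces the two $n\geq 2$ sums in the claimed identity; for $n=1$ each such product also carries a zero-frequency piece, which $\mc{H}$ annihilates, and it is exactly this truncation that is responsible for the anomalous $g_{11},g_{12}$ terms built from $\cos\paren{2\theta}$ and $\sin\paren{2\theta}$. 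So I would separate the $n=1$ contribution, treat it with the four special $n=1$ identities of Lemma \ref{c:hilbert_01}, treat the $n\geq 2$ part with the generic identities, and recombine.

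Finally, (3) follows by dotting $\pd{\theta}\bm{\tau}_1=\pdd{\theta}{2}g\,\bm{X}_c+\pd{\theta}g\,\pd{\theta}\bm{X}_c$ against $\bm{X}_c'$ and against $\Delta\bm{X}_c=\bm{X}_c-\bm{X}_c'$: the second identity is simply this expansion, and the first uses $\bm{X}_c\cdot\bm{X}_c=1$, $\bm{X}_c\cdot\pd{\theta}\bm{X}_c=0$, $\bm{X}_c\cdot\bm{X}_c'=\cos\paren{\theta-\theta'}$, $\pd{\theta}\bm{X}_c\cdot\bm{X}_c'=\sin\paren{\theta'-\theta}$ and $\abs{\Delta\bm{X}_c}^2=2-2\cos\paren{\theta-\theta'}$. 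I expect the only real obstacle to be organizational---keeping the $n=1$ special cases of Lemma \ref{c:hilbert_01} straight while assembling part (2)---since there is no analytic content here beyond the linearity of $\mc{H}$ and elementary trigonometric identities.
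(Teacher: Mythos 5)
Your overall route is the right one and is, in effect, the paper's: Lemma \ref{c:tau_01} is stated without proof as a direct computation, and parts (1), (2) and the second identity in (3) do follow essentially as you describe. In particular your treatment of the $n=1$ truncation in (2) is sound: the anomalous contribution is $(g_{12}\cos 2\theta-g_{11}\sin 2\theta,\; g_{11}\cos 2\theta+g_{12}\sin 2\theta)^{T}$, which is exactly what the lemma asserts.

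Two sign issues will, however, derail a literal execution of your plan. First, for (2) you propose to quote the four $n=1$ entries of Lemma \ref{c:hilbert_01} verbatim; the printed entry for $\mc{H}\left[\sin\theta\,\pd{\theta}\bm{X}_c\right]$ is off in its second component (a direct computation, e.g. $\sin\theta\cos\theta=\tfrac{1}{2}\sin 2\theta\mapsto-\tfrac{1}{2}\cos 2\theta$, gives $\tfrac{1}{2}(\sin 2\theta,\,-\cos 2\theta)^{T}$), and plugging in the table as printed does \emph{not} reproduce the correct formula in (2); test this on $g=\cos\theta$, where $\mc{H}\pd{\theta}\bm{\tau}_1=(-\sin 2\theta,\cos 2\theta)^{T}$. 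So you should rederive the $n=1$ identities by the product-to-sum mechanism you describe rather than cite them. Second, for (3) the computation you outline does not land on the printed first identity: with the paper's convention $\Delta\bm{X}_c=\bm{X}_c(\theta)-\bm{X}_c(\theta')$ and $\pd{\theta}\bm{X}_c\cdot\bm{X}_c'=\sin(\theta'-\theta)=-\sin(\theta-\theta')$, dotting $\pd{\theta}\bm{\tau}_1=\pdd{\theta}{2}g\,\bm{X}_c+\pd{\theta}g\,\pd{\theta}\bm{X}_c$ with $\Delta\bm{X}_c$ gives $\tfrac{1}{2}\pdd{\theta}{2}g\,\abs{\Delta\bm{X}_c}^2+\pd{\theta}g\,\sin(\theta-\theta')$, i.e. the opposite sign on the second term. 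This is a typo in the statement rather than an error in your method (the way the identity is used in the proof of Lemma \ref{L:110} is consistent only with the plus sign), but as written your assertion that the printed identity simply follows by dotting is not accurate: carry the computation out, record the corrected sign, and note the discrepancy explicitly.
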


\bibliographystyle{plain}
\bibliography{reference}

\newpage

\end{document}